\documentclass{amsart}
\pdfoutput=1
\usepackage[backend=biber, backref, style=alphabetic]{biblatex}
\addbibresource{$HOME/Dropbox/00Research/MainBib.bib}
\usepackage{amsmath, amsthm, amsfonts, amssymb}
\usepackage[mathcal]{euscript}
\usepackage{tikz-cd}
\usepackage{mathrsfs}
\usepackage{hyperref}
\usepackage{color}
\usepackage[capitalize,nameinlink,noabbrev]{cleveref}
\usepackage[T1]{fontenc}
\setcounter{tocdepth}{1}

\newcommand{\im}{\mathrm{im}}

\newcommand{\isom}{\cong}
\newcommand{\into}{\hookrightarrow}

\newcommand{\inv}{^{-1}}

\newcommand{\comment}[1]{}
\newcommand{\ev}{\mathrm{ev}}


\newcommand{\tensor}{\otimes}


\DeclareMathOperator{\Spec}{Spec}

\newcommand{\A}{\mathbb{A}}       
\newcommand{\Bl}{\mathrm{Bl}}     
\newcommand{\V}{\mathbb{V}}       

\renewcommand{\div}{\text{div}}

\newcommand{\shC}{\mathcal{C}}
\newcommand{\shD}{\mathcal{D}}
\newcommand{\shE}{\mathcal{E}}
\newcommand{\shF}{\mathcal{F}}
\newcommand{\shG}{\mathcal{G}}
\newcommand{\shH}{\mathcal{H}}

\newcommand{\shI}{\mathcal{I}}
\newcommand{\shJ}{\mathcal{J}}

\newcommand{\shL}{\mathcal{L}}

\newcommand{\shO}{\mathcal{O}}

\newcommand{\shR}{\mathcal{R}}

\newcommand{\shHom}{{\mathcal{H}om}}


\newcommand{\mld}{\mathrm{mld}}
\newcommand{\lct}{\mathrm{lct}}



\newcommand{\D}{\mathfrak{D}}
\newcommand{\ord}{\mathrm{ord}}
\newcommand{\ratrk}{\mathrm{ratrk}}

\newcommand{\triv}{\mathrm{triv}}
\newcommand{\val}{\mathrm{val}}
\newcommand{\Val}{\mathrm{Val}}


\newcommand{\ldb}{[\![}   
\newcommand{\rdb}{]\!]}
\newcommand{\lbrak}{\{\!\!\{}
\newcommand{\rbrak}{\}\!\!\}}


\DeclareMathOperator{\trdeg}{tr.deg}

\newcommand{\m}{\mathfrak{m}}
\newcommand{\n}{\mathfrak{n}}
\newcommand{\p}{\mathfrak{p}}
\newcommand{\q}{\mathfrak{q}}
\newcommand{\idealr}{\mathfrak{r}}

\newcommand{\ideala}{\mathfrak{a}}
\newcommand{\idealb}{\mathfrak{b}}
\newcommand{\idealc}{\mathfrak{c}}

\newcommand{\fpow}[1]{\ensuremath ^{[p^{#1}]}}

\newcommand{\Frac}{\mathrm{Frac}}

\DeclareMathOperator{\Hom}{Hom}




\mathchardef\mhyph="2D 

\newcommand{\mb}{\mathbb}
\newcommand{\mc}{\mathcal}

\newcommand{\mf}{\mathfrak}

\newcommand{\N}{\mb{N}}
\newcommand{\Q}{\mb{Q}}
\newcommand{\R}{\mb{R}}
\newcommand{\Z}{\mb{Z}}
\newcommand{\C}{\mb{C}}
\newcommand{\bF}{\mb{F}}
\newcommand{\e}{\varepsilon}

\newtheorem{theorem}{Theorem}[section]
\newtheorem*{theorem*}{Theorem}
\newtheorem*{mainthm}{Main Theorem}
\newtheorem*{theoremA}{Theorem A}
\newtheorem*{theoremB}{Theorem B}
\newtheorem*{theoremC}{Theorem C}
\newtheorem*{theoremD}{Theorem D}
\newtheorem{corollary}[theorem]{Corollary}
\newtheorem*{corollary*}{Corollary}
\newtheorem{proposition}[theorem]{Proposition}
\newtheorem{lemma}[theorem]{Lemma}

\theoremstyle{remark}
\newtheorem{question}{Question}[theorem]

\theoremstyle{definition}

\newtheorem{remark}[theorem]{Remark}
\newtheorem{definition}[theorem]{Definition}
\newtheorem{example}[theorem]{Example}
\newtheorem{conjecture}[theorem]{Conjecture}
\newtheorem*{conjecture*}{Conjecture}


\newenvironment{pfClaim}{\,{\em Proof of claim:}}{\hfill$\boxtimes$\newline\newline}

\numberwithin{equation}{theorem}

\title{Berkovich log discrepancies in positive characteristic}
\author{Eric Canton}
\thanks{Partially supported by DMS \#1606414}
\address{
  Department of Mathematics\\
  University of Michigan\\
  Ann Arbor, MI 48109
}
\email{ecanton@umich.edu}
\subjclass[2010]{Primary: 14F18; Secondary: 13A35, 12J25, 14B05}
\keywords{Log discrepancies, Berkovich spaces, multiplier ideals, graded sequences of ideals}

\begin{document}
\begin{abstract}
  We introduce and study a log discrepancy function on the space of semivaluations 
  centered on an integral noetherian scheme of positive characteristic. Our definition 
  shares many properties with the analogue in characteristic zero; we prove that if log 
  resolutions exist, then our definition agrees with previous approaches to 
  log discrepancies of semivaluations that these resolutions. We then apply 
  this log discrepancy to a variety of topics in singularity theory over fields 
  of positive characteristic. Strong $F$-regularity and sharp $F$-purity of Cartier 
  subalgebras are detected using positivity and non-negativity of log discrepancies 
  of semivaluations, just as Kawamata log terminal and log canonical singularities 
  are defined using divisorial log discrepancies, making precise a long-standing 
  heuristic. We prove, in positive characteristic, several theorems of 
  Jonsson and Musta\c{t}\u{a} in characteristic zero regarding log canonical 
  thresholds of graded sequences of ideals. Along the way, we give a valuation-theoretic 
  proof that asymptotic multiplier ideals are coherent on strongly $F$-regular schemes.
\end{abstract}
\maketitle
\tableofcontents

\section{Introduction}

One of the fundamental ways to study singularities of 
normal varieties of dimension at least 3
is through the {\em log discrepancies} of real 
valuations on the variety. Log discrepancies were discovered
by Mori's school in the 1980s as part of the development of 
the minimal model program; they have since found applications 
across algebraic geometry and commutative algebra. 

During the 1990s and early 2000s, researchers began to realize
deep connections between classes of singularities defined 
using log discrepancies and resolutions of singularities
(e.g.: rational, log canonical, and Kawamata log terminal) 
and singularities appearing in tight closure 
(e.g.: F-rational, F-pure, and F-regular, resp.) 
\cite{Fedder83, MehtaSrinivasFPureSurface, MehtaSrinivasRatImpliesFRat, 
SmithFRatImpliesRat, HaraWatanabe, HaraYoshida}. 
The groundbreaking connection was made by Hara and Watanabe in 
\cite{HaraWatanabe}, where the authors showed that splittings 
of the Frobenius morphism on normal $\Q$-Gorenstein varieties
can be converted into divisors giving log canonical pairs. 
We build on this connection, extending many of their ideas 
to the setting of normal $F$-finite schemes, and beyond. 

\subsection{Log discrepancies}

Let $X$ be a normal variety over an algebraically closed field $k$. 
Log discrepancies were classically defined only for 
divisorial valuations, meaning those of the form $c\,\ord_E$ for 
a real number $c > 0$ and a prime divisor $E \subset Y$ 
on a normal variety with a proper birational morphism $\pi: Y \to X$.


Extending the log discrepancy function to the space $\Val_X$ 
of valuations centered on $X$ goes back at least to 
Favre and Jonsson's {\em Valuative Tree}, \cite{FavreJonsson}, 
for smooth complex surfaces, where they called it {\em thinness}.  
Numerous groups of authors developed the theory more generally
in higher dimensions and on singular varieties 
\cite{dFH, JonssonMustata, BdFFU, BoucksomFavreJonssonNonArchMAEquation, BlumThesis}. 
The approach taken 
depends on using resolutions of singularities to find subspaces 
of $\Val_X$ that have simple, cone-like structures, and so
is presently unavailable in positive characteristics and dimensions
greater than three. 

%
%
%
%

The goals of this paper are to 
extend log discrepancies to $\Val_X$ in characteristic $p > 0$ 
making systematic use of $p^{-e}$-linear maps, and to show
that this is a good extension by demonstrating a number of
properties enjoyed by log-resolution-based extensions
to $\Val_X$ are also enjoyed by our definition. 
In fact, we prove that the two approaches yield the {\em same} log discrepancy
function should one have log resolutions (e.g. on surfaces, and on 
3-dimensional varieties when $p \ge 5$); see \cref{agreement with JM}. 
This is our main theorem, and should be compared with 
Mauri, Mazzon, and Stevenson's comparison of log discrepancies
in characteristic zero with Temkin's canonical metrics, and Temkin's
comparision with Musta\c{t}\u{a} and Nicaise's weight metrics
\cite{MauriMazzonStevenson, TemkinMetrization, MustataNicaiseWeightFunctions}.
It is interesting to observe that Brezner and Temkin's different function 
for a finite morphism between Berkovich curves is closely related to our definition
when applied to the Frobenius morphism (if this is finite) \cite{BreznerTemkin}.

\subsection{Statement of results}
By a pair $(X, \Delta)$ we mean $X$ is a normal variety
over an algebraically closed field and $\Delta \ge 0$ is
a $\Q$-Weil divisor on $X$. 
The starting point for this article is the following 
observation, due to Cascini, Musta\c{t}\u{a}, and Schwede, 
shared with the present author in private correspondence. 

\begin{proposition}[{\Cref{CMS3}}]\label{CMS1}
  Let $(X, \Delta)$ be a pair over a field with
  characteristic $p > 0$. Assume $(1-p^e)(K_X + \Delta)$
  is an integral Cartier divisor for some $e > 0$, and let
  $\psi_\Delta: \shO_X((p^e-1)\Delta) \to \shO_X$ 
  be the associated $p^{-e}$-linear map. 
  For every divisor $E$ over $X$, one has
  \[
    A_{(X, \Delta)}(E) = 
    \sup\{ (p^e-1)\inv \ord_E(f) \,:\, f \in k(X), \psi_\Delta(f) = 1\}. 
  \]
\end{proposition}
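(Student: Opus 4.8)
The plan is to pass to a model on which $E$ is an honest prime divisor, localize at its generic point to land in a DVR, and there recognize $\psi_\Delta$ as a twist of the Grothendieck trace by the discrepancy divisor.

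First I would fix a proper birational morphism $\pi\colon Y\to X$ with $Y$ normal carrying $E$ as a prime divisor, together with a canonical divisor $K_Y$ with $\pi_*K_Y=K_X$, and set $\Delta_Y:=\pi^*(K_X+\Delta)-K_Y$. This $\Q$-Weil divisor need not be effective, but by definition of the log discrepancy $A_{(X,\Delta)}(E)=1-\ord_E(\Delta_Y)$, and $(p^e-1)\Delta_Y=(p^e-1)\pi^*(K_X+\Delta)-(p^e-1)K_Y$ is an \emph{integral} Weil divisor because $(p^e-1)(K_X+\Delta)$ is Cartier. The first substantive point is the transformation rule for $p^{-e}$-linear maps under $\pi$: the map $\psi_\Delta$ extends to a map $F^e_*\shO_Y((p^e-1)\Delta_Y)\to\shO_Y$, and under the isomorphism
\[
  \Hom_{\shO_Y}\!\big(F^e_*\shO_Y((p^e-1)\Delta_Y),\,\shO_Y\big)\;\cong\;F^e_*\shO_Y\big((1-p^e)(K_Y+\Delta_Y)\big)\;=\;F^e_*\pi^*\shO_X\big((1-p^e)(K_X+\Delta)\big)
\]
coming from Grothendieck duality for Frobenius, it corresponds to the pullback of the generator of the line bundle $\shO_X((1-p^e)(K_X+\Delta))$ defining $\psi_\Delta$; in particular it is again a generator of the left-hand $\Hom$-sheaf. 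This bookkeeping — duality for Frobenius together with compatibility of the associated section with pullback — is where I expect the most care to be needed.

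Next I would localize at the generic point $\eta$ of $E$: write $S=\shO_{Y,\eta}$, a DVR with uniformizer $t$ and $v:=\ord_t=\ord_E$, and set $g:=\ord_E\big((p^e-1)\Delta_Y\big)=(p^e-1)\big(1-A_{(X,\Delta)}(E)\big)\in\Z$, so that $\shO_Y((p^e-1)\Delta_Y)_\eta=t^{-g}S$. By the previous paragraph $\psi_\Delta$ localizes to a generator of $\Hom_S(F^e_*(t^{-g}S),S)$. Composing the $F^e_*S$-linear isomorphism $\Hom_S(F^e_*(t^{-g}S),S)\xrightarrow{\ \sim\ }\Hom_S(F^e_*S,S)$, $\phi\mapsto\phi(t^{-g}\,\cdot\,)$, which carries generators to generators, with the fact that $\Hom_S(F^e_*S,S)$ is free of rank one over $F^e_*S$ (as $S$ is regular, hence Gorenstein), one obtains a generator $\Tr_e$ of $\Hom_S(F^e_*S,S)$ and a unit $u\in S^\times$ such that $\psi_\Delta(f)=\Tr_e\big(F^e_*(u\,t^{g}f)\big)$ for every $f\in k(X)$.

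It then remains to prove the purely local statement that $\sup\{\,v(h):h\in k(X),\ \Tr_e(F^e_*h)=1\,\}=p^e-1$. Granting it, put $h=u\,t^{g}f$, so $v(h)=g+v(f)$ and $\psi_\Delta(f)=1$ if and only if $\Tr_e(F^e_*h)=1$; the supremum on the right-hand side of the proposition then equals $\tfrac{1}{p^e-1}\big(p^e-1-g\big)=A_{(X,\Delta)}(E)$, as desired. For the local statement, standard properties of the trace give $\Tr_e(F^e_*\m^c)=\m^{\lfloor c/p^e\rfloor}$ for $\m^c=t^cS$, so $\Tr_e(F^e_*h)=1$ forces $v(h)\le p^e-1$; conversely $\Tr_e(F^e_*\m^{p^e-1})=S$ produces an $h$ of valuation exactly $p^e-1$ with $\Tr_e(F^e_*h)$ a unit, which after rescaling by the $p^e$-th power of a unit of $S$ (it lies in $k(X)$) and approximating within $S$ may be taken in $k(X)$ with trace exactly $1$. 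I expect this last step to be routine; the heart of the argument is the identification, carried out in the second and third paragraphs, of $\psi_\Delta$ near $\eta$ with the trace twisted by the discrepancy.
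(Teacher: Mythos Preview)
Your proof is correct and follows essentially the same path as the paper's. Both arguments pass to a normal model $Y$ carrying $E$, define $\Delta_Y$ via $K_Y+\Delta_Y=\pi^*(K_X+\Delta)$, observe that $\psi_\Delta$ corresponds to $\Delta_Y$ on $Y$ (since the maps agree on the locus where $\pi$ is an isomorphism), localize to the DVR $S=\shO_{Y,\eta}$, and there identify $\psi_\Delta$ as the canonical generator $\Phi_E^e$ (your $\Tr_e$) right-multiplied by a local equation for $(p^e-1)\Delta_Y$. The only difference is packaging: the paper records the DVR computation once as \Cref{CMS2} (showing $A(\ord_E;\shC^R)=\ord_E(\varpi)$, with the same ideal formula $\Phi^e(\varpi^sR)=\varpi^{\lceil p^{-e}(s-p^e+1)\rceil}R$ you use) and then deduces the result formally from \Cref{effect of right multiplication} and \Cref{finite type}, whereas you carry out the twist and the supremum computation directly inside $S$. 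Your final approximation remark is unnecessary, since $S\subseteq k(Y)=k(X)$ already: if $\Tr_e(h)=w\in S^\times$ with $v(h)=p^e-1$, then $w^{-p^e}h\in k(X)$ has $\Tr_e(w^{-p^e}h)=1$ and the same valuation.
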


This expression is 
quite similar to the definition of $A_{X}(E)$ in 
the approaches to log discrepancies on non-$\Q$-Gorenstein varieties
of \cite{dFH,BdFFU}. We model our definition of log discrepancy
on this proposition, incorporating also a supremum over
$e \ge 1$ with $(1-p^e)(K_X + \Delta)$ Cartier.
The main result of this paper is that this extension to arbitrary 
valuations matches Jonsson and Musta\c{t}\u{a}'s in the 
presence of log resolutions.  

\begin{mainthm}[\Cref{agreement with JM}]
  Let $(X, \Delta)$ be a pair over a field with
  characteristic $p > 0$. Suppose $(1-p^e)(K_X + \Delta)$
  is an integral Cartier divisor for some $e > 0$, 
  and suppose log resolutions exist 
  for varieties over $k$ of dimension $\dim(X)$. 
  For valuations $v \in \Val_X$, 
  denote by $A_{(X, \Delta)}(v)$ the log discrepancy defined 
  as in \cite{JonssonMustata}, and by $A(v; \shC^X \cdot \Delta)$ 
  our log discrepancy from \Cref{Section: Log discrepancy}. Then 
  $A_{(X, \Delta)}(v) = A(v; \shC^X \cdot \Delta)$ 
  for all $v \in \Val_X$.
\end{mainthm}

This theorem proves that our approach is the 
``correct'' definition of log discrepancies on $\Val_X$, cf.
\cite{MauriMazzonStevenson,MustataNicaiseWeightFunctions, TemkinMetrization,
BreznerTemkin}. Because of this theorem, for the rest of this 
introduction we write $A_X$ for the quantity that would be written 
$A(-; \shC^X)$ in the notation of \S \ref{Section: Log discrepancy}. 

As one might hope, we can extend Hara and Watanabe's result
that $\Q$-Gorenstein normal varieties that are
$F$-pure (resp. $F$-regular) are 
log canonical (resp. Kawamata log terminal; klt). 
We no longer need the normal nor $\Q$-Gorenstein assumptions, 
understanding log canonical (resp. klt) 
to mean $A_X(E) \ge 0$ (resp. $> 0$) 
for all divisors $E$ over $X$. In fact, we can 
say much more using our approach. The new
tool is our extension of log discrepancies 
to the space $X^\beth$ of {\em semi}valuations
on $X$, a compactification of $\Val_X$ common
to non-archimedean geometry. 

\begin{theoremA}[\cref{characterization of ShFP and SFR,HW corollary}, 
  cf. {\cite[Theorem 3.3]{HaraWatanabe}}]
  Let $X$ be an $F$-finite integral scheme. 
  \begin{enumerate}
    \item If $X$ is $F$-pure, then $X$ is log canonical. 
    \item If $X$ is $F$-regular, then $X$ is klt. 
    \item Conversely, if $A_X(\xi) \ge 0$ (resp. $> 0$)
      for all $\xi \in X^\beth$ besides the trivial
      valuation on $X$, then $X$ is $F$-pure
      (resp. $F$-regular). 
  \end{enumerate}
\end{theoremA}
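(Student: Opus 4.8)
The plan is to read parts (1) and (2) off the definition of the log discrepancy together with \Cref{CMS1}, and to prove part (3)---the genuinely new content---by identifying the non-$F$-pure ideal and the test ideal of $X$ valuatively over $X^\beth$.

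For (1) and (2), recall that $A(\xi;\shC^X)$ is by definition (\Cref{Section: Log discrepancy}, modelled on \Cref{CMS1}) a supremum over $e\ge 1$ and nonzero $\phi\in\shC^X_e$ of quantities $(p^e-1)\inv\sup\{\xi(f):f\in k(X),\ \phi(f)=1\}$. If $X$ is $F$-pure then, locally near the center of $\xi$, the Frobenius splits: there are $e_0\ge 1$ and $\phi_0\in\shC^X_{e_0}$ with $\phi_0(1)=1$, and taking $f=1$ in the corresponding term gives $(p^{e_0}-1)\inv\xi(1)=0$; hence $A_X(\xi)\ge 0$ for every $\xi\in X^\beth$, so in particular $X$ is log canonical. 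If $X$ is strongly $F$-regular and $\xi\in X^\beth$ is not the trivial valuation, then on an affine chart meeting the center of $\xi$ we may choose a nonzero $c$ with $\xi(c)>0$; strong $F$-regularity provides $e\ge 1$ and $\phi\in\shC^X_e$ with $\phi(F^e_*c)=1$, and the term indexed by $(e,\phi)$ evaluated at $f=c$ contributes $(p^e-1)\inv\xi(c)>0$, so $A_X(\xi)>0$ and in particular $X$ is klt. (These arguments in fact give the stated inequalities for every $\xi\in X^\beth$, not just for divisorial valuations.)

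For (3) I would first reduce to the triviality of two ideals: $X$ is $F$-pure if and only if the non-$F$-pure ideal $\sigma(\shC^X)$ is the unit ideal, and $X$ is strongly $F$-regular if and only if the (big) test ideal $\tau(\shC^X)$ is the unit ideal; both equivalences are standard. The crux is then the valuative description
\begin{align*}
  \sigma(\shC^X) &= \{\,h\in\shO_X : \xi(h)+A_X(\xi)\ge 0\ \text{ for all }\ \xi\in X^\beth\setminus\{\triv_X\}\,\},\\
  \tau(\shC^X) &= \{\,h\in\shO_X : \xi(h)+A_X(\xi)>0\ \text{ for all }\ \xi\in X^\beth\setminus\{\triv_X\}\,\},
\end{align*}
that is, the recognition of $\tau(\shC^X)$ (resp.\ $\sigma(\shC^X)$) as the asymptotic multiplier ideal (resp.\ non-log-canonical ideal) over $X^\beth$ of the graded sequence of ideals $\mathfrak a_\bullet$ canonically attached to the Cartier algebra $\shC^X$. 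Granting this, (3) is immediate: putting $h=1$ and using $\xi(1)=0$, the right-hand side equals $\shO_X$ exactly when $A_X(\xi)\ge 0$ (resp.\ $>0$) for all non-trivial $\xi$, which is the hypothesis; hence $\sigma(\shC^X)=\shO_X$ (resp.\ $\tau(\shC^X)=\shO_X$), so $X$ is $F$-pure (resp.\ strongly $F$-regular).

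The main obstacle is the valuative description itself, which I would establish in two steps. First, one sets up the graded sequence $\mathfrak a_\bullet$ from the homogeneous pieces of $\shC^X$ and identifies $\sigma(\shC^X)$ and $\tau(\shC^X)$ with its asymptotic non-lc and multiplier ideals; since $X$ is not assumed normal or $\Q$-Gorenstein, $\shC^X$ need not be principally generated, and this is where one needs the positive-characteristic coherence of asymptotic multiplier ideals. Second---and this is the heart of the proof---one must show that these ideals, a priori defined by conditions on \emph{divisorial} valuations, are cut out by conditions on \emph{all} of $X^\beth$; equivalently, that the infimum defining the relevant log canonical threshold of $\mathfrak a_\bullet$ is computed on the compact space $X^\beth$. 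In characteristic zero this follows from log resolutions, but here it requires the positive-characteristic analogue of Jonsson and Musta\c{t}\u{a}'s results on log canonical thresholds of graded sequences, using compactness of $X^\beth$ together with lower semicontinuity of $A_X$. It is precisely this passage from divisorial valuations to all of $X^\beth$ that separates $F$-purity and strong $F$-regularity from mere log canonicity and klt, and that forces the exclusion of $\triv_X$: one has $A_X(\triv_X)=0$, so the trivial valuation can satisfy the condition ``$\ge 0$'' but never ``$>0$''.
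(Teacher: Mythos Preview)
Your arguments for (1) and (2) are correct and essentially identical to the paper's: pick $f$ with $\psi(f)=1$ (taking $f=1$ for $F$-purity, $f\in\m_{c_X(\xi)}$ for $F$-regularity) and read off the sign of $E(f,\psi,\xi)$.

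For (3), however, you have missed the key idea and proposed a route that is both far more elaborate than necessary and potentially circular. The paper's proof is direct and nearly trivial once one realizes that $X^\beth$ contains, for each point $x\in X$, the \emph{trivial semivaluation} $\triv_x$ (the zero valuation on $\kappa(x)$, viewed as a point of $X^\beth$ with home $x$). These points---not any valuative description of $\sigma$ or $\tau$---are what detect failure of $F$-purity and $F$-regularity. Concretely (\Cref{detecting F-things}): if $\shC^X$ is not sharply $F$-pure at $x$, then every $\psi\in(\shC^X_e)_x$ has image in $\m_x$, so the exceptional restriction $\shC^X\|_{\{x\}}$ is zero and $A(\triv_x;\shC^X)=\sup\varnothing=-\infty$. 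If $\shC^X$ is sharply $F$-pure but not strongly $F$-regular at $x$, the splitting prime $\mc P(\shC^X_x)$ is a nonzero $\shC^X$-compatible prime; at its generic point $\p$ one computes $A(\triv_\p;\shC^X)=0$. In each case a single point of $X^{\triv,*}\subset X^{\beth,*}$ violates the hypothesis, and that is the entire argument.

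Your proposed valuative descriptions of $\sigma(\shC^X)$ and $\tau(\shC^X)$ are not established anywhere in the paper in the generality you need, and the machinery you invoke to justify them---coherence of asymptotic multiplier ideals (\Cref{coherence}) and the Jonsson--Musta\c{t}\u{a}-type results on log canonical thresholds---appears \emph{later} in the paper and, in the case of $\tau$, explicitly \emph{assumes} strong $F$-regularity of $\shD$, which is precisely what you are trying to prove. So the approach as written is circular. The moral is that the passage to $X^\beth$ is powerful here not because of compactness or lower semicontinuity, but simply because $X^\beth\setminus\Val_X$ contains the points $\triv_x$ that see compatible subschemes directly.
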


A consequence of the third statement is that the $F$-pure
centers of a sharply $F$-pure variety are 
identified as those points whose minimal 
{$\beth$}-log discrepancy is zero, 
cf. \Cref{detecting F-things} and \S\ref{Section: LSC}.

One of the most important properties of the extension of log
discrepancies to $\Val_X$ in characteristic zero is 
lower-semicontinuity; we prove that our extension also
has this property in \cref{log discrepancy lsc}. As
a corollary, we generalize a result of Ambro over $\C$:
the minimal ($\beth$-)log discrepancy 
is lower-semicontinuous, for any Cartier subalgebra on an
integral scheme $X$ of characteristic $p > 0$, 
if we consider $X$ with the 
\textbf{constructible topology}; cf. \cref{mld is lsc} and
\cite[Theorem 2.2]{AmbroOnMinimalLogDiscrepancies}. 

In many ways, the similarities between our log discrepancies
and the characteristic zero analogues amount to this
shared lower-semicontinuity. As a demonstration of this assertion, 
we prove, in the new setting of regular $F$-finite schemes,
the main theorems from \cite{JonssonMustata} regarding valuations 
computing the log canonical threshold $\lct(X, \ideala_\star)$ of 
a multiplicatively graded sequence of ideals. For simplicity,
we state our results for a fixed smooth variety $X$ over an
algebraically closed field $k$ in this introduction. 


Let $\ideala_\star$ be a graded sequence of ideals on $X$. 
We write $\shJ(X, \ideala_\star^t)$ for the 
sheaf of ideals whose sections 
over an affine open $U = \Spec(R) \subseteq X$ are those 
$f \in R$ satisfying
\[ v(f) + A_X(v) -  (t/m)\,v(\ideala_m) > 0 \]
for all $v \in \Val_U$ and all $m \gg 1$.
This ideal is well-known (in characteristic zero) as the 
{\em asymptotic multiplier ideal} of $(X, \ideala_\star^t)$. 
These are coherent whenever a log resolution exists for 
all pairs $(X, \ideala_m^{t/m})$, being defined in this case 
as the pushforward of a certain invertible sheaf on any log resolution 
of $(X, \ideala_m^{t/m})$ for $m \gg 0$ divisible enough 
\cite[Definition 1.4]{EinLazarsfeldSmithSymbolic}, cf. 
\cite[Lemma 11.1.1]{LazarsfeldPositivity2}. Lacking resolutions
in positive characteristics, it was unknown if these ideals were coherent,
and a reasonable expectation would be that a proof of coherence would require
a sufficient theory of resolutions of singularities. 
A significant result of this paper is a purely valuative proof 
that multiplier ideals are coherent on strongly $F$-regular schemes,
e.g. smooth varieties. 

\begin{theoremB}[\cref{coherence}]
  Let $X$ be a smooth variety of characteristic $p$. The 
  asymptotic multiplier ideal sheaf $\shJ(X, \ideala_\star^t)$ 
  is coherent for any graded sequence of ideals $\ideala_\star$ 
  on $X$ and $t \in [0, \infty)$. 
\end{theoremB}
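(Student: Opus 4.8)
The plan is to reduce coherence to a local statement and then exhibit, locally, a single ordinary multiplier ideal that computes the asymptotic one. Since coherence is local on $X$, I would replace $X$ by an affine open $U = \Spec(R)$ with $R$ a regular (hence strongly $F$-regular, since $X$ is smooth) $F$-finite domain, and show that $\shJ(X, \ideala_\star^t)(U)$ is a finitely generated ideal of $R$ that behaves well under localization. The key structural input is the \emph{subadditivity/superadditivity} of the sequence $m \mapsto \shJ(U, \ideala_m^{t/m})$ and the fact that on a strongly $F$-regular scheme the individual ideals $\shJ(U, \ideala_m^{t/m})$ — defined valuatively via $v(f) + A_X(v) - (t/m)v(\ideala_m) > 0$ for all $v \in \Val_U$ — are themselves coherent. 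That last point should follow from Theorem A together with the valuative description of test ideals / $\tau$-pairs: on a strongly $F$-regular ambient scheme the valuative multiplier ideal of a single $\Q$-ideal coincides with a test ideal $\tau(R; \ideala_m^{t/m})$, which is visibly a finitely generated ideal (it is defined via finitely many $p^{-e}$-linear maps). I would phrase this carefully using the $A(v; \shC^X \cdot \Delta)$ formalism of \S\ref{Section: Log discrepancy} so that no log resolution is invoked.

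Granting coherence of each $\shJ(U, \ideala_m^{t/m})$, the next step is the standard Noetherian stabilization argument, but run valuatively rather than via resolutions. One shows the family $\{\shJ(U, \ideala_m^{t/m})\}_{m}$ is directed under divisibility: $\shJ(U, \ideala_m^{t/m}) \subseteq \shJ(U, \ideala_{mk}^{t/mk})$ for all $k \ge 1$, which is immediate from the graded-sequence inequality $\ideala_m^k \subseteq \ideala_{mk}$ applied inside the defining condition $v(f) + A_X(v) - (t/m)v(\ideala_m) > 0$ (note $\tfrac{1}{mk}v(\ideala_{mk}) \le \tfrac{1}{m}v(\ideala_m)$). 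Since $R$ is Noetherian, this ascending chain of ideals along any cofinal sequence $m_1 \mid m_2 \mid \cdots$ stabilizes, say at $\shJ(U, \ideala_{m_0}^{t/m_0})$. Then I would check that $\shJ(X, \ideala_\star^t)(U)$, which by definition consists of those $f$ satisfying the inequality for \emph{all} $m \gg 1$, equals this stable ideal $\shJ(U, \ideala_{m_0}^{t/m_0})$ — one containment is formal, and the other uses that any $m$ can be replaced by a multiple $m' \gg m$ in the cofinal sequence, together with the same monotonicity. Hence $\shJ(X, \ideala_\star^t)(U)$ is finitely generated, and compatibility with further localization $R \to R_g$ follows because both the test-ideal description and the valuative description commute with such localization.

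The main obstacle I anticipate is the first step: establishing, \emph{without resolutions}, that the valuatively-defined ideal $\{f : v(f) + A_X(v) - c\, v(\ideala) > 0\ \forall v \in \Val_U\}$ is coherent (finitely generated) and agrees with a test ideal $\tau(R; \ideala^c)$ on the strongly $F$-regular $R$. This is where Theorem A and the lower-semicontinuity of $A_X$ (\cref{log discrepancy lsc}) must do real work: one needs that the infimum of $v(f) + A_X(v) - c\,v(\ideala)$ over the compact space $U^\beth$ is attained, so that the strict inequality for all $v$ is detected by finitely much data, and then match this against the finitely-many-$p^{-e}$-linear-maps description of the test ideal coming from the proof of Theorem A(3). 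Everything after that — the directedness, the Noetherian stabilization, the identification with the $m \gg 1$ ideal, and sheafification — is routine bookkeeping with the graded-sequence inequalities.
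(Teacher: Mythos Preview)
Your reduction to a single ideal via directedness and Noetherian stabilization matches the paper's opening paragraph. The gap is in the single-ideal case. You want to identify $\shJ(U,\ideala^c)$ with the test ideal $\tau(R,\ideala^c)$, but this is not established in the paper and does not follow from Theorem~A, which only characterizes strong $F$-regularity of $\shD$ and says nothing about test ideals of pairs. Your compactness sketch---``the infimum over $U^\beth$ is attained, so the condition is detected by finitely much data''---does not yield finite generation either, since any minimizing valuation depends on $f$ and there is no passage from a single minimizer to a finite set of generators. Whether the valuative multiplier ideal agrees with $\tau$ on a regular $F$-finite ring, without resolutions, is itself a nontrivial question the paper deliberately sidesteps.

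The paper's argument is different: it proves the localization identity $\shJ(R_g)=\shJ(R)R_g$ directly, never comparing $\shJ(R)$ to a test ideal. Given $y\notin\shJ(R)R_g$, for each $n\ge 0$ there is $w_n\in\Val_X^*$ with $w_n(g^ny)+A(w_n;\shC^X\cdot\ideala^c)\le 0$; these inequalities cut out a descending chain of closed sets $\mathcal W_n\subseteq\Val_X^*$. Strong $F$-regularity forces $w_n(\ideala)>0$, so after rescaling to $w(\ideala)=1$ each $\mathcal W_n$ meets the compact set $\mathcal V_t=\{v:v(\ideala)=1,\ A(v;\shC^X)\le t\}$ of \cref{V_t compact}. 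Any $w_\infty\in\bigcap_n(\mathcal W_n\cap\mathcal V_t)$ satisfies all the inequalities simultaneously, which forces $w_\infty(g)=0$; hence $w_\infty\in\Val_{\Spec R_g}^*$ witnesses $y\notin\shJ(R_g)$. This nested-compacta construction is the missing idea.
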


The main technical statement one needs in this proof is 
\cref{V_t compact}, which identifies a compact subset of $\Val_X$
that is large enough to define these multiplier ideals and 
minimize the infima in log canonical thresholds. A number 
of subsets like this are known in the literature,
cf. \cite[Proposition 5.9]{JonssonMustata} and
\cite[Lemma 3.4, Theorem 3.1]{BdFFU}. If one has such a statement,
it is reasonable to expect that our proof of \ref{coherence} may 
be adapted to that setting; see \cref{example: char 0 coherence}. 

Once we have \cref{coherence}, we can adapt the
argument of Jonsson and Musta\c{t}\u{a}, with technical 
modifications, extending the following results to positive 
characteristics. 

\begin{theoremC}[\cref{existence of computing valuation}; 
  cf \cite{JonssonMustata}, Theorem A]
  Let $X$ be a smooth variety of characteristic $p$, 
  and let $\ideala_\star$ be a graded sequence of 
  ideals on $X$. Suppose $\lambda = \lct(\ideala_\star) < \infty$. 
  For any generic point $x$ of an irreducible component 
  of $\V(\shJ(X, \ideala_\star^\lambda))$ there exists 
  a valuation with center $x$ computing $\lct(\ideala_\star)$, 
  i.e. such that $\lambda = A_X(v)/v(\ideala_\star)$. 
\end{theoremC}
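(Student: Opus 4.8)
The plan is to follow the structure of Jonsson--Musta\c{t}\u{a}'s proof of their Theorem A, substituting \cref{coherence} and \cref{V_t compact} for the log-resolution-based inputs they used. First I would reduce to a local, affine statement: fix an affine open $U = \Spec(R) \ssq X$ containing the given generic point $x$, and note that by the very definition of $\shJ(X, \ideala_\star^\lambda)$ and the coherence afforded by \cref{coherence}, the ideal $\shJ(X, \ideala_\star^\lambda)$ has a well-defined stalk/primary decomposition, so that $x$ corresponds to a minimal prime $\p$ over the ideal of sections $J := \Gamma(U, \shJ(X, \ideala_\star^\lambda))$. The goal becomes: produce $v \in \Val_U$ with center $\p$ (i.e. $v$ nonnegative on $R$ with $\{v > 0\} \cap R = \p$) and $A_X(v) = \lambda\, v(\ideala_\star)$, where $v(\ideala_\star) = \lim_m v(\ideala_m)/m$.

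The heart of the argument is a compactness-plus-continuity step. By \cref{V_t compact}, for $t$ slightly larger than $\lambda$ the set $V_t = \{v \in \Val_U : A_X(v) \le t\, v(\ideala_\star),\ v \text{ normalized somehow}\}$ (or whatever the precise normalization in that statement is) is compact; on it the functionals $v \mapsto A_X(v)$ (lower semicontinuous, by \cref{log discrepancy lsc}) and $v \mapsto v(\ideala_\star)$ (an infimum/limit of the continuous evaluations $v \mapsto v(\ideala_m)$, hence upper semicontinuous) are controlled. Because $x$ is a generic point of a component of $\V(\shJ(X,\ideala_\star^\lambda))$, the definition of the multiplier ideal forces the infimum defining membership in $J$ at $\p$ to be attained (or approached) by valuations centered at $\p$: for every $f \in \p \setminus J_{\p}$ one has $\inf_v \big(v(f) + A_X(v) - \lambda\, v(\ideala_\star)\big) \le 0$ over valuations with center containing $\p$. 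Running this over a set of generators of $\p$ and using compactness of the relevant slice of $V_\lambda$, I extract a single valuation $v_0$ with center exactly $\p$ realizing $A_X(v_0) - \lambda\, v_0(\ideala_\star) \le 0$. The reverse inequality $A_X(v_0) \ge \lambda\, v_0(\ideala_\star)$ is automatic from $\lambda = \lct(\ideala_\star) = \inf_v A_X(v)/v(\ideala_\star)$ (here one must know $v_0(\ideala_\star) > 0$, which follows since the center of $v_0$ lies in $\V(\ideala_\star)$: membership of $x$ in the multiplier-ideal vanishing locus puts it inside the cosupport of every $\ideala_m$ for $m \gg 0$). Hence $A_X(v_0) = \lambda\, v_0(\ideala_\star)$ and $v_0$ computes the log canonical threshold.

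Two technical points require care beyond the routine. First, the semicontinuity of $v \mapsto v(\ideala_\star)$ must be handled correctly: it is a decreasing limit of the continuous functions $v(\ideala_m)/m$ only in a suitable sense (Fekete), and its behavior at the boundary (semivaluations, or valuations with small center) is exactly where one needs the compact set from \cref{V_t compact} rather than all of $\Val_U$ — the whole point of that lemma. Second, one must ensure the extracted $v_0$ has center \emph{exactly} $x$, not a strictly larger closed subset: this is where the hypothesis that $x$ is a generic point of a \emph{component} of $\V(\shJ)$ enters, via a localization argument at $\p$ showing that valuations with strictly larger center cannot witness failure of membership in $J_{\p}$. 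The main obstacle I anticipate is precisely this center-control together with verifying that the infimum in the multiplier-ideal definition is genuinely attained on the compact set — in characteristic zero this is where a log resolution makes everything transparent, and replicating it valuatively is the crux; I expect the argument to lean essentially on \cref{V_t compact} and on the lower-semicontinuity \cref{log discrepancy lsc}, exactly as the paper's introduction advertises.
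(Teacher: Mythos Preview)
Your overall strategy---reduce to an affine local situation, then exploit the compactness of $\mathcal{V}_t$ from \cref{V_t compact} together with lower-semicontinuity of $\lct^\q(-;\shD,\ideala_\star)$ to extract a minimizer---is exactly the paper's strategy, and you correctly flag center-control as the crux. However, your proposed mechanism for center-control is where the argument breaks down. The claim that ``for every $f \in \p \setminus J_\p$ one has $\inf_v(\cdots) \le 0$ over valuations with center containing $\p$'' is not justified: non-membership of $f$ in the multiplier ideal is witnessed by \emph{some} valuation in $\Val_U^*$, whose center could be anywhere in $\V(\ideala_\star)$, and nothing in your sketch forces that witness to be centered at (or even to specialize to) $\p$. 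The subsequent ``running over generators of $\p$ and using compactness'' step then has no content, because you have not produced a compact set of valuations all centered at $\p$ on which to minimize.

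The paper resolves this by two preparatory reductions you do not mention. First (\cref{completion lemma}), it completes at $\m$, which preserves $\lambda$ and makes $\m$ the unique maximal ideal with $\m = \sqrt{(\shJ_\lambda:\q)}$. Second, and this is the real trick (\cref{enlarging lemma 1}, via \cref{v(c_star)}), it replaces $\ideala_\star$ by the enlarged sequence $\idealc_j = \sum_{i=0}^j \ideala_i\,\m^{s(j-i)}$ for $s \gg 0$; one checks $v(\idealc_\star) = \min\{v(\ideala_\star), s\,v(\m)\}$, that $\lct^\q$ is unchanged, and that any valuation computing the new lct also computes the old one. The payoff is that now $\m^s \subseteq \idealc_1$, so \emph{any} $v$ with $v(\idealc_\star) > 0$ automatically has $v(\m) > 0$, i.e.\ is centered at $\m$. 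After normalizing $v(\m) = 1$ one lands in $\mathcal{V}_N$ for $N = Ms$, and the compactness argument goes through with the center already forced. Your ``localization argument at $\p$'' is a gesture toward the first of these reductions, but the second---the enlarging lemma---is the missing idea that actually delivers center-control.
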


These computing valuations are obtained here and 
in \cite{JonssonMustata} using a compactness argument, 
and their properties (e.g. Abhyankar) do not seem to 
be revealed from the proof. Jonsson and Musta\c{t}\u{a} 
conjecture that these valuations must be {\em quasi-monomial}, 
a condition equivalent to Abhyankar for excellent 
schemes in characteristic zero. We state the analogous 
conjectures here using Abhyankar valuations,
all of which are {\em locally} quasi-monomial \cite{KnafKuhlmann}. 

\begin{conjecture*}
  [\cref{JM: Conjecture 7.4}; 
  cf. \cite{JonssonMustata}, Conjecture B]\label{JM: Conjecture B}
  Let $X$ be a smooth variety of characteristic $p$, 
  and let $\ideala_\star$ be a graded sequence of ideals
  on $X$ such that $\lct(\ideala_\star) < \infty$. 
  \begin{itemize}
    \item {\bf Weak version}: some Abhyankar
      valuation computes $\lct_B(\ideala_\star)$. 
    \item {\bf Strong version}: any valuation 
      computing $\lct_B(\ideala_\star)$ is Abhyankar. 
  \end{itemize}
\end{conjecture*}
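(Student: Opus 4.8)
The plan is to follow the characteristic-zero strategy of Jonsson--Musta\c{t}\u{a} and of Blum \cite{BlumThesis} as far as positive characteristic permits, using the compactness and lower-semicontinuity results of this paper as the technical engine, and then to pin down exactly where resolution of singularities is really needed.

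I would attack the weak version first. By \cref{existence of computing valuation} there is a valuation computing $\lct_B(\ideala_\star)$ whose center $x$ is the generic point of an irreducible component of $\V(\shJ(X,\ideala_\star^\lambda))$; I would localize at $x$, so that $(R,\m)$ is regular and essentially of finite type over $k$ with $\lambda = \lct_B(\ideala_\star) < \infty$ and every computing valuation has center $\m$. After normalizing so that $v(\m) = 1$, \cref{V_t compact} shows that the set $C$ of computing valuations centered at $\m$ sits inside a compact slice of $\Val_R$, and combining \cref{log discrepancy lsc} with upper-semicontinuity of $v \mapsto v(\ideala_\star)$ shows that $C$ is exactly the zero locus of the nonnegative lower-semicontinuous function $v \mapsto A_X(v) - \lambda\, v(\ideala_\star)$, hence closed and therefore compact. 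The goal is to exhibit an Abhyankar valuation in $C$; since a divisorial valuation is Abhyankar, it would suffice (as in \cite{BlumThesis}) to find a prime divisor $E$ over $\Spec R$ with $\ord_E \in C$ after rescaling. In characteristic zero such an $E$ is produced by a relative MMP over $\Spec R$ extracting a Kollár component of a log canonical center of the computing valuation; the strong version, asserting that \emph{every} element of $C$ is quasi-monomial, would additionally require a common log smooth model of $\Spec R$ on whose dual complex $\Val_R$ retracts.

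The main obstacle, and the reason this stays a conjecture, is that neither the extraction of that divisor nor the retraction onto a log smooth model is available in positive characteristic in dimension $\ge 4$ (nor in dimension $3$ for small $p$), so a purely valuative substitute must be found. The natural one, special to characteristic $p$, is to exploit the $p^{-e}$-linear description of $A_X$ behind \Cref{CMS1}: a valuation $v$ with $A_X(v) = \lambda\, v(\ideala_\star)$ should, suitably normalized, behave like a fixed point of a Frobenius-type operator on a compact subspace of $X^\beth$, and I would try to show that such fixed points must satisfy $\ratrk(v) + \trdeg_k k(v) = \dim R$. A second avenue is to combine Izumi-type comparisons for divisorial valuations centered at $\m$ (in the spirit of \cite{BdFFU}) with the compactness of \cref{V_t compact} to argue that even when $C$ contains no divisorial valuation it must contain one of maximal numerical type along its boundary; but carrying out such a dimension count without a simplicial model is precisely the hard part. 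I expect the weak version to become accessible only once one of these substitutes is in place, and the strong version to require, on top of that, a genuine positive-characteristic analogue of quasi-monomiality over a log resolution --- hence to be further still from present technology.
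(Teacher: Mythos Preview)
You have correctly identified this as an open conjecture and your proposal is an honest research plan rather than a proof; the paper likewise does not prove it. However, the paper's actual contribution toward this conjecture is quite different from what you outline. Rather than pursuing an MMP/Koll\'ar-component argument or a Frobenius-dynamics approach, the paper follows Jonsson--Musta\c{t}\u{a} and \emph{reduces} \cref{JM: Conjecture 7.4} to the affine-space case \cref{JM: Conjecture 7.5}, proving this reduction as \cref{affine conjecture implies regular conjecture}. The machinery is: localize and complete at a minimal prime $\m$ of $(\shJ_\lambda : \q)$ via \cref{completion lemma}; use the enlarging lemmas (\cref{enlarging lemma 1}, \cref{enlarging lemma 2}) to make both $\ideala_1$ and $\q$ contain a power of $\m$; descend from the power series ring back to $\A^n_k$ by choosing polynomial generators; then base change to $\overline{k}$ via \cref{base change of fields lemma}. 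For the strong version, one first replaces $\ideala_\star$ by the valuation ideals $\ideala_\star(v)$ using \cref{TFAE}, and reduces transcendence degree to zero using the dimension formula and \cref{relative canonical lemma}. The paper then settles \cref{JM: Conjecture 7.5} only in the monomial case.

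Your plan via compactness of the set $C$ of computing valuations and extraction of a divisor is the Blum--Xu route in characteristic zero, and you are right that its MMP step is unavailable here. But your proposal does not mention the reduction-to-$\A^n$ strategy at all, which is what the paper actually carries out; that reduction is a genuine theorem and is the paper's concrete progress on the conjecture. Your Frobenius-fixed-point idea is intriguing but speculative, and is not something the paper attempts.
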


Following \cite{JonssonMustata}, we reduce this conjecture 
to what is hopefully a more approachable form on affine spaces. 

\begin{conjecture*}
  [\cref{JM: Conjecture 7.5}; 
  cf. \cite{JonssonMustata}, Conjecture C]\label{JM: Conjecture C}
  Let $X = \A^n_k$, with $\bF_p \subset k = \overline{k}$,
  and let $\ideala_\star$ be a graded sequence of ideals 
  on $X$ with $\lct(\ideala_\star) < \infty$, 
  vanishing only at a closed point $x \in X$. 
  \begin{itemize}
    \item {\bf Weak version}: some Abhyankar
      valuation centered at $x$ computes $\lct(\ideala_\star)$. 
    \item {\bf Strong version}: any valuation of 
      transcendence degree $0$ over $\A^n_k$, centered at $x$, 
      and computing $\lct(\ideala_\star)$ must be Abhyankar. 
  \end{itemize}
\end{conjecture*}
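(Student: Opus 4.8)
I focus on the weak version; the strong version needs a further rigidity input, indicated at the end. The plan is to transplant the compactness argument of Jonsson and Musta\c{t}\u{a} \cite{JonssonMustata} to characteristic $p$, using in place of their log resolutions three ingredients from the present paper: \Cref{existence of computing valuation}, which provides a computing valuation to start from; \Cref{coherence} together with the subadditivity $\ideala_m\ideala_{m'}\ssq\ideala_{m+m'}$, which cuts the problem down to finitely many ordinary pairs; and the $p^{-e}$-linear formula \Cref{CMS1}, which makes $A_X$ explicit on the toric models appearing below.

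First, by \Cref{existence of computing valuation} fix a valuation $v_0$ centered at $x$ with $A_X(v_0)=\lambda\,v_0(\ideala_\star)$, where $\lambda=\lct(\ideala_\star)$; since $\ideala_\star$ vanishes only at $x$ we have $v_0(\ideala_\star)>0$, so after rescaling $v_0(\ideala_\star)=1$, $A_X(v_0)=\lambda$. Note that $w\mapsto w(\ideala_\star)=\inf_m w(\ideala_m)/m$ is upper-semicontinuous, being an infimum of continuous functions, while $A_X$ is lower-semicontinuous by \Cref{log discrepancy lsc}; hence the functional $w\mapsto A_X(w)/w(\ideala_\star)$ is lower-semicontinuous on the compact set of \Cref{V_t compact}, and its minimum locus $M=\{w: A_X(w)=\lambda\,w(\ideala_\star)\}$ is a nonempty compact set containing the computing valuation $v_0$. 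The problem becomes: $M$ meets the Abhyankar locus. For this, \Cref{coherence} and subadditivity yield an $m_0$ with $\shJ(X,\ideala_\star^{t})=\shJ(X,\ideala_m^{t/m})$ for every $m\in m_0\Z_{>0}$ and every $t$ in a left-neighbourhood of $\lambda$; this exhibits $\lambda=\lim_{m\in m_0\Z}m\,\lct(\ideala_m)$ and, more usefully, allows the graded sequence to be replaced level by level by an ordinary pair $(X,\ideala_m^{t/m})$.

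Now exploit the toric structure of $X=\A^n_k$ with $x$ the torus-fixed point: the torus-invariant valuations centered at $x$ are exactly the monomial valuations, forming the cone $\R_{\ge0}^n$, and each smooth toric modification $\pi_\nu\colon X_\nu\to X$ supported over $x$ induces a continuous retraction onto a simplicial refinement of this cone, all of whose points are quasi-monomial on $X_\nu$ --- hence Abhyankar, with $A_X$ and $w(\ideala_m)$ given explicitly by \Cref{CMS1} on the toric $X_\nu$. If $v_0$ were monomial on some $X_\nu$ we would be done; in general it is not, and the plan is instead to locate an Abhyankar element of $M$ by approximation: because the level-$e$ term of $A_X(v_0)$ in \Cref{CMS1} involves only finitely many functions, one wants, for each level $e$ and each $m\in m_0\Z$, a birational model $Y_{e,m}\to X$ on which that finite collection together with a fixed generating set of $\ideala_m$ becomes monomial, and on it a quasi-monomial valuation $v_0^{(e,m)}$ agreeing with $v_0$ on all this data; then $v_0^{(e,m)}$ is Abhyankar, centered at $x$, $v_0^{(e,m)}(\ideala_m)=v_0(\ideala_m)$, and as $e,m\to\infty$ a limit point of the $v_0^{(e,m)}$ inside the compact set of \Cref{V_t compact} --- controlled by the semicontinuity properties above and by closedness of the Abhyankar locus in the relevant stratification --- should land in $M$ and be Abhyankar.

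The hard part, and the reason the conjecture is open even over $\C$ where resolutions are available, is exactly this construction: producing the models $Y_{e,m}$ with the approximants $v_0^{(e,m)}$ and controlling the interchange of the limits in $e$, $m$ and the approximation while preserving the computing property. In positive characteristic one must moreover build the models by hand, and although local uniformization of \emph{Abhyankar} valuations is available in all characteristics \cite{KnafKuhlmann}, the uniform ``monomialization of a bounded amount of data along a fixed, possibly wild, valuation'' that is needed is not known. Finally, the strong version should follow from a rigidity statement in the spirit of the quasi-monomiality of minimizers of normalized volume (cf.\ \cite{BlumThesis, BoucksomFavreJonssonNonArchMAEquation}): a valuation computing $\lct(\ideala_\star)$ is extremal for a normalized functional on $\Val_{X,x}$, and this extremality should be incompatible with a transcendence-degree-$0$ valuation having positive defect; making this precise without resolutions, presumably again through \Cref{CMS1}, is where I expect the real difficulty to lie.
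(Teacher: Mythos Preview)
The statement is a \emph{conjecture}, and the paper does not prove it. It is explicitly left open; the only case settled is the final Proposition, where each $\ideala_s$ is a monomial ideal. So there is no ``paper's own proof'' to compare against, and your proposal --- as you yourself acknowledge in its third paragraph --- is a program, not a proof.

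A few concrete points on the program itself. First, your use of ``subadditivity'' is only the graded-sequence containment $\ideala_m\ideala_{m'}\subseteq\ideala_{m+m'}$; combined with \Cref{coherence} this does give, for each fixed $t$, some $m_0$ with $\shJ(X,\ideala_\star^t)=\shJ(X,\ideala_{m_0}^{t/m_0})$ by Noetherianity. But the uniformity ``for every $t$ in a left-neighbourhood of $\lambda$'' is not automatic, and in any case this reduction does not by itself produce an Abhyankar minimizer for the \emph{graded} sequence --- that step is already the obstruction in characteristic zero. Second, the approximation scheme you sketch requires building models $Y_{e,m}$ on which a prescribed finite collection of functions becomes monomial \emph{along a fixed, possibly non-Abhyankar, valuation}; this is essentially local uniformization and is unknown in positive characteristic (Knaf--Kuhlmann applies only to valuations already Abhyankar). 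Third, even granting such models, the Abhyankar locus is not closed in $\Val_X$, so ``a limit point of the $v_0^{(e,m)}$ \dots\ should be Abhyankar'' does not follow from the semicontinuity and compactness you invoke.

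For comparison, in the monomial case the paper's argument is much shorter and does not go through approximation at all: with $H=\div(x_1\cdots x_n)$, the retraction $r_{(X,H)}$ sends any $v$ centered at $\m$ to a monomial valuation $\overline v$ with $\overline v(\ideala_\star)=v(\ideala_\star)$ (since the $\ideala_s$ are monomial) and $A(\overline v;\shC^X)\le A(v;\shC^X)$, with strict inequality unless $v=\overline v$ (\Cref{retraction lemma}). This immediately gives both the weak and strong versions in that case. Your toric paragraph gestures at this retraction but then abandons it for the approximation scheme; for general $\ideala_\star$ the retraction no longer preserves $v(\ideala_\star)$, which is exactly why the conjecture remains open.
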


\begin{theoremD}
  [\cref{affine conjecture implies regular conjecture};
  cf. \cite{JonssonMustata}, Theorem D]
  If \cref{JM: Conjecture 7.5} holds for all $n \le d$, then 
  \cref{JM: Conjecture 7.4} holds for all $X$ with $\dim(X) \le d$. 
\end{theoremD}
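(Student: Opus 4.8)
The plan is to reduce \cref{JM: Conjecture 7.4} for a smooth $d$-dimensional $X$ to \cref{JM: Conjecture 7.5} for affine spaces of dimension $\le d$, by localizing at the center of a computing valuation and passing through the completion. Write $\lambda = \lct(\ideala_\star) < \infty$. By \cref{existence of computing valuation} every generic point $x$ of a component of $\V(\shJ(X, \ideala_\star^\lambda))$ is the center of a valuation computing $\lambda$; conversely, following \cite{JonssonMustata} and using coherence of the asymptotic multiplier ideals (\cref{coherence}), every valuation computing $\lambda$ has its center at such a generic point and is of transcendence degree $0$ over that center. So it is enough to prove both versions of \cref{JM: Conjecture 7.4} after localizing $X$ at one such $x$, since $\lct$, the values $v(\ideala_\star)$, the log discrepancies $A_X(v)$, and the property of being Abhyankar are all unaffected by this localization. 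By Cohen's structure theorem the regular local ring $\widehat{\shO_{X,x}}$ is $\kappa(x)\ldb t_1, \dots, t_c\rdb$ with $c = \codim(x) \le d$, the completion of $\A^c_{\kappa(x)}$ at its origin, which we continue to call $x$; after base change to an algebraic closure $K = \o{K} \supseteq \bF_p$ of $\kappa(x)$, a count of residue transcendence degrees shows that a valuation over the original $X$ centered at $x$ is Abhyankar if and only if the corresponding valuation over $\A^c_K$ is.

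The next step is to force the graded sequence to vanish only at $x$. Replace $\ideala_\star$ by $\idealb_m := \ideala_m + \m_x^{cm}$. Since $v(\idealb_m) = \min(v(\ideala_m),\, cm\, v(\m_x))$ for every valuation $v$, one has $v(\idealb_\star) = \min(v(\ideala_\star),\, c\, v(\m_x))$, so $\lct(\idealb_\star) \ge \lambda$; and if $c$ is chosen so that $c \ge v(\ideala_\star)$ for every computing valuation $v$ normalized by $v(\m_x) = 1$ — such a $c$ exists because those valuations lie in a compact subset of the valuation space on which $v \mapsto v(\ideala_\star)$ is bounded, by \cref{V_t compact} — then $v(\idealb_\star) = v(\ideala_\star)$ for all of them, whence $\lct(\idealb_\star) = \lambda$ and the valuations computing $\lct(\idealb_\star)$ are exactly those computing $\lambda$. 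Thus $\idealb_\star$ is cosupported at $x$, with the same threshold and computing valuations. Passing to completions changes none of $\lct$, the asymptotic multiplier ideals, or the computing valuations, so we may replace $\idealb_\star$ by its extension $\widehat{\idealb}_\star$ over $K\ldb t_1, \dots, t_c\rdb$; and by a standard approximation argument — the threshold and its transcendence-degree-$0$ computing valuations depending on only finitely many jets of each $\idealb_m$ — one may further replace $\widehat{\idealb}_\star$ by a polynomial graded sequence $\idealc_\star$ on $\A^c_K$, vanishing only at $x$, with the same threshold and the same transcendence-degree-$0$ computing valuations.

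This places us in the hypothesis of \cref{JM: Conjecture 7.5} with $n = c \le d$. For the \textbf{weak version}, it supplies an Abhyankar valuation $w$ centered at $x$ computing $\lct(\idealc_\star)$, which may be taken of transcendence degree $0$ because $\V(\shJ(\A^c_K, \idealc_\star^\lambda)) = \{x\}$; by \cite{KnafKuhlmann} the valuation $w$ is monomial in suitable local coordinates, so it descends through the approximation and the completion to a quasi-monomial — hence Abhyankar — valuation on $X$ computing $\lambda$. For the \textbf{strong version}, let $v$ on $X$ compute $\lambda$; by the first paragraph $v$ has center a generic point $x$ of a component of $\V(\shJ(X, \ideala_\star^\lambda))$ and transcendence degree $0$ over $x$, so an extension of $v$ to $\widehat{\shO_{X,x}}$ with the same value group and residue field gives a transcendence-degree-$0$ valuation centered at $x$ computing $\lct(\idealc_\star)$; \cref{JM: Conjecture 7.5} forces it to be Abhyankar, and by the transcendence-degree count of the first paragraph $v$ is Abhyankar too.

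The main obstacle is the transfer across the completion and the polynomial approximation. One must show that the log discrepancy $A_X$ — not merely $\lct$ and the values $v(\ideala_\star)$ — is insensitive to completing $\shO_{X,x}$, which is where the coherence result \cref{coherence} and the compactness statement \cref{V_t compact} do the real work; and one must control the correspondence between transcendence-degree-$0$ valuations of $X$ centered at $x$ and those of $K\ldb t_1, \dots, t_c\rdb$, then of $\A^c_K$, bearing in mind that a valuation need not extend uniquely to the completion and that value group and residue field can a priori drop under restriction — here \cite{KnafKuhlmann}, which makes Abhyankar valuations locally quasi-monomial and so transparently behaved under completion, is the decisive input. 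Finally, arranging the approximation step to preserve the threshold, the computing property, \emph{and} the transcendence degree all at once, rather than any one of them in isolation, is the last delicate point.
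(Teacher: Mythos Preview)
Your overall strategy---localize, complete, reduce to $\m$-primary ideals, descend to affine space---matches the paper's, but two of the steps you treat as routine are where the actual content lies, and in both places your proposal has genuine gaps.

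\textbf{The descent from power series to polynomials.} Your ``standard approximation argument'' via finitely many jets is not how this works and is not obviously correct: a graded sequence has infinitely many ideals, and truncating generators could destroy the graded property or perturb the set of computing valuations. The paper's mechanism is instead purely algebraic and much simpler. After the enlarging lemmas one has $\m^s \subseteq \ideala_1$, hence $\m^{st} \subseteq \ideala_t$ for every $t$; then any generator $g$ of $\ideala_t$ in $k\ldb x_1,\dots,x_n\rdb$ differs from a polynomial $g'$ by an element of $\m^{st} \subseteq \ideala_t$, so $g' \in \ideala_t$ already. Thus each $\ideala_t$ is literally extended from the polynomial ring, and the completion lemma can be run in reverse with no approximation at all. (Relatedly, your replacement sequence $\idealb_m = \ideala_m + \m_x^{cm}$ is in general \emph{not} multiplicatively graded: for $\ideala_m = (x^m)$ in $k[x,y]$ and $c \ge 2$, the product $\idealb_1\idealb_1$ contains $xy^c \notin \idealb_2$. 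The paper uses the telescoped sequence $\idealc_j = \sum_{i=0}^j \ideala_i\m^{s(j-i)}$ of \cref{v(c_star)} precisely to keep the graded property.)

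\textbf{The reduction to transcendence degree zero.} Your claim that every computing valuation automatically has transcendence degree $0$ over its center is not justified and is not what the paper (or \cite{JonssonMustata}) proves. The paper's route for the strong version is: first invoke \cref{TFAE} to replace $\ideala_\star$ by the valuation-ideal sequence $\ideala_\star(v)$ of the given computing valuation $v$; then choose a birational regular model $Y \to X$ on which $\dim \shO_{Y,c_Y(v)}$ is minimal (via the dimension formula), so that $\trdeg_Y(v)=0$; and then transfer the problem to $Y$ by adjusting the auxiliary ideal $\q$ by the relative canonical divisor, using \cref{relative canonical lemma} to guarantee that this adjustment is by an honest ideal of $\shO_Y$. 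This birational step, and the role of $\q$ in absorbing $K_{Y/X}$, is entirely absent from your outline and is the main new ingredient beyond the weak case.
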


\subsection{Structure of the Paper}

\Cref{Section: Notation} introduces notation and some background.
In \Cref{Section: Log discrepancy}, we define our log discrepancy for 
Cartier subalgebras of integral schemes in characteristic $p > 0$, and prove 
some essential (but elementary) properties. In \Cref{Section: Main Theorem}, 
we undertake the proof of our main theorem, starting with Cascini, Musta\c{t}\u{a}, 
and Schwede's proof of their result. 
\Cref{Section: F-things} is a brief study of the 
relationship between our log discrepancies, $F$-purity, and strong 
$F$-regularity; here we generalize Hara and Watanabe's result. 
We prove in \Cref{Section: LSC} that log discrepancies are 
lower-semicontinuous on $X^\beth$, and use this to deduce constructible 
lower-semicontinuity of the minimal log discrepancy on any integral
scheme of positive characteristic. In \Cref{Section: LCT}, we extend to 
positive characteristics the aforementioned theorems of Jonsson and 
Muta\c{t}\u{a}; we prove along the way that asymptotic 
multiplier ideals are coherent sheaves on strongly $F$-regular schemes.

\textbf{Acknowledgements}:
  This paper is part of the author's Ph.D. thesis. 
  He thanks Tommaso de Fernex, Mattias Jonsson, Devlin Mallory,
  Mircea Musta\c{t}\u{a}, Karl Schwede, Matthew Stevenson, 
  Kevin Tucker, and Wenliang Zhang for numerous helpful 
  discussions. This project grew out of discussions with 
  Rankeya Datta, Felipe P\'erez, and Karl at the 2015 AMS MRC 
  in Commutative Algebra.  The author is indebted to 
  Paolo Cascini, Mircea, and Karl for sharing their result on 
  log discrepancies of divisorial valuations, upon which his 
  definition of log discrepancy is modeled.  He is especially 
  grateful to his Ph.D. advisor, Wenliang, for organizing and 
  supporting (DMS \#1606414) his visit to the University of 
  Utah during the Spring 2016 semester, during which he 
  worked out the key ideas for much of this paper. Thanks also to his thesis 
  committee: Wenliang, Lucho Avramov, Brian Harbourne, and Mark Walker, 
  for their careful readings of drafts of this document. Many
  thanks are due to the referee, whose suggestions greatly 
  improved the presentation and mathematics in this article. 
  Finally, the author thanks the Mathematics Department 
  at the University of Nebraska -- Lincoln for providing such a 
  rich environment in which to learn and grow mathematically, 
  and at the University of Utah for being such wonderful hosts.

\newpage

\section{Notation and background}
\label{Section: Notation}
\noindent Let us first establish conventions, and gather definitions, used 
throughout the paper. 

\subsection{Conventions}
\label{Conventions}
The following basic terminology is used throughout this article.
\begin{enumerate}
  \item A {\em ring} always has a multiplicative identity in this paper. 
    Except for Cartier subalgebras, all rings are commutative. 

  \item The group of units of a (commutative) ring $R$ is denoted $R^\times$. 

  \item By a {\em scheme} we (almost) always mean a separated noetherian scheme.
    The {\em sole} exception is that we may consider $\Spec(V)$ for a valuation 
    ring $V$; we will be explicit about this exception. 
    A point of a scheme refers to any (not necessarily closed) point. 

  \item A {\em variety} over a field $k$ is an integral scheme
    of finite type over $k$. 

  \item The {\em reduction} of $\Q$-Weil divisor $D = \sum_{i=1}^m r_i D_i$,
    where the $D_i$ are distinct prime divisors (on some normal scheme),
    is $D_{red} = \sum_{i=1}^m D_i$. 

  \item By an {\em $\shO_X$-module} we mean a quasi-coherent 
    sheaf of $\shO_X$-modules. An {\em ideal of $\shO_X$} is a quasi-coherent
    $\shO_X$-submodule of $\shO_X$. 

  \item If $I \subset \shO_X$ is an ideal on $X$, we denote by 
    $\V(I)$ the closed subscheme of $X$ with structure sheaf $\shO_X/I$. 

  \item {\em Neighborhoods} of a point in a topological space are 
    open neighborhoods. 

  \item Valuation rings $V$ are all rank-one, meaning the Krull dimension
    of $V$ is one. Equivalently, the value group $\Frac(V)^\times/V^\times$ 
    can taken to be a subgroup of $(\R, +)$) \cite{Bourbaki1998}. 
    Valuations take value $+\infty$ on $0$. 

  \item If $X$ is a scheme and $Z \subseteq X$ is an integral 
    subscheme with generic point $x$, we denote by 
    $\shO_{X, Z}$ or $\shO_{X,x}$ the local ring at $x$,
    and $\kappa(Z)$ or $\kappa(x)$ the residue field $\shO_{X,x}/\p_x$. 

  \item For local sections $f \in \shO_{X, x}$, 
    we write $f(x)$ for the residue of $f$ in $\kappa(x)$. 
    \label{f(x)}

  \item If $\shF$ and $\shG$ are two $\shO_X$-modules, we 
    denote by $\shHom_X(\shF, \shG)$ the $\shO_X$-module 
    $(U \mapsto \Hom_U(\shF|_U, \shG|_U))$. 

  \item If $X$ is a scheme of characteristic $p > 0$, and
    $\shI \subseteq \shO_{X}$ is an ideal, then
    we define the ideal $\shI\fpow{e}$ generated 
    by $p^e$-th powers of sections of $\shI$. 

\end{enumerate}

\subsection{Arithmetic on the extended real line}
We will often need the standard extension of arithmetic 
operations on $\R$ to $\R_{\pm \infty} = \R \cup \{+\infty, -\infty\}$, 
where $+\infty$ and $-\infty$ satisfy $-\infty < r < +\infty$ 
for every $r \in \R$. The following expressions are \textbf{undefined}: 
\[ (\pm \infty) + (\mp \infty), (\pm \infty) - (\pm \infty), 
0 \cdot (\pm \infty), (\pm \infty) \cdot 0. \]
We otherwise set
\begin{enumerate}
  \item $r \cdot (\pm \infty) = \pm \infty$ if $r > 0$, 
  \item $r \cdot (\pm \infty) = \mp \infty$ if $r < 0$, 
  \item $r + (\pm \infty) = \pm \infty$.
\end{enumerate}
Many technicalities in our definitions
in sections \ref{Section: Log discrepancy} 
and \ref{Section: LSC} arise from the need 
to avoid the undefined expressions above. 

\subsection{SNC Divisors}
A core concept is that of an {\em snc divisor} on a regular
scheme, short for {\em simple normal crossings (support)}, 
a global version of partial regular
systems of parameters. Such divisors play a key role
in the definition of quasi-monomial valuations and the
construction of {\em retraction morphisms} used to 
define log discrepancies of arbitrary valuations. 

\begin{definition}[SNC divisor]
  Let $Y$ be a regular scheme, and let $D_1, \dots, D_N$
  be prime divisors on $Y$. We say the $\Q$-divisor $D = \sum_1^N r_iD_i$,
  $r_i \in \Q$, is an {\em snc divisor} if:
  \begin{enumerate}
    \item Each $D_i$ is a regular scheme. 
    \item For each $y \in Y$, let $D_{i_1}, \dots, D_{i_s}$
      be those $D_i$ that contain $y$. Suppose $D_{i_j}$ corresponds
      to the prime $(f_{i_j}) \subset \shO_{Y,y}$. Then 
      $f_{i_1}, \dots, f_{i_s}$ form part of a regular system 
      of parameters for $\shO_{Y,y}$. 
  \end{enumerate}
\end{definition}

\subsection{Log resolutions and geometric log discrepancies}

We briefly review the definition of log resolutions, 
and log discrepancies in the classical sense,
which we will call {\em geometric} log discrepancies to distinguish
them {\em a priori} from the ones defined in \cref{Section: Log discrepancy}. 
In the more general setting of $F$-finite schemes, we prefer
the language of Cartier subalgebras to that of pairs.  

For this subsection, let $k$ be an algebraically closed field. 
We start with several standard definitions in birational geometry,
taking \cite[Notation 0.4]{KollarMori} as our primary source (see
also \cite[Ch. 9]{LazarsfeldPositivity2} for a simpler setting). 

\begin{definition}[Pair, log $\Q$-Gorenstein]
  A {\em pair} $(X, \Delta)$ is the data of a normal variety $X$
  over $k$ and an effective $\Q$-Weil divisor $\Delta$ on $X$.
  Fixing a canonical class $K_X$, we say $(X, \Delta)$ is 
  {\em log $\Q$-Gorenstein} if there exists
  an integer $m > 0$ such that $m(K_X + \Delta)$ is an 
  integral Cartier divisor. The least 
  such $m$ is the {\em Cartier index} of $K_X + \Delta$. 
\end{definition}

\begin{definition}[Strict transform]
  Let $(X, \Delta)$ be a pair, and $\pi: Y \to X$ a proper
  birational morphism from a normal variety $Y$ that is a isomorphism
  between open subsets $U \subseteq X$ and $V \subseteq Y$, where 
  $X \setminus U$ has codimension at least 2. Suppose 
  $\Delta = \sum_i r_i \Delta_i$ 
  with $\Delta_i$ prime divisors on $X$. The {\em strict transform} of 
  $\Delta$ on $Y$, denoted here $\pi_*\inv(\Delta)$, is the divisor 
  $\sum_i r_i \widetilde{\Delta}_i$ whose prime components
  $\widetilde{\Delta}_i$ are the (topological) closures
  of $\Delta_i \cap U$ in $Y$, identifying $U$ and $V$ via $\pi$. 
\end{definition}

\begin{definition}[Log resolution]
  Let $(X, \Delta)$ be a pair.  
  A {\em log resolution} of $(X, \Delta)$ is a proper 
  birational morphism $\pi: Y \to X$, from a smooth variety $Y$,
  whose exceptional locus $E \subset Y$ is a divisor, and 
  $E + \pi_*\inv(\Delta)_{red}$ is snc. 
\end{definition}

\begin{definition}[Geometric log discrepancy]
  Let $(X, \Delta)$ be a log $\Q$-Gorenstein pair on
  a normal variety over $k$ with canonical class $K_X$. 
  Suppose $Y$ is a normal variety with a proper birational
  morphism $\pi: Y \to X$, let $E \subset Y$ be a prime divisor,
  and let $K_Y$ be the canonical class on $Y$ with $\pi_*K_Y = K_X$. 
  Define a $\Q$-Weil divisor $\Delta_Y$ on $Y$ via
  \[ K_Y + \Delta_Y = \pi^*(K_X + \Delta). \]
  The {\em geometric log discrepancy of $(X, \Delta)$ on $E$}
  is 
  \[ A_{(X, \Delta)}(E) = 1 - \ord_E(\Delta_Y). \]
  Here, by $\ord_E(\Delta_Y)$ we mean the coefficient
  on $E$ in $\Delta_Y$. 
\end{definition}

It is well-known that $A_{(X, \Delta)}$ depends
only on the valuation $\ord_E$ on the fraction
field $k(X)$ and not the variety $Y$ on which we have
realized $E$ as a prime divisor, see e.g. \cite{KollarMori}.
In characteristic $p > 0$, this will also follow from
Cascini, Musta\c{t}\u{a}, and Schwede's result, \cref{CMS1}. 

\subsection{Cartier subalgebras, sharp F-purity, and strong F-regularity}
Our central organizational objects for birational geometry in 
positive characteristics
are the {\em Cartier subalgebras} defined by Schwede 
\cite{SchwedeTestIdealsInNonQGor} and Blickle 
\cite{BlickleTestIdealsViaAlgebras}. These can be thought of
as a positive characteristic variant of pairs or triples 
$(X, \Delta, \ideala_\star^t)$, though they can be much more general. 
These are the primary objects of study from 
\Cref{Section: Log discrepancy} onwards.

We follow the down-to-earth approach of $p^{-e}$-linear maps 
in lieu of the systematic use of pushforwards of sheaves under the 
Frobenius morphism. For this subsection, $X$ is an integral scheme 
of characteristic $p > 0$ with function field $L$. The {\em Frobenius 
morphism} on $X$ is denoted $F: X \to X$; this is the identity
on the underlying topological space of $X$, and is the $p$-th power
morphism on the structure sheaf. Some authors call this the absolute
Frobenius morphism. We also have the $e$-th iterated Frobenius
$F^e: X \to X$, i.e. the $p^e$-th power morphism on $\shO_X$. 

One of the starting points for studying singularities
of rings in characteristic $p > 0$ is Kunz's celebrated
1969 result.

\begin{theorem}[\cite{Kunz1969}]
   A Noetherian ring $R$ of characteristic $p > 0$
   is regular if and only if it is flat over the
   subring $R^p = \{f^p \,:\, f \in R\}$.
\end{theorem}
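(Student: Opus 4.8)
The plan is to reduce both implications to the local case, since regularity and flatness of $R$ over $R^p$ are both local on $\Spec R$: for a prime $\p$ one has $(R^p)_{\p\cap R^p}=(R_\p)^p$ (inverting $f^p$ inverts $f$), so $R_\p=R\otimes_{R^p}(R^p)_{\p\cap R^p}$ is a base change of $R$ over $R^p$. So let $(R,\m,\kappa)$ be Noetherian local; the Frobenius $F\colon R\to R^p$ is then a local homomorphism, so flat and faithfully flat coincide. For ``$R$ regular $\Rightarrow$ flat'', I would induct on $d=\dim R$: if $d=0$ then $R$ is a field, hence free --- so flat --- over the subfield $R^p$; if $d>0$, choose $x$ belonging to a regular system of parameters, so that $R/xR$ is regular of dimension $d-1$ and $(R/xR)^p=R^p/x^pR^p$, whence $R/xR$ is flat over $R^p/x^pR^p$ by induction. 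As $R$ is a domain, $x^p$ is a nonzerodivisor on $R$ and on $R^p$, and the filtration $R/x^pR\supseteq xR/x^pR\supseteq\dots\supseteq x^{p-1}R/x^pR\supseteq 0$ has all successive quotients isomorphic to $R/xR$, so $R/x^pR$ is flat over $R^p/x^pR^p$ as an iterated extension of flat modules. The local criterion for flatness --- with base $R^p$ and $R^p$-regular element $x^p$, using that $R$ is module-finite over $R$ itself so that no $F$-finiteness is required --- then applies, since $x^p$ is a nonzerodivisor on $R$ and $R/x^pR$ is $R^p/x^pR^p$-flat; hence $R$ is flat over $R^p$. (Alternatively, pass to the completion, which by Cohen's structure theorem is $\kappa[[x_1,\dots,x_d]]$, and write down the $\kappa^p[[x_1^p,\dots,x_d^p]]$-basis consisting of the products $c_\alpha\,x_1^{\beta_1}\cdots x_d^{\beta_d}$, where $\{c_\alpha\}$ is a $\kappa^p$-basis of $\kappa$ and $0\le\beta_i\le p-1$.)

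For the converse, assume $(R,\m)$ is local with Frobenius faithfully flat. First, $R$ is reduced: writing $N=\ker F=\{r:r^p=0\}$ (a finitely generated ideal), $N$ annihilates $R$ regarded as an $R$-module via $F$, so flatness forces $\Tor_1^R(R/N,R)=0$; but this $\Tor$-group is $N\otimes_R R=(N/N^2)\otimes_{R/N}R$, and faithful flatness then gives $N/N^2=0$, so $N=0$ by Nakayama. Running the same argument for the iterated Frobenius $F^e$ (flat, being a composite of flat maps) kills every nilpotent, so $R$ is reduced and $F\colon R\xrightarrow{\ \sim\ }R^p$; in particular $R^p$ is Noetherian local, with maximal ideal $\n=F(\m)$, residue field $\kappa^p$, and $\n R=\m\frob$, and $R$ is faithfully flat over $R^p$. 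Replacing $R$ by its $\m$-adic completion --- faithfully flat over $\widehat{R^p}=(\widehat R)^p$, with the same dimension and embedding dimension --- I may assume $R$ complete, so $R=A/I$ with $A=\kappa[[x_1,\dots,x_n]]$, $n=\embdim(R)$, and $I\subseteq(x_1,\dots,x_n)^2$; the goal is $I=0$.

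Upgrading reducedness to regularity is where the real work lies, and here I would follow Kunz's length argument. On one side, iterating the length formula for flat base change along $R^p\hookrightarrow R$ gives $\ell_R(R/\m\fpow e)=\ell_R(R/\m\frob)^e$, and since $\m^{np^e}\subseteq\m\fpow e\subseteq\m^{p^e}$ the Hilbert--Samuel asymptotics of $R$ pin down $\ell_R(R/\m\frob)=p^{\dim R}$, so $\ell_R(R/\m\fpow e)=p^{e\dim R}$ for all $e$. On the other side, in the presentation $R=A/I$ the quotient $R/\m\fpow e$ is spanned over $\kappa$ by the $p^{en}$ monomials of degree $<p^e$ in each variable, so $p^{e\dim R}=\ell_R(R/\m\fpow e)\le p^{en}$, with equality throughout exactly when $I=0$; moreover flatness makes $A/I\frob=A\otimes_{A^p}R^p$ a free $R^p$-module surjecting onto $R$ with $R^p$-flat kernel $I/I\frob$ (free, when $R$ is $F$-finite), and playing these computations against each other forces $\embdim(R)=\dim(R)$, that is $I=0$ and $R$ regular. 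I expect that last comparison --- controlling how many relations the finitely generated ideal $I$ can impose in $A/\m\fpow e$ against the rigid value $p^{e\dim R}$ dictated by flatness, and handling a possibly imperfect, non-$F$-finite residue field --- to be the technical heart of the proof; everything preceding it is formal manipulation with flat base change, Nakayama's lemma, and Cohen's structure theorem.
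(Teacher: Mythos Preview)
The paper does not prove this theorem; it is stated as background material with a citation to Kunz's original 1969 paper and followed immediately by examples. There is therefore no proof in the paper to compare your proposal against.

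That said, your outline is essentially Kunz's own argument. The reduction to the local case and the forward direction are standard and correct; your remark that the local flatness criterion applies with $A=R^p$, $B=R$, $M=R$ (so that $M$ is trivially finite over $B$, and $R^p\cong R$ is Noetherian since a regular local ring is a domain) is the right way to avoid any $F$-finiteness hypothesis. Your reducedness argument is also correct: since $N=\ker F$ annihilates $F_*R$, the long exact sequence for $0\to N\to R\to R/N\to 0$ identifies $\Tor_1^R(R/N,F_*R)$ with $N\otimes_R F_*R=(N/N^2)\otimes_{R/N}F_*R$, and faithful flatness of $F_*R$ over $R/N$ then kills $N/N^2$. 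The length computation $\ell_R(R/\m\fpow{e})=p^{e\dim R}$ is likewise the heart of Kunz's converse.

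Where your sketch remains incomplete is exactly where you flag it: deducing $\embdim(R)=\dim(R)$ from this length equality in the presentation $R=\kappa\ldb x_1,\dots,x_n\rdb/I$. Kunz's argument here requires a careful comparison (one must show that if $I\ne 0$ then $\ell_R(R/\m\fpow{e})$ is eventually strictly less than $p^{en}$, which takes some work when $\kappa$ is imperfect and $R$ is not $F$-finite). Your proposal correctly identifies this as the technical crux but does not carry it out.
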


Notable examples include polynomial rings $k[x_1, \dots, x_n]$
and power series rings $k\ldb x_1, \dots, x_n \rdb$, where $k$
is a field with $[k:k^p] < \infty$, both
of which are in fact {\em free} over their rings of $p$-th
powers. For example, if $k$ is perfect, then a basis
for either of these rings over $R^p$ is given by
\[ \{x_1^{a_1} \cdots x_n^{a_n} \,:\, 0 \le a_i \le p - 1
\,\,\text{ for all $1 \le i \le n$}\}. \]
Based on Kunz's theorem, one studies and characterizes
singularities of characteristic $p$ rings and schemes
in terms of the existence of free $R^p$-summands
\[ R = R^p f \oplus M \]
where $M$ is an $R^p$-submodule of $R$. A summand
is equivalent to an $R^p$-linear mapping
$\phi: R \to R^p \subset R$. If $R$ is reduced,
then $R^p \isom R$ as rings, sending $r^p$ to $r$,
so we can view $\phi$ as an additive map
$\phi: R \to R$ with the additional property
$\phi(a^p g) = a\,\phi(g)$ for all $a, g \in R$. 
Such $\phi$ are called $p\inv$-linear maps,
see \eqref{p-e linear}.

\begin{definition}
  We say $X$ is {\em $F$-finite} if 
  $F^e$ is a finite morphism for 
  for some, equivalently any, $e \ge 1$. 
  When $X = \Spec(R)$ is affine, we will
  say $R$ is $F$-finite.  
\end{definition}

\begin{definition}\label{p-e linear}
  A {\em $p^{-e}$-linear map on $L$} is an additive
  function $\psi: L \to L$ with the property $\psi(f^{p^e} g) = f \psi(g)$
  for all $f, g \in L$. If $\psi$ is a 
  $p^{-e}$-linear map with $\psi(\shO_X) \subseteq \shO_X$,
  then we say $\psi$ is a {\em $p^{-e}$-linear map on $X$.}
\end{definition}

\begin{remark}\label{other perspectives}
  We have chosen one of several approaches to $p^{-e}$-linear
  maps; let us briefly discuss the two other approaches,
  both for continuity with other literature cited
  here, and because we need a few consequences of
  these alternative descriptions. 
  \begin{enumerate}
    \item Writing $F^e: L \to L$ for the Frobenius
      morphism, a $p^{-e}$-linear map $\psi$
      is none other than an $L$-linear map
      $F^e_*L \to L$. Similarly, a $p^{-e}$-linear
      map on $X$ is an $\shO_X$-linear morphism
      $F^e_*\shO_X \to \shO_X$. 

    \item If we view the Frobenius as the {\em inclusion}
      $L \subseteq L^{1/p^e}$, taking $L^{1/p^e}$ to
      be the ring of $p^{e}$-th roots of elements of $L$
      in some fixed algebraic closure of $L$, then
      a $p^{-e}$-linear map becomes an $L$-linear
      mapping $L^{1/p^e} \to L$. Taking $\shO_X^{1/p^e}$
      to be the integral closure of $\shO_X$ in the
      constant sheaf on $X$ associated to $L^{1/p^e}$,
      $p^{-e}$-linear maps on $X$ are $\shO_X$-linear
      morphisms $\shO_X^{1/p^e} \to \shO_X$. 
  \end{enumerate}
  The consequences 
  we will need occasionally are the following. 
  \begin{enumerate}
    \item Supposing $X$ is $F$-finite, the sheaf of $p^{-e}$-linear
      morphisms $\shHom_X(F^e_*\shO_X, \shO_X)$ is just
      $(F^e)^!\shO_X$ \cite[Exercise III.6.10]{Hartshorne}. 
      Consider an affine open subset $\Spec(R) \subseteq X$, 
      $\p \in \Spec(R)$, and let $\Spec(R') \to \Spec(R)$ be 
      the completion morphism at $\p$. Then 
      \begin{equation}\label{completion isomorphism}
        R' \tensor_R \Hom_R(F^e_*R, R) \isom \Hom_{R'}(F^e_*R', R') 
      \end{equation}
      since $R'$ is faithfully flat and $F^e_*R$ is finitely
      generated over $R$. \label{completing Cartier algebra}. 

    \item The dual $\Hom_L(L^{1/p^e}, L)$ has the structure
      of an $L^{1/p^e}$-vector space. If $L$ is an $F$-finite
      field, then $\dim_L \Hom_L(L^{1/p^e}, L) = \dim_L(L^{1/p^e})$.
      This implies $\Hom_L(L^{1/p^e}, L)$ is one-dimensional over
      $L^{1/p^e}$. In particular, given two non-zero maps
      $\phi, \psi \in \Hom_L(L^{1/p^e}, L)$ there exists a unique
      $h \in L$ such that $\phi = \psi h^{1/p^e}$. The right hand
      side of this equality will be written $\psi \cdot h$ in this article,
      cf. \eqref{module and multiplication}. \label{L vector space}
  \end{enumerate}
\end{remark}

To shorten notation throughout the article, we 
introduce the notation $(F^e)^!\shO_X$ for the 
collection of $p^{-e}$-linear maps on $X$. More
generally, we write $(F^e)^!\shL$ for the collection
of $p^{-e}$-linear maps $\shL \to \shO_X$, 
where $\shL$ is a rank-one reflexive sheaf on $X$.
The notation is chosen to accord with Grothendieck 
duality in the $F$-finite case (cf. \cite[Exercise III.6.10]{Hartshorne}). 
We develop what we can without an $F$-finite assumption, 
so the endofunctor $(F^e)^!$ on the derived category
of $X$ may not even be defined, and the notation $(F^e)^!$ 
is purely formal. If we need to emphasize the scheme $X$, 
as we do in \S\ref{Section: LCT}, we write $(F_X^e)^!\shL$. 

\begin{definition}
  The {\em Cartier algebra of $X$} is the graded sheaf
  \[ \shC^X = \oplus_{e \ge 0} (F^e)^!\shO_X. \]
  We will often use $\shC^X_e$ as a synonym for $(F^e)^!\shO_X$. 
\end{definition}

\begin{definition}
  \label{module and multiplication}
  The Cartier algebra of $X$ is a sheaf of
  non-commutative graded $\bF_p$-algebras on $X$ via composition. 
  Indeed, if $\psi_i$ is $p^{-e_i}$-linear, $i = 1, 2$, then 
  $\psi_1 \circ \psi_2$ is $p^{-(e_1 + e_2)}$ linear: 
  \[ (\psi_1 \circ \psi_2)(f^{p^{e_1 + e_2}}g) = \psi_1(f^{p^{e_1}} \psi_2(g)) 
  = f \,\psi_1(\psi_2(g)). \]
  To emphasize that we are thinking of $\psi_1 \circ \psi_2$ as
  the result of a graded multiplication (i.e.  in $\shC^X_{e_1 + e_2}$), 
  we write $\psi_1 \cdot \psi_2$, or $\psi^2$ if $\psi_1 = \psi = \psi_2$. 

  As a special case of this, where one $e_i = 0$, we see
  that each $\shC^X_e$ has {\em two distinct} structures
  of an $\shO_X$-module. Working affine-locally on $\Spec(R) \subseteq X$,
  suppose $\psi: R \to R$ is $p^{-e}$-linear, and let $f \in R$. 
  Then $f$ is $p^0$-linear (i.e. $R$-linear), so both
  $f \cdot \psi$ and $\psi \cdot f$ are $p^{-e}$-linear. 
  They are, however, not equal: 
  \[ (f \cdot \psi)(1) = f\,\psi(1) = \psi(f^{p^e}) \]
  which is not $\psi(f) = (\psi \cdot f)(1)$, generally. 
\end{definition}

\begin{definition}
  A {\em Cartier subalgebra on $X$} is a quasi-coherent sheaf of 
  graded subrings $\shD = \oplus_{e \ge 0} \shD_e \subseteq \shC^X$.
  We ask that $\shD_0 = \shO_X = \shC^X_0$, which implies
  $\shD_e \subseteq \shC^X_e$ is an $\shO_X$-submodule
  under both module structures, cf. \cref{module and multiplication}. 
\end{definition}

\begin{definition}\label{Cartier subalgebra of map}
  Given $0 \ne \psi \in \Gamma(U, \shC^X_e)$, there is an associated
  Cartier subalgebra $\lbrak \psi \rbrak$ on $U$. This is 
  non-zero only in degrees $ne$, $n \ge 0$, and is the $\shO_U$-module
  $\psi^n \cdot \shO_U$ in degree $ne$.
\end{definition}

\begin{definition}
  [{cf. \cite{HochsterHunekeTightClosureAndStrongFRegularity, HaraWatanabe,SchwedeSharpTestElements}}]
  Suppose $X$ is an $F$-finite integral scheme, and
  $\shD$ is a Cartier subalgebra on $X$. 
  \begin{enumerate}
    \item We say $\shD$ is {\em sharply $F$-pure} at
      $x \in X$ if there exists $\psi \in (\shD_e)_x$
      that is surjective, as a function $\shO_{X,x} \to \shO_{X, x}$,
      for some $e \ge 1$. If $\shC^X$ is sharply $F$-pure at $x$,
      then we say {\em $X$ is $F$-pure at $x$}. 

    \item We say $\shD$ is {\em strongly $F$-regular} at
      $x \in X$ if the following condition is satisfied. 
      For all $f \in \shO_{X,x}$ non-zero, there exists
      $e \ge 1$ and $\psi \in (\shD_e)_x$ such that
      $\psi(f) = 1$. If $\shC^X$ is strongly $F$-regular
      at $x$, we say {\em $X$ is $F$-regular at $x$}. 
  \end{enumerate}
  If $X$ is $F$-pure (or $F$-regular) at every
  $x \in X$, then we say $X$ is $F$-pure (resp., $F$-regular). 
\end{definition}

\begin{remark}
  Following a growing consenus among experts, we drop
  the adjectives ``sharply'' and ``strongly'' from 
  $F$-purity and $F$-regularity of $F$-finite schemes.  

\end{remark}

\begin{remark}
  There is a well-known, heuristic, correspondence between
  $F$-pure and log canonical varieties, and $F$-regular and 
  klt varieties. See, e.g., \cite{HaraWatanabe} and 
  \cref{characterization of ShFP and SFR}. 
\end{remark}

\subsection{Divisors and $p^{-e}$-linear maps}
\label{Subsection: divisors}
We now recall the close connection between 
$p^{-e}$-linear maps and certain divisors on a 
normal variety $X$. The ideas present here go back at least
to Mehta-Ramanathan 
\cite{MehtaRamanathanFrobeniusSplittingAndCohomologyVanishing}
and Ramanan-Ramanathan \cite{RamananRamanathanProjectiveNormality},
though the most direct origin of the technique is Hara and Watanabe's
\cite[Lemma 3.4]{HaraWatanabe}. We refer the reader also to 
\cite[Ch. 1]{BrionKumar} and the excellent surveys 
\cite{SchwedeTuckerTestIdealSurvey, 
BlickleSchwedeSurveyPMinusE, 
PatakfalviSchwedeTucker}. 

Let $X$ be a normal variety of dimension $n$ over an algebraically 
closed field $k$ of characteristic $p > 0$; this implies
$X$ is $F$-finite. Let $\omega_X$ be the canonical bundle on $X$, 
the rank-one reflexive sheaf agreeing with $\wedge^n \Omega_{X/k}$
on the smooth locus of $X$. Fix a line bundle $\shL$ on $X$. 
Grothendieck duality for the Frobenius morphism $F: X \to X$ provides an 
isomorphism of reflexive, coherent right $\shO_X$-modules
\[ (F^e)^!\shL \isom \shL\inv \otimes \omega_X^{\otimes (1-p^e)}. \]
As a consequence, a globally defined $p^{-e}$-linear
map $\shL \to \shO_X$ is equivalent, up to a unit of $\Gamma(X, \shO_X)$,
to an effective Cartier divisor $D$ with 
\begin{equation}\label{Cartier operators, divisors}
  \shO_X(D) \isom \shL\inv \otimes \omega_X^{\otimes (1-p^e)}. 
\end{equation}
Of course, $D$ is unchanged if we multiply $\psi$ (on the right)
by a unit of $\Gamma(X, \shO_X)$. 
If we set $\Delta = \frac{1}{p^e-1} D$, and choose a canonical class
$K_X$ on $X$, then the isomorphism \eqref{Cartier operators, divisors}
can be re-formulated as saying: $\Delta$ is an effective $\Q$-Weil divisor on
$X$ such that $m(K_X + \Delta)$ is Cartier for some $m$ coprime to $p$, i.e.
$(X, \Delta)$ is a log-$\Q$-Gorenstein pair, and the Cartier index of 
$K_X + \Delta$ is not divisible by $p$. The normalization by $(p^e-1)$
is useful because 
$\psi^m \in (F^{me})^! \shL^{\otimes (p^{(m-1)e} + \cdots + p^e + 1)}$ 
has the same associated divisor as $\psi$ for all $m \ge 1$. 

\begin{definition}\label{twist: divisor}
  Let $\Delta$ be an effective divisor on $X$,
  and assume $K_X + \Delta$ is $\Q$-Cartier, 
  with Cartier index not divisible by $p$. 
  We define $\shC^X \cdot \Delta$ to be
  \[ \oplus_e \, (F^e)^!\shO_X((p^e-1)\Delta) \]
  where $e \ge 0$ ranges over values for which
  $(p^e-1)(K_X + \Delta)$ is Cartier. Note that
  $\shC^X \cdot \Delta$ is a Cartier subalgebra on $X$, 
  cf. proof of \cite[Lemma 2.8]{SchwedeCentersOfFPurity}. 
  More precisely, if $\psi$ any $p^{-e}$-linear map on 
  an open subset $U \subseteq X$
  corresponding to a Cartier divisor $(1-p^e)(K_X + \Delta) \cap U$
  then $(F^e)^!\shO_X((p^e-1)\Delta)|_U = 
  \lbrak \psi \rbrak_e = \psi \cdot \shO_U$, 
  see \eqref{Cartier subalgebra of map}. 
\end{definition}

\subsection{Berkovich spaces}
Because we expect this article to be of interest to researchers
unfamiliar with constructions in non-archimedean geometry, 
we provide a summary of the Berkovich space theory as we need it. 
We use Thuillier's $\beth$-spaces (\cite{ThuillierToroidal}) 
and not the entire Berkovich analytification \cite{Berkovich90},
because they are compact and their points are more closely
tied to the birational geometry of a given (possibly non-proper) 
variety. 

We provide sketches of proofs when the ideas involved are 
necessary for later sections, deferring to \cite{Berkovich90} 
for more detailed developments of the following material. 
When terminology is already developed in birational geometry
that conflicts with Berkovich's terminology 
for these concepts, we use the birational language. 

\begin{definition}
  Let $X$ be a scheme over a field $k$. 
  \begin{enumerate}
    \item A {\em semivaluation on X} is a pair $\zeta = (w, x)$ consisting of 
      a point $x \in X$, with closure $Z = \overline{\{x\}}$, 
      and a valuation $w$ on $\kappa(x)$ that is {\em centered on $X$}, 
      meaning $w$ is non-negative on some local ring 
      $\shO_{Z, z} \subseteq \kappa(x) = \kappa(Z)$, $z \in Z$. 
      If $f \in \shO_{X, x}$, we define 
      $\ev_f(\zeta) = \zeta(f) := w(f(x)) \in [0, \infty]$. Note 
      that being centered on $X$ forces $w(u) = 0$ for every 
      $u \in k \setminus \{0\}$, i.e. $w$ restricts to the 
      {\em trivial} valuation on $k$.  

    \item We denote by $X^\beth$ the set of all semivaluations on $X$. 
      The point $x$ of $\zeta = (w, x) \in X^\beth$ is called the 
      {\em home} of $\zeta$ (on $X$), and the {\em home function}  
      $h_X: X^\beth \to X$ is $h_X(\zeta) = x$. The home map is also 
      commonly called the {\em kernel} map, e.g., in \cite{Berkovich90}. 

    \item If $X$ is integral with generic point $\eta_X$, we define 
      $\Val_X$ to be $h_X\inv(\eta_X)$, i.e. valuations on $\kappa(X)$ 
      having center on $X$. 

    \item The {\em semialuation ring} of $(w, x) \in X^\beth$
      is $w\inv[0, \infty] \subseteq \kappa(x)$, denoted throughout
      this article as $\shO_w$. Semivaluation rings of valuations
      are called valuation rings. 

    \item For any $\zeta = (w, x) \in X^\beth$, there is a unique
      morphism $i_\zeta: \Spec(\shO_w) \to X$ extending the
      natural map $\Spec(\kappa(x)) \to X$; here,
      $\shO_w = w\inv[0, \infty]$ is the semivaluation ring of $\zeta$. 
      The point $c_X(\zeta) := i_\zeta(\m_w)$ is called 
      the {\em center of $\zeta$ on $X$} (where $\m_w \subset \shO_w$ is
      the maximal ideal). In the more general setting of \cite{Berkovich90}, 
      $c_X(\zeta)$  is called the {\em reduction} of 
      $\zeta$, denoted there $\mathrm{red}(\zeta)$. 

    \item We will write $\shO_{X, c(\zeta)}$ 
      for $\shO_{X, c_X(\zeta)}$. We follow 
      a similar convention with $h_X(\zeta)$. 
  \end{enumerate}
\end{definition}

We put no finite type hypothesis on the scheme 
$X$ above, so we can take $k = \Q$ or $\bF_p$, and 
in some sense we are just using $k$
to force $X$ to have equal characteristic.  

\begin{remark}
  Integral subschemes of $X$ and $X_{\mathrm{red}}$ are the same, 
  and so $X^\beth = (X_{\mathrm{red}})^\beth$ as sets. Moreover, 
  any nilpotent section $f$ is sent to $+\infty$ by all 
  $\zeta \in X^\beth$ whose home is in an open subset where $f$ 
  is regular, so $\ev_f = \ev_0$. It will be clear from our 
  construction that $X^\beth$ and $(X_{\mathrm{red}})^\beth$ 
  agree as topological spaces, too. 
\end{remark}

We now topologize the set $X^\beth$ to define the 
{\em $\beth$-space} of $X$. 

\begin{definition}\label{definition: affine Berkovich space}
  Suppose $X = \Spec(R)$ is a reduced affine scheme. Consider 
  \begin{align*}
    \Spec(R)^\beth \into \prod_{f \in R} [0, \infty] \quad\text{defined by}\quad
        \zeta \mapsto \prod_{f \in R} \zeta(f). 
  \end{align*}
  Here $[0, \infty]$ has the topology making it homeomorphic to $[0, 1]$.
  We give $\Spec(R)^\beth$ the subspace topology via this injection. 
  One checks easily that the image of $\Spec(R)^\beth$ is a closed subspace 
  of this product space, so is compact by Tychonoff's theorem. 
  The Hausdorff property is inherited. 
\end{definition}

\begin{remark}
  It is equivalent to give the set $\Spec(R)^\beth$ the 
  weakest topology such that $\ev_f$
  is continuous for all $f \in R$. 
\end{remark}

The following lemma is well-known, see e.g. 
\cite[Corollary 2.4.2]{Berkovich90}, 
\cite[Remark 4.2]{JonssonMustata}.
We include a proof for convenience, and to illustrate the
basics of working with the center and home functions. 

\begin{lemma}
  For any affine $X = \Spec(R)$, $h_X: X^\beth \to X$ is continuous. 
  On the other hand, $c_X$ is anti-continuous, in the sense that if 
  $U \subseteq X$ is Zariski open, $c_X\inv(U) \subseteq X^\beth$ is closed. 
\end{lemma}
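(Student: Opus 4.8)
The plan is to prove continuity of $h_X$ and anti-continuity of $c_X$ separately, working directly from the definitions via the generators $\ev_f$ of the topology on $X^\beth$.

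\textbf{Continuity of $h_X$.} The Zariski topology on $X = \Spec(R)$ is generated by the principal opens $D(f) = \{\p : f \notin \p\}$, so it suffices to show $h_X^{-1}(D(f))$ is open in $X^\beth$. By definition, $h_X(\zeta) = h_X((w,x)) = x$, and $x \in D(f)$ exactly when $f \notin \p_x$, i.e. the image $f(x)$ of $f$ in $\kappa(x)$ is non-zero. Since $w$ is a valuation, $f(x) \ne 0$ is equivalent to $w(f(x)) < \infty$, i.e. $\ev_f(\zeta) < \infty$. Thus $h_X^{-1}(D(f)) = \ev_f^{-1}([0,\infty))$, which is open because $\ev_f$ is continuous and $[0,\infty) \subseteq [0,\infty]$ is open. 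This gives continuity of $h_X$.

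\textbf{Anti-continuity of $c_X$.} Again reduce to principal opens: I must show $c_X^{-1}(D(f))$ is closed for each $f \in R$. Unwinding the definition of the center, for $\zeta = (w,x)$ the center $c_X(\zeta) = i_\zeta(\m_w)$ is the image of the maximal ideal of the semivaluation ring $\shO_w \subseteq \kappa(x)$ under the canonical map $\Spec(\shO_w) \to X$; concretely its prime ideal is $\{g \in R : g(x) \in \m_w\} = \{g \in R : w(g(x)) > 0\}$. Hence $c_X(\zeta) \in D(f)$ iff $f$ is \emph{not} in this prime, i.e. iff $f(x) \notin \m_w$, which (since $\shO_w$ is local with maximal ideal $\m_w$, and $f(x) \in \shO_w$ as soon as it is finite) means either $w(f(x)) = 0$, or $f(x) \notin \shO_w$, i.e. $w(f(x)) = -\infty$ — but values are non-negative on the image of $R$ only when $f(x) \in \shO_w$; in general $w(f(x)) \in [0,\infty]$ could still be $0$ while $f \notin \p_x$. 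The clean statement is: $c_X(\zeta) \in D(f) \iff w(f(x)) = 0 \iff \ev_f(\zeta) = 0$. Therefore $c_X^{-1}(D(f)) = \ev_f^{-1}(\{0\})$, and $\{0\} \subseteq [0,\infty]$ is closed, so by continuity of $\ev_f$ this preimage is closed, as desired. Taking unions over a cover of a general Zariski open $U$ by principal opens $D(f_i)$ shows $c_X^{-1}(U) = \bigcup_i \ev_{f_i}^{-1}(\{0\})$; this need not be closed for infinite unions, so instead I argue with complements — the complement of $U$ is $\V(I)$ for $I = (f_i)$, and $c_X^{-1}(\V(I)) = \bigcap_i \ev_{f_i}^{-1}((0,\infty])$, which is open, giving that $c_X^{-1}(U)$ is closed.

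\textbf{Main obstacle.} The delicate point is pinning down exactly which primes of $R$ arise as $c_X(\zeta)$ and translating the condition ``$f \notin c_X(\zeta)$'' into a statement purely about $\ev_f(\zeta)$ — one has to be careful that $f(x)$ may fail to lie in $\shO_w$ (value $> 0$ is automatic only for elements of the semivaluation ring), and that the home point $x$ may not be closed, so $\kappa(x) = \kappa(Z)$ for $Z = \overline{\{x\}}$ and the map $\Spec(\shO_w) \to X$ factors through $Z$. Once the identity $c_X^{-1}(D(f)) = \ev_f^{-1}(\{0\})$ is established (the heart of the matter), the topological conclusion is immediate. I would also remark that the same computation shows $h_X^{-1}(D(f)) \supseteq c_X^{-1}(D(f))$, reflecting that the center specializes the home, which is a useful sanity check.
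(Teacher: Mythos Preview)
Your proof is correct and follows essentially the same route as the paper: both arguments reduce everything to the continuity of the maps $\ev_f$, and the identifications $h_X^{-1}(D(f)) = \ev_f^{-1}[0,\infty)$ and $c_X^{-1}(D(f)) = \ev_f^{-1}(\{0\})$ are exactly dual to the paper's $h_X^{-1}(\V(I)) = \bigcap_{f\in I}\ev_f^{-1}(+\infty)$ and $c_X^{-1}(\V(I)) = \bigcap_{f\in I}\ev_f^{-1}(0,\infty]$.

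Two small remarks. First, the digression in the middle of your $c_X$ argument about whether $f(x)$ lies in $\shO_w$ is unnecessary and a bit garbled: by the definition of $X^\beth$, every $\zeta$ takes values in $[0,\infty]$ on $R$, so $\ev_f(\zeta) \ge 0$ automatically and the equivalence $f \notin c_X(\zeta) \iff \ev_f(\zeta)=0$ is immediate. Second, in your final step you write $c_X^{-1}(\V(I)) = \bigcap_i \ev_{f_i}^{-1}((0,\infty])$ and declare it open; for this you need the intersection to be \emph{finite}, which follows from the standing convention that $R$ is noetherian (so $I$ is finitely generated). You should say so explicitly---the same noetherian observation would also have rescued your first attempt via finite unions of the closed sets $\ev_{f_i}^{-1}(\{0\})$.
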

\begin{proof}
  We first check that $h_X\inv(\V(I))$ is closed for all 
  proper ideals $I \subset R$. If $\p = h_X(\zeta) \in \V(I)$ 
  then $I \subseteq \p$. Thus $\zeta(I) = \{+\infty\}$. 
  We conclude $h_X\inv(\V(I)) = \bigcap_{f \in I} \ev_f\inv(+\infty)$, 
  which is closed by continuity. 

  Considering $c_X\inv(\V(I))$, note that $f \in P$, $P \in \Spec(R)$ if and only if 
  $\zeta(f) > 0$ when $c_X(\zeta) = P$. Thus, 
  $c_X\inv(\V(I)) = \bigcup_{f \in I} \ev_f\inv(0, \infty]$, which
  is open in $X^\beth$. 
\end{proof}

\begin{remark}\label{compatibility of topologies}
  Any ring homomorphism $\pi: R \to S$ induces a continuous 
  map $\pi_*: \Spec(S)^\beth \to \Spec(R)^\beth$ by $\pi_*(\zeta)(f) = \zeta(\pi(f))$, 
  $f \in R$. We see that when $\pi$ gives an open immersion, the subspace 
  topology on (the compact subset) $\pi_*(\Spec(S)^\beth)$ agrees with the one defined 
  directly on $\Spec(R)^\beth$ as above. We use this observation to define
  the $\beth$-space of an arbitrary (non-affine) scheme.  
\end{remark}

\begin{definition}[$\beth$-space of a scheme]
  \label{definition: Berkovich space}
  Let $X$ be a scheme over a field $k$, and let 
  $U$ and $V$ be two affine open subschemes of $X$. 
  Since $X$ is separated, $U \cap V$ is again affine,
  and the topology on $(U \cap V)^\beth$ is identical 
  to the subspace topology induced from either $U^\beth$ 
  or $V^\beth$. Thus, there is a unique topology on 
  $X^\beth$ whose open subsets are those subsets 
  $\mathcal{U} \subset X^\beth$ such that 
  $\mathcal{U} \cap U^\beth$ is open in the topology 
  from \ref{definition: affine Berkovich space} 
  for every affine open subscheme $U \subseteq X$. 
\end{definition}

\begin{remark}
  The topology of $X^\beth$ is entirely determined 
  by the topology of $U^\beth$ as $U$ ranges over 
  affine subschemes $U \subset X$, and a finite 
  affine open cover $\{U_1, \dots, U_t\}$ of $X$
  leads to a finite cover of $X^\beth$ by the 
  \textbf{compact} subsets $U_i^\beth$. Thus, $c_X$ 
  remains anticontinuous and $h_X$ continuous for any $X$, 
  since they are so on $U^\beth$ for any affine 
  open $U \subseteq X$. 
\end{remark}

\begin{remark} 
  As a word of caution, it is eminently not true 
  that $U^\beth = h_X\inv(U)$ for open subsets $U$ 
  of $X$. Indeed: supposing that $X$ is integral 
  for simplicity, the home of every valuation 
  having center on $X$ is the generic point of 
  $X$, and so $\Val_X \subseteq h_X\inv(U)$.
  However, not every valuation having center 
  on $X$ necessarily has center on $U$. We do 
  have $U^\beth = c_X\inv(U)$ for $U \subseteq X$ 
  open, and $(X \setminus U)^\beth = h_X\inv(X \setminus U)$.
\end{remark}

\subsection{Retractions and Abhyankar valuations}
\label{Subsection: Locally quasi-monomial}
We recall some of the basic numerics of valuations,
and the construction of monomialization retractions,
in the setting of integral excellent schemes
over a field. We need this material for the proof
of our main theorem (where the more typical setting
of varieties over algebraically closed fields suffices)
and also in the final section (developed in the
general setting of strongly $F$-regular $F$-finite 
schemes). 

Let $X$ be an integral excellent scheme over a 
field $k$ with function field $L$, and $v \in \Val_X$. 
The {\em value group of $v$} is 
$\Gamma_v = v(L^\times) \subseteq \R$. The 
{\em rational rank of $v$} is 
$\ratrk(v) = \dim_\Q(\Gamma_v \otimes_\Z \Q)$. 
If $(B, \m)$ is a local subring of $L$ dominated 
by the valuation ring $(\shO_v, \m_v)$, then 
$(B/\m) =: \ell \subseteq \kappa(v) := \shO_v/\m_v$ 
and we set the {\em transcendence degree of $v$ over $B$} 
to be $\trdeg_B(v) = \trdeg(\kappa(v) \,|\, \ell)$. 
When $c_X(v) = x$, we define $\trdeg_X(v) = \trdeg_{\shO_{X, x}}(v)$. 
The fundamental estimate is {\em Abhyankar's inequality}: 
\[ \ratrk(v) + \trdeg_X(v) \le \dim(\shO_{X, x}). \]
Valuations achieving equality are called {\em Abhyankar valuations}. 
We refer the reader to \cite[Th\'eor\`eme 9.2]{Vaquie} for
this result in our generality here. 

\begin{definition}\label{monomial valuation}
  One easy way to obtain an Abhyankar valuation is via the 
  following construction, outlined more carefully in \S 3.1 
  of \cite{JonssonMustata}. Let $(R, \m, \kappa)$ be a 
  regular local ring. For any regular system of parameters 
  $r_1, \dots, r_d$ for $R$, there is an isomorphism
  $\hat{R} \isom \kappa \ldb r_1, \dots, r_d \rdb$, and so 
  we may view $f \in R \subseteq \hat{R}$ as having an 
  expansion of the form $f = \sum_{u \in \N_0^d} c_u r^u$, 
  where $r^{(u_1, \dots, u_d)} = r_1^{u_1} \cdots r_d^{u_d}$ and $c_u \in \kappa$.
  Jonsson and Musta\c{t}\u{a} prove in Proposition 3.1 of 
  \cite{JonssonMustata} that for any 
  $\alpha = (\alpha_1, \dots, \alpha_d) \in \R_{\ge 0}^d$, 
  there is a unique valuation $\val_\alpha$ on $\hat{R}$ 
  such that $\val_\alpha(f) = 
  \min \{ \sum_{j=1}^d \alpha_j u_j \,:\, c_u \ne 0\}$; such 
  valuations are sometimes called {\em Gauss} or {\em monomial} 
  valuations. Restricting $\val_\alpha$ to $R$ now gives an 
  Abhyankar valuation $v$;
  After blowing-up appropriately, there exists a regular local 
  ring $(R', \m')$, dominating and birational to $(R, \m)$, with
  Krull dimension 
  $\dim_{\Q}(\Q \alpha_1 + \cdots + \Q \alpha_d) = \ratrk(\val_\alpha)$, 
  on which $v$ is centered \cite[Proposition 3.6(ii)]{JonssonMustata}. 
  Moreover, the residue field $\kappa(v)$
  must be an algebraic extension of $R'/\m'$, thanks to the {\em dimension
  formula} \cite[Theorem 15.2]{MatsumuraCommutativeRingTheory} 
  (using that $X$ is excellent); 
  see the discussion at the beginning of \cite[\S 3.2]{JonssonMustata}. 
\end{definition}

\begin{definition}[Monomialization retraction, {cf. \cite{JonssonMustata}}]
  \label{definition: retraction}
  Fix an snc divisor $D$ on a regular excellent scheme 
  $Y$ with a proper birational morphism $\pi: Y \to X$. 
  Let $v \in \Val_X$ with $c_Y(v) = y$, 
  and suppose $D \cap \Spec(\shO_{Y,y}) = \div(z_1 \cdots z_t)$,
  with $z_i \in \m_y$; extend these to a generating set $z_1, \dots, z_d$ for 
  $\m_y$. The $\m_y$-adic completion of $\shO_{Y,y}$ is 
  isomorphic to $\hat R := \kappa(y)\ldb z_1, \dots, z_d \rdb$ by Cohen's structure 
  theorem, and one has an inclusion $\iota: \shO_{Y,y} \into \hat R$ sending
  $z_i$ to $z_i$. On $\hat R$, we have the monomial valuation
  $w$ with $w(z_i) = v(D_i)$ for $1 \le i \le t$ and $w(z_i) = 0$
  for $t < i \le d$. We define $r_{(Y, D)}(v) := \iota_*(w) \in \Val_X$. 
\end{definition}

\begin{remark}
  Note that, by construction and super-additivity of valuations,
  $v(f) \ge r_{(Y, D)}(v)(f)$ for all $f \in \shO_{Y,y}$.
  Consequently, $c_Y(v) \in \overline{\{c_Y(r_{(Y, D)}(v))\}}$. 
\end{remark}

For regular excellent schemes over $\Q$, every Abhyankar valuation 
is obtained as the result of some monomialization retraction, 
induced from a proper birational $Y \to X$ and some SNC 
divisor $H$ on $Y$ (see \cite{EinLazarsfeldSmithSymbolic} and 
Proposition 3.7 in \cite{JonssonMustata}). If $X$
is a variety and $k$ has characteristic $p > 0$, Knaf and 
Kuhlmann show in \cite{KnafKuhlmann} that an Abhyankar valuation 
whose residue field is separable over $k$ admits a 
{\em local monomialization} over $X$, meaning
that if $v$ is an Abhyankar valuation centered at $x \in X$,
then there exists an open neighborhood $U \subset X$ of $x$, 
a proper birational morphism $\pi: Y \to U$ from a regular 
variety $Y$, and an snc divisor $D$ on $Y$, such 
that $v = r_{(Y, D)}(v)$.


\section{Log discrepancies of Cartier subalgebras}
\label{Section: Log discrepancy}
This is the foundational section of this article; the subsequent 
sections are applications of the ideas developed here. 

Let $X$ be an integral scheme of prime characteristic 
$p > 0$. We denote the function field of $X$ by $L$ 
thoughout this section. Recall from \cref{Subsection: divisors} 
that when $X$ is a normal variety, there is essentially a bijection between
log $\Q$-Gorenstein pairs $(X, \Delta)$ whose
log Cartier index is not divisible by $p$, and
$p^{-e}$-liner maps $\psi$ on $X$, furnished by
the isomorphism $(F^e)^!\shO_X((1-p^e)\Delta) \isom \lbrak \psi \rbrak$. 

Our primary goal is to define the log discrepancy of 
Cartier subalgebras on $X$, with the fundamental definition
being that of log discrepancies of $p^{-e}$-linear maps on
{\em the function field $L$ of $X$}. Much of this section goes through
without an $F$-finite assumption; because this may be of interest,
e.g. for varieties over non-$F$-finite fields, we do what we can 
without assuming $X$ is $F$-finite. Of course, for many of our more 
precise results, we need this assumption. 

%
%
%

\begin{remark}
  In previous versions of this paper, we developed these ideas
  in the setting of Cartier {\em pre} algebras. This obfuscated
  the main points, adding lots of technicalities unnecessary for
  the most interesting setting of Cartier subalgebras. We have
  thus revised this section to focus on Cartier subalgebras. 
\end{remark}

\begin{definition}
  Let $\shD$ be a Cartier subalgebra on $X$, and $Z \subseteq X$ a 
  closed integral subscheme with ideal $\shI_Z \subseteq \shO_X$. 
  We say that $Z$ is {\em uniformly $\shD$-compatible} if
  $\psi(\shI_{Z,x}) \subseteq \shI_{Z,x}$ for 
  each $x \in X$, $e \ge 1$, and $\psi \in (\shD_e)_x$.
  In this case, each $\psi$ induces a well-defined $p^{-e}$-linear 
  map on $Z$, which we denote by $\psi\|_Z$. The collection of all 
  $\psi\|_Z$ give a Cartier subalgebra $\shD\|_Z$ on $Z$, which we call
  the {\em exceptional restriction}. The usual restriction
  $\oplus_{e \ge 0} (\shO_Z \otimes_X \shD_e)$ is not useful for us.
\end{definition}

\subsection{The definition of log discrepancy}\label{definition}
\begin{definition}[{Log discrepancy of $p^{-e}$-linear maps 
  and Cartier subalgebras}]
  \label{definition: log discrepancy function}
  We define the log discrepancy of $p^{-e}$-linear maps and Cartier 
  subalgebras at a semivaluation in four steps. 
    \begin{enumerate}
      \item Suppose $\psi$ is a $p^{-e}$-linear map on $L$ for some $e \ge 1$, 
        let $0 \ne f \in L$, and $v \in \Val_X$. We define 
        \begin{equation}\label{definition: E}
          E(f, \psi, v) = \frac{v(f) - p^e v(\psi(f))}{p^e-1} 
          \in [-\infty, \infty). 
        \end{equation}
        Note that if $\psi(f)$ is a unit of the 
        valuation ring $\shO_v$, then $E(f, \psi, v) = (p^e - 1)\inv v(f)$. 
        The notation is meant to suggest the coefficient of 
        $K_Y - \pi^*(K_X + \Delta)$ on divisors $E \subset Y$ 
        over a variety $X$, when $\Delta$
        corresponds to $\psi$ via \eqref{Subsection: divisors}.

        If $g := \psi(f)$ is nonzero, then 
        $\psi(fg^{-p^e}) = g\inv \psi(f) = 1$ and 
        $v(fg^{-p^e}) = v(f) - p^ev(g) = v(f) - p^e v(\psi(f))$.
        Thus, given any $f \in L$ with $\psi(f) \ne 0$, 
        there is some other 
        $h \in L$ with $E(h, \psi, v) = E(f, \psi, v)$ and $\psi(h) = 1$. 
        On the other hand, \ref{definition: E} allows us to consider
        the full range of values of $E(f, \psi, v)$, as $f$ ranges over nonzero 
        $f \in L$, by restricting to any subring $R \subseteq L$ with 
        $L = \Frac(R)$. 

      \item The {\em log discrepancy} of $0 \ne \psi: L \to L$ at 
        $v \in \Val_X$ is
        \[ A(v; \psi) = \sup_{f \ne 0, n \ge 1} E(f, \psi^n, v). \]
        Note that, per the comment at the end of the previous item, we can 
        (and often do) restrict this supremum to those $f \ne 0$ with 
        $\psi^n(f) = 1$, or $E(f; \psi^n, v)$ for $f \in R$ for
        some fixed ring $R$ with $\Frac(R) = L$, e.g. $\shO_{X,x}$ for $x \in X$,
        or valuation rings $\shO_v$. 

      \item Let $\shD$ be a Cartier subalgebra on $X$. 
        The {\em log discrepancy} of $\shD$ at $v \in \Val_X$ 
        centered at $x$ is
        \[ A(v; \shD) = \sup_{e \ge 1} \left( \sup_{0 \ne \psi \in (\shD_e)_x } 
        A(v; \psi) \right). \]

      \item When $\zeta \in X^\beth \setminus \Val_X$, we define 
        $E(f, \psi, \zeta)$ for nonzero $p^{-e}$-linear maps $\psi$,
        when $\zeta(f)$ and $\zeta(\psi(f))$ are not both $+\infty$. 

      \item Let $x \in X$, with closure $Z = \overline{\{x\}}$ that
        is uniformly compatible with $\shD$. We define
        \[ A(\zeta; \shD) = A(\zeta; \shD\|_Z) \]
        for every $\zeta \in \Val_Z = h_X\inv(x) \subseteq X^\beth$. 
        For $\zeta \in X^\beth$ whose home is not uniformly $\shD$-compatible, 
        we set $A(\zeta; \shD) = +\infty$. 
    \end{enumerate}
\end{definition}

We immediately prove several very useful 
ways to simplify the calculation of log discrepancies. 
\Cref{effect of right multiplication}
presents an important proof method we build on 
several times in this section; this was inspired by 
(and our proof is a solution to) 
\cite[Exercise 4.11]{BlickleSchwedeSurveyPMinusE}. 
\Cref{finite type} is used constantly throughout the paper. 

\begin{proposition}\label{effect of right multiplication}
  Let $v \in \Val_X$, $\psi: L \to L$, and $f \in L$, where both $\psi$ 
  and $f$ are nonzero. Suppose $A(v; \psi) < \infty$. Then
  \[ A(v; \psi \cdot f) + (p^e-1)\inv v(f) = A(v; \psi). \]
\end{proposition}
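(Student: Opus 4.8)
The plan is to reduce the claim to the defining formula \eqref{definition: E} by tracking how right multiplication by $f$ interacts with the graded power operation. The first step is to establish, by induction on $n \ge 1$, the identity
\[ (\psi \cdot f)^n = \psi^n \cdot f^{d_n}, \qquad\text{where}\qquad d_n := 1 + p^e + \cdots + p^{e(n-1)} = \frac{p^{en}-1}{p^e-1}. \]
Assuming this for $n$, and recalling that $\psi^n$ is $p^{-en}$-linear (\cref{module and multiplication}), for $0 \ne g \in L$ one computes $(\psi \cdot f)^{n+1}(g) = \psi\bigl(f \cdot \psi^n(f^{d_n} g)\bigr) = \psi\bigl(\psi^n(f^{\,p^{en} + d_n} g)\bigr) = (\psi^{n+1} \cdot f^{d_{n+1}})(g)$, using $p^{en} + d_n = d_{n+1}$. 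One must keep straight the two $\shO_X$-module structures of \cref{module and multiplication}: $\psi \cdot f$ is the $p^{-e}$-linear map sending $g$ to $\psi(fg)$.

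The second step is to compare $E$-values directly. Fix $n \ge 1$ and $0 \ne g \in L$, and put $h := f^{d_n} g$, so $v(g) = v(h) - d_n v(f)$ with $v(f)$ finite because $f \ne 0$. Since $\psi^n \cdot f^{d_n}$ is $p^{-en}$-linear and $(\psi^n \cdot f^{d_n})(g) = \psi^n(h)$, formula \eqref{definition: E} gives
\[ E\bigl(g, (\psi \cdot f)^n, v\bigr) = \frac{v(h) - d_n v(f) - p^{en} v(\psi^n(h))}{p^{en} - 1} = E(h, \psi^n, v) - \frac{v(f)}{p^e - 1}, \]
using $d_n/(p^{en}-1) = 1/(p^e-1)$; no undefined extended-real expressions arise, since only $v(\psi^n(h))$ can take the value $+\infty$, in which case both sides equal $-\infty$.

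For the third step I would take suprema. For each fixed $n$ the substitution $g \mapsto f^{d_n} g$ is a bijection of $L \setminus \{0\}$, so $\sup_{0 \ne g} E(g, (\psi\cdot f)^n, v) = \bigl(\sup_{0 \ne h} E(h, \psi^n, v)\bigr) - (p^e-1)\inv v(f)$; taking the supremum over $n \ge 1$ and invoking the definitions of $A(v;\psi\cdot f)$ and $A(v;\psi)$ yields $A(v;\psi\cdot f) = A(v;\psi) - (p^e-1)\inv v(f)$. The hypothesis $A(v;\psi) < \infty$ lets us rearrange this into the asserted equality without manipulating $+\infty$. I expect the only genuine obstacle to be careful bookkeeping: getting the exponent $d_n$ and the recursion $p^{en} + d_n = d_{n+1}$ right, distinguishing left from right multiplication throughout, and verifying that the substitution in the supremum is a bijection — the rest is routine arithmetic on $[-\infty,\infty)$.
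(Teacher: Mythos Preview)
Your proof is correct and follows essentially the same approach as the paper: both establish the identity $(\psi \cdot f)^n = \psi^n \cdot f^{d_n}$ with $d_n = (p^{en}-1)/(p^e-1)$, then use the resulting bijection on $L \setminus \{0\}$ (the paper phrases it as a bijection between elements sent to $1$ by $\psi^n$ and by $(\psi\cdot f)^n$, which amounts to the same thing) to shift the $E$-values by the constant $(p^e-1)\inv v(f)$ before taking suprema.
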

\noindent Here, $\psi \cdot f$ denotes
the product in the Cartier algebra of $\Spec(L)$ 
\eqref{module and multiplication}. 
\begin{proof}
  Define $f_n = f^{(p^{ne} - 1)/(p^e-1)}$ for all $n \ge 1$. 
  The following observations are easy to check:
  \begin{enumerate}
    \item $(\psi \cdot f)^n = \psi^n \cdot f_n$. 
    \item $\psi^n(h) = 1$ if and only if 
      \[ (\psi \cdot f)^n \left( f_n\inv h\right) = 1. \]
    \item $E(h, \psi^n, v) = E(f_n\inv h, \psi^n, v) + (p^e-1)\inv v(f)$. 
  \end{enumerate}
  Thus, there is a bijection between $h \in L$ with $\psi^n(h) = 1$ 
  and $g \in L$ with $(\psi \cdot f)^n(g) = 1$ given by multiplication
  by $f_n$. Considering then the definitions of $A(v; \psi \cdot f)$
  and $A(v; \psi)$, and applying the third observation, gives the claimed
  expression for $A(v; \psi \cdot f)$. 
\end{proof}

\begin{proposition}\label{finite type}
  Let $\shD$ be a Cartier subalgebra on $X$ and $x \in X$. Suppose 
  that $\shD_x = \lbrak \psi \rbrak_x \subset \shC^X_x$ 
  for some $\psi \in (\shD_e)_x$, cf. \eqref{Cartier subalgebra of map}. 
  Then $A(v; \shD) = A(v; \psi)$ for every $v \in \Val_X$ with $c_X(v) = x$. 
\end{proposition}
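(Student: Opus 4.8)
The plan is to unwind both sides of the asserted equality using the explicit description of the cyclically generated subalgebra $\lbrak\psi\rbrak_x$, and then to reduce everything to \cref{effect of right multiplication}.

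First I would record what the hypothesis buys us. By \cref{Cartier subalgebra of map}, $\shD_x = \lbrak\psi\rbrak_x$ means $(\shD_{e'})_x = 0$ unless $e' = ne$ for some integer $n \ge 1$ (ignoring the degree-zero part, which does not enter the supremum), and in that case $(\shD_{ne})_x = \psi^n\cdot\shO_{X,x}$. Substituting this into \cref{definition: log discrepancy function}(3) yields
\[ A(v;\shD) \;=\; \sup_{n\ge 1}\ \sup_{0\ne g\in\shO_{X,x}} A(v;\psi^n\cdot g). \]
The inequality $A(v;\shD)\ge A(v;\psi)$ is then immediate, since $\psi$ itself occurs among the $\psi^n\cdot g$ (take $n=1$, $g=1$).

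For the reverse inequality I would first dispose of the degenerate case: if $A(v;\psi) = \infty$ there is nothing to prove. Assuming $A(v;\psi) < \infty$, the one genuinely useful subsidiary observation is that $A(v;\psi^n)\le A(v;\psi)$ for all $n\ge 1$. This is immediate from $(\psi^n)^m = \psi^{nm}$: the supremum defining $A(v;\psi^n)$ (over $f\ne 0$ and $m\ge 1$) ranges over a sub-collection of the quantities $E(f,\psi^k,v)$ appearing in the supremum defining $A(v;\psi)$. With this in hand, I would apply \cref{effect of right multiplication} to the $p^{-ne}$-linear map $\psi^n$ and an arbitrary nonzero $g\in\shO_{X,x}$, which is legitimate precisely because $A(v;\psi^n)$ is now known to be finite; it gives $A(v;\psi^n\cdot g) = A(v;\psi^n) - (p^{ne}-1)\inv v(g)$. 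Since $c_X(v)=x$ forces $v$ to be non-negative on $\shO_{X,x}$, we have $v(g)\ge 0$, so $A(v;\psi^n\cdot g)\le A(v;\psi^n)\le A(v;\psi)$. Taking the supremum over $n$ and $g$ gives $A(v;\shD)\le A(v;\psi)$.

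There is no substantive obstacle here: the statement is essentially bookkeeping around \cref{effect of right multiplication}. The only point that needs attention is the order of the argument — the finiteness bound $A(v;\psi^n)\le A(v;\psi)$ must be established before invoking \cref{effect of right multiplication}, whose conclusion is vacuous without it — together with the harmless observation that $v(g)\ge 0$ because $v$ has center $x$.
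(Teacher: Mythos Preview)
Your proof is correct and follows essentially the same route as the paper's: unwind the structure of $\lbrak\psi\rbrak_x$ and apply \cref{effect of right multiplication} to each $\psi^n\cdot g$, using $v(g)\ge 0$ from the center condition. If anything you are slightly more careful than the paper, explicitly disposing of the case $A(v;\psi)=\infty$ and separately checking $A(v;\psi^n)\le A(v;\psi)$ before invoking \cref{effect of right multiplication}.
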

\begin{proof}
  By definition, $A(v; \shD) \ge A(v; \psi)$.  
  On the other hand, the assumption 
  $\shD_x = \lbrak \psi \rbrak_x$ 
  implies $(\shD_m)_x = 0$ unless $m = ne$, and any $\phi \in (\shD_{ne})_x$ 
  can be written as $\psi^n \cdot f$ for some 
  $f \in \shO_{X,x}$. Now \eqref{effect of right multiplication} shows 
  $A(v; \phi) = A(v; \psi) + (1- p^e)\inv v(f)$.
  Thus, $A(v; \phi) \le A(v; \psi)$ since $c_X(v) = x$, which implies 
  $v(f) \ge 0$. Therefore, $A(v; \shD) = A(v; \psi)$. 
\end{proof}

We now prove some easy consequences of \eqref{effect of right multiplication}, 
inspired by \cite[Exercise 4.12]{BlickleSchwedeSurveyPMinusE}, 
cf. \cite[Lemma 4.9(i)]{SchwedeSmithLogFanoVsGloballyFRegular}. 
The first, we attribute to Cascini, Musta\c{t}\u{a}, and Schwede,
since the key claim in the middle of the proof was shared with the
author by Karl Schwede in private correspondence. 
\Cref{log discrepancy of QM} generalizes this result. 

\begin{corollary}[{Cascini-Musta\c{t}\u{a}-Schwede}]
  \label{CMS2}
  Let $v$ be a discrete valuation on $L$ 
  whose associated valuation ring $R$ is 
  $F$-finite, and let $\varpi \in R$
  be any generator for the maximal ideal. 
  Let $\shC^R$ be the Cartier algebra of $\Spec(R)$. 
  Then $A(v; \shC^R) = v(\varpi)$. 
\end{corollary}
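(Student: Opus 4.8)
The plan is to prove the two inequalities $A(v;\shC^R)\le v(\varpi)$ and $A(v;\shC^R)\ge v(\varpi)$ separately. Throughout I will use that $c_X(v)$ is the closed point $\m$ of $X=\Spec R$ (since $\shO_v=R$), that the stalk $(\shC^R_e)_\m$ is the module $\Hom_R(F^e_*R,R)$ of all $p^{-e}$-linear maps on $X$, and that $v=v(\varpi)\cdot\ord_\varpi$, so $v(f)=\ord_\varpi(f)\,v(\varpi)$ for every $0\ne f\in R$. The first inequality is elementary and needs no hypothesis beyond $R$ being a discrete valuation ring; the second is where $F$-finiteness enters, through an explicit Cartier operator.

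For the upper bound, fix $e\ge 1$, a nonzero $\psi\in\Hom_R(F^e_*R,R)$, and $n\ge 1$; by \cref{definition: log discrepancy function}(ii) the quantity $A(v;\psi)$ may be computed using only $f\in\shO_v=R$. Given such an $f$ with $a=\ord_\varpi(f)$, write $a=p^{ne}q+r$ with $0\le r\le p^{ne}-1$; then $f=h\,(\varpi^q)^{p^{ne}}$ with $h=(f\varpi^{-a})\varpi^r\in R$, so $\psi^n(f)=\varpi^q\psi^n(h)$ with $\psi^n(h)\in R$, and therefore $v(\psi^n(f))\ge q\,v(\varpi)$. Consequently
\[
E(f,\psi^n,v)=\frac{v(f)-p^{ne}v(\psi^n(f))}{p^{ne}-1}\le\frac{(a-p^{ne}q)\,v(\varpi)}{p^{ne}-1}=\frac{r}{p^{ne}-1}\,v(\varpi)\le v(\varpi),
\]
and taking suprema over $f$, $n$, $\psi$ and $e$ yields $A(v;\shC^R)\le v(\varpi)$.

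For the lower bound it suffices to produce a $p^{-1}$-linear map $\psi$ on $X$ and an element $g\in R$ with $\ord_\varpi(g)=p-1$ and $\psi(g)\in R^\times$: then $E(g,\psi,v)=v(\varpi)$ (using $v(g)=(p-1)v(\varpi)$ and $v(\psi(g))=0$), whence $A(v;\shC^R)\ge A(v;\psi)\ge v(\varpi)$. When $R$ carries a coefficient field (in particular when $R$ is complete) such a pair is classical: for a lift $\lambda_1,\dots,\lambda_s\in R$ of a $p$-basis of the residue field, the monomials $\varpi^{a_0}\lambda_1^{a_1}\cdots\lambda_s^{a_s}$ with $0\le a_i\le p-1$ form a free $R^p$-basis of $R$, each $\lambda_i$ is a unit, and the dual map of the top monomial $g:=\varpi^{p-1}\lambda_1^{p-1}\cdots\lambda_s^{p-1}$ is a $p^{-1}$-linear map on $X$ carrying $g$ to $1$. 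In general I would carry this out over the completion $\hat R=\kappa\ldb\varpi\rdb$, obtaining some $\hat\Phi\in\Hom_{\hat R}(F_*\hat R,\hat R)$ and $\hat g=\varpi^{p-1}u$ with $u\in\hat R^\times$ and $\hat\Phi(\hat g)=1$, and then descend along \eqref{completion isomorphism}: writing $\hat\Phi=\sum_i\hat c_i\cdot(\psi_i\otimes 1)$ with $\psi_i\in\Hom_R(F_*R,R)$, the equation $\hat\Phi(\hat g)=1$ forces $(\psi_i\otimes 1)(\hat c_i\hat g)\in\hat R^\times$ for some $i$, and the divisibility estimate of the previous paragraph, applied over $\hat R$, then forces $\hat c_i\in\hat R^\times$, so that $\ord_\varpi(\hat c_i\hat g)=p-1$; approximating $\hat c_i\hat g$ by a sufficiently close $g\in R$ keeps $\ord_\varpi(g)=p-1$ and (again by the divisibility estimate on the error term) $\psi_i(g)\in R^\times$.

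The one nonformal ingredient is this last step, the structure of $\Hom_R(F_*R,R)$ for a regular $F$-finite local ring; it is entirely standard (see \cite{BlickleSchwedeSurveyPMinusE}), and the completion argument above reduces it to the classical Cartier operator on $\kappa\ldb\varpi\rdb$. I expect it to be the main, though mild, obstacle, with the rest being routine bookkeeping with the definitions.
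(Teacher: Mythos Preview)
Your argument is correct, but it takes a somewhat different route from the paper's. For the upper bound the paper first invokes the structural fact that $\shC^R_e=\Phi^e\cdot R$ for the projection $\Phi$ onto $\varpi^{p-1}$, applies \cref{finite type} to reduce $A(v;\shC^R)$ to $A(v;\Phi)$, and then analyzes exactly how $\Phi^e$ maps $\varpi^s R$ to $\varpi^{s'}R$. Your Euclidean-division estimate bounds $E(f,\psi^n,v)\le v(\varpi)$ directly for \emph{every} $p^{-e}$-linear $\psi$ on $R$, bypassing both the generator and \cref{finite type}; this is cleaner and more elementary. For the lower bound the situation is reversed: the paper simply uses that an $F$-finite DVR is free over $R^{p}$ with $\varpi^{p-1}$ among a basis, so the dual projection $\Phi$ already lives in $\shC^R_1$ and $\Phi(\varpi^{p-1})=1$ gives $E=v(\varpi)$ immediately. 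Your completion-and-descent reproduces this, but with extra work; note also a small notational slip---under the isomorphism \eqref{completion isomorphism} the element $\hat c_i\otimes\psi_i$ acts as $x\mapsto \hat c_i\,\psi_i(x)$, so from $\sum_i\hat c_i\,\psi_i(\hat g)=1$ you get directly that some $\psi_i(\hat g)\in\hat R^\times$, and the subsequent divisibility step on $\hat c_i$ is unnecessary. Either way your approximation of $\hat g$ by $g\in R$ works, and the proof goes through.
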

\begin{proof}
  It is well-known that $R$ is a free $R^{p^e}$-module 
  (of rank $p^{e[L:L^p]}$) with a basis containing $\varpi^{(p^e-1)}$, 
  and that $\shC^R_e = \Phi^e \cdot R$, where
  $\Phi^e$ is the $p^{-e}$-linear projection $\Phi: R \to R$ 
  onto $\varpi^{(p^e-1)}$. \Cref{finite type} proves
  $A(v; \shC^R) = A(v; \Phi)$. 

  Because $\Phi(\varpi^{(p-1)}) = 1$, 
  $A(v; \Phi) \ge v(\varpi) = E(\varpi, \Phi, v)$. Now suppose
  $f \in R$ has $\Phi^e(f) = 1$ and
  $E(f, \Phi^e, v) \ge E(\varpi, \Phi, v)$. Write
  $f = u \varpi^s$, with $u \in R^\times$, 
  and $s \ge p^e-1$. I claim that in fact $s = p^e-1$. More generally:

  \noindent\textbf{Claim:} Define $s' = \lceil p^{-e} (s - p^e + 1) \rceil$. 
  Then $\Phi^e(\varpi^{s}R) = \varpi^{s'}R$. 

  \begin{pfClaim}
    The claim is clear when $p^e \,|\, (s - p^e + 1)$:
    $\varpi^{(p^e-1)}$ is sent to $1$ by $\Phi^e$, and 
    $\Phi^e$ is $p^{-e}$-linear, so
    \begin{align*}
      \Phi^e(\varpi^{s}) 
      &= \Phi^e(\varpi^{s - p^e + 1 } \varpi^{(p^e-1)}) \\
      &= \varpi^{s'}. 
    \end{align*}
    More generally, for $f \in R$, we have 
    \[ v(\Phi^e(f \varpi^{-np^e}))) = v(\Phi^e(f)) - v(\varpi^n) \]
    so $\varpi^{(n+1)p^e - 1}R$ is the smallest ideal
    of $R$ sent into $\varpi^n R$. 
  \end{pfClaim}
  Finishing the proof, we see that if $\Phi(f) = 1$, then $s' = 0$.
  This is not the case unless $s = p^e-1$.



\end{proof}

\begin{corollary}
  [{cf. \cite[Lemma 4.9(i)]{SchwedeSmithLogFanoVsGloballyFRegular}}]
  Suppose $v$ is a discrete valuation on $L$ whose valuation ring $R$ is
  $F$-finite. Let $0 \ne \psi_i: L^{1/p^{e_i}} \to L$, $i = 1, 2$. 
  Suppose $\min\{e_1, e_2\} \ge 1$. Then
  \begin{equation}\label{log discrep of product}
    (1 - \e) A(v; \psi_1) + \e A(v; \psi_2) = A(v; \psi_1 \cdot \psi_2) 
  \end{equation}
  where $\e = (p^{e_2}-1)/(p^{(e_1 + e_2)} - 1)$. 
\end{corollary}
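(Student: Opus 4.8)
The plan is to reduce everything to the distinguished generators $\Phi^e$ of $\shC^R_e$ from the proof of \Cref{CMS2} and then apply \Cref{effect of right multiplication} three times. Fix a uniformizer $\varpi$ of $R$ and a $p^{-1}$-linear generator $\Phi$ of $\shC^R_1$ with $\Phi(\varpi^{p-1}) = 1$, as there, and set $\Phi^e = \Phi \circ \cdots \circ \Phi$ ($e$-fold composition), a $p^{-e}$-linear map on $L$ that generates $\shC^R_e$; note that $\Phi^{e_1} \circ \Phi^{e_2} = \Phi^{e_1 + e_2}$ tautologically. The first point to record is that $A(v; \Phi^e) = v(\varpi)$ for every $e \ge 1$: indeed $A(v; \Phi^e) = \sup_{f \ne 0,\, n \ge 1} E(f, \Phi^{en}, v) \le \sup_{f \ne 0,\, m \ge 1} E(f, \Phi^m, v) = A(v; \Phi) = v(\varpi)$ by \Cref{CMS2}, while $A(v; \Phi^e) \ge E(\varpi^{p^e-1}, \Phi^e, v) = v(\varpi)$ because $\Phi^e(\varpi^{p^e-1}) = 1$.

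Next, since $\Phi^{e_i} \ne 0$, the one-dimensionality of $\Hom_L(L^{1/p^{e_i}}, L)$ over $L^{1/p^{e_i}}$ (\Cref{other perspectives}) produces unique units $h_i \in L^\times$ with $\psi_i = \Phi^{e_i} \cdot h_i$, $i = 1, 2$. A short computation using $p^{-e}$-linearity then identifies the product in the Cartier algebra of $\Spec(L)$: for $x \in L$,
\[
  (\psi_1 \cdot \psi_2)(x) = \Phi^{e_1}\bigl(h_1\,\Phi^{e_2}(h_2 x)\bigr) = \Phi^{e_1}\bigl(\Phi^{e_2}(h_1^{p^{e_2}} h_2\, x)\bigr) = \Phi^{e_1 + e_2}(h\,x), \qquad h := h_1^{p^{e_2}} h_2,
\]
so $\psi_1 \cdot \psi_2 = \Phi^{e_1 + e_2} \cdot h$, with $v(h) = p^{e_2} v(h_1) + v(h_2)$.

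Finally, \Cref{effect of right multiplication} is applicable here, since $A(v; \Phi^e) = v(\varpi) < \infty$ for all $e$; it gives $A(v; \psi_i) = v(\varpi) - (p^{e_i}-1)\inv v(h_i)$ for $i = 1, 2$ and $A(v; \psi_1 \cdot \psi_2) = v(\varpi) - (p^{e_1+e_2}-1)\inv v(h)$. With $\e = (p^{e_2}-1)/(p^{e_1+e_2}-1)$, hence $1 - \e = p^{e_2}(p^{e_1}-1)/(p^{e_1+e_2}-1)$, a one-line check gives $(1-\e)(p^{e_1}-1)\inv = p^{e_2}/(p^{e_1+e_2}-1)$ and $\e(p^{e_2}-1)\inv = 1/(p^{e_1+e_2}-1)$, so that $(1-\e)A(v;\psi_1) + \e A(v;\psi_2) = v(\varpi) - \bigl(p^{e_2}v(h_1) + v(h_2)\bigr)/(p^{e_1+e_2}-1) = A(v; \psi_1 \cdot \psi_2)$, as claimed. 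The one step that requires genuine care is the middle one — the identity $\psi_1 \cdot \psi_2 = \Phi^{e_1+e_2} \cdot h$, i.e. that composing the two maps is again controlled by the degree-$(e_1+e_2)$ generator together with a right-multiplication twist; everything else is bookkeeping with \Cref{effect of right multiplication}.
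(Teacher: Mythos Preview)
Your proof is correct and follows essentially the same approach as the paper: write each $\psi_i = \Phi^{e_i} \cdot h_i$ using one-dimensionality of $\Hom_L(L^{1/p^{e_i}}, L)$, compute $\psi_1 \cdot \psi_2 = \Phi^{e_1+e_2} \cdot (h_1^{p^{e_2}} h_2)$, and finish with \Cref{effect of right multiplication} and \Cref{CMS2}. Your extra verification that $A(v; \Phi^e) = v(\varpi)$ for all $e$ is a nice touch, though it also follows from \Cref{limsup} (since $A(v;\psi) = A(v;\psi^n)$).
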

\noindent We expect that when $R$ is not $F$-finite, then 
$A(v; \psi) = +\infty$ for every $p^{-e}$-linear map. This 
is true when $X$ is a variety over a perfect field,
see \eqref{non-F-finite DVRs}. 
\begin{proof}
  Suppose $\shC^R_1 = \Phi \cdot R$ as in the last proof. 
  Considering $\Phi$, $\psi_1$, and $\psi_2$ as elements of
  $\Hom_L(L^{1/p^a}, L) \isom L^{1/p^a}$, for 
  $a \in \{1, e_1, e_2\}$ (respectively),
  then there exist $h_i \in L$ with 
  $\Phi^{e_i} \cdot h_i = \psi_i$ for $i = 1, 2$, see 
  \Cref{other perspectives}\eqref{L vector space}. 
  Now \eqref{effect of right multiplication} and \eqref{CMS2} imply
  \[ A(v; \psi_i) = v(\varpi) + \frac{1}{1 - p^{e_i}} v(h_i). \]
  Additionally, 
  \begin{align*}
    \psi_1 \cdot \psi_2 &= (\Phi^{e_1} \cdot h_1) \cdot (\Phi^{e_2} \cdot h_2) \\
    &= \Phi^{e_1 + e_2} \cdot (h_1^{p^{e_2}} h_2)
  \end{align*}
  so another application of \eqref{effect of right multiplication}
  gives the value
  \begin{align*}
    v(\varpi) - A(v; \psi_1 \cdot \psi_2) &= 
    \frac{1}{p^{(e_1 + e_2)} - 1}(p^{e_2} v(h_1) + v(h_2). \\
    &= (1 - \e)(v(\varpi) - A(v; \psi_1)) + \e(v(\varpi) - A(v; \psi_2)) \\
    &= v(\varpi) - (1-\e) A(v; \psi_1) - \e A(v; \psi_2). 
  \end{align*}
  Re-arranging the terms of these equations, we are left with 
  \eqref{log discrep of product}. 
\end{proof}

\begin{example}\label{non-F-finite DVRs}
  Datta and Smith carefully studied $p^{-e}$-linear maps on valuation rings
  inside function fields \cite{DattaSmithFrobeniusAndValuations, 
  DattaSmithFrobeniusAndValuationsCorrection}. Among other things,
  they prove that if a valuation ring in a function field is $F$-finite, 
  then the associated valuation must be an Abhyankar discrete valuation. 
  Let us re-interpret their results as statements about log discrepancies
  of non-Abhyankar discrete valuations. 

  Suppose $v$ is a non-Abhyankar discrete valuation with value group equal
  to $\Z$ on the function field $L$ of some variety $X$ over a perfect field, 
  with valuation ring $R$, and let $\varpi \in R$ with $v(\varpi) = 1$. 
  Datta and Smith proved that $(F^e)^!R = 0$ for all $e \ge 1$ 
  \cite[Corollary 4.2.2 and Lemma 4.2.4]{DattaSmithFrobeniusAndValuations},
  cf. \cite[Theorem 0.1]{DattaSmithFrobeniusAndValuationsCorrection}). 
  I claim that this implies that $A(v; \psi)= +\infty$ for any nonzero 
  $p^{-e}$-linear map $\psi: L \to L$. Indeed, the set of real numbers 
  $\{v(\psi(r)) \,:\, r \in R, \psi(r) \ne 0 \}$ must be 
  unbounded below, for if $\psi(r) \ge -M = v(\varpi^{-M})$ for 
  all $r \in R$, then $\varpi^M\psi$ gives a nonzero element of $(F^e)^!R$. 
  Therefore, the set $\{E(r, \psi, v) \,:\, r \in R\}$ is unbounded above, 
  so $A(v; \psi) = +\infty$.  
\end{example}

%
%

\begin{proposition}\label{RLR}
  Suppose $(R, \m, k)$ is a regular $F$-finite ring and $v$ is a valuation
  centered on $\m$ and monomial with respect to some regular system of parameters
  $x_1, \dots, x_n$ for $R$. Then $A(v; \shC^R) = \sum_{i=1}^n v(x_i)$. 
\end{proposition}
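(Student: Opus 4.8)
The plan is to reduce the computation to the case of a single Cartier operator via \Cref{finite type}, then compute that operator's log discrepancy directly using a monomial basis for $R$ over $R^{p^e}$. First I would recall that since $(R, \m, k)$ is regular and $F$-finite, $R$ is a free module over $R^{p^e}$; fixing the regular system of parameters $x_1, \dots, x_n$ and a $p^e$-basis $\{b_1, \dots, b_r\}$ of $k$ over $k^{p^e}$, a basis for $R$ over $R^{p^e}$ is given by the monomials $b_j \cdot x_1^{a_1} \cdots x_n^{a_n}$ with $0 \le a_i \le p^e - 1$. Let $\Phi_e$ be the $p^{-e}$-linear projection onto the basis element $x_1^{p^e-1} \cdots x_n^{p^e-1}$ (times $b_{j_0}$ for a suitable choice of $j_0$, e.g. corresponding to $1 \in k$). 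Then $\shC^R_e = \Phi_e \cdot R$, and one checks $\Phi_e = \Phi_1^e$ up to a unit, so $\shC^R = \lbrak \Phi_1 \rbrak$. By \Cref{finite type}, $A(v; \shC^R) = A(v; \Phi_1)$, and it suffices to prove $A(v; \Phi_1) = \sum_i v(x_i)$.

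Next I would establish the lower bound: since $\Phi_e(x_1^{p^e-1} \cdots x_n^{p^e-1}) = 1$, we get $E(x_1^{p^e-1}\cdots x_n^{p^e-1}, \Phi_1^e, v) = (p^e-1)\inv \cdot (p^e - 1)\sum_i v(x_i) = \sum_i v(x_i)$, so $A(v; \Phi_1) \ge \sum_i v(x_i)$. For the upper bound, I would mimic the Claim in the proof of \Cref{CMS2}, but now for a monomial valuation on a regular local ring of higher dimension. The key computation is: for a monomial $x^u = x_1^{u_1} \cdots x_n^{u_n}$, we have $\Phi_e(x^u R) = x^{u'} R$ where $u'_i = \lceil p^{-e}(u_i - p^e + 1) \rceil$ (taking $\max$ with $0$), or more precisely $\Phi_e(f \cdot x^u)$ lies in $x^{u'} R$ for all $f$, with equality of ideals. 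This follows by expanding $f$ in the $R^{p^e}$-basis: $\Phi_e$ picks out, from each monomial $c \cdot x^a$ appearing ($0 \le a_i \le p^e-1$, $c \in R^{p^e}$, though $c$ itself must be expanded), the piece where the total exponent in each variable is $\equiv p^e - 1 \pmod{p^e}$, divides that exponent's $p^e$-part out, and applies the $k$-level projection. The upshot is that $v(\Phi_e(g))$ for $g \in x^u R$ is minimized when $g$ is (a unit times) $x^u$ itself after clearing, giving $v(\Phi_e(g)) \ge \sum_i u'_i v(x_i) \ge \sum_i p^{-e}(u_i v(x_i) - (p^e-1)v(x_i))$, hence $E(g, \Phi_1^e, v) = (p^e-1)\inv(v(g) - p^e v(\Phi_e(g))) \le (p^e-1)\inv \big( \sum_i u_i v(x_i) - \sum_i (u_i - p^e + 1) v(x_i) \big) = \sum_i v(x_i)$.

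There is one subtlety I should handle carefully: a general $g \in R$ with $\Phi_1^e(g)$ a unit need not be a monomial times a unit, so I cannot simply write $g = u \cdot x^u$ as in the one-dimensional case. The right way to phrase the argument is: for any nonzero $g \in R$, writing $g$ in the $R^{p^e}$-basis and examining which basis monomials contribute to $\Phi_e(g)$, one shows $v(\Phi_e(g)) \ge (p^e)^{-1} \big( v(g) - \sum_i (p^e-1)v(x_i) \big)$ — i.e. $\Phi_e$ cannot increase $v$ by more than is accounted for by dividing out at most $x_1^{p^e-1}\cdots x_n^{p^e-1}$. Since $v$ is monomial, $v(g)$ is the minimum over the $v$-weights of the terms in any expansion, and each term $c\,x^a$ with $c \in R^{p^e}$ contributes to $\Phi_e(g)$ only a monomial of $v$-weight $\ge p^{-e}(\text{weight of }c\,x^a) - p^{-e}\sum_i(p^e-1)v(x_i)$, since the worst case strips $x_i^{p^e-1}$ from each variable. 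This inequality, combined with $v(g) \le v$-weight of each term, gives the bound on $E$ and hence $A(v;\Phi_1) \le \sum_i v(x_i)$.

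The main obstacle I expect is making the combinatorial bookkeeping in the previous paragraph airtight — in particular, keeping track of how the residue-field projection ($k$ over $k^{p^e}$) interacts with the monomial-in-$x$ projection, and verifying that the monomial valuation's "minimum of weights" formula genuinely passes through $\Phi_e$ term-by-term. This is where one must be careful that $c \in R^{p^e}$ is itself a $p^e$-th power $c = c_0^{p^e}$ with $v(c) = p^e v(c_0)$, so that after $\Phi_e$ extracts $c_0$ the weight contribution $v(c_0) = p^{-e}v(c)$ is exactly right; the only "loss" is the $p^{-e}\sum_i(p^e-1)v(x_i)$ from the $x$-monomial part, and crucially \emph{no} loss from $c$. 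Once that term-by-term estimate is nailed down, both inequalities close and the proof is complete. Everything else is a direct application of \Cref{finite type} and the free-basis structure, exactly parallel to \Cref{CMS2}.
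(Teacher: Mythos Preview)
Your strategy is sound and the combinatorics you worry about does work out, but you are missing one step that the paper performs first and that your argument actually depends on: the reduction to the completion. The paper begins by noting that $\hat R \cong k\ldb x_1,\dots,x_n\rdb$ via Cohen's theorem and that $\shC^{\hat R}_e \cong \hat R \otimes_R \shC^R_e$ (cf.\ \cref{other perspectives}\eqref{completing Cartier algebra}), so $A(v;\shC^R)=A(\hat v;\shC^{\hat R})$, and only then works with the explicit basis. You need this for two reasons. First, your basis elements $b_j x^a$ with $b_j$ a $p^e$-basis of $k$ only lie in $R$ once $R$ contains a coefficient field. Second, your crucial step ``$v(g)$ is the minimum over the $v$-weights of the terms in any expansion'' is false for arbitrary decompositions; it holds for the $R^{p^e}$-basis expansion precisely because in $\hat R$ the terms $\lambda_j x^a \cdot \hat R^{p^e}$ contribute monomials $x^u$ with distinct residues $u\bmod p^e$ (and, for fixed $u$, coefficients in $k^{p^e}$-independent lines $k^{p^e}\lambda_j$), so no cancellation occurs. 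That justification lives entirely in the monomial expansion over $k$, hence in $\hat R$.

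Once you are in $\hat R$, your direct estimate $v(\Phi^e(g)) \ge p^{-e}\bigl(v(g)-(p^e-1)\sum_i v(x_i)\bigr)$ is a clean route to the upper bound $E(g,\Phi^e,v)\le \sum_i v(x_i)$ and is arguably more transparent than what the paper does. The paper instead invokes the method of \Cref{CMS2}: localize $\hat R$ at each $(x_i)$ to get an $F$-finite DVR and read off the $x_i$-order of $f$ when $\Phi(f)=1$. Your term-by-term bound avoids the localization and handles all the variables at once; the paper's shortcut trades that uniform estimate for a reduction to the one-dimensional case already proved.
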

\begin{proof}
  Since $R$ is regular and $F$-finite, the Cohen structure theorem yields 
  an isomorphism $\hat{R} \cong k \ldb x_1, \dots, x_n \rdb$, with $k$ an 
  $F$-finite field. Recalling 
  \Cref{other perspectives}\eqref{completing Cartier algebra}, 
  and letting $\hat{v}$ be the $\m$-adic extension of $v$
  to $\hat{R}$, we see $A(v; \shC^R) = A(\hat{v}; \shC^{\hat{R}})$. Thus, we
  have reduced to the case $R = k \ldb x_1, \dots, x_n \rdb$ for an $F$-finite
  field $k$, and $v = \val_{\mathbf{r}}$ for some 
  $\mathbf{r} = (r_1, \dots, r_n) \in \R_{\ge 0}^n$, cf. 
  \eqref{monomial valuation}. Suppose $\{\lambda_i\}_{i=1}^{[k : k^p]}$ is 
  a basis for $k$ over $k^p$, with $\lambda_1 = 1$. Then $R$ is $F$-finite, 
  and is free over $R^{p^e}$ with basis
  \[ \mathcal{B} = 
  \{\lambda_i (x_1^{a_1} \cdots x_d^{a_d}) \,:\, 
  0 \le a_j \le p-1, \, 1 \le i \le [k : k^p]\}.\]

  It is well-known that $\shC^R$ is generated canonically by the projection 
  $\Phi$ onto the $(x_1 \cdots x_d)^{p-1}$-basis element
  (see, e.g., \cite[Chapter 1]{BrionKumar} or
  \cite[Example 3.0.5]{BlickleSchwedeSurveyPMinusE}). 
  The elements in $\mathcal{B}$ also give a basis for $L$ over $L^p$, so 
  given any element $f \in L$, we can uniquely write
  \[ f = \sum_{i = 1}^{[k:k^p]} \sum_{j = 1}^n \sum_{a_j = 0}^{p-1} 
          f_{(i, a_1, \dots, a_n)}^p \lambda_i x_1^{a_1} \cdots x_n^{a_n}, \]
  with $f_{(i, a_1, \dots, a_d)}^p \in L^p$. It follows that
  $\Phi(f) = f_{(1, p-1, \dots, p-1)}$. Applying \Cref{finite type}, we see that 
  \[ A(v_{\mathbf{r}}; \shC^R) = A(v_{\mathbf{r}}; \Phi) = \sum_{j = 1}^d r_j. \]
  To see the rightmost equality, we can argue as in \eqref{CMS2}: localizing $R$ at 
  $(x_i)$ gives an $F$-finite DVR, and we conclude that if $\Phi(f) = 1$ then 
  $\ord_{x_i}(f) = p-1$ for all $1 \le i \le n$. 
\end{proof}

Let us say that $v \in \Val_X$ is {\em locally quasi-monimial}
if there exists a neighborhood $U$ of $x = c_X(v)$ and a 
proper birational morphism $\pi: Y \to U$ from a regular scheme
such that $v = r_{(Y, D)}(v)$ for some snc divisor $D$ on $Y$. 

\begin{proposition}\label{log discrepancy of QM}
  Suppose $\psi$ is a $p^{-e}$-linear map on an integral $F$-finite scheme $X$. 
  Suppose $v \in \Val_X$ is locally quasi-monomial
  with center $x$, and $\pi: Y \to U$ is a proper birational
  morphism for which $v = r_{(Y, D)}(v)$. Let $\psi = \Phi_y^e \cdot h$
  where $\shC^Y_{y} = \lbrak \Phi_y \rbrak_y$, 
  and $y = c_Y(v)$. Then
  \[ 
  A(v; \psi) = v(D) + \frac{v(h)}{1 - p^e}. 
  \]
\end{proposition}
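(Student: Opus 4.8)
The strategy is to reduce the claim to a computation on the regular scheme $Y$ and then invoke the results already proved for regular $F$-finite rings. First I would observe that log discrepancies only depend on the function field $L$ (and the chosen valuation $v$), not on the model, so $A(v;\psi)$ is unchanged if we replace $X$ by $U$ and then pull back to $Y$ via $\pi$. Passing to the local ring $\shO_{Y,y}$, the map $\psi$ is a $p^{-e}$-linear map on $L$, and by hypothesis $\psi = \Phi_y^e \cdot h$ with $\shC^Y_y = \lbrak \Phi_y \rbrak_y$. Since $\Phi_y$ generates $\shC^{\shO_{Y,y}}$ as a Cartier algebra (in degrees divisible by $e$), \Cref{finite type} gives $A(v; \shC^{\shO_{Y,y}}) = A(v;\Phi_y)$ for any $v$ with center $y$, and \Cref{effect of right multiplication} gives
\[
  A(v;\psi) = A(v;\Phi_y^e\cdot h) = A(v;\Phi_y^e) + \frac{v(h)}{1-p^e}.
\]
Hence it suffices to show $A(v;\Phi_y^e) = v(D)$, i.e. $A(v;\shC^{\shO_{Y,y}}) = v(D)$.

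Next I would compute $A(v;\shC^{\shO_{Y,y}})$ directly. By \Cref{definition: retraction}, $v = r_{(Y,D)}(v) = \iota_*(w)$, where $\iota\colon \shO_{Y,y}\into \hat R = \kappa(y)\ldb z_1,\dots,z_d\rdb$ is the completion map, $D\cap\Spec(\shO_{Y,y}) = \div(z_1\cdots z_t)$, and $w$ is the monomial valuation with $w(z_i) = v(D_i)$ for $1\le i\le t$ and $w(z_i)=0$ for $t<i\le d$. Using \Cref{other perspectives}\eqref{completing Cartier algebra}, completing at $\m_y$ identifies $\shC^{\shO_{Y,y}}$ with $\shC^{\hat R}$ compatibly with extending $v$ to $\hat R$, so $A(v;\shC^{\shO_{Y,y}}) = A(w;\shC^{\hat R})$. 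Now $\hat R = \kappa(y)\ldb z_1,\dots,z_d\rdb$ is a regular $F$-finite local ring — here I would use that $\kappa(y)$ is $F$-finite because $Y$ is a variety (or more generally $F$-finite), so $\shO_{Y,y}$ and hence $\kappa(y)$ are $F$-finite — and $w$ is monomial with respect to the regular system of parameters $z_1,\dots,z_d$. \Cref{RLR} (or equivalently \Cref{finite type} applied to the canonical generator $\Phi$ of $\shC^{\hat R}$, as in the proof of \Cref{RLR}) then yields
\[
  A(w;\shC^{\hat R}) = \sum_{i=1}^d w(z_i) = \sum_{i=1}^t v(D_i) = v(D),
\]
the last equality since the $z_i$ for $i>t$ have $w(z_i)=0$ and $v(D) = \sum_{i=1}^t v(D_i)$ by construction of the monomialization retraction. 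Combining with the reduction in the first paragraph finishes the proof.

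The only subtle points — not really obstacles, but places demanding care — are the bookkeeping between the two $\shO_X$-module structures on each $\shC^X_e$ (\Cref{module and multiplication}): one must check that writing $\psi = \Phi_y^e \cdot h$ with right multiplication is the correct convention to feed into \Cref{effect of right multiplication}, which is stated precisely for $\psi\cdot f$. A second point is verifying that $A(v;\psi)<\infty$ so that \Cref{effect of right multiplication} applies; this follows a posteriori from $A(w;\shC^{\hat R}) = v(D) < \infty$ together with $A(v;\psi) \le A(v;\Phi_y^e) + \text{(finite)}$ after noting $A(v;\Phi_y^e)\le A(v;\shC^{\shO_{Y,y}})$ — so I would organize the argument to first establish finiteness on $\hat R$ via \Cref{RLR}, then descend. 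Finally, I should confirm that the completion isomorphism \eqref{completion isomorphism} is compatible with the Cartier-algebra grading and with the $p^{-e}$-linear map structure, which is immediate from faithful flatness and finite generation, and that $v$ and its $\m_y$-adic extension $w$ agree on $\shO_{Y,y}$, which is part of \Cref{definition: retraction}.
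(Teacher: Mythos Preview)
Your proposal is correct and follows essentially the same approach as the paper: apply \Cref{effect of right multiplication} to split off $h$, then use \Cref{finite type} and \Cref{RLR} to identify $A(v;\Phi_y^e)=A(v;\shC^{\shO_{Y,y}})=v(D)$. The paper's proof is terser---it simply cites these three results in sequence---whereas you unpack the completion step and the finiteness check explicitly, but these details are already absorbed into the proof of \Cref{RLR} itself (which begins by completing), so you could streamline by invoking \Cref{RLR} directly on $\shO_{Y,y}$ rather than passing to $\hat R$ by hand.
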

\begin{proof}
  Applying \cref{finite type}, we see $A(v; \shC^Y) = A(v; \Phi_y)$. Now applying
  \Cref{effect of right multiplication} to $\psi = \Phi_y^{e} \cdot h$,
  and \Cref{RLR} to $\shO_{Y,y}$, we see that 
  $A(v; \psi) = v(D) + (1 - p^{e})\inv v(h)$. 
\end{proof}

We will return to the previous proposition, giving more precise information about
$\div_Y(h)$ in \cref{relative canonical lemma} and \ref{retraction lemma} with
added assumptions on $X$. 

\begin{proposition}\label{no residues}
  Let $\shD$ be a Cartier subalgebra on $X$, fix $x \in X$, 
  and let $Z \subseteq X$ be a uniformly $\shD$-compatible 
  subscheme passing through $x$. 
  Denote $\shO_{X,x}$ by $R$, and let $\p \in \Spec(R)$
  be the prime corresponding to $Z$. Then for all $\psi \in \shD_x$,
  \[ 
    A(\zeta; \psi\|_Z) = \sup_{n, f} \{E(f, \psi^n, \zeta) 
    \,:\, \psi^n(f) \in R \setminus \p\}. 
  \]
\end{proposition}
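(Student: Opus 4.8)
The plan is to rewrite both sides as suprema of $E$-values and to compare them term by term; the only genuine content is a substitution — already used implicitly in \Cref{definition: log discrepancy function} — that replaces a fraction of $\kappa(Z)$ by an actual element of the image of $R = \shO_{X,x}$ without changing the relevant $E$-value. Write $\phi := \psi\|_Z$, a well-defined $p^{-e}$-linear map on $Z$ by uniform $\shD$-compatibility; since $\psi$ preserves $R$ and $\psi(\p) \subseteq \p$, the induced $\phi$ preserves the subring $R/\p$ of $\kappa(Z) = \Frac(R/\p)$, the field on which $\zeta$ is a valuation. For $f \in R$ with image $\bar f \in R/\p$ one has $\zeta(f) = \zeta(\bar f)$ and $\overline{\psi^n(f)} = \phi^n(\bar f)$, hence $E(f, \psi^n, \zeta) = E(\bar f, \phi^n, \zeta)$, the latter computed inside $\kappa(Z)$. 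Because $\psi^n(\p) \subseteq \p$, the requirement $\psi^n(f) \in R \setminus \p$ is equivalent to $\phi^n(\bar f) \ne 0$ and forces $\bar f \ne 0$; so in each term of the right-hand supremum both $\zeta(f)$ and $\zeta(\psi^n(f))$ are finite and no undefined arithmetic arises. (If $\phi = 0$ both sides are $-\infty$ as empty suprema, so I would assume $\phi \ne 0$.) In this language the assertion is the ``exceptional restriction'' analogue of the observation in \Cref{definition: log discrepancy function} that the supremum defining $A(v;\psi)$ may be taken over any subring $R'$ with $\Frac(R') = L$.

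The inequality ``$\ge$'' is immediate: $A(\zeta; \psi\|_Z) = A(\zeta; \phi) = \sup_{n \ge 1,\, \bar g \in \kappa(Z) \setminus \{0\}} E(\bar g, \phi^n, \zeta)$ by definition, and by the dictionary above each term of the right-hand supremum equals $E(\bar f, \phi^n, \zeta)$ for some $\bar f \in (R/\p) \setminus \{0\}$, so it is bounded above by $A(\zeta; \phi)$. For ``$\le$'' I would fix $n$ and $\bar g \in \kappa(Z) \setminus \{0\}$, discard the case $\phi^n(\bar g) = 0$ (then $E = -\infty$), write $\bar g = \bar a/\bar b$ with $\bar a, \bar b \in (R/\p) \setminus \{0\}$, lift to $a, b \in R \setminus \p$, and set $f := a\, b^{p^{ne}-1} \in R$. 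Then $\bar f = \bar b^{p^{ne}}\bar g$, so $p^{ne}$-linearity of $\phi^n$ gives $\overline{\psi^n(f)} = \phi^n(\bar f) = \bar b\,\phi^n(\bar g) \ne 0$, i.e. $\psi^n(f) \in R \setminus \p$; and substituting $\zeta(f) = \zeta(\bar a) + (p^{ne}-1)\zeta(\bar b)$ and $\zeta(\psi^n(f)) = \zeta(\bar b) + \zeta(\phi^n(\bar g))$ into \eqref{definition: E} (for the $p^{-ne}$-linear map $\psi^n$) yields $E(f, \psi^n, \zeta) = E(\bar g, \phi^n, \zeta)$. Taking suprema gives ``$\le$'', and hence equality.

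The argument is essentially bookkeeping, so the step deserving care is the last substitution: one must check that the lifted element $f$ genuinely lands in $R \setminus \p$ — so that it is admissible in the right-hand supremum — and that shuttling between $R$, $R/\p$, and $\kappa(Z)$ never produces an undefined expression in the extended arithmetic on $[-\infty,\infty]$. Both follow from the two structural facts recorded at the start: $\kappa(Z)$ is a field, and $\phi$ maps $R/\p$ into itself.
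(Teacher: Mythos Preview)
Your proof is correct and follows the same approach as the paper's own argument, which simply notes that $\psi^n(f) \in R \setminus \p$ is equivalent to $\phi^n(\bar f) \ne 0$ and then appeals to ``unwinding the definition'' together with the remark (in \Cref{definition: log discrepancy function}) that the supremum defining $A$ may be taken over any subring with the right fraction field. Your explicit substitution $f = a\,b^{p^{ne}-1}$ is precisely the content of that remark applied to $R/\p \subset \kappa(Z)$, so you have supplied the details the paper leaves implicit.
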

\begin{proof}
  The condition $\psi^n(f) \in R \setminus \p$ is equivalent
  to $(\psi\|_Z)^n(f(\p)) \ne 0 \in \kappa(\p)$, thus 
  also equivalent to $\zeta(\psi^n(f)) < \infty$. 
  The claimed expression for $A(\zeta; \psi\|_Z)$ is then just unwinding 
  the definition of $A(\zeta; \psi\|_Z)$, recalling that $\zeta(f)$ is 
  defined to be $\zeta(f(\p))$, see Convention \ref{Conventions}\eqref{f(x)}. 
\end{proof}

\begin{proposition}\label{limsup}
  Let $\shD$ be a Cartier subalgebra on $X$ and suppose $\zeta \in X^\beth$ has
  uniformly $\shD$-compatible home. Let $x = c_X(\zeta)$, $R = \shO_{X,x}$, and 
  $\p = h_X(\zeta) \in \Spec(R)$. 
  \begin{enumerate}
    \item For $\psi \in (\shD_e)_x$,
      \[ 
      A(\zeta; \psi) = 
      \limsup_{n \to \infty} \left( \sup_f 
      \left\{ 
      E(f, \psi^n, \zeta) \,:\, f \in R, \psi^n(f) \in R \setminus \p
      \right\} \right).
      \]

    \item Similarly, we have an equality 
      \[
        A(\zeta; \shD) = 
        \limsup_{e \to \infty} \sup_{\psi \in (\shD_e) x} A(\zeta; \psi).
      \]
      \label{limsup for D}
  \end{enumerate}
\end{proposition}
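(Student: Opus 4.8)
The plan is to reduce both assertions to a single elementary fact about powers of a fixed $p^{-e}$-linear map, and then feed this into \Cref{no residues}.

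I would first record two lemmas. \textbf{(a)} If $(a_n)_{n\ge1}$ is a sequence in $[-\infty,\infty]$ with $a_{mn}\ge a_n$ for all $m,n\ge1$, then $\sup_n a_n=\limsup_n a_n$: for each fixed $N$ the subsequence $(a_{nN})_{n\ge1}$ is bounded below by $a_N$, so $\limsup_n a_n\ge\limsup_n a_{nN}\ge a_N$, and $N$ was arbitrary. \textbf{(b)} Let $K$ be a field, $w$ a valuation on $K$, and $\phi$ a $p^{-e}$-linear map on $K$; set $a_n:=\sup\{E(g,\phi^n,w):g\in K,\ \phi^n(g)\ne0\}$, so that $A(w;\phi)=\sup_n a_n$. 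Then $a_{mn}\ge a_n$ for all $m,n\ge1$: given $g$ with $\phi^n(g)\ne0$, replacing $g$ by $g\cdot\phi^n(g)^{-p^{ne}}$ changes neither $E(g,\phi^n,w)$ nor the nonvanishing of $\phi^n(g)$, so we may assume $\phi^n(g)=1$ and $E(g,\phi^n,w)=w(g)/(p^{ne}-1)$; a short induction on $m$ (writing $g^{N_{m+1}}=g\cdot(g^{N_m})^{p^{ne}}$, applying $\phi^n$ and then $\phi^{mn}$) shows $\phi^{mn}(g^{N})=1$ for $N=(p^{mne}-1)/(p^{ne}-1)$, whence $E(g^N,\phi^{mn},w)=N\,w(g)/(p^{mne}-1)=w(g)/(p^{ne}-1)=E(g,\phi^n,w)$. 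Combining (a) and (b), $A(w;\phi)=\limsup_n a_n$ for any such $w,\phi$.

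For part (1): by \Cref{no residues} and its proof, applied level-by-level, together with the comment in \Cref{definition: log discrepancy function} that the supremum defining a log discrepancy may be taken over any subring with the correct fraction field, the inner supremum $\sup\{E(f,\psi^n,\zeta):f\in R,\ \psi^n(f)\in R\setminus\p\}$ equals $a_n(\psi\|_Z,\zeta)$, where $\zeta$ is regarded as a valuation on $\kappa(Z)=\Frac(R/\p)$ and $\psi\|_Z$ as the induced $p^{-e}$-linear map. Since $A(\zeta;\psi)=A(\zeta;\psi\|_Z)=\sup_n a_n(\psi\|_Z,\zeta)$, the previous paragraph identifies this with $\limsup_n a_n(\psi\|_Z,\zeta)$, which is exactly the asserted formula.

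For part (2): write $s_e:=\sup_{\psi\in(\shD_e)_x}A(\zeta;\psi)$. Unwinding the definitions of $A(\zeta;\shD)$ and of the log discrepancy of a Cartier subalgebra --- using that the center of $\zeta$ on $Z$ is $x$ and that, by construction of the exceptional restriction, $((\shD\|_Z)_e)_x=\{\psi\|_Z:\psi\in(\shD_e)_x\}$ --- one gets $A(\zeta;\shD)=A(\zeta;\shD\|_Z)=\sup_{e\ge1}s_e$ (maps $\psi$ with $\psi\|_Z=0$ contribute $-\infty$ and may be ignored). Now fix $\psi\in(\shD_e)_x$; since $\shD$ is a subalgebra, $\psi^m\in(\shD_{me})_x$, and applying part (1) to $\psi$ and to $\psi^m$ (note $(\psi^m\|_Z)^k=(\psi\|_Z)^{mk}$, and invoke the monotonicity from (b)) yields $A(\zeta;\psi^m)=A(\zeta;\psi)$. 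Hence $s_{me}\ge\sup_{\psi\in(\shD_e)_x}A(\zeta;\psi^m)=s_e$ for every $m\ge1$, so fact (a) gives $A(\zeta;\shD)=\sup_e s_e=\limsup_e s_e$, as required.

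The only step that is not pure bookkeeping is lemma (b): the choice of exponent $N=(p^{mne}-1)/(p^{ne}-1)$ and the verification that $\phi^{mn}$ carries $g^N$ to $1$. This is the same device by which $\varpi^{p^e-1}$ is passed to $\varpi^{p^{me}-1}$ in the proof of \Cref{CMS2}; once it is in place, both parts follow formally from \Cref{no residues} and the elementary sequence lemma (a).
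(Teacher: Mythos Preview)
Your proof is correct and follows essentially the same approach as the paper: both hinge on the observation that if $\phi^n(g)=1$ then $\phi^{mn}(g^N)=1$ for $N=(p^{mne}-1)/(p^{ne}-1)$, which preserves the value of $E$, so the level-$n$ supremum is dominated by the level-$mn$ supremum and the overall $\sup$ equals the $\limsup$. The paper carries this out directly in $R$, constructing an explicit $f_m\in R$ (which requires an extra unit factor $u_m$ to land back in $R$ rather than in $R_\p$), whereas you pass to the field $\kappa(Z)$ via $\psi\|_Z$ and invoke the subring remark from \Cref{definition: log discrepancy function}; your abstraction of the sequence lemma (a) is a clean way to package the argument and makes part (2) fall out with no extra work.
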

\begin{proof}
  If we replace the limit supremum with supremum in the first statement, 
  then we are left with \Cref{no residues}. Thus, it suffices to show
  that $\{E(f, \psi^n, \zeta) \,:\, f\in R, \psi^n(f) \in R \setminus \p\}$
  is contained in 
  $\{E(f, \psi^{nm}, \zeta) \,:\, f\in R, \psi^{nm}(f) \in R \setminus \p\}$
  for all $m \ge 1$. The key idea here is similar to the observations 
  in the proof of \Cref{effect of right multiplication}. 
  Suppose $f \in R$ has the property $\psi^n(f) \in R \setminus \p$. 
  I claim that for each $m \ge 1$, there is some 
  $f_m \in R$ with $\psi^{nm}(f_m) \in R \setminus \p$ and 
  $E(f_m, \psi^{nm}, v) = E(f, \psi^n, v)$. 
  
  Localizing at $\p$, we are looking for $f_m \in R$ so that 
  $\psi^{nm}(f_m)$ is a unit of $R_\p$. By assumption,
  $\psi^n(f) = u$ is a unit of $R_\p$ that is contained 
  in $R$. Thus, as a $p^{-ne}$-linear map $\psi^n: L \to L$, 
  $\psi^n(u^{-p^{ne}}f) = 1$. 
  Define $g_m = (u^{-p^{ne}}f)^{(p^{nme}-1)/(p^{ne}-1)}$ and 
  $u_m = u^{p^{ne}(p^{nme}-1)/(p^{ne}-1)}$; note 
  $\psi^{nm}(g_m) = \psi^n(u^{-p^{ne}}f) = 1$ 
  for all $m \ge 1$. Moreover, the idea for proving the first observation
  in the proof of \eqref{effect of right multiplication} can be used
  to show
  \[ \psi^{nm}(u_m g_m) = u \]
  for each $m$. Set $f_m = u_m g_m \in R$. By construction, 
  $E(f_m, \psi^{nm}, v) = E(g_m, \psi^{nm}, v)$, and also 
  \[ \frac{v(g_m)}{p^{nme} - 1} = \frac{v(f) - p^{ne} v(u)}{p^{ne}-1}. \]
  The right-hand expression is precisely $E(f, \psi^n, v)$. 
  This proves that we can find the claimed $f_m$, and that
  the supremum over $n$ in the definition of $A(\zeta; \psi)$ 
  is equal to the limit supremum over $n$. 

  For the second claim, about $A(\zeta; \shD)$, note that 
  by definition (or the first part of this proposition)
  $A(v; \psi) = A(v; \psi^n)$ for all $n$. Thus, given
  $\psi \in (\shD_e)_x$ we have $\psi^n \in (\shD_{ne})_x$
  with the same log discrepancy, and $A(\zeta; \shD)$ can
  be calculated as a limit supremum over $e \ge 1$ instead
  of a supremum. 
\end{proof}

\subsection{Multiplicatively and $F$-graded sequences of ideals}
\label{graded sequences} 
We now study how Cartier subalgebras can be twisted by sequences 
of ideals with multiplicative structures, proving formulas similar 
to \cref{effect of right multiplication}. Let 
$\{\ideala_e \,:\, e \in \N_1\}$ be a sequence of non-zero ideals on $X$. 
We say that this collection is {\em multiplicatively graded} 
(or simply {\em graded}) if $\ideala_s\ideala_t \subseteq \ideala_{s + t}$ 
for all $s, t \in \N$, and that this sequence is {\em $F$-graded} 
if $\ideala_1 = \shO_X$ and $\ideala_s^{[p^t]}\ideala_t \subseteq \ideala_{s + t}$ 
for all $s, t \ge 0$. While it is standard to use the notation $\ideala_\bullet$ 
for both of these, we have need to use these two concepts together 
and so we will \textbf{always} interpret $\ideala_\bullet$ as an 
$F$-graded sequence and $\ideala_\star$ as a
multiplicatively graded sequence. 

Interesting examples of (multipicatively) graded sequences of ideals 
arise as base loci of line bundles on $X$ (\cite[Def. 1.1.18]{LazarsfeldPositivity1}),  
and symbolic powers of a fixed ideal. Another common source of 
graded sequences of ideals, especially relevant here, are those 
associated to valuations $v$ on $X$, defined by
\[ \ideala_s(v) = \{f \in \shO_X \,:\, v(f) \ge s \} \text{ for $s \in \Z_{\ge 0}$.} \]
We write $\ideala_\star(v)$ for this graded sequence.

The $F$-graded condition is precisely what is needed to make new Cartier subalgebras 
from old as described below. 
Every Cartier subalgebra on a Gorenstein scheme arises in this way, 
cf. \cite{BlickleTestIdealsViaAlgebras, BlickleSchwedeTuckerFSigPairs1}. 
 
\begin{definition}
  Let $\shD$ be a Cartier subalgebra, and $\ideala_\bullet$ an $F$-graded
  sequence, on $X$. We define a Cartier subalgebra 
  $\shD \cdot \ideala_\bullet \subset \shD$ by
  \begin{align*}
    \shD\cdot\ideala_\bullet &= \oplus_{e \ge 0} \left( \shD_e \cdot \ideala_e \right) \\
    &= \oplus_{e \ge 0} \left\{\sum (\phi_i \cdot a_i) \,:\, a_i \in \ideala_e 
    \text{ and }\phi_i \in \shD_e\right\}. 
  \end{align*}

  A special case of interest is constructed from the data of a Cartier subalgebra 
  $\shD$ on $X$, an ideal $\ideala \subseteq \shO_X$, and a real number $t \ge 0$. 
  Setting $\ideala_e = \ideala^{\lceil t(p^e-1) \rceil}$,
  we get an $F$-graded sequence $\ideala_\bullet$. We define 
  \[ \shD \cdot \ideala^t := \shD \cdot \ideala_\bullet. \]
  These $F$-graded sequences seem to first appear in the theory of tight closure with 
  respect to an ideal \cite{HaraYoshida}, see also \cite{SchwedeSharpTestElements}. 
  Their study was key to the development of sharp $F$-purity, and Schwede's approach 
  to $F$-singularities of pairs and triples 
  \cite{SchwedeSharpTestElements,
  SchwedeFAdjunction,
  SchwedeCentersOfFPurity,
  SchwedeTestIdealsInNonQGor}. 
\end{definition}

For $\zeta \in X^\beth$ with center $x$, and an ideal sheaf 
$\ideala \subseteq \shO_X$, we define
\[ \zeta(\ideala) = \min\{\zeta(f) \,:\, f \in \ideala_x \}. \]
The following lemma allows us to evaluate semivaluations on sequences 
of ideals in two ways; the existence of the limits is well-known. 

\begin{lemma}
  [{cf. \cite{EinLazMusNakPopAsymptoticInvariantsOfBaseLoci,JonssonMustata}}]
  \label{limit lemma}
  Let $\zeta \in X^\beth$ and $\{\ideala_e\}_{e \in \N_1}$ be a sequence of 
  ideals on $X$. If $\ideala_\bullet$ is $F$-graded, we have the limit
  \[ \zeta_F(\ideala_\bullet) := \lim_{e \to \infty} \frac{\zeta(\ideala_e)}{p^e-1} 
  = \inf_{e \ge 1} \frac{\zeta(\ideala_e)}{p^e-1}. \]
  Similarly, if $\ideala_\star$ is a graded sequence, then 
  \[ \zeta(\ideala_\star) 
  := \lim_{e \to \infty} \frac{\zeta(\ideala_e)}{e} 
  = \inf_{e \ge 1} \frac{\zeta(\ideala_e)}{e}. \]
\end{lemma}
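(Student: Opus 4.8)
The plan is to derive both equalities from Fekete's subadditivity lemma, once the interaction of $\zeta$ with products and Frobenius powers of ideals has been recorded. Writing $\p = h_X(\zeta)$ for the home, and recalling that $\zeta$ — evaluated on local sections of $\shO_X$ near $c_X(\zeta)$ — is the restriction of a genuine valuation and is nonnegative there, together with $\zeta(\ideala) = \min\{\zeta(g) : g \in \ideala_{c_X(\zeta)}\}$, one gets the elementary identities $\zeta(\ideala\idealb) = \zeta(\ideala) + \zeta(\idealb)$ and $\zeta(\ideala^{[p^t]}) = p^t\,\zeta(\ideala)$, as well as $\ideala \subseteq \idealb \Rightarrow \zeta(\ideala) \ge \zeta(\idealb)$ (each reduces to the corresponding fact for a generating set). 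Feeding in the defining inclusions of a graded (resp.\ $F$-graded) sequence then yields
\[
  \zeta(\ideala_{s+t}) \le \zeta(\ideala_s) + \zeta(\ideala_t)
  \qquad\text{resp.}\qquad
  \zeta(\ideala_{s+t}) \le p^t\,\zeta(\ideala_s) + \zeta(\ideala_t)
\]
for all indices $s,t$. I would first dispose of the degenerate case: $\zeta(\ideala_e) = +\infty$ exactly when $\ideala_e \subseteq \p$, and the two inclusions above together with primality of $\p$ force that, if some $\zeta(\ideala_{e_0})$ is finite, then every $\zeta(\ideala_e)$ is (a nonnegative real) — otherwise the limit and the infimum are both $+\infty$ and there is nothing to prove.

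Assume henceforth that all $\zeta(\ideala_e)$ are nonnegative reals. In the multiplicatively graded case the sequence $e \mapsto \zeta(\ideala_e)$ is subadditive, so Fekete's lemma gives $\lim_e \zeta(\ideala_e)/e = \inf_e \zeta(\ideala_e)/e$ at once. For the $F$-graded case the correct denominator is $p^e - 1$ rather than $e$, so I would iterate the estimate, using $(\idealb\idealc)^{[p^a]} = \idealb^{[p^a]}\idealc^{[p^a]}$, to obtain $\prod_{i=0}^{n-1}\ideala_e^{[p^{ie}]} \subseteq \ideala_{ne}$, whence $\zeta(\ideala_{ne}) \le \big(\sum_{i=0}^{n-1} p^{ie}\big)\,\zeta(\ideala_e) = \tfrac{p^{ne}-1}{p^e-1}\,\zeta(\ideala_e)$, i.e.\ $c_{ne} \le c_e$ where $c_e := \zeta(\ideala_e)/(p^e-1)$. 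To pass from the subsequence $(c_{ne_0})_{n \ge 1}$ to all of $(c_e)_{e \ge 1}$, fix $\varepsilon > 0$ and $e_0$ with $c_{e_0} < \inf_e c_e + \varepsilon$; writing $e = ne_0 + r$ with $0 \le r < e_0$ (the case $r = 0$ being $c_e = c_{ne_0} \le c_{e_0}$), the inclusion $\ideala_{ne_0}^{[p^r]}\ideala_r \subseteq \ideala_e$ yields
\[
  c_e \le \frac{p^{ne_0+r}-p^r}{p^{ne_0+r}-1}\,c_{e_0} + \frac{\zeta(\ideala_r)}{p^e-1}.
\]
As $e \to \infty$ the first coefficient tends to $1$ and the second term tends to $0$ (only finitely many values of $r$ occur), so $\limsup_e c_e \le c_{e_0} < \inf_e c_e + \varepsilon$; letting $\varepsilon \to 0$ gives $\lim_e c_e = \inf_e c_e$.

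The argument is largely formal; the one place demanding attention is the bookkeeping in the $F$-graded case. One must iterate the defining inclusion in the correct order so that the Frobenius exponents telescope as $\sum_{i<n} p^{ie} = (p^{ne}-1)/(p^e-1)$, matching precisely the normalization $p^e - 1$ and producing the clean inequality $c_{ne} \le c_e$, and then one has to check that the remainder contribution $\zeta(\ideala_r)/(p^e-1)$ genuinely vanishes in the limit; after that both halves of the lemma are immediate.
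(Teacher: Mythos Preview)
Your Fekete-style argument is the standard one and is almost certainly what the paper intends: the paper's own proof is just a pointer to \cite[Lemma~2.3]{JonssonMustata}, which establishes the multiplicative case by exactly this subadditivity argument. Your handling of the $F$-graded case---iterating to get $\prod_{i<n}\ideala_e^{[p^{ie}]}\subseteq\ideala_{ne}$, telescoping $\sum_{i<n}p^{ie}=(p^{ne}-1)/(p^e-1)$, and controlling the remainder $\zeta(\ideala_r)/(p^e-1)$---is correct and carefully done.

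There is one small slip in your treatment of the degenerate case. Primality of $\p$ together with $\ideala_s\ideala_t\subseteq\ideala_{s+t}$ only shows that the set $S=\{e:\ideala_e\not\subseteq\p\}$ is closed under addition; it does \emph{not} force $S=\N_{\ge 1}$ once $S$ is nonempty. For example, take $R=k[x,y]$, $\p=(x)$, $\ideala_1=(x)$, and $\ideala_e=(x^e,y)$ for $e\ge 2$: this is multiplicatively graded, yet $\zeta(\ideala_1)=+\infty$ while $\zeta(\ideala_e)<\infty$ for $e\ge2$. The fix is painless: $S$ is a numerical sub-semigroup of $\N$, so (after passing to the subsequence indexed by $d\N$ where $d=\gcd(S)$ if necessary) its complement is finite; choosing $e_0$ beyond the largest gap makes all the remainder terms $\zeta(\ideala_r)$ in your estimate finite, and the finitely many infinite terms affect neither the infimum nor the limit. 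In practice the issue never arises in the paper, since every application restricts to $\zeta\in\Val_X$, where $\p=0$ and nonzero ideals automatically have finite $\zeta$-value.
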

\begin{proof}
  See \cite[Lemma 2.3]{JonssonMustata}.
\end{proof}

\noindent We close this section by proving some statements
analogous to \cref{effect of right multiplication}, but with
Cartier subalgebras and $F$-graded sequences of ideals
in place of single $p^{-e}$-linear maps an field elements. 

\begin{lemma}\label{twisting rule}
    Let $\shD$ and $\shR$ be Cartier subalgebras on $X$, and let $\zeta \in X^\beth$ a 
    whose home is uniformly $\shD$-compatible. 
    \begin{enumerate}
      \item \textbf{Monotonicity:} If $\shD \subseteq \shR$ 
        then $A(\zeta; \shD) \le A(\zeta; \shR)$. 
        \label{monotonicity}
      \item \textbf{Conservation:} Let $\ideala_\bullet$ be an $F$-graded 
        sequence of ideals on $X$.  If $\zeta_F(\ideala_\bullet)$, $A(\zeta; \shD)$, 
        and $A(\zeta; \shD \cdot \ideala_\bullet)$ are all finite, then 
        $A(\zeta; \shD) = A(\zeta; \shD\cdot\ideala_\bullet) + \zeta_F(\ideala_\bullet).$ 
        \label{conservation}
    \end{enumerate}
\end{lemma}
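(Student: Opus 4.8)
The key reduction is that by Proposition~\ref{no residues} and Proposition~\ref{limsup}, all three log discrepancies can be computed as limit suprema over $n$ (and $e$) of quantities $E(f, \phi^n, \zeta)$ as $f$ ranges over elements of $R = \shO_{X,c(\zeta)}$ with $\phi^n(f)$ landing outside the prime $\p = h_X(\zeta)$. So the whole lemma should follow from a careful bookkeeping of how the defining data of $\shD$ and $\shD \cdot \ideala_\bullet$ interact with the valuation $\zeta$, very much in the spirit of the bijection-by-multiplication argument in the proof of Proposition~\ref{effect of right multiplication}. I would treat \eqref{monotonicity} first as a warm-up: if $\shD \subseteq \shR$, then every $\psi \in (\shD_e)_x$ is also in $(\shR_e)_x$, so $A(\zeta; \psi)$ appears among the quantities whose supremum defines $A(\zeta; \shR)$; hence $A(\zeta;\shD)\le A(\zeta;\shR)$. (One should note that uniform $\shD$-compatibility of the home is what makes the left side finite/meaningful, but nothing more is needed; if the home is not $\shD$-compatible both sides are $+\infty$ by the last clause of Definition~\ref{definition: log discrepancy function}.)

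For \eqref{conservation}, I would argue the two inequalities separately. For ``$\ge$'': since $\shD \cdot \ideala_\bullet \subseteq \shD$, monotonicity gives $A(\zeta; \shD\cdot\ideala_\bullet) \le A(\zeta; \shD)$, but I need the sharper additive statement, so instead I would take a generator-type element: for each $e$ with $\ideala_e \ne 0$ pick $a \in \ideala_e$ with $\zeta(a)$ close to $\zeta(\ideala_e)$, and for $\psi \in (\shD_e)_x$ consider $\psi \cdot a \in (\shD\cdot\ideala_\bullet)_e$. Proposition~\ref{effect of right multiplication} gives $A(\zeta; \psi \cdot a) = A(\zeta; \psi) - (p^e-1)\inv \zeta(a)$ (for $\zeta \in \Val_X$; for general $\zeta$ one passes to $\psi\|_Z$ on the closure $Z$ of the home, which is legitimate since $\shD\cdot\ideala_\bullet$ has the same home-compatibility as $\shD$ once we check $\ideala_\bullet$-twisting preserves $\shI_Z$-compatibility — this is the one place requiring a small verification). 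Taking a supremum over $\psi$ and then a limit supremum over $e$, and using Lemma~\ref{limit lemma} to recognize $\lim (p^e-1)\inv\zeta(\ideala_e) = \zeta_F(\ideala_\bullet)$, yields $A(\zeta;\shD\cdot\ideala_\bullet) \ge A(\zeta;\shD) - \zeta_F(\ideala_\bullet)$, i.e. the ``$\ge$'' direction of the claimed identity after rearranging.

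For the reverse inequality ``$\le$'': a general element of $(\shD\cdot\ideala_\bullet)_e$ is a finite sum $\sum_i \phi_i \cdot a_i$ with $\phi_i \in (\shD_e)_x$, $a_i \in \ideala_e$. The point is that $E(f, -, \zeta)$ is governed by valuations, and $\zeta\big((\sum_i \phi_i\cdot a_i)(f)\big) \ge \min_i \zeta(\phi_i(a_i^{p^e} f)) \ge \min_i \zeta(\phi_i(f)) + \zeta(a_i) \ge (\text{stuff}) + \zeta(\ideala_e)$, using that $\zeta$ restricted to a valuation is super-additive on sums and $\phi_i$ is $p^{-e}$-linear so $\phi_i(a_i^{p^e}f) = a_i\phi_i(f)$. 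Wait — here is the subtlety: to rewrite $\phi_i \cdot a_i$ acting on $f$ I should instead recall $(\phi\cdot a)(f) = \phi(a^{p^e} f)$ only gives what I want after replacing $f$; more robustly, $E(f, (\phi\cdot a)^n, \zeta)$ relates to $E(f', \phi^n, \zeta)$ by the substitution $f \mapsto f' \cdot (a\text{-power})$, exactly as in the three bullet observations of Proposition~\ref{effect of right multiplication} and its iteration in Proposition~\ref{limsup}. Carrying that substitution through, each term contributes a shift by at least $(p^e-1)\inv\zeta(\ideala_e)$, so $A(\zeta; \sum_i \phi_i\cdot a_i) \le \sup_i A(\zeta;\phi_i) - (p^e-1)\inv\zeta(\ideala_e) \le A(\zeta;\shD) - (p^e-1)\inv\zeta(\ideala_e)$; taking $\limsup_e$ and Lemma~\ref{limit lemma} finish it.

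\textbf{Expected main obstacle.} The genuine difficulty is the ``$\le$'' direction in the presence of \emph{sums} $\sum_i \phi_i \cdot a_i$: a single clean multiplicative substitution no longer suffices, and one must argue that evaluating $E(f, (\sum_i\phi_i\cdot a_i)^n, \zeta)$ is still controlled termwise — this is where super-additivity of the (semi)valuation $\zeta$ on sums, and the $F$-graded condition $\ideala_s^{[p^t]}\ideala_t \subseteq \ideala_{s+t}$ needed to expand the $n$-th power, both get used. A secondary but real technical point, deferred silently above, is confirming that when the home of $\zeta$ is a non-generic point one may legitimately pass to the exceptional restriction $\shD\|_Z$ and that $(\shD\cdot\ideala_\bullet)\|_Z = (\shD\|_Z)\cdot(\ideala_\bullet\|_Z)$ as Cartier subalgebras on $Z$, so that the $\Val_X$ arguments apply verbatim; this is routine but must be stated.
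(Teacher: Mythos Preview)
Your outline is correct and matches the paper's argument closely: reduce to $\zeta \in \Val_X$ via the exceptional restriction, handle one inequality by exhibiting $\psi\cdot a \in (\shD\cdot\ideala_\bullet)_e$ with $\zeta(a)$ (near) $\zeta(\ideala_e)$ and invoking Proposition~\ref{effect of right multiplication}, and handle the other by picking out the dominant term in a sum $\sum_i \phi_i\cdot a_i$.

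Two small points where your write-up wobbles. First, the formula $(\phi_i\cdot a_i)(f) = \phi_i(a_i^{p^e}f)$ is wrong; by Definition~\ref{module and multiplication} it is $\phi_i(a_if)$. You notice something is off but do not fix it --- once you use the correct formula, the super-additivity step $\zeta(\psi(f)) \ge \min_i \zeta(\phi_i(a_if))$ plus $E(a_if,\phi_i,\zeta) \le A(\zeta;\shD)$ gives $E(f,\psi,\zeta) \le A(\zeta;\shD) - (p^e-1)^{-1}\zeta(a_{i_0}) \le A(\zeta;\shD) - (p^e-1)^{-1}\zeta(\ideala_e)$ cleanly, with no need for the substitution gymnastics you reach for. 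Second, your ``expected main obstacle'' about expanding $\psi^n$ as a product of sums is a phantom: since $\psi^n$ already lies in $(\shD\cdot\ideala_\bullet)_{ne}$, it is itself a finite sum $\sum_j \Phi_j\cdot b_j$ with $\Phi_j \in (\shD_{ne})_x$ and $b_j \in \ideala_{ne}$, and the \emph{same one-step} argument yields $E(f,\psi^n,\zeta) \le A(\zeta;\shD) - (p^{ne}-1)^{-1}\zeta(\ideala_{ne})$; now take $\limsup$ over $n$ and use Lemma~\ref{limit lemma}. The $F$-graded condition is used only to know $(\shD\cdot\ideala_\bullet)$ is closed under products, not in any expansion. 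The paper does exactly this, just phrased constructively (it picks the index $i$ minimizing $\zeta(\phi_i(a_if))$ and builds an explicit $f'$ with $\phi_i(f')=1$).
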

\begin{proof}
    Monotonicity follows directly from the definition; 
    we therefore begin with conservation. Towards this end, 
    let $x = h_X(\zeta)$ and $Z = \overline{\{x\}}$. Then
    $\shD \cdot \ideala_\bullet \subseteq \shD$ implies
    that $Z$ is uniformly compatible with $\shD \cdot \ideala_\bullet$. 
    By passing to $\ideala_\bullet \shO_Z$ and
    $\shD\|_Z$, and re-setting notation (replacing $Z$ with $X$), 
    we assume $\zeta = v$ is a valuation, with valuation ring 
    $\shO_v \subset L$. Furthermore, the definitions of log discrepancy
    and $\zeta_F(\ideala_\bullet)$ are local near the center $z = c_X(v) \in X$, 
    so we restrict our attention to $R := \shO_{X,x}$ and write $\shD$ for the
    stalk $\shD_x$. Since $R$ is noetherian, $\ideala_e$ is finitely generated, 
    and so $\ideala_e \shO_v$ is principally generated, say by
    $g_e \in \ideala_e$. Possible generators are characterized by  
    $v(g_e) = v(\ideala_e)$.  

    Let $\psi \in D_e$ be nonzero. Then \cref{effect of right multiplication}
    tells us 
    \begin{align*}
      A(v; \psi^n) - A(v; \psi^n \cdot g_{ne}) &= \frac{v(g_{ne})}{p^{ne}-1} \\
      &= \frac{v(\ideala_e)}{p^{ne}-1}
    \end{align*}
    so $\lim_{n \to \infty} A(v; \psi^n) - A(v; \psi^n \cdot g_{ne}) 
    = v_F(\ideala_\bullet)$. 
    On the other hand, $A(v; \psi^n) = A(v; \psi)$ for all $n$ \eqref{limsup}, so 
    \[ A(v; \psi) = 
    v_F(\ideala_\bullet) + \lim_{n \to \infty} A(v; \psi^n \cdot g_{ne}). \]
    Since $\psi^n \cdot g_{ne} \in (\shD \cdot \ideala_\bullet)_{ne}$, applying
    \cref{limsup}\eqref{limsup for D} gives
    \[ A(v; \psi) \le v_F(\ideala_\bullet) + A(v; \shD \cdot \ideala_\bullet) \]
    for all $\psi \in \shD_e$. Now taking a supremum over $\psi \in (\shD_e)_x$,
    and $e \ge 1$, shows $A(v; \shD) \le v_F(\ideala_\bullet) + A(v; \shD \cdot \ideala_\bullet)$.

%
    We now establish the reversed inequality. By \cref{limit lemma}, it suffices
    to show that for all $\e > 0$ there exists $e > 0$ such that
    \[ A(v; \shD \cdot \ideala_\bullet) + \frac{1}{p^e-1} v_F(\ideala_e) 
    < A(v; \shD) + \e. \]
    From the definition of $A(v; \shD \cdot \ideala_\bullet)$, we know there 
    exists $e > 0$, $\psi \in (\shD \cdot \ideala_\bullet)_{e, x}$, and 
    $f \in L$ such that $\psi(f) = 1$ and
    \begin{equation}\label{conservation estimate} 
      A(v; \shD \cdot \ideala_\bullet) - \e < \frac{1}{p^e-1}v(f) = E(f, \psi, v). 
    \end{equation}
    By definition of $\shD \cdot \ideala_\bullet$, there exist
    $\psi_1, \dots \psi_n \in \shD_{e, x}$ and 
    $a_1, \dots, a_n \in \ideala_{e, x}$ such that
    \[ \psi(f) = \sum_1^n \psi_i(a_i f) = 1. \]
    Let $-c = \min_{1 \le i \le n}\{v(\psi_i(a_i f))\} 
    \le v(\psi(f)) = 0$. 
    By reindexing, we may assume that $-c = v(\psi_1(a_1b))$ 
    so that $\psi_1(ug^{p^e}a_1f) = 1$ for some unit 
    $u \in \shO_v^\times$ and $g = \psi_1(a_1f) \in \shO_v$ (note $v(g) = c$). 
    Since $a_1 \in \ideala_e$, it follows that $v(\ideala_e) \le v(a_1)$, so 
    \begin{align*}
        A(v; \shD \cdot \ideala_\bullet) - \e + \frac{1}{p^e-1}v(\ideala_e) 
        &\le A(v; \shD \cdot \ideala_\bullet) - \e + \frac{1}{p^e-1}v(a_1) \\
        &< E(f, \psi, v) + \frac{1}{p^e-1}v(a_1) \\
        &\le E(f, \psi, v) + \frac{1}{p^e-1}v(a_1) + \frac{p^e}{p^e-1}cp^e \\
        &= E(ug^{p^e}a_1f; \psi, v) \\
        &\le A(v; \shD). 
    \end{align*}
    Here we are using \eqref{conservation estimate}, and $c \ge 0$, 
    between the first and second lines, and second and third, respectively. 
    Finally, we used that $\psi_1 \in \shD_e$ and $\psi_1(ug^{p^e}a_1f) = 1$, 
    so $E(ug^{p^e}a_1f, \psi_1, v) \le A(v; \shD)$. 
\end{proof}

\begin{remark}
    The assumption that $\zeta_F(\ideala_\bullet)$, $A(\zeta; \shD)$, and 
    $A(\zeta; \shD \cdot \ideala_\bullet)$ are finite is 
    essential. Indeed, for a fairly trivial counterexample without these 
    assumptions (which demonstrates the main trouble), 
    we can take $\zeta = \triv_X$, $\shD = \shC_X$, and $\ideala_e = 0$ for 
    $e > 1$. Then $\zeta_F(\ideala_\bullet) = +\infty$, $A(\zeta; \shD) = 0$, 
    and $A(\zeta; \shD \cdot \ideala_\bullet) = -\infty$. 

    Alternatively, suppose $X$ is $F$-finite and regular, $Z \subset X$ is a 
    proper integral subscheme of dimension at least one, let $\p_Z$ be
    the associated prime ideal, and set $\ideala_e = (\p_Z\fpow{e} : \p_Z)$
    for $e > 1$. Then $\p_Z$ is compatible with 
    $\shD = \shC_X \cdot \ideala_\bullet$, and 
    $\shD\|_Z = \shC_Z$ \cite{Fedder83}. 
    Let $z \in Z_{reg}$ and $\zeta = \ord_z$ be the valuation corresponding 
    to the exceptional fiber of $\Bl_z(Z) \to Z$. 
    Then $A(\zeta; \shD) = A(\zeta; \shC_Z) = \dim(Z)$ while 
    $\zeta_F(\ideala_\bullet) = +\infty = A(\zeta; \shC_X)$.  
\end{remark}

\begin{definition}[Log discrepancies of graded sequences]
  \label{log discrepancy of graded sequences}
  Let $\ideala_\star$ be a graded sequence of ideals, and $\shD$ a 
  Cartier subalgebra, on $X$. For $t \in [0, \infty)$ we define
  the Cartier subalgebra
  \[ \shD \cdot \ideala_\star^t = \sum_{m \ge 1} \shD \cdot \ideala_m^{t/m}. \]
  For $\zeta \in X^\beth$, we have
  \[ A(\zeta; \shD \cdot \ideala_\star^t) 
  := \sup_{m \ge 1} A(\zeta; \shD \cdot \ideala_m^{t/m}). \]
\end{definition}

\noindent We also have conservation for graded sequences.

\begin{corollary}\label{twisting corollary}
    Let $\shD$ be a Cartier subalgebra on $X$, $t \in [0, \infty)$, and 
    $\zeta \in X^\beth$ whose home $Z$ is $\shD$-compatible. 
    Suppose $\ideala_\star \subseteq \shO_X$ is a multiplicatively graded 
    sequence of ideals on $X$ such that 
    $\zeta(\ideala_\star) < \infty$. If $A(\zeta; \shD)$, 
    $A(\zeta; \shD \cdot \ideala_\star^t)$, and $\zeta(\ideala_\star)$ are finite,
    then
    \[ A(\zeta; \shD) = 
    A(\zeta; \shD \cdot \ideala_\star^t) + t \,\, \zeta(\ideala_\star). \]
\end{corollary}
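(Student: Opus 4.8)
The plan is to deduce Corollary~\ref{twisting corollary} from the conservation statement for $F$-graded sequences, \Cref{twisting rule}\eqref{conservation}, by the same kind of reindexing trick used throughout this section. First I would fix $m \ge 1$ and apply the $F$-graded conservation law to the $F$-graded sequence $\ideala_\bullet$ obtained from $\ideala_m$ and the exponent $t/m$; by construction (see the definition preceding \Cref{limit lemma}) this sequence has $\zeta_F(\ideala_\bullet) = \lim_e \lceil (t/m)(p^e-1)\rceil\,\zeta(\ideala_m)/(p^e-1) = (t/m)\,\zeta(\ideala_m)$. Hence, provided all three quantities are finite,
\[ A(\zeta; \shD) = A(\zeta; \shD \cdot \ideala_m^{t/m}) + \frac{t}{m}\,\zeta(\ideala_m). \]

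Next I would take the supremum over $m \ge 1$. By \Cref{log discrepancy of graded sequences}, $A(\zeta; \shD \cdot \ideala_\star^t) = \sup_{m} A(\zeta; \shD \cdot \ideala_m^{t/m})$, and by \Cref{limit lemma} applied to the multiplicatively graded sequence $\ideala_\star$, $\zeta(\ideala_\star) = \inf_m \zeta(\ideala_m)/m$. Rearranging the displayed identity gives $A(\zeta; \shD \cdot \ideala_m^{t/m}) = A(\zeta; \shD) - (t/m)\zeta(\ideala_m)$; taking $\sup_m$ of the left side and using that $\sup_m \big( - (t/m)\zeta(\ideala_m)\big) = - t\inf_m \zeta(\ideala_m)/m = -t\,\zeta(\ideala_\star)$ (valid since $t \ge 0$) yields $A(\zeta; \shD \cdot \ideala_\star^t) = A(\zeta; \shD) - t\,\zeta(\ideala_\star)$, which is the claim.

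The main obstacle is bookkeeping the finiteness hypotheses so that \Cref{twisting rule}\eqref{conservation} is actually applicable for each $m$. We are given that $A(\zeta;\shD)$, $A(\zeta;\shD\cdot\ideala_\star^t)$, and $\zeta(\ideala_\star)$ are finite. Finiteness of $\zeta(\ideala_\star)$ forces $\zeta(\ideala_m) < \infty$ for $m \gg 0$, hence $\zeta_F(\ideala_\bullet) = (t/m)\zeta(\ideala_m) < \infty$ for those $m$; and since $\shD \cdot \ideala_m^{t/m} \subseteq \shD$ we get $A(\zeta;\shD\cdot\ideala_m^{t/m}) \le A(\zeta;\shD) < \infty$ by monotonicity, while $A(\zeta;\shD\cdot\ideala_m^{t/m}) \ge A(\zeta;\shD) - (t/m)\zeta(\ideala_m) > -\infty$ follows once the identity is established for a single such $m$ — so one should first run the argument for one large $m_0$ to pin down finiteness, then note the identity propagates to all $m$ with $\zeta(\ideala_m) < \infty$, and finally observe that $m$ with $\zeta(\ideala_m) = \infty$ contribute $A(\zeta;\shD\cdot\ideala_m^{t/m}) = -\infty$ (as $\ideala_m\shO_\zeta = 0$ forces the twisted algebra to vanish in positive degrees) and so do not affect either supremum. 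I would also remark that the case $t = 0$ is trivial, both sides reducing to $A(\zeta;\shD)$, so we may assume $t > 0$ when dividing.
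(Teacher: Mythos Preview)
Your proposal is correct and follows essentially the same route as the paper: apply the $F$-graded conservation law \Cref{twisting rule}\eqref{conservation} to each $\shD \cdot \ideala_m^{t/m}$, compute $\zeta_F$ of the associated $F$-graded sequence as $(t/m)\,\zeta(\ideala_m)$, and then take the supremum over $m$. Your treatment of the finiteness hypotheses is in fact more careful than the paper's own proof, which simply invokes \Cref{twisting rule} and passes to the supremum without comment.
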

\begin{proof}
  We apply \cref{twisting rule} to the sequence of Cartier subalgebras
  $\shD \cdot \ideala_m^{t/m}$. By definition, 
  $\zeta(\ideala_m^{\lceil (t/m)(p^e-1) \rceil}) = \lceil (t/m)(p^e-1) \rceil \zeta(\ideala_m)$,
  so $\zeta_F(\{\ideala_m^{\lceil (t/m)(p^e-1) \rceil}\}_{e}) = (t/m) \zeta(\ideala_m)$. Taking suprema over
  $m \ge 1$ gives
  \begin{align*}
    A(\zeta; \shD \cdot \ideala_\star^t) &= \sup_{m \ge 1} A(\zeta; \shD \cdot \ideala_m^{t/m}) \\
    &= A(\zeta; \shD) - t v(\ideala_\star). 
  \end{align*}
\end{proof}

\noindent 
We return to studying asymptotic invariants of graded sequences of ideals 
in \S \ref{Section: LCT}.

\section{Proof of the Main Theorem}
\label{Section: Main Theorem}

For this section, we fix a normal variety $X$ over an 
algebraically closed field $k$ of characteristic $p > 0$. 
We also fix a canonical Weil divisor $K_X$ on $X$, which 
fixes a canonical divisor $K_Y$ on every normal variety 
$Y$ with a proper birational morphism $\pi: Y \to X$ by 
requiring $\pi_*K_Y = K_X$.  
To state and prove our main theorem, we review 
the construction of log discrepancies of arbitrary 
valuations of log $\Q$-Gorenstein pairs $(X, \Delta)$. 
But first, let us give the proof of \eqref{CMS1}, using \eqref{CMS2}. 
Recall the notation $\shC^X \cdot \Delta$ from \cref{twist: divisor}. 

\begin{proposition}[{Cascini-Musta\c{t}\u{a}-Schwede}]\label{CMS3}
  Let $(X, \Delta)$ be a log $\Q$-Gorenstein pair,
  and assume the Cartier index of $K_X + \Delta$
  is not divisible by $p$. Then 
  $A_{(X, \Delta)}(E) = A(\ord_E; \shC^X \cdot \Delta)$
  for every divisor $E$ over $X$. 
\end{proposition}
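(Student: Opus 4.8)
The plan is to localize at the center of $\ord_E$ and then pass to its valuation ring, where \cref{CMS2} applies directly. Both $A_{(X,\Delta)}(E)$ and $A(\ord_E;\shC^X\cdot\Delta)$ depend only on $\ord_E$ and on $\shC^X\cdot\Delta$ near $x=c_X(\ord_E)$, so I may assume $X=\Spec\shO_{X,x}$ is local. Then every invertible sheaf on $X$ is trivial, so for each admissible $e$ (one with $(p^e-1)(K_X+\Delta)$ Cartier) the sheaf $(F^e)^!\shO_X((p^e-1)\Delta)\isom\shO_X((1-p^e)(K_X+\Delta))$ is free of rank one; by \cref{twist: divisor}, $(\shC^X\cdot\Delta)_x$ is then the cyclic module $\psi_\Delta^n\cdot\shO_{X,x}$ in degree $ne$, where $\psi_\Delta$ is a generating $p^{-e}$-linear map $\shO_X((p^e-1)\Delta)\to\shO_X$ for the least admissible $e$, and it vanishes in the remaining degrees. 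Hence $(\shC^X\cdot\Delta)_x=\lbrak\psi_\Delta\rbrak_x$, and \cref{finite type} reduces the claim to showing $A(\ord_E;\psi_\Delta)=A_{(X,\Delta)}(E)$.

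To compute $A(\ord_E;\psi_\Delta)$, choose a normal variety $\pi: Y\to X$ realizing $E$ as a prime divisor with $\pi_*K_Y=K_X$, and let $(V,\m_V)=\shO_{Y,\eta_E}$ be the associated discrete valuation ring: an $F$-finite DVR with uniformizer $\varpi$, $\ord_E(\varpi)=1$. The divisor $\Delta_Y$ with $K_Y+\Delta_Y=\pi^*(K_X+\Delta)$ satisfies $\ord_E(\Delta_Y)=1-A_{(X,\Delta)}(E)$; moreover $(p^e-1)\Delta_Y=\pi^*\big((p^e-1)(K_X+\Delta)\big)-(p^e-1)K_Y$ is a difference of integral divisors, so $c:=(p^e-1)\ord_E(\Delta_Y)\in\Z$. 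The key point is that the divisor correspondence of \S\ref{Subsection: divisors} is compatible with the birational pullback $\pi^*$ — this is the technique of Hara and Watanabe, cf.\ \cite[Lemma~3.4]{HaraWatanabe} — so $\psi_\Delta$, viewed on $Y$, is the $p^{-e}$-linear map associated to $\Delta_Y$, and therefore, localized at $\eta_E$ and regarded inside the one-dimensional $L^{1/p^e}$-vector space $\Hom_L(L^{1/p^e},L)$ (\cref{other perspectives}\eqref{L vector space}), it equals $\Phi\cdot h$, where $\Phi$ is the canonical generator of $\shC^V_e$ — the $p^{-e}$-linear projection onto $\varpi^{p^e-1}$ from \cref{CMS2} — and $\ord_E(h)=c$. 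Since $\shC^V$ is generated in degree one by $\Phi$, \cref{finite type} together with \cref{CMS2} gives $A(\ord_E;\Phi)=A(\ord_E;\shC^V)=\ord_E(\varpi)=1<\infty$, so \cref{effect of right multiplication} applies and yields
\[
  A(\ord_E;\psi_\Delta)=A(\ord_E;\Phi)-\frac{\ord_E(h)}{p^e-1}=1-\ord_E(\Delta_Y)=A_{(X,\Delta)}(E).
\]
Together with the previous paragraph this gives $A(\ord_E;\shC^X\cdot\Delta)=A_{(X,\Delta)}(E)$; the formula of \cref{CMS1} follows as well, because the Claim inside the proof of \cref{CMS2} forces $\ord_E(f)=p^e-1$ for every $f$ with $\Phi(f)=1$, so $\sup\{(p^e-1)\inv\ord_E(f):\psi_\Delta(f)=1\}$ also equals $A_{(X,\Delta)}(E)$.

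The only genuine obstacle I anticipate is the compatibility of the divisor correspondence with $\pi^*$: checking that $\psi_\Delta$, localized at $\eta_E$, really is $\Phi\cdot h$ with exactly $\ord_E(h)=(p^e-1)\ord_E(\Delta_Y)$, sign included, and that this stays consistent with the fractional-ideal bookkeeping when $E$ is exceptional with $A_{(X,\Delta)}(E)>1$ (so $\ord_E(\Delta_Y)<0$ and $h$ has a pole along $E$). Everything else is formal manipulation with \cref{effect of right multiplication} and \cref{finite type}.
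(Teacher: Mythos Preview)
Your proof is correct and follows essentially the same route as the paper's: reduce via \cref{finite type} to a single generator $\psi_\Delta$, pass to the valuation ring $V=\shO_{Y,\eta_E}$, identify $\psi_\Delta=\Phi_E^e\cdot h$ with $\ord_E(h)=(p^e-1)\ord_E(\Delta_Y)$ by compatibility of the divisor correspondence with $\pi^*$, and finish with \cref{CMS2} and \cref{effect of right multiplication}. The compatibility you flag as the one genuine obstacle is exactly the point the paper justifies by observing that $\psi_\Delta$ and $\psi_{\Delta_Y}$ must agree on any dense open where $\pi$ is an isomorphism; one small notational slip is that your $\Phi$ is used both as the degree-$e$ generator of $\shC^V_e$ and as generating $\shC^V$ ``in degree one'' --- the paper writes $\Phi_E^e$ for the former and $\Phi_E$ for the latter, but since $A(\ord_E;\Phi_E^e)=A(\ord_E;\Phi_E)$ this does not affect the argument.
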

\begin{proof}
  The valuation ring $R$ of $\ord_E$ must be $F$-finite: 
  $X$ is $F$-finite, so every variety birational 
  to $X$ is, also. Suppose $E \subset Y$ is a prime divisor
  on some normal variety $Y$ with a proper birational morphism 
  $\pi: Y \to X$. Then $\shO_{Y,E} \cong R$ is $F$-finite. 
  In particular, \eqref{CMS2} gives
  $A(\ord_E; \shC^Y) = A(\ord_E; \shC^R) = 1$. 
  If $\shC^R = \lbrak \Phi_E \rbrak$ (see proof of \ref{CMS2}),
  then the fact that $\shC^Y_1 \cong \omega_Y^{\otimes (1-p)}$
  implies $\Phi_E$ must correspond to a generator 
  for $\omega_{Y,E}^{\otimes (1-p)}$ over $R$. 

  Now suppose $(1-p^e)(K_X + \Delta)$ is Cartier, and 
  let $\psi \in (F^e)^!\shO_X((p^e-1)\Delta)$ 
  be the corresponding $p^{-e}$-linear map, \eqref{Subsection: divisors}.
  Then $\lbrak \psi \rbrak \cong (F^e)^!\shO_X((p^e-1)\Delta)$.  

  We define $\Delta_Y$ as in \cref{Subsection: divisors}:
  \[ K_Y + \Delta_Y = \pi^*(K_X + \Delta). \]
  The $p^{-e}$-linear map corresponding to $\Delta_Y$ in 
  $(F^e)^!\shO_Y((p^e-1)\Delta_Y)$ is also $\psi$, since 
  these maps must agree when restricted to any dense open 
  set where $\pi$ is an isomorphism. 
  Normality of $Y$ implies that $(p^e-1)\Delta_Y$ 
  is Cartier in a smooth neighborhood $V$ of the generic 
  point of $E$ in $Y$, say $(p^e-1)\Delta_Y \cap V = \div_V(h)$. 
  Letting $K_V = K_Y \cap V$, we then have 
  \[ V \cap (1-p^e)(K_Y + \Delta_Y) = (1-p^e)K_V - \div_V(h) \]
  so $\psi$ gives a generator of the line bundle
  \[ \shO_V((1-p^e)K_V - \div_V(h))= h \cdot \omega_V^{\otimes (1-p^e)}. \]
  Thus, $\psi = \Phi_E^e \cdot h$ via the $\shO_V$-linear isomorphism 
  $\omega_V^{\otimes (1-p^e)} \cong (F^e)^!\shO_V$. Applying 
  \eqref{effect of right multiplication} and \eqref{finite type} now gives
  \begin{align*}
    A(\ord_E; \shC^X \cdot \Delta) &= A(\ord_E; \psi) \\
    &= A(\ord_E; \Phi_E) - \frac{\ord_E(h)}{p^e-1} \\
    &= 1 - \ord_E(\Delta_Y)\\
    &= A_{(X, \Delta)}(E). 
  \end{align*}
\end{proof}

The remainder of this section builds on \eqref{CMS3}. 
We must first review and extend the method of 
Jonsson and Musta\c{t}\u{a} to our setting, a process 
we work out significantly more carefully than in previous
versions of this paper. 

\subsection{Log smooth pairs and domination.}
A {\em log smooth pair over $X$} consists of a smooth variety $Y$ 
with a proper birational morphism $\pi: Y \to X$ and 
a reduced snc divisor $D = \sum_i D_i$ on $Y$; 
we will say $\pi: (Y, D) \to X$ is a log-smooth pair over $X$. 
We would like to follow
\cite{JonssonMustata} in the construction of a partial order on 
log smooth pairs over $X$; since in our setting $X$ is not assumed to
be $\Q$-Gorenstein, we must handle boundary divisors, so our definition
becomes more involved. We fix, on each normal variety admitting 
a proper birational morphism $\pi: Y \to X$, a canonical class $K_Y$
by requiring $\pi_*K_Y = K_X$. This choice of $K_Y$ implies that
it is supported on $E + \pi_*\inv(K_X)$, where $E$ is the 
exceptional locus of $\pi$. 

\begin{definition}
  Fix a log $\Q$-Gorenstein pair $(X, \Delta)$.  
  We will say a log-smooth pair $\pi: (Y, D) \to X$ {\em dominates} 
  $(X, \Delta)$, and write $(Y, D) \succeq (X, \Delta)$, if 
  $\pi: Y \to X$ is a log resolution with the two properties below.
  \begin{enumerate}
    \item The support of $\pi^*(K_X + \Delta)$ is contained 
      in the support of $K_Y + D$. 
    \item The support of $E + (\pi_*\inv(\Delta))_{red}$ is
      contained in the support of $D$. 
  \end{enumerate}
  When $(Y, D) \succeq (X, \Delta)$, we call $\pi$ {\em the domination morphism}. 
\end{definition}

Following Jonsson and Musta\c{t}\u{a}, we extend $\succeq$ to 
a partial order on log smooth pairs over $X$ as follows. 
If $(Y', D')$ and $(Y, D)$ are two log smooth pairs dominating 
$(X, \Delta)$, with domination morphisms $\pi': Y' \to X$
and $\pi: Y \to X$, we will write $(Y', D') \succeq (Y, D)$ 
whenever $\pi'$ factors as $\pi \circ \mu$ for a proper birational morphism
$\mu: Y' \to Y$, and $\mu^*D$ is supported on $D'$. 

\subsection{Comparison of retractions}
For every log-smooth pair $(Y, D)$ over $X$, recall
from \cref{definition: retraction}
the retraction morphism $r_{(Y, D)}: \Val_X \to \Val_X$,
the image of which we denote by $QM(Y, D)$. Suppose
$D = \sum_i D_i$, with each $D_i$ a prime divisor on $Y$. 

Following \cite[Proposition/Definition 5.1]{JonssonMustata}, for 
$(Y, D) \succeq (X, \Delta)$ and for $v \in QM(Y, D)$ we define 
\begin{equation}\label{QM log discrepancy}
  A_{(X, \Delta)}(v) = \sum_i v(D_i) A_{(X, \Delta)}(D_i). 
\end{equation}
Our first result towards agreement of our log discrepancy with
established approaches is that the expression \eqref{QM log discrepancy}
is the value of $A(v; \shC_X \cdot \Delta)$ for $v \in QM(Y, D)$. 
The result, and proof, is quite similar to \eqref{log discrepancy of QM}. 
We choose to give a full proof since we can
be much more explicit in this setting, and give
details we refer to in the proof of our main theorem. 

\begin{lemma}\label{agreement for QM}
  Let $\Delta \ge 0$ be a $\Q$-Weil divisor on $X$ such that 
  $(1-p^e)(K_X + \Delta)$ is Cartier for some $e > 0$. 
  Suppose $(Y, D) \succeq (X, \Delta)$ and $v \in QM(Y, D)$. 
  Then $A_{(X, \Delta)}(v) = A(v; \shC_X \cdot \Delta)$. 
\end{lemma}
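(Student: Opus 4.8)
The plan is to reduce the statement to \Cref{log discrepancy of QM} by choosing a convenient proper birational model on which $v$ becomes a monomial valuation and the Cartier subalgebra $\shC_X \cdot \Delta$ becomes principal at the center of $v$. Fix $v \in QM(Y, D)$ with $c_Y(v) = y$, and let $D \cap \Spec(\shO_{Y,y}) = \div(z_1 \cdots z_t)$ with $z_1, \dots, z_t$ part of a regular system of parameters $z_1, \dots, z_d$ for $\shO_{Y,y}$; by definition of $QM(Y,D)$, the valuation $v$ is the monomial valuation $\val_\alpha$ with $\alpha_i = v(D_i)$ for $i \le t$ and $\alpha_i = 0$ for $i > t$. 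First I would apply \Cref{RLR} to $\shO_{Y,y}$ (which is regular $F$-finite, since $Y$ is smooth over the $F$-finite field $k$) to get $A(v; \shC^Y) = \sum_{i=1}^t v(D_i)$. Then I would compute the left-hand side \eqref{QM log discrepancy}: by \eqref{CMS3}, $A_{(X,\Delta)}(D_i) = A(\ord_{D_i}; \shC_X \cdot \Delta)$, so it remains to relate these divisorial log discrepancies to the ``coefficient'' data of $\psi$ on $Y$.

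The key computation is the following. Since $(1-p^e)(K_X + \Delta)$ is Cartier, let $\psi \in (F^e)^!\shO_X((p^e-1)\Delta)$ be the corresponding $p^{-e}$-linear map \eqref{Subsection: divisors}, and define $\Delta_Y$ by $K_Y + \Delta_Y = \pi^*(K_X + \Delta)$; as in the proof of \eqref{CMS3}, $\psi$ is also the $p^{-e}$-linear map on $Y$ corresponding to $\Delta_Y$. Because $(Y,D) \succeq (X, \Delta)$, the support of $\Delta_Y$ is contained in the support of $D$, so near $y$ we may write $(p^e-1)\Delta_Y \cap \Spec(\shO_{Y,y}) = \div(h)$ where $h = \prod_{i=1}^t z_i^{(p^e-1)\ord_{D_i}(\Delta_Y)}$ (the components $D_i$ through $y$ with $i > t$ do not appear in $\Delta_Y$ because $v(D_i) = 0$ forces them out of $D \cap \Spec \shO_{Y,y}$ in the relevant sense — I would need to double-check that $\Delta_Y$ is supported exactly on the $D_i$ passing through $y$ among $i \le t$). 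Then, writing $\shC^Y_{y} = \lbrak \Phi_y \rbrak_y$, we have $\psi = \Phi_y^e \cdot h$ on $\Spec(\shO_{Y,y})$, exactly as in the local step of \eqref{CMS3}. Now \Cref{log discrepancy of QM} gives
\[
  A(v; \psi) = v(D) + \frac{v(h)}{1 - p^e}
  = \sum_{i=1}^t v(D_i) - \sum_{i=1}^t v(D_i)\,\ord_{D_i}(\Delta_Y)
  = \sum_{i=1}^t v(D_i)\bigl(1 - \ord_{D_i}(\Delta_Y)\bigr),
\]
and since $A_{(X,\Delta)}(D_i) = 1 - \ord_{D_i}(\Delta_Y)$ by definition of geometric log discrepancy, the right-hand side is exactly \eqref{QM log discrepancy}. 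Finally, by \Cref{finite type}, $A(v; \shC_X \cdot \Delta) = A(v; \psi)$ since $(\shC_X \cdot \Delta)_y = \lbrak \psi \rbrak_y$, which closes the argument.

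The main obstacle I expect is the bookkeeping around which prime divisors actually pass through $y = c_Y(v)$ and contribute to the local equation of $\Delta_Y$: I must be careful that the components $D_i$ with $v(D_i) = 0$ (the $i > t$) genuinely play no role, that $K_Y + \Delta_Y$'s support behaves as the domination hypotheses guarantee, and that $\Phi_y$ and $h$ are correctly normalized so that $\psi = \Phi_y^e \cdot h$ holds on the nose (not merely up to a unit, which suffices anyway since units do not change $v(h)$). A secondary subtlety is that \Cref{log discrepancy of QM} is stated for locally quasi-monomial valuations realized via $r_{(Y',D')}$ for some model; here $v = r_{(Y,D)}(v)$ directly by hypothesis $v \in QM(Y,D)$, so this is immediate, but I would make the identification of $h$ with $\div_{\shO_{Y,y}}(h)$ explicit to match the hypotheses of that proposition. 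Everything else is a routine unwinding of definitions already established in the excerpt.
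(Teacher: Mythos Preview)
Your proposal is correct and follows essentially the same route as the paper: pull $\psi$ back to $Y$ via $\psi = \psi_{\Delta_Y}$, write $\psi = \Phi_y^e \cdot h$ locally at $y = c_Y(v)$ with $h$ a monomial in local equations for the $D_i$, and then apply \Cref{log discrepancy of QM} (equivalently \Cref{RLR} plus \Cref{effect of right multiplication}) together with \Cref{finite type}. The bookkeeping worry you flag is exactly the one the paper addresses: it indexes all components of $D$ as $D_1, \dots, D_t$, renumbers so that $D_1, \dots, D_s$ are precisely those through $y$ (their local equations form a full regular system of parameters for $\shO_{Y,y}$, since $v \in QM(Y,D)$ forces $y$ to be the generic point of $\cap_{v(D_i)>0} D_i$), and observes that the remaining factors $f_{s+1}^{b_{s+1}} \cdots f_t^{b_t}$ assemble into a unit $u \in \shO_{Y,y}^\times$, so that $\psi = \Phi_y^e \cdot (u v' f_1^{b_1}\cdots f_s^{b_s})$ for some unit $v'$ --- and units, as you note, do not affect $v(h)$.
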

\begin{proof}
  Fix $v \in QM(Y, D)$ and define $x = c_X(v)$, $y = c_Y(v)$. Let $\pi: Y \to X$
  be the domination morphism. By assumption, 
  $\pi^*(K_X + \Delta) - K_Y = \Delta_Y$
  is supported on $D$. Suppose
  $\Delta_Y = \sum_{i = 1}^t \frac{b_i}{p^e-1} D_i$, where $b_i \in \Z$. 

  Since $(1-p^e)(K_X + \Delta)$ is Cartier, there is a corresponding 
  $\psi_\Delta \in (F^e)^!\shO_X((p^e-1)\Delta)$; similarly, 
  $(1-p^e)(K_Y + \Delta_Y) = \pi^*((1-p^e)(K_X + \Delta))$
  corresponds to some 
  $\psi_{\Delta_Y} \in (F^e)^!\shO_Y((p^e-1)\Delta_Y)$. 
  Since these $p^{-e}$-linear maps must agree on any dense open
  subset where $\pi$ is an isomorphism, they must be the same map. Therefore,
  \[ A(v; \shC_X \cdot \Delta) = A(v; \psi_{\Delta}) = A(v; \psi_{\Delta_Y}). \]
  The first equality is \cref{finite type}, the second following from 
  $\psi_\Delta = \psi_{\Delta_Y}$.

  Let $f_1, \dots, f_t \in \shO_{Y, y}$ be such that $D_i = \div(f_i)$ in 
  some neighborhood of $y$. By re-numbering the $D_i$ if necessary, we can 
  assume that $f_1, \dots, f_s$ give a regular system of parameters for 
  $\shO_{Y, y}$ 
  (so for $s < i \le t$, the divisor $D_i$ does not contain $y$). 
  The monomials $(f_1^{n_1} \cdots f_s^{n_s})$, with 
  $0 \le n_i \le p^e-1$,
  give a free basis for $\shO_{Y, y}$ over $\shO_{Y, y}^{p^e}$. 
  The projection $\Phi_y^e$ onto the basis element 
  $(f_1 \cdots f_s)^{(p^e-1)}$ gives a generator for 
  $\shC^Y_y$ over $\shO_{Y,y}$. Since 
  $y \not\in D_i$ for $s < i \le t$, 
  we see that $u := f_{s+1}^{b_{s+1}} \cdots f_t^{b_t}$ is a unit in $\shO_{Y,y}$.

  Recalling that $\Delta_Y = \sum_1^t b_i D_i$, there is a unit 
  $v \in \shO_{Y,y}^\times$ such that we have
  \[ \psi_{\Delta_Y} = \Phi_y^e \cdot (uv f_1^{b_1} \cdots f_s^{b_s}). \]
  For example, $v = 1$ when the generator $\Phi_y$ is chosen to correspond 
  to the specific embedding 
  $\omega_Y^{\otimes (1-p^e)} \cong \shO_Y((1-p^e)K_Y) \subseteq L$ 
  furnished by our choice of the Weil divisor $K_Y$. 

  Now using \cref{log discrepancy of QM} and 
  \cref{effect of right multiplication}, we get
  \begin{align*}
    A(v; \psi_{\Delta_Y}) &= 
    A(v; \Phi_y^e) - \frac{1}{p^e-1} \left( \sum_{i = 1}^s b_i v(f_i) \right) \\
    &= \left( \sum_{i = 1}^s v(f_i) \right) - \frac{1}{p^e-1} 
    \left( \sum_{i = 1}^s b_i v(f_i) \right) \\
                          &= \sum_{i = 1}^s v(f_i) \left( 1 - \frac{b_i}{p^e-1} \right) \\
                          &= \sum_{i = 1}^s v(D_i) (1 - \ord_{D_i}(\Delta_Y) \\
                          &= \sum_{i = 1}^s v(D_i) A_{(X, \Delta)}(D_i) \\
                          &= A_{(X, \Delta)}(v). 
  \end{align*}
\end{proof}

\begin{lemma}\label{non-decreasing along retraction}
  Let $(X, \Delta)$ be a log $\Q$-Gorenstein pair. 
  Suppose $(Y', D')$ and $(Y, D)$ are log smooth pairs 
  over $X$, and $(Y', D') \succeq (Y, D) \succeq (X, \Delta)$. 
  Then for all $v \in \Val_X$, 
  \[ A_{(X, \Delta)}(r_{(Y, D)}(v)) \le A_{(X, \Delta)}(r_{(Y', D')}(v)). \]
\end{lemma}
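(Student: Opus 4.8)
The plan is to evaluate both sides via the explicit divisorial formula \eqref{QM log discrepancy} and to reduce the inequality to log canonicity of the log smooth pair $(Y,D)$. Fix $v\in\Val_X$ and set $y=c_Y(v)$, $y'=c_{Y'}(v)$; since $\mu\colon Y'\to Y$ is proper, $\mu(y')=y$. The two retracted valuations lie in $QM(Y,D)$ and $QM(Y',D')$ respectively, and as $\succeq$ is transitive we also have $(Y',D')\succeq(X,\Delta)$, so \eqref{QM log discrepancy} applies to each. By \cref{definition: retraction} the coefficients satisfy $r_{(Y,D)}(v)(D_i)=v(D_i)$ for the $D_i$ through $y$ and vanish otherwise, and likewise for $r_{(Y',D')}(v)$; hence $A_{(X,\Delta)}(r_{(Y,D)}(v))=\sum_{D_i\ni y}v(D_i)\,A_{(X,\Delta)}(D_i)$ and $A_{(X,\Delta)}(r_{(Y',D')}(v))=\sum_{D'_j\ni y'}v(D'_j)\,A_{(X,\Delta)}(D'_j)$.

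Because $\mu^{*}D$ is supported on $D'$ I may write $\mu^{*}D_i=\sum_j c_{ij}D'_j$; pulling back a local equation of $D_i$ near $y'$ yields $v(D_i)=\sum_{D'_j\ni y'}c_{ij}\,v(D'_j)$ (components of $D'$ not through $y'$ receive value $0$). Substituting this into the first formula and subtracting it from the second gives
\[
  A_{(X,\Delta)}\bigl(r_{(Y',D')}(v)\bigr)-A_{(X,\Delta)}\bigl(r_{(Y,D)}(v)\bigr)
  =\sum_{D'_j\ni y'}v(D'_j)\Bigl(A_{(X,\Delta)}(D'_j)-\sum_i c_{ij}\,A_{(X,\Delta)}(D_i)\Bigr).
\]
Since $v$ has a center on $Y'$ each $v(D'_j)\ge 0$, so it is enough to show the bracket is $\ge 0$ for every $j$ with $D'_j\ni y'$.

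For the bracket I would unwind the definitions. Write $\pi^{*}(K_X+\Delta)=K_Y+\Delta_Y$. The second defining condition of ``$(Y,D)\succeq(X,\Delta)$'', together with the fact that the chosen $K_Y$ is supported on $E+\pi_*\inv(K_X)$, forces $\Supp(\Delta_Y)\subseteq\Supp(D)$: a prime $F$ with $\ord_F(\Delta_Y)\ne 0$ is either $\pi$-exceptional, hence contained in $E\subseteq D$, or else $\ord_F(\Delta_Y)=\ord_{\pi(F)}(\Delta)$, which forces $F$ into $(\pi_*\inv\Delta)_{red}\subseteq D$. Thus $\Delta_Y=\sum_i\ord_{D_i}(\Delta_Y)\,D_i$; moreover $(\pi')^{*}(K_X+\Delta)=\mu^{*}(K_Y+\Delta_Y)$ and $K_{Y'}=\mu^{*}K_Y+K_{Y'/Y}$ with $K_{Y'/Y}\ge 0$ the relative canonical divisor of the morphism of smooth varieties $\mu$. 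Combining these, $\ord_{D'_j}(\Delta_{Y'})=\sum_i c_{ij}\ord_{D_i}(\Delta_Y)-\ord_{D'_j}(K_{Y'/Y})$, and since $\sum_i c_{ij}=\ord_{D'_j}(\mu^{*}D)$ a direct computation collapses the bracket to
\[
  A_{(X,\Delta)}(D'_j)-\sum_i c_{ij}\,A_{(X,\Delta)}(D_i)
  =1+\ord_{D'_j}(K_{Y'/Y})-\ord_{D'_j}(\mu^{*}D)
  =A_{(Y,D)}(D'_j),
\]
the geometric log discrepancy at $D'_j$ of the log $\Q$-Gorenstein pair $(Y,D)$ (Cartier index one, as $Y$ is smooth). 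Since $Y$ is smooth and $D$ is reduced snc, $(Y,D)$ is log canonical, so $A_{(Y,D)}(D'_j)\ge 0$ \cite{KollarMori}, which finishes the proof.

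I expect the middle step to be the main obstacle. The point is that $A_{(X,\Delta)}(D_i)$ and $A_{(X,\Delta)}(D'_j)$ may be negative, since $(X,\Delta)$ is not assumed log canonical, so the decisive nonnegativity must come from log canonicity of the auxiliary \emph{log smooth} pair $(Y,D)$ rather than of $(X,\Delta)$; and the inclusion $\Supp(\Delta_Y)\subseteq\Supp(D)$ — which is exactly what the second clause in the definition of ``dominates'' is there to guarantee — has to be verified carefully with the fixed canonical classes in hand.
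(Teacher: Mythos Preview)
Your proof is correct and follows essentially the same route as the paper's: both reduce to the per-component inequality $A_{(X,\Delta)}(D'_j)\ge\sum_i c_{ij}A_{(X,\Delta)}(D_i)$ via the explicit formula \eqref{QM log discrepancy} and the pullback relation $\mu^*D_i=\sum_j c_{ij}D'_j$. The only difference is in the last step: you recognize the bracket as $A_{(Y,D)}(D'_j)$ and invoke log canonicity of the snc pair $(Y,D)$ from \cite{KollarMori}, whereas the paper proves the equivalent inequality $A_{(Y,0)}(G)\ge\sum_i b_i$ directly via a K\"ahler differential computation---a choice made to keep the argument self-contained in positive characteristic, where the cited reference is not stated.
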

\begin{proof}
  Let $w' = r_{(Y', D')}(v)$, which is in $QM(Y', D')$.
  Since $r_{(Y, D)} \circ r_{(Y', D')} = r_{(Y, D)}$, 
  it suffices to prove that 
  $A_{(X, \Delta)}(r_{(Y, D)}(w')) \le A_{(X, \Delta)}(w')$. 

  Let $\pi': Y' \to X$, $\pi: Y \to X$, and $\mu: Y' \to Y$ be
  the domination morphisms between these pairs. 
  Define $\Delta_Y$ and $\Delta_{Y'}$ via:
  \[ K_Y + \Delta_Y = \pi^*(K_X + \Delta) \]
  and
  \[ K_{Y'} + \Delta_{Y'} = (\pi')^*(K_X + \Delta) = \mu^*K_Y + \mu^*\Delta_Y. \]
  Write $D = \sum_i D_i$ and $D' = \sum_j D'_j$ for the snc
  divisors on $Y$ and $Y'$, respectively, with each $D_i$ and $D'_j$
  a prime divisor. From $(Y', D') \succeq (Y, D)$ we see that there
  are integers $b_{i,j}$ such that
  \[ \mu^*D_i = \sum_j b_{i,j}D'_j \]
  for all $i$. The log discrepancies $A_{(X, \Delta)}(r_{(Y, D)}(w'))$
  and $A_{(X, \Delta)}(w')$ of question are:
  \[ A_{(X, \Delta)}(r_{(Y, D)}(w')) = \sum_i w'(D_i)A_{(X, \Delta)}(D_i) \]
  and 
  \[ A_{(X, \Delta)}(w') = \sum_j w'(D'_j)A_{(X, \Delta)}(D'_j). \]
  Since $w'(D_i) = w'(\mu^*D_i) = \sum_j b_{i,j}w'(D'_j)$, we can re-write
  $A_{(X, \Delta)}(r_{(Y, D)}(w'))$ as
  \[ \sum_i \left( \sum_j b_{i,j}w'(D'_j) \right) A_{(X, \Delta)}(\ord_{D_i}) = \sum_j \left[ w'(D'_j) \left( \sum_i b_{i,j} A_{(X, \Delta)}(\ord_{D_i}) \right) \right]. \]
  Each $w'(D'_j)$ is non-negative, since $w' \in QM(Y', D')$, so to prove 
  $A_{(X, \Delta)}(r_{(Y, D)}(w')) \le A_{(X, \Delta)}(w')$ it suffices 
  to prove
  \[ \sum_i b_{i,j} A_{(X, \Delta)}(D_i) \le A_{(X, \Delta)}(D'_j) \]
  for each $j$; fix one, and rename the corresponding divisor $G$. 
  To simplify notation, we define $b_i$ to be the coefficient on $G$ in $\mu^*D_i$. 
  Thus,
  \begin{align*}
    A_{(X, \Delta)}(G) &= 1 + \ord_{G}(K_{Y'} - \mu^*(K_Y + \Delta_Y)) \\
                          &= A_{(Y, 0)}(G) - \ord_{G}(\mu^*\Delta_Y). 
  \end{align*}
  From $(Y, D) \succeq (X, \Delta)$, we know $\Delta_Y$ is supported on $D$;
  let $\Delta_Y = \sum_i r_i D_i$ for some $r_i \in \Q$. Therefore, 
  \[ \mu^*\Delta_Y = \sum_i r_i \mu^*D_i. \]
  Focusing on $G$, we see that
  \[ \ord_G(\mu^*\Delta_Y) = \sum_i r_i b_i. \]
  To complete the proof, we need the following estimate. 
  Jonsson and Musta\c{t}\u{a} prove essentially the same 
  estimate in \cite[Lemma 1.5]{JonssonMustata} for regular excellent
  $\Q$-schemes. We are on a variety, so we can use the K\"ahler differential
  sheaves instead of sheaves of special differentials, cf. \cite{dFEM}. 

  \noindent\textbf{Claim:} $A_{(Y, 0)}(G) \ge \sum_i b_i$.

  \begin{pfClaim}
    Let $\eta'$ be the generic point of $G$, and $\eta = \mu(\eta')$. 
    By re-numbering the $D_i$, we may assume $\eta \in D_1 \cap \cdots \cap D_s$,
    and $\eta \not\in D_i$ for $i > s$. 
    For $1 \le i \le s$, choose $f_i \in \shO_{Y, \eta}$ with 
    $D_i = \div(f_i)$ near $\eta$. Let $\varpi$ be a generator
    for the maximal ideal of $\shO_{Y', \eta'}$. 

    The differentials $df_1, \dots, df_s$ form part of a basis for
    $\Omega_{Y, \eta}$, since $D$ is snc. Choose 
    $g_{s+1}, \dots, g_n \in \shO_{Y,\eta}$ so that the 1-forms 
    $df_1, \dots, df_s, dg_{s+1}, \dots, dg_n$ 
    give a basis for $\Omega_{Y, \eta}$ over $\shO_{Y, \eta}$. 
    Setting 
    \[ \delta = (\wedge_{i = 1}^s df_i) \wedge (\wedge_{j = s+1}^n dg_j) \]
    gives a generator for $\omega_{Y, \eta}$. Suppose
    $c_j = \ord_G(g_j)$, $s < j \le n$. Then it is straightforward to see 
    \[ (\mu^*\delta) \shO_{Y', \eta'} =
    \varpi^{(\sum_i b_i) + (\sum_j c_j) - 1} \omega_{Y', \eta'}. \]
    Therefore, by definition $A_{(Y, 0)}(G) = (\sum_i b_i) + (\sum_j c_j)$,
    which is at least $(\sum_i b_i)$ since each $c_j \ge 0$. 
  \end{pfClaim}
  Putting all of this together, we conclude:
  \begin{align*}
    A_{(X, \Delta)}(G) &= A_{(Y, 0)}(G) - \ord_{G}(\mu^*\Delta_Y) \\
                              &= A_{(Y, 0)}(G) - \sum_i r_i b_i \\
                              &\ge \sum_i b_i (1 - r_i) \\
                              &= \sum_i b_i(1 - \ord_{D_i}(\Delta_Y)) \\
                              &= \sum_i b_iA_{(X, \Delta)}(D_i). 
  \end{align*}

\end{proof}

\begin{corollary}\label{non-decreasing for QM}
  With the notation as in the previous proposition, 
  \[ A(r_{(Y, D)}(v); \shC^X \cdot \Delta) \le 
  A(r_{(Y', D')}(v); \shC^X \cdot \Delta). \]
\end{corollary}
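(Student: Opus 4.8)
The plan is to deduce this directly from \Cref{non-decreasing along retraction} by translating both sides through \Cref{agreement for QM}. Write $w = r_{(Y, D)}(v)$ and $w' = r_{(Y', D')}(v)$. First I would note that $w \in QM(Y, D)$ and $w' \in QM(Y', D')$, which is immediate from \Cref{definition: retraction}. Second, I would check that $(Y, D)$ and $(Y', D')$ both dominate $(X, \Delta)$ in the sense required by \Cref{agreement for QM}: for $(Y, D)$ this is the standing hypothesis, and for $(Y', D')$ it follows because $\pi' = \pi \circ \mu$ is a composite of proper birational morphisms from smooth varieties whose snc divisor $D'$ supports $\mu^*D$, so $\pi'$ is a log resolution satisfying the two support conditions defining domination over $(X, \Delta)$; in other words, the partial order $\succeq$ on log-smooth pairs over $X$ is arranged precisely so that anything lying above $(X, \Delta)$ still dominates it.

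Granting these two points, \Cref{agreement for QM} applies to each of $w$ and $w'$, giving
\[ A(w; \shC^X \cdot \Delta) = A_{(X, \Delta)}(w), \qquad A(w'; \shC^X \cdot \Delta) = A_{(X, \Delta)}(w'). \]
Then \Cref{non-decreasing along retraction}, which is the substantive input where the genuine work (the differentials estimate in its embedded claim) has already been done, supplies $A_{(X, \Delta)}(w) \le A_{(X, \Delta)}(w')$, and chaining these three relations gives the corollary. I do not expect any real obstacle here: the only step requiring a sentence of justification is verifying that $(Y', D')$ still dominates $(X, \Delta)$, so that \Cref{agreement for QM} is legitimately applicable to $w'$ and not merely to $w$.
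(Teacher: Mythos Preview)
Your proposal is correct and is exactly what the paper intends: the corollary is stated without proof, and the implicit argument is precisely to combine \Cref{non-decreasing along retraction} with \Cref{agreement for QM} applied to each retraction. One minor simplification: you need not argue that $(Y',D')$ dominates $(X,\Delta)$, since in the paper's setup the relation $(Y',D') \succeq (Y,D)$ is only defined between log-smooth pairs that \emph{already} dominate $(X,\Delta)$, so this is part of the hypothesis rather than something to verify.
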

   
\subsection{Log discrepancies of arbitrary valuations.}
Assume for this subsection that
log resolutions of Weil divisors on varieties birational to $X$
exist, so that the collection of log-smooth
pairs $(Y, D) \succeq (X, \Delta)$ is non-empty, and 
Abhyankar valuations admit global monomializations. 
Still following Jonsson and Musta\c{t}\u{a}, 
we define the log discrepancy of arbitrary $v \in \Val_X$ to be:
\begin{equation}\label{definition: JM log discrep}
  A_{(X, \Delta)}(v) = \sup_{(Y, D) \succeq (X, \Delta)} A_{(X, \Delta)}(r_{(Y, D)}(v)). 
\end{equation}
Note that this is well defined, thanks to 
\eqref{non-decreasing along retraction}. 
There are numerous reasons to believe this 
is the correct extension of $A_{(X, \Delta)}$ 
from $X^\div$ to all of $\Val_X$. For example, this 
extension is the maximal lower-semicontinuous extension 
of the log discrepancy on $X^\div$. In characteristic zero, 
Mauri, Mazzon, and Stevenson \cite{MauriMazzonStevenson} 
recently proved that this definition coincides with Temkin's 
pluricanonical metric from \cite{TemkinMetrization}; their 
approach relates log discrepancies to the {\em weight metrics}
of Musta\c{t}\u{a} and Nicaise \cite{MustataNicaiseWeightFunctions}, 
which give an analogous function for discretely 
valued ground fields. 

Our main theorem is that in the case log resolutions exist, 
defining $A_{(X, \Delta)}(v)$ for valuations $v$ using 
\eqref{definition: JM log discrep} gives the same function 
on $\Val_X$ as \cref{definition: log discrepancy function}. 

\begin{theorem}\label{agreement with JM}
  Let $(X, \Delta)$ be a log $\Q$-Gorenstein pair, and suppose 
  that the Cartier index of $K_X + \Delta$ is not divisible by $p$. 
  Suppose log resolutions exist for Weil 
  divisors on varieties birational to $X$. Then
  $A_{(X, \Delta)}(v) = A(v; \shC^X \cdot \Delta)$ for all $v \in \Val_X$. 
\end{theorem}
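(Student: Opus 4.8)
The plan is to first rewrite the target using the definition \eqref{definition: JM log discrep} together with \cref{agreement for QM}: these give
\[ A_{(X,\Delta)}(v) = \sup_{(Y,D)\succeq(X,\Delta)} A_{(X,\Delta)}\bigl(r_{(Y,D)}(v)\bigr) = \sup_{(Y,D)\succeq(X,\Delta)} A\bigl(r_{(Y,D)}(v);\,\shC^X\cdot\Delta\bigr), \]
so the theorem reduces to the purely Frobenius-theoretic assertion that $A(v;\shC^X\cdot\Delta)$ is computed by its monomialization retractions, i.e.\ equals the last supremum. I would fix once and for all an $e$ with $(1-p^e)(K_X+\Delta)$ Cartier and let $\psi_\Delta$ be the associated $p^{-e}$-linear map, so that $A(v;\shC^X\cdot\Delta)=A(v;\psi_\Delta)$ by \cref{finite type}; recall from the proofs of \cref{CMS3} and \cref{agreement for QM} that for every $(Y,D)\succeq(X,\Delta)$ the maps $\psi_\Delta$ and $\psi_{\Delta_Y}$ (attached to $\pi^*(1-p^e)(K_X+\Delta)=(1-p^e)(K_Y+\Delta_Y)$) coincide as additive self-maps of $L$.

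For the inequality $A(v;\shC^X\cdot\Delta)\ge\sup_{(Y,D)}A(r_{(Y,D)}(v);\shC^X\cdot\Delta)$, I would fix $(Y,D)\succeq(X,\Delta)$, put $w=r_{(Y,D)}(v)$ and $y=c_Y(v)$, and argue term by term. On one side, \cref{agreement for QM} and \eqref{QM log discrepancy} give $A(w;\shC^X\cdot\Delta)=A_{(X,\Delta)}(w)=\sum_{D_i\ni y} v(D_i)\bigl(1-\ord_{D_i}(\Delta_Y)\bigr)$, since $w(D_i)$ equals $v(D_i)$ when $D_i$ passes through $y$ and $0$ otherwise. On the other side, choose a regular system of parameters $z_1,\dots,z_d$ for $\shO_{Y,y}$ extending the local equations of the $D_i$ through $y$, take $\Phi$ to be a generator of $\shC^Y_e$ over $\shO_{Y,y}$ with $E((z_1\cdots z_d)^{p^e-1},\Phi,v)=\sum_j v(z_j)$ (projection onto the monomial $(z_1\cdots z_d)^{p^e-1}$, cf.\ the proofs of \cref{RLR} and \cref{CMS3}), and write $\psi_\Delta=\psi_{\Delta_Y}=\Phi\cdot h$ near $y$, where (as in the proof of \cref{agreement for QM}) $h$ is a unit of $\shO_{Y,y}$ times $\prod_{D_i\ni y} z_i^{(p^e-1)\ord_{D_i}(\Delta_Y)}$. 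Then \cref{effect of right multiplication} gives $A(v;\psi_\Delta)=A(v;\Phi)-(p^e-1)\inv v(h)$; using $A(v;\Phi)\ge\sum_j v(z_j)$ and $v(h)=(p^e-1)\sum_{D_i\ni y}\ord_{D_i}(\Delta_Y)\,v(D_i)$ (units have $v$-value $0$), one obtains $A(v;\psi_\Delta)\ge\sum_{D_i\ni y}v(D_i)(1-\ord_{D_i}(\Delta_Y))=A(w;\shC^X\cdot\Delta)$. This is the computation of \cref{agreement for QM} run as an inequality; if $A(v;\psi_\Delta)=+\infty$ there is nothing to prove.

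For the reverse inequality I would approximate. Given $\varepsilon>0$, pick $n\ge1$ and $f\in L$ with $\psi_\Delta^n(f)=1$ and $E(f,\psi_\Delta^n,v)=(p^{ne}-1)\inv v(f)>A(v;\psi_\Delta)-\varepsilon$ (replace this by a sequence if the supremum is $+\infty$). Applying the hypothesis that log resolutions exist to a log resolution of $(X,(\div_0 f+\div_\infty f)_{red}+\Delta)$ refining a fixed log resolution of $(X,\Delta)$ produces $(Y,D)\succeq(X,\Delta)$ with $\operatorname{supp}\div_Y(f)\subseteq D$; hence near $y=c_Y(v)$ the element $f$ is a unit of $\shO_{Y,y}$ times a monomial in the local equations of the $D_i$ through $y$, and therefore $r_{(Y,D)}(v)(f)=v(f)$ by the same unit-value observation used above. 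Since $\psi_{\Delta_Y}^n=\psi_\Delta^n$ on $L$, this yields $E(f,\psi_{\Delta_Y}^n,r_{(Y,D)}(v))=E(f,\psi_\Delta^n,v)$, so $A(r_{(Y,D)}(v);\shC^X\cdot\Delta)=A(r_{(Y,D)}(v);\psi_{\Delta_Y})\ge A(v;\psi_\Delta)-\varepsilon$ by \cref{finite type}; letting $\varepsilon\to0$ completes the proof.

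The step I expect to be the main obstacle is the careful comparison of $v$ with its retraction $w=r_{(Y,D)}(v)$: these two valuations are directly comparable only on $\shO_{Y,y}$ (where $w\le v$ by super-additivity), not on all of $L$, and both directions of the argument rest on the sharper fact that $v$ and $w$ actually \emph{agree} on every element that is a unit of $\shO_{Y,y}$ times a monomial in the $D$-components through $c_Y(v)$. Making this precise, arranging (automatically on the ``$\ge$'' side, via log resolution on the ``$\le$'' side) that the relevant field elements $h$ and $f$ have this shape, and tracking the harmless remaining details — the generization $c_Y(w)\rightsquigarrow c_Y(v)$, the identification $(\shC^X\cdot\Delta)_{c_X(w)}=\lbrak\psi_\Delta\rbrak_{c_X(w)}$, and the possibility $A(v;\shC^X\cdot\Delta)=+\infty$ — is where the real work lies; the remaining manipulations are routine applications of \cref{effect of right multiplication}, \cref{finite type}, and \cref{agreement for QM}.
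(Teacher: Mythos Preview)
Your proposal is correct and follows essentially the same route as the paper. Both directions match the paper's argument closely: your $\ge$ direction via $A(v;\Phi)\ge\sum_j v(z_j)$ and \cref{effect of right multiplication} is exactly the paper's explicit element $f=(\prod g_j^{p^e-1})(\prod f_i^{p^e-1-c_i})$ rewritten, and your $\le$ direction via a log resolution making $f$ monomial is the same mechanism the paper uses (the paper resolves $\div(f)+\div(\psi^n(f))$ with $f\in R$, while you take $f\in L$ with $\psi^n(f)=1$ and resolve $\div_0 f+\div_\infty f$; these are equivalent). The one place you streamline the paper is in bypassing its Claim~$(\star)$: rather than forcing $c_X(r_{(Y,D)}(v))=c_X(v)$, you simply note that $c_X(w)$ generalizes $c_X(v)$ and hence $(\shC^X\cdot\Delta)_{c_X(w)}=\lbrak\psi_\Delta\rbrak_{c_X(w)}$ still holds, which is enough to invoke \cref{finite type}.
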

\begin{proof}
  Fix $v \in \Val_X$. Since $r_{(Y, D)}(v) \in QM(Y, D)$, 
  \cref{agreement for QM} provides an equality 
  $A_{(X, \Delta)}(r_{(Y, D)}(v)) = A(r_{(Y, D)}(v); \shC^X \cdot \Delta)$
  for every $(Y, D) \succeq (X, \Delta)$. 

  All of our considerations are local near $x = c_X(v)$, so we restrict our
  attention to $R = \shO_{X,x}$, denoting by $\m = \m_x$ the maximal ideal 
  of $R$. The prime ideal $\p$ of $R$ associated to 
  $c_X(r_{(Y, D)}(v)) \in X$ is contained in $\m$ for every 
  $(Y, D) \succeq (X, \Delta)$. 

  \noindent\textbf{Claim ($\star$) :} 
  For every $(Y, D) \succeq (X, \Delta)$, there exists 
  $(Y', D') \succeq (Y, D)$ such that $c_X(r_{(Y', D')}(v)) = \m$. 

  \begin{pfClaim}
    Fix some $(Y, D) \succeq (X, \Delta)$ with domination morphism 
    $\pi: Y \to X$, and assume $\p = c_{(Y ,D)}(v) \subsetneq \m$.
    Then we can choose $f_1 \in \m \setminus \p$; this element is 
    defined in some neighborhood $U$ of $\m$ in $X$, and we define 
    $N$ to be the scheme-theoretic closure of  $\div_U(f)$ in $X$, 
    which is an effective Weil divisor. Take $(Y^{(1)}, D^{(1)})$ to 
    be a log resolution of $(X, \Delta + N)$ dominating $(Y, D)$;
    note that such a log resolution exists by taking a log
    resolution $\mu: Y^{(1)} \to Y$ of $(Y, D + \pi_*\inv(\Delta + N)_{red})$. 
    If $\mu^{(1)}: Y^{(1)} \to X$ is the domination morphism, then by 
    definition the strict transform $(\mu^{(1)})_*\inv N$ is supported on 
    $D^{(1)}$. In particular, $r_{(Y^{(1)}, D^{(1)})}(v)(f) > 0$. 
    Therefore, 
    \[ \p \subsetneq c_X(r_{(Y^{(1)}, D^{(1)})}(v)) \subseteq \m. \]
    Call $\p_1 = c_X(r_{(Y^{(1)}, D^{(1)})}(v))$. If there exists 
    $f^{(2)} \in \m \setminus \p_1$, we repeat this argument with 
    $(Y^{(1)}, D^{(1)})$ in place of $(Y, D)$, giving 
    $(Y^{(2)}, D^{(2)}) \succeq (Y^{(1)}, D^{(1)})$ and a new center 
    $\p_2 = c_X(r_{(Y^{(2)}, D^{(2)})}(v))$. 
    Thus, we get a chain
    \[ \p \subsetneq \p_1 \subsetneq \p_2 \subsetneq \cdots \m \]
    with $\cup_i \p_i = \m$. Since $X$ is noetherian,
    this chain must stabilize, meaning $\p_i = \m$ for $i \gg 0$. 
    Taking $(Y', D') = (Y^{(i)}, D^{(i)})$ gives the desired pair. 
  \end{pfClaim}

  If $(Y, D)$ is a pair dominating $(X, \Delta)$ with 
  $c_X(r_{(Y, D)}(v)) = c_X(v) = \m$, we write 
  $(Y, D) \gtrdot (X, \Delta)$. Using 
  \eqref{agreement for QM}, \eqref{non-decreasing along retraction}, 
  and Claim ($\star$) above, we see
  \begin{align*}
    A_{(X, \Delta)}(v) 
    :&= \sup_{(Y, D) \succeq (X, \Delta)} A_{(X, \Delta)}(r_{(Y, D)}(v)) \\
    &= \sup_{(Y, D) \gtrdot (X, \Delta)} A_{(X, \Delta)}(r_{(Y, D)}(v)) \\
    &= \sup_{(Y, D) \gtrdot (X, \Delta)} A(r_{(Y, D)}(v); \shC^X \cdot \Delta) \\
  \end{align*}

  The Cartier subalgebra $\shC^X \cdot \Delta$ is generated near $\m$ 
  by a single $p^{-e}$-linear map $\psi$; see \eqref{Subsection: divisors}. 
  \Cref{finite type} implies $A(w; \shC^X \cdot \Delta) = A(w; \psi)$
  for every $w \in \Val_X$ with $c_X(w) = \m$, so we would be
  done if we could show 
  \begin{equation}\label{key claim}
    \sup_{(Y, D) \gtrdot (X, \Delta)} A(r_{(Y, D)}(v); \psi) = A(v; \psi). 
  \end{equation}
  We cannot follow a definitional proof of this because 
  the expressions $E(f, \psi^n, v)$ used to define $A(v; \psi)$ are 
  {\em not necessarily non-decreasing}
  along retractions, i.e. do not satisfy an analogue of 
  \eqref{non-decreasing along retraction}. We therefore proceed
  by careful estimates of $A(v; \psi)$, and by using ideas
  similar to those in the proofs of \eqref{log discrepancy of QM} and 
  \eqref{agreement for QM}. To simplify the notation going
  forward, call $\mathcal{S}$ the supremum on the left in 
  \eqref{key claim}. 
  
  For any fixed $f \in R$, \cite[Lemma 4.7]{JonssonMustata} shows
  $v(f) = r_{(Y, D)}(f)$ whenever $(Y, D)$ gives a 
  log resolution of $(X, \overline{\div(f)})$.  
  Using ideas similar to those in the proof of Claim 
  ($\star$), any log resolution for the closure $N_{f,n}$ of
  the divisor $\div(f) + \div(\psi^n(f))$ is 
  dominated by some $(Y, D) \gtrdot (X, \Delta)$. When 
  $(Y, D) \gtrdot (X, \Delta)$ is a log resolution of $N_{f,n}$, 
  we do have $E(f, \psi^n, v) = E(f, \psi^n, r_{(Y, D)}(v))$. 

  Let $\e > 0$, and choose $n \ge 1$ and $f \in R$ so that
  \[ A(v; \psi) - \e < E(f, \psi^n, v). \]
  Suppose $(Y, D) \gtrdot (X, \Delta)$ is also a log resolution 
  of the Weil divisor $N_{f,n}$ defined in the previous paragraph. 
  Then $E(f, \psi^n, v) = E(f, \psi^n, r_{(Y, D)}(v)) \le A(r_{(Y, D)}(v); \psi)$, so
  \[ A(v; \psi) - \e < A(r_{(Y, D)}(v); \psi) \le \mathcal{S}. \]
  Since this was true for all $\e$, we conclude 
  $A(v; \psi) \le \mathcal{S}$. 
  
  For the reversed inequality, we use \eqref{log discrepancy of QM}. 
  Let $(Y, D) \gtrdot (X, \Delta)$ and $w = r_{(Y, D)}(v)$. We prove 
  $A(w; \psi) \le A(v; \psi)$. 
  Call $\eta = c_Y(w)$, and $y = c_Y(v)$. 
  If we number the components $D_i$ of $D$ so that $v(D_i) > 0$ if 
  and only if $1 \le i \le s$, then local equations 
  $f_1, \dots, f_s \in \shO_{Y, \eta}$ for $D_1, \dots, D_s$ (resp.)
  generate the maximal ideal 
  of $\shO_{Y, \eta}$. This generating set can be extended by 
  $g_1, \dots, g_{n-s} \in \shO_{Y,y}$ to a set of generators 
  for the maximal ideal of $\shO_{Y,y}$. Fix the generator 
  $\Phi_y$ for $\shC^Y_y$ that projects onto 
  $(f_1 \cdots f_s g_1 \cdots g_{n-s})^{(p-1)}$. 
  Then $\psi = \Phi_y^e \cdot h$ for some $h \in L = \kappa(X)$. 
  We have assumed $(Y, D)$ gives a log resolution for $(X, \Delta)$, 
  so in particular $\pi^*(K_X + \Delta)$ is snc and supported
  on $K_Y + D$. This implies that, after possibly multiplying 
  $\Phi_y^e$ on the right by a unit $u \in \shO_{Y,y}^\times$, 
  we have $h = \prod_{i = 1}^s f_i^{c_i}$ for some $c_i \in \Z$.
  See the proof of \eqref{agreement for QM} for more details. 
  \Cref{log discrepancy of QM} shows
  \[ A(w; \psi) = w(D) - \frac{w(h)}{p^e-1} = \frac{v(D) - v(h)}{p^e-1} =  
  \left( \sum_i v(D_i) (p^e -1 - c_i) \right). \]

  Suppose we can find $f \in L$ such that $\psi(f)$ is a
  unit in $\shO_{Y,y}$ and $E(f, \psi, v) \ge A(w; \psi)$; from this
  it follows $A(v; \psi) \ge A(w; \psi)$. The choice for $f$ is clear:
  $f =(\prod_{j = 1}^{n-s} g_j^{p^e-1})(\prod_{i = 1}^s f_i^{p^e - 1 - c_i})$. 
  By construction, 
  \[
    \psi(f) = \Phi_y^e(u(f_1 \cdots f_s g_1 \cdots g_{n-s})^{(p^e-1)}) 
    \in \shO_{Y,y}^\times.
  \]
  Recall that to get $h$ to be monomial, we may have had to multiply $\Phi_y^e$
  by some $u \in \shO_{Y,y}^\times$, so $\psi(f)$ may not be $1$.
  We also see:
  \begin{align*}
    (p^e-1)E(f, \psi, v) &= \left( \sum_{j = 1}^{n-s} (p^e-1)v(g_i) \right) + \left(\sum_{j = 1} (p^e-1-c_j)v(f_i) \right) \\
                         &\ge (p^e-1)A(w; \psi)
  \end{align*}
  since $v(g_j) > 0$ for all $j$. 
  We conclude that \eqref{key claim} is true, which completes the proof. 
\end{proof}

\section{Connections with $F$-singularities}
\label{Section: F-things}

In this section, we briefly explore the relationship 
between our log discrepancies, sharp $F$-purity,
and strong $F$-regularity. These results are of independent
interest, and are also important in \Cref{Section: LCT}
to prove, e.g., that asymptotic multiplier ideals of
graded sequences on strongly $F$-regular schemes are coherent.

We prove that sharply $F$-pure and strongly $F$-regular 
Cartier subalgebras are characterized as non-negativity (resp. positivity)
of log discrepancies on $\beth$-spaces. This builds on
the heuristic correspondence between sharply $F$-pure and log 
canonical singularities (resp. strongly $F$-regular and klt). 
In particular, our result greatly generalizes Hara and
Watanabe's theorem \cite[Theorem 3.3]{HaraWatanabe}. 

Throughout, we fix a Cartier subalgebra $\shD \subseteq \shC_X$ 
on an integral $F$-finite scheme $X$. To avoid trivialities,
we assume $\shD_e \ne 0$ for some $e > 0$.

\begin{definition}
  The {\em splitting prime} of $\shD$ at $x \in X$ is the ideal 
  \[ \mc{P}(\shD_x) = 
  \{f \in \shO_{X, x} \,:\, \psi(f) \in \m 
  \text{ for all $\psi \in (\shD_e)_x$, for all $e \ge 1$ }\}. \]
  Standard facts about $\mc{P} = \mc{P}(\shD_x)$ include 
  (see \cite{AberbachEnescuStructureOfFPure, BlickleSchwedeTuckerFsig1}):
  \begin{enumerate}
    \item $\mc{P} \ne \shO_X$ if and only if $\shD$ is sharply $F$-pure at $x$.
    \item As suggested by the name, $\mc{P}$ is prime whenever it is proper. 
    \item When $\mc{P}$ is proper, no prime $\p \in \Spec(R)$ with 
      $\mc{P} \subsetneq \p$ can be $\shD_x$-compatible. In particular, 
      $\mc{P} = 0$ if and only if $\shD$ is strongly $F$-regular at $x$. 
    \item The restriction $\shD_x\|_{\mc{P}}$ is strongly $F$-regular if
      $\shD$ is sharply $F$-pure at $x$. 
  \end{enumerate}
\end{definition}

\begin{lemma}\label{detecting F-things}
  Let $x \in X$. Denote by $\m_x$ the maximal ideal of $\shO_{X,x}$
  and by $\triv_x$ the trivial valuation $\kappa(x)^\times \to \{0\}$. 
  There are three possible values for $A(\triv_x, \shD)$:
  \[
    A(\triv_x, \shD) = \left\{\begin{tabular}{ c  l |}
        \hline
      $-\infty$ & iff $\shD_x$ is not sharply $F$-pure.\\
        \hline
      $0$       & iff $\shD_x$ is sharply $F$-pure and 
      $\mc{P}(\shD_x) = \m_x$.\\
        \hline
      $+\infty$ & iff $\shD_x$ is sharply $F$-pure and 
      $\mc{P}(\shD_x) \ne \m_x$.\\
      \hline
    \end{tabular}\right.
  \]
\end{lemma}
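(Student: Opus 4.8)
The plan is to read the trichotomy off directly from \cref{definition: log discrepancy function}. Write $R = \shO_{X,x}$, $\m = \m_x$, $\mc{P} = \mc{P}(\shD_x)$, and $Z = \overline{\{x\}}$; then $x$ is the generic point of $Z$, and $\triv_x$ is just the trivial valuation $\triv_Z$ on $\kappa(Z) = \kappa(x)$, with home and center $x$. By \cref{definition: log discrepancy function}, $A(\triv_x;\shD) = +\infty$ when $Z$ is not uniformly $\shD$-compatible, and $A(\triv_x;\shD) = A(\triv_Z;\shD\|_Z)$ otherwise. So the statement reduces to (i) deciding when $Z$ is uniformly $\shD$-compatible, and (ii) evaluating $A(\triv_Z;\shD\|_Z)$ in that case.

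For (i) I would show that $Z$ is uniformly $\shD$-compatible if and only if $\m \subseteq \mc{P}$, that is, $\mc{P} \in \{\m, R\}$. One direction is immediate: $\shI_{Z,x} = \ker(R \to \shO_{Z,x}) = \m$ since $\shO_{Z,x} = \kappa(Z)$ is a field, so uniform compatibility forces $\psi(\m) \subseteq \m$ for every $\psi \in (\shD_e)_x$ and $e \ge 1$, which is exactly $\m \subseteq \mc{P}$. Conversely, compatibility at $x$ propagates to every $y \in Z$: localizing at the prime $\q = \shI_{Z,y} \subseteq \shO_{X,y}$ recovers $(R,\m)$ and the stalk $(\shD_e)_x$, so the hypothesis there, together with primality of $\q$ (so that $\q_\q \cap \shO_{X,y} = \q$), forces $\psi'(\q) \subseteq \q$ for every $\psi' \in (\shD_e)_y$; and compatibility is automatic at points off $Z$. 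Using the standard properties of the splitting prime recalled above --- $\mc{P} = R$ exactly when $\shD$ is not sharply $F$-pure at $x$, and $\mc{P}$ a proper prime contained in $\m$ otherwise --- this identifies ``$Z$ not uniformly $\shD$-compatible'' with ``$\shD$ sharply $F$-pure at $x$ and $\mc{P} \ne \m$'', which is the $+\infty$ row.

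For (ii), when $Z$ is uniformly $\shD$-compatible we have $A(\triv_x;\shD) = A(\triv_Z;\shD\|_Z) = \sup_{e \ge 1} \sup_{0 \ne \phi} A(\triv_Z;\phi)$, the inner supremum over nonzero $\phi \in ((\shD\|_Z)_e)_{\eta_Z}$. Any nonzero $p^{-e}$-linear map $\phi$ on $\kappa(Z)$ has $A(\triv_Z;\phi) = 0$: since $\triv_Z$ is trivial, $E(f,\phi^n,\triv_Z) = \tfrac{\triv_Z(f) - p^{ne}\triv_Z(\phi^n(f))}{p^{ne}-1}$ equals $0$ when $\phi^n(f) \ne 0$ and $-\infty$ otherwise (by the arithmetic on $\R_{\pm\infty}$), and $\phi(f) \ne 0$ for some $f$. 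So the double supremum is $0$ if some degree-$e$ part of $\shD\|_Z$ at $\eta_Z$ is nonzero for some $e \ge 1$, and is the empty supremum $-\infty$ otherwise. Finally, the induced map $\psi\|_Z\colon \kappa(x)\to\kappa(x)$ is nonzero exactly when $\psi(R)\not\subseteq\m$, which for the local ring $R$ is equivalent to $\psi$ being surjective; hence a nonzero $\phi$ exists (in some degree $\ge 1$) precisely when $\shD$ is sharply $F$-pure at $x$. Splitting the uniformly-compatible case as $\mc{P} = R$ (not sharply $F$-pure at $x$; value $-\infty$) versus $\mc{P} = \m$ (sharply $F$-pure at $x$; value $0$) gives the remaining two rows.

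I expect the main obstacle to be the propagation half of (i), namely that uniform $\shD$-compatibility of $Z$ is detected at the generic point $x$; the rest is unwinding \cref{definition: log discrepancy function} and the extended-real-line conventions. The one delicate point is that $\shD_e$ carries two $\shO_X$-module structures, but the relevant localization is along the prime $\shI_{Z,y}$, along which the defining identity $\psi(g^{p^e}h) = g\,\psi(h)$ of a $p^{-e}$-linear map passes without obstruction, so this causes no trouble and the argument runs as sketched.
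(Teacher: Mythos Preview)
Your proposal is correct and follows essentially the same approach as the paper's proof: both split into the three cases by first deciding whether $Z=\overline{\{x\}}$ is uniformly $\shD$-compatible, and then computing $A(\triv_Z;\shD\|_Z)$ via the observation that any nonzero $p^{-e}$-linear map on $\kappa(x)$ has log discrepancy $0$ at $\triv_Z$. The only difference is that the paper immediately replaces $X$ by $\Spec(\shO_{X,x})$ (so $Z$ becomes the closed point and compatibility is just $\psi(\m_x)\subseteq\m_x$), whereas you keep the original $X$ and explicitly verify the propagation step --- that compatibility at the generic point $x$ of $Z$ forces compatibility at every $y\in Z$ --- which the paper's localization takes for granted.
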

\begin{proof}
  Restricting our attention to $R := \shO_{X,x}$, let us
  write $\shD$ for $\shD_x$, $X = \Spec(R)$, and $x$
  for the closed set $\{x\} \subset X$. 
  
  Suppose $\shD$ is not sharply $F$-pure. 
  Then no $\psi \in \shD_e$ is surjective for $e > 0$, 
  so in particular $\psi(\m_x) \subseteq \m_x$.
  Thus, $x$ is uniformly $\shD$-compatible, and $\psi(R) \subseteq \m_x$
  for all $\psi \in \shD_{> 0}$ implies the exceptional
  restriction $\shD\|_x$ is zero. But then
  \[ A(\triv_x; \shD) = A(\triv_x; 0) = \sup \varnothing = -\infty. \]
  
  If $\shD$ is sharply $F$-pure and $\mc{P}(\shD) = \m_x$, then $x$
  is again $\shD$-compatible. In this case, one has non-zero elements of 
  $\shD\|_x$ in positive degrees, corresponding to surjective
  maps $\psi \in \shD_e$. If $\psi\|_x \ne 0$, then $A(\triv_x; \psi\|_x) = 0$.
  Thus, $A(\triv_x; \shD) = \sup \{0\} = 0$. 

  In the last case, we see $x$ is not uniformly $\shD$-compatible,
  and we defined \linebreak $A(\triv_x; \shD) = +\infty$.  
\end{proof}

The center map admits a section $\triv: X \to X^\beth$,
sending $x \in X$ to $\triv_x$. Let us write
$X^\triv$ for the image of this map and $\triv_X$
for the image of the generic point of $X$. Let 
$X^{\beth, *} = X^\beth \setminus \{\triv_X\}$, and 
$X^{\triv, *} = X^\triv \cap X^{\beth, *}$. Our main theorem
in this section is:

\begin{theorem}\label{characterization of ShFP and SFR}
  Let $X$ be an $F$-finite integral scheme, and let $\shD$
  be a Cartier subalgebra on $X$. Then $\shD$ is:
  \begin{enumerate}
    \item sharply $F$-pure if and only if $A(\zeta; \shD) \ge 0$ 
      for all $\zeta \in X^\beth$. 
    \item strongly $F$-regular if and only if $A(\zeta; \shD) > 0$ 
      for all $\zeta \in X^{\beth,*}$. 
  \end{enumerate}
  Moreover, it suffices to check these statements 
  on $X^\triv$ and $X^{\triv, *}$, respectively. 
\end{theorem}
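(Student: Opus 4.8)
The plan is to reduce everything to \Cref{detecting F-things} together with one elementary observation: if $v\in\Val_X$ has center $x$ and some $\psi\in(\shD_e)_x$ satisfies $\psi(f)=1$ for some $f\in\shO_{X,x}$, then $E(f,\psi,v)=v(f)/(p^e-1)\ge 0$, because $v$ is nonnegative on $\shO_{X,x}$, and this is strict as soon as $f$ can be taken in $\m_x$. For the forward direction of (1), suppose $\shD$ is sharply $F$-pure and fix $\zeta\in X^\beth$ with home $x$ and $Z=\o{\{x\}}$. If $Z$ is not uniformly $\shD$-compatible then $A(\zeta;\shD)=+\infty$ by definition, so assume it is. Surjectivity of a $\psi\in(\shD_e)_z$ on $\shO_{X,z}$ descends to surjectivity of $\psi\|_Z$ on $\shO_{Z,z}=\shO_{X,z}/\shI_{Z,z}$, so $\shD\|_Z$ is sharply $F$-pure at every point of $Z$; applying the observation on $Z$ at the center of $\zeta$ gives $A(\zeta;\shD)=A(\zeta;\shD\|_Z)\ge 0$.

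For the converse of (1), if $A(\triv_x;\shD)\ge 0$ for every $x$ then \Cref{detecting F-things} excludes the value $-\infty$ at each $x$, which is precisely sharp $F$-purity of $\shD$ at each point; since the forward direction used only that $\shD$ is sharply $F$-pure, this shows (1) may be tested on $X^\triv$. For the forward direction of (2): strong $F$-regularity implies sharp $F$-purity (take $f=1$), so $A\ge 0$ everywhere by (1). Moreover, if $\shD$ is strongly $F$-regular then $\mc{P}(\shD_y)=0$ for all $y$, so no proper nonzero prime is $\shD$-compatible; hence the only uniformly $\shD$-compatible integral closed subscheme is $X$ itself, and $A(\zeta;\shD)=+\infty$ whenever $h_X(\zeta)\neq\eta_X$. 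For $v\in\Val_X$ with $v\neq\triv_X$ the center $x$ satisfies $x\neq\eta_X$, so $\m_x\neq 0$; picking $0\neq g\in\m_x$ we have $v(g)>0$, and strong $F$-regularity at $x$ provides $\psi\in(\shD_e)_x$ with $\psi(g)=1$, whence $A(v;\shD)\ge E(g,\psi,v)=v(g)/(p^e-1)>0$.

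The substantive point is the converse of (2): assuming $A(\zeta;\shD)>0$ for all $\zeta\in X^{\triv,*}$, I must show $\mc{P}(\shD_x)=0$ for every $x$. If $\mc{P}(\shD_x)\neq 0$ for some $x$ there are two cases. If $\shD_x$ is not sharply $F$-pure then $A(\triv_x;\shD)=-\infty$ by \Cref{detecting F-things}; here $x\neq\eta_X$, because $(\shD_{\eta_X})_e\neq 0$ for some $e>0$ (any nonzero section of $\shD_e$ stays nonzero in $L$) and a nonzero $p^{-e}$-linear map on a field is surjective, so $\shD$ is sharply $F$-pure at $\eta_X$; thus $\triv_x\in X^{\triv,*}$ and we contradict the hypothesis. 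Otherwise $\p:=\mc{P}(\shD_x)$ is a proper \emph{nonzero} prime; let $y$ be the point of $X$ with $\shO_{X,y}=(\shO_{X,x})_\p$, so $y\neq\eta_X$ and $\triv_y\in X^{\triv,*}$, giving $A(\triv_y;\shD)>0$ and hence $\mc{P}(\shD_y)\neq\m_y$ by \Cref{detecting F-things}. But $p^{-e}$-linear maps localize, and $\p$ is $\shD_x$-compatible, so $\psi(\m_y)\subseteq\m_y$ for every $\psi\in(\shD_y)_e$, i.e. $\m_y\subseteq\mc{P}(\shD_y)$; as $\shD_y$ is sharply $F$-pure (again since $A(\triv_y;\shD)>0$) this forces $\mc{P}(\shD_y)=\m_y$, a contradiction. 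Combining the two implications of (2) shows it may be tested on $X^{\triv,*}$.

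The step I expect to be delicate is this last one: identifying the point $y$ attached to the splitting prime and verifying that $\mc{P}(\shD_x)$ localizes to the maximal ideal at $y$, which amounts to checking that compatibility of ideals and the behaviour of $p^{-e}$-linear maps pass correctly between the stalks $\shO_{X,x}$ and $\shO_{X,y}=(\shO_{X,x})_\p$ with no hidden $F$-finiteness or noetherian subtleties; everything else is bookkeeping around \Cref{detecting F-things} and the definition of $A(\,\cdot\,;\shD)$.
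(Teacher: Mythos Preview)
Your proof is correct and follows essentially the same approach as the paper: reduce to the local situation, use \Cref{detecting F-things} for the converses, and for the converse of (2) localize at the splitting prime $\p=\mc{P}(\shD_x)$ to find a point $y$ with $A(\triv_y;\shD)=0$. Your concern about the localization step is exactly what the paper also uses without further comment (``$\shD_\p$ is a sharply $F$-pure Cartier subalgebra on $R_\p$ with $\mc{P}(\shD_\p)=\p R_\p$''), and it is indeed routine: quasi-coherence of $\shD$ gives $(\shD_e)_y=((\shD_e)_x)_\p$, and compatibility of an ideal is preserved under localization.
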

\begin{proof}
  Sharp $F$-purity, and strong $F$-regularity, are conditions $\shD$ 
  must satisfy at each point of $X$, and $A(\zeta; \shD)$ depends only 
  on $\phi \in \shD_{c(\zeta)}$.  Therefore, we assume $X = \Spec(R)$, 
  $R$ is local with maximal ideal $\m$, and $c_X(\zeta) = \m$. 
  Simplifying notation, let $\shD = \shD_\m$. 
  
  Suppose first that $\shD$ is sharply $F$-pure and 
  let $\psi \in \shD_e$ be surjective with $\psi(f) = 1$. Then
  for all $\zeta \in c_X\inv(\m)$, we have
  \[ A(\zeta; \shD) \ge E(f, \psi, \zeta) \ge 0. \]
  On the other hand, if $\shD$ is not sharply $F$-pure,
  then \eqref{detecting F-things} shows $A(\triv_\m; \shD) = -\infty$,
  so $A(\zeta; \shD) < 0$ for some $\zeta \in X^\beth$. 

  Suppose then $\shD$ is strongly $F$-regular, and let $\zeta \in X^{\beth, *}$
  with $c_X(\zeta) = \m$. If $h_X(\zeta) \ne (0)$, then $h_X(\zeta)$ is not 
  uniformly $\shD$-compatible, and we have defined 
  $A(\zeta; \shD) = +\infty > 0$. If $h_X(\zeta) = (0)$, meaning
  $\zeta \in \Val_X$, take $f \in \m$ and $\psi \in \shD_e$ with 
  $\psi(f) = 1$. Then $A(\zeta; \shD) \ge E(f, \psi, v) > 0$. 
  Contrapositively, suppose $\shD$ is not strongly $F$-regular. 
  If $\shD$ is not even sharply $F$-pure,
  then the previous case shows $A(\triv_\m; \shD) = -\infty \le 0$. 
  We therefore assume $\shD$ is sharply $F$-pure. Then
  $\mc{P}(\shD) =: \p$ is a nonzero prime ideal of $R$, and
  $\shD_\p$ is a sharply $F$-pure Cartier subalgebra on $R_\p$
  with $\mc{P}(\shD_\p) = \p R_\p$. We have seen $A(\triv_\p; \shD) = 0$. 

  To complete the proof, we note that in both cases, the points of $X^\triv$
  gave semivaluations with negative (resp. non-positive) log discrepancy. 
\end{proof}

\begin{corollary}[{cf. \cite{HaraWatanabe}}]
  \label{HW corollary}
  Let $X$ be an integral $F$-finite scheme. 
  \begin{enumerate}
    \item If $X$ is $F$-pure, then $A(E; \shC^X) \ge 0$ for all
      divisors $E$ over $X$. 

    \item If $X$ is $F$-regular, then $A(E; \shC^X) > 0$ for all
      divisors $E$ over $X$. 
  \end{enumerate}
\end{corollary}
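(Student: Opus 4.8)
The plan is to read this off directly from \Cref{characterization of ShFP and SFR}. Write $A(E;\shC^X)$ for $A(\ord_E;\shC^X)$, where $\ord_E$ denotes the divisorial valuation on $\kappa(X)$ attached to a divisor $E$ over $X$. The only preliminary point is to place $\ord_E$ inside the subspace of $X^\beth$ to which the theorem speaks: realizing $E$ as a prime divisor on some normal integral scheme $Y$ with a proper birational morphism $\pi\colon Y\to X$, the valuation $\ord_E$ is non-negative on $\shO_{Y,\eta_E}$, hence, via $\pi$, on the local ring $\shO_{X,\pi(\eta_E)}$ of $X$; thus $\ord_E$ is centered on $X$, i.e.\ $\ord_E\in\Val_X\subseteq X^\beth$. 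Since $\ord_E$ is a nontrivial valuation on $\kappa(X)$, we moreover have $\ord_E\in X^{\beth,*}=X^\beth\setminus\{\triv_X\}$.

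For part (1): if $X$ is $F$-pure then $\shC^X$ is a sharply $F$-pure Cartier subalgebra on $X$, so \Cref{characterization of ShFP and SFR}(1) gives $A(\zeta;\shC^X)\ge 0$ for every $\zeta\in X^\beth$; taking $\zeta=\ord_E$ yields $A(E;\shC^X)\ge 0$. For part (2): if $X$ is $F$-regular then $\shC^X$ is strongly $F$-regular, so \Cref{characterization of ShFP and SFR}(2) gives $A(\zeta;\shC^X)>0$ for every $\zeta\in X^{\beth,*}$; since $\ord_E\in X^{\beth,*}$ we conclude $A(E;\shC^X)>0$.

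There is no real obstacle here: the entire content is already in \Cref{characterization of ShFP and SFR}, and this corollary merely specializes that characterization from arbitrary semivaluations to classical divisorial ones, recovering and extending \cite[Theorem 3.3]{HaraWatanabe}. If desired, one can instead argue the divisorial case by hand — using a surjective $\psi\in(\shC^X_e)_{c_X(\ord_E)}$ with $\psi(f)=1$ to estimate $A(\ord_E;\shC^X)\ge E(f,\psi,\ord_E)=(p^e-1)^{-1}\ord_E(f)\ge 0$, with strict inequality when $f$ may be chosen in the maximal ideal (which is the case when $\shC^X$ is strongly $F$-regular) — but this is exactly the argument already carried out inside the proof of \Cref{characterization of ShFP and SFR}, so there is nothing to gain by repeating it.
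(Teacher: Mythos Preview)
Your proof is correct and matches the paper's approach: the corollary is stated immediately after \Cref{characterization of ShFP and SFR} with no separate proof, so the paper likewise regards it as the specialization of that theorem to the divisorial semivaluations $\ord_E\in X^{\beth,*}$.
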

  
\begin{question}
  Suppose $\shD$ is sharply $F$-pure but not strongly $F$-regular at $x \in X$. 
  Does there exist a non-trivial $\zeta \in X^\beth$ with $c_X(\zeta) = x$ 
  and $A(\zeta; \shD) = 0$? 
\end{question}

\begin{question}
  What is the relationship between the $F$-signature $s(D_x)$ 
  \cite{BlickleSchwedeTuckerFSigPairs1} and $A_X(\zeta; \shD)$, 
  for various $\zeta \in X^\beth$ centered at $x$? 
  Cf. \cite[Theorem 3.1]{CantonLeftDerivativeOfFSignature}; note 
  that the limit function in that theorem 
  factors as $\left( \frac{1}{2} \mld(R, f^t) \right)^2$. 
\end{question}

\section{Lower-semicontinuity}
\label{Section: LSC}
For this section, let $X$ be an integral scheme of characteristic $p > 0$. 
We show that $A(-; \shD)$ is lower-semicontinuous on $X^\beth$ for any 
Cartier subalgebra $\shD$ on $X$. As a first application, we deduce that 
the minimal log discrepancy function derived from log discrepancies 
on $X^\beth$ is lsc on $X$ considered with \textbf{the constructible topology}. 
In \Cref{Section: LCT}, we give our major applications of 
lower-semicontinuity of $A(-; \shD)$: 
coherence of asymptotic multiplier ideals, and existence of 
valuations calculating log canonical thresholds, on regular
$F$-finite schemes. 

\begin{definition}\label{lsc: definition}
  Recall that a function $f: Y \to \R_{\pm \infty} = [-\infty, \infty]$ on a 
  topological space $Y$ is {\em lower-semicontinuous} (lsc) at 
  $y_0 \in Y$ if $f(y_0) = -\infty$ or one of the following equivalent conditions holds:
  \begin{enumerate}
    \item For every convergent net $y_\alpha \to y_0$, 
      $f(y_0) \le \liminf_\alpha f(y_\alpha)$. \label{lsc: nets}
    \item For every $\e > 0$ there exists an open neighborhood 
      $U \subseteq Y$ of $y_0$ such that $f(y_0) - \e < f(y)$ for all 
      $y \in U$;\label{lsc: opens}
    \item If we consider $\R_{\pm \infty}$ with the topology whose open 
      subsets are of the form $(a, \infty]$, $a \in \R$, and 
      $[-\infty, \infty]$, then $f$ is continuous. 
      \label{lsc: continuous}
  \end{enumerate}
\end{definition}

\begin{remark}
  Let us make some comments on topological notions used here.  
  \begin{enumerate}
    \item Since $X^\beth$ is not generally first countable, we use nets 
      and not sequences for questions of convergence and compactness. 

    \item If a topological space $Y$ is not Hausdorff, there may be 
      more than one limit point of a convergent net. We write $\lim y_\beta$ 
      for the {\bf set of limit points} of a convergent net $y_\beta$. 
      In the case $\lim y_\beta$ consists of one point $y_\ast$, per usual
      we write $y_\ast = \lim y_\beta$. 
  \end{enumerate}
\end{remark}

The following lemma is a technical generalization of a classical way 
of producing a new lsc function from a given collection of lsc functions. 
The author thanks Kevin Tucker for suggesting this approach, 
which leads to a much simpler proof of lower-semicontinuity than 
the author's original. 

\begin{lemma}[cf. \cite{Folland}, Proposition 7.11(c)]\label{lsc sheaf lemma}
  Let $Y$ be a topological space and let $\shG$ be a sheaf of 
  lsc functions on $Y$, meaning for every open subset $V \subseteq Y$,
  $\shG(V)$ is a (possibly empty) collection of $\R_{\pm\infty}$-valued lsc functions 
  defined on $V$. Then $a(y) := \sup\{ g(y) \,:\, g \in \shG_y\}$ is lsc. 
\end{lemma}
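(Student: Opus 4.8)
The plan is to verify the open-set form of lower-semicontinuity, \Cref{lsc: definition}\eqref{lsc: opens} (equivalently the continuity reformulation \eqref{lsc: continuous}), directly from the definition of $a$ as a pointwise supremum. Fix $y_0 \in Y$. If $a(y_0) = -\infty$ there is nothing to prove, so assume $a(y_0) > -\infty$; it then suffices to show that for every real number $c < a(y_0)$ there is an open neighborhood $U \ni y_0$ with $a(y) > c$ for all $y \in U$. (Running this for every real $c$ also covers the case $a(y_0) = +\infty$.)

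By definition of the supremum there is a germ $g \in \shG_{y_0}$ with $g(y_0) > c$. Here $g(y_0)$ means the common value at $y_0$ of any representative of $g$, which is well defined because any two representatives of a germ agree on some neighborhood of $y_0$, in particular at $y_0$ itself. Pick such a representative $\tilde g \in \shG(V)$ on an open neighborhood $V$ of $y_0$. Since $\tilde g$ is lower-semicontinuous at $y_0$ and $\tilde g(y_0) = g(y_0) > c$, there is an open set $U$ with $y_0 \in U \subseteq V$ and $\tilde g(y) > c$ for all $y \in U$.

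Now I invoke that $\shG$ is a (pre)sheaf of functions: the restriction $\tilde g|_U$ lies in $\shG(U)$, hence for every $y \in U$ the germ of $\tilde g$ at $y$ belongs to $\shG_y$. Therefore $a(y) \ge \tilde g(y) > c$ for all $y \in U$, which is exactly what was needed. I do not expect any real obstacle here: the only points requiring a line of care are that germ-values at a point are well defined and that a sheaf of functions is automatically closed under restriction, so that lower-semicontinuity of a single representative near $y_0$ propagates to a lower bound for the supremum $a$ on an entire neighborhood of $y_0$.
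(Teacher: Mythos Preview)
Your proof is correct and takes essentially the same approach as the paper: both pick a representative $\tilde g$ of a germ witnessing $a(y_0) > c$ and use lower-semicontinuity of $\tilde g$ to produce an open neighborhood on which $a \ge \tilde g > c$. The only cosmetic difference is that the paper packages this via characterization \eqref{lsc: continuous}, writing $a^{-1}(r,\infty]$ as a union of the open sets $g_U^{-1}(r,\infty]$, whereas you argue pointwise via characterization \eqref{lsc: opens}.
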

\begin{proof}
  The proof is very similar to one in \cite{Folland}. 
  We use definition \ref{lsc: definition}\eqref{lsc: continuous}. 

  Fix $r \in \R$ and $(r, \infty] \subseteq \R_{\pm\infty}$, and 
  let $G = \displaystyle\bigsqcup_{y \,:\, a(y) > r} \shG_y$. 
  Each $g \in \shG_y \subset G$ represents an equivalence class of 
  lsc functions $g_U: U \to \R_{\pm \infty}$ on open subsets $U$ containing $y$. 
  If $r < g(y)$, then   $r < g_U(y)$ for each $U$ and $g_U$ that $g$ represents, so
  $g_U\inv(r, \infty]$ is a non-empty open subset of $U$ (so also of $X$). 
  Write $g \sim (U, g_U)$ to mean $g$ is the image of $g_U$ in $\shG_y$. I claim that 
  \[ a\inv(r, \infty] = 
  \cup_{g \in G} \cup_{g \sim (U, g_U)}\,\, g_U\inv(r, \infty]. \]
  From this, it will follow that $a\inv(r, \infty]$ is open. 

  If $a(y) > r$, then $\shG_y \subseteq G$ and there exists 
  some $g \in \shG_y$ with $g(y) > r$. Therefore, 
  $y \in g_U\inv(r, \infty]$ for any $g_U$ with $g \sim (U, g_U)$. 
  On the other hand, if 
  $y \in \cup_{g \in G} \cup_{g \sim (U, g_U)} g_U\inv(r, \infty]$, then 
  $y \in g_U\inv(r, \infty]$ for some $(U, g_U) \sim g \in \shG_y$ and
  some $U \ni y$. Thus, $r < g_U(y) \le a(y)$, so $y \in a\inv(r, \infty]$. 
\end{proof}

\begin{theorem}\label{log discrepancy lsc}
  For every Cartier subalgebra $\shD$ on an integral scheme $X$ of positive 
  characteristic, the log discrepancy $A(-; \shD)$ is lsc on $X^\beth$. 
\end{theorem}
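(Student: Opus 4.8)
The plan is to exhibit $A(-;\shD)$ as the pointwise supremum over the stalks of a (pre)sheaf of lower-semicontinuous functions on $X^\beth$ and then quote \Cref{lsc sheaf lemma}. First I would reduce to the case $X=\Spec R$ affine: a finite affine open cover $\{U_1,\dots,U_t\}$ of $X$ gives a cover of $X^\beth$ by the compact --- hence closed --- subspaces $U_i^\beth$, so $\{A(-;\shD)\le a\}$ is closed as soon as each intersection $\{A(-;\shD)\le a\}\cap U_i^\beth$ is, and the argument below for affine schemes uses only the restriction of $\shD$ to an affine neighbourhood of a given center.

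So assume $X=\Spec R$. By construction of $X^\beth$, each evaluation map $\ev_g\colon X^\beth\to[0,\infty]$ with $g\in R$ is continuous. Given nonzero $\phi\in\Gamma(X,\shD_e)$ and $f\in R$, and $n\ge 1$, the set $\mathcal V_{f,\phi,n}=\ev_{\phi^n(f)}^{-1}\bigl([0,\infty)\bigr)$ is open in $X^\beth$, and on it the function
\[
  \zeta\ \longmapsto\ E(f,\phi^n,\zeta)=\frac{\ev_f(\zeta)-p^{ne}\,\ev_{\phi^n(f)}(\zeta)}{p^{ne}-1}
\]
is well defined with values in $(-\infty,\infty]$; it is lsc, being $(p^{ne}-1)^{-1}$ times the sum of the lsc function $\ev_f$ and the continuous function $-p^{ne}\ev_{\phi^n(f)}$, with no occurrence of $\infty-\infty$. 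I would let $\shG$ be the presheaf on $X^\beth$ assigning to an open set $\mathcal U$ the collection of all restrictions $E(f,\phi^n,\cdot)|_{\mathcal U}$ with $\mathcal U\subseteq\mathcal V_{f,\phi,n}$ (one may sheafify without changing stalks or the property of being lsc). Then $\shG_\zeta$ consists of the germs at $\zeta$ of those $E(f,\phi^n,\cdot)$ with $\zeta(\phi^n(f))<\infty$, so
\[
  a(\zeta):=\sup\{g(\zeta)\,:\,g\in\shG_\zeta\}=\sup\bigl\{E(f,\phi^n,\zeta)\ :\ e,n\ge 1,\ 0\ne\phi\in\Gamma(X,\shD_e),\ 0\ne f\in R,\ \zeta(\phi^n(f))<\infty\bigr\}.
\]

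The heart of the proof is the identity $a(\zeta)=A(\zeta;\shD)$ for every $\zeta\in X^\beth$. Write $x=c_X(\zeta)$, let $\p\subseteq\shO_{X,x}$ be the prime of $h_X(\zeta)$, and let $Z=\overline{\{h_X(\zeta)\}}$. If $Z$ is uniformly $\shD$-compatible, then combining $A(\zeta;\shD)=A(\zeta;\shD\|_Z)$ with \Cref{no residues} gives
\[ A(\zeta;\shD)=\sup\bigl\{E(f,\psi^n,\zeta)\ :\ e,n\ge 1,\ 0\ne\psi\in(\shD_e)_{x},\ f\in\shO_{X,x},\ \psi^n(f)\notin\p\bigr\}, \]
and since the defining conditions for $a(\zeta)$ form a special case ($\psi=\phi$ a global section, $f\in R$, and $\psi^n(f)\notin\p\iff\zeta(\psi^n(f))<\infty$) we get $a(\zeta)\le A(\zeta;\shD)$ at once. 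For the reverse inequality I would show the supremum is unchanged when $\psi$ is required to be a global section: a germ $\psi\in(\shD_e)_x$ has the form $\psi=\phi\cdot(a^{p^e}/s^{p^e})$ for some global $\phi\in\Gamma(X,\shD_e)$, $a\in R$, and $s\notin\p$; since $s$ is a unit in the local ring at the center of $\zeta$ we have $\zeta(s)=0$, so \Cref{effect of right multiplication} (applied to the valuation $\zeta$ on $Z$, where $\psi\|_Z=(\phi\|_Z)\cdot(\overline{a/s})^{p^e}$) yields $A(\zeta;\psi)=A(\zeta;\phi)-\tfrac{p^{e}}{p^{e}-1}\,\zeta(a)\le A(\zeta;\phi)$, and likewise clearing the denominator of $f$ only decreases the relevant $E$-values. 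If instead $Z$ is \emph{not} uniformly $\shD$-compatible, then $A(\zeta;\shD)=+\infty$ by \Cref{definition: log discrepancy function}, and I would match this on the $a$-side by producing $0\ne f\in\Gamma(X,\shI_Z)\subseteq R$ and a global $\phi\in\Gamma(X,\shD_e)$ with $\phi(f)\notin\Gamma(X,\shI_Z)$ --- possible after clearing denominators in a failure of compatibility, using that $X$ is affine --- so that $\zeta(f)=+\infty$, $\zeta(\phi(f))<\infty$, and hence $E(f,\phi,\zeta)=+\infty\le a(\zeta)$.

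With $a=A(-;\shD)$ established, \Cref{lsc sheaf lemma} immediately gives that $A(-;\shD)$ is lsc on $X^\beth$. The main obstacle is precisely the verification $a=A(-;\shD)$: one must reconcile the two clauses of \Cref{definition: log discrepancy function} (exceptional restriction when the home is uniformly compatible, $+\infty$ otherwise), track the conventions around the undefined expressions such as $\infty-\infty$ so that the functions $E(f,\phi^n,\cdot)$ stay well defined and lsc, and --- crucially --- justify the passage from germs of $\shD_e$ to global sections, which is where affineness of $X$ and \Cref{effect of right multiplication} do the real work.
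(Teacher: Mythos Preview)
Your approach is essentially the same as the paper's: reduce to an affine $X=\Spec R$, express the log discrepancy as a supremum over stalks of the sheaf of functions $E(f,\phi^n,\cdot)$ defined on the opens $\ev_{\phi^n(f)}^{-1}[0,\infty)$, and invoke \Cref{lsc sheaf lemma}; the identification of that supremum with $A(\zeta;\shD)$ via \Cref{no residues} is exactly the paper's step. The paper organizes this as a two-stage reduction (first to a single global $\psi$, then to the sheaf of $E$-functions for that $\psi$), whereas you bundle everything into one sheaf; you are also more explicit than the paper about the non-compatible-home case and about why germs of $\shD_e$ may be replaced by global sections. One small slip: when writing $\psi=\phi\cdot(a^{p^e}/s^{p^e})$ you need $s\notin\m_x$ (the maximal ideal at the \emph{center}), not merely $s\notin\p$, for $s$ to be a unit in $\shO_{X,x}$ and hence $\zeta(s)=0$; once corrected, your inequality $A(\zeta;\psi)\le A(\zeta;\phi)$ goes through as you intend.
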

\begin{proof}
  To make the notation for preimages of sets of $\R_{\pm \infty}$
  under $A(-; \shD)$ more sensible, let us simply write $A(\zeta)$ 
  for $A(\zeta; \shD)$. We first reduce to the affine case. Suppose 
  $\{U_i = \Spec(R_i)\}_{i = 1}^N$ is an affine open cover of $X$. 
  We then have a corresponding finite, compact
  cover $\{U_i^\beth\}_{i = 1}^N$ of $X^\beth$. Suppose when 
  we restrict $A$ to each $U_i$, we have an lsc function. I claim this 
  implies $A$ is lsc. Indeed, fix $r \in \R$, and let 
  $\mathcal{V}_i \subset U_i^\beth$ be $A|_{U_i^\beth}\inv(r, \infty]$,
  which is an open subset of $U_i^\beth$. Then $\mathcal{V} = \cup_i \mathcal{V}_i$ 
  is an open subset, since $\mathcal{V} \cap U_i^\beth = \mathcal{V}_i$ is open
  for all $i$. By definition, $\mathcal{V} = A\inv(r, \infty]$.  
  Thus, it suffices to check that $A$ is lower-semicontinuous when 
  $X = \Spec(R)$ is affine. Let $D = \oplus_e D_e = \Gamma(X, \shD)$. Each
  $\psi \in D_e$ has globally defined log-discrepancy 
  $A_\psi : X^\beth \to \R_{\pm \infty}$,
  and $A$ is built from these $A_\psi$ as in
  \eqref{lsc sheaf lemma}, taking $\shG$ to be the constant sheaf associated to 
  $(\mathcal{U} \mapsto \sqcup_e \{A_\psi \,:\, \psi \in D_e\})$, where 
  $\mathcal{U} \subset X^\beth$ is an open subset. Therefore, it suffices
  to fix $0 \ne \psi \in D_e$, for some $e > 0$, and prove $A_\psi$ is lsc. 

  Recall the notation $\ev_g: X^\beth \to [0, \infty]$ for 
  $(\zeta \mapsto \zeta(g))$, $g \in R$. By definition of the topology on $X^\beth$
  \eqref{definition: affine Berkovich space}, all $\ev_g$ are continuous. Thus, 
  $\ev_v\inv(\R) = \ev_g\inv[0, \infty)$ is an open subset of $X^\beth$, 
  and we can identify $\ev_g\inv(\R)$ as $h_X\inv(\Spec(R_g))$. 
  For any $f \in R$, the function $E(f, \psi, -)$ is continuous 
  on $\ev_{\psi(f)}\inv(\R)$, taking values in $(-\infty, \infty]$:
  it is the sum of the continuous
  functions $\frac{1}{p^e-1} \ev_f$ and $\frac{-p^e}{p^e-1}\ev_{\psi(f)}$. 

  Applying \eqref{no residues}, we see that for $\zeta \in X^\beth$
  with $\p = h_X(\zeta)$, 
  \[ 
  A_\psi(\zeta) = 
  \sup_{n, f} \{E(f, \psi^n, \zeta) \,:\, \ev_{\psi(f)}\inv(\R) \ni \zeta \} \]
  We then apply \eqref{lsc sheaf lemma}  to the sheaf $\shE$ of continuous functions
  \begin{equation}\label{sheaf E}
    \Gamma(\mathcal{U}, \shE) =
    \cup_{n, f} \{E(f, \psi^n, -) \,:\, \mathcal{U} \subseteq \ev_{\psi^n(f)}\inv(\R) \} 
  \end{equation}
  on an open subset $\mathcal{U} \subseteq X^\beth$, noting
  that each $E(f, \psi^n, -)$ is continuous on any open subset
  $\mathcal{U}$ for which $\mathcal{U} \subseteq \ev_{\psi^n(f)}\inv(\R)$.
\end{proof}

\begin{remark}
  The sheaf $\shE$ from \eqref{sheaf E} may have stalks $\shE_\zeta$ that
  are empty: let $Z \subset X$ be the subset where $\shD$ is not
  sharply $F$-pure; it is easy to see that $Z$ is closed. 
  For any $\zeta \in h_X\inv(Z)$, the stalk $\shE_\zeta = \varnothing$
  since $\zeta(\psi(f)) = +\infty$ for all $f \in R$. 
  In this case, $A_\psi(\zeta) = -\infty$. Thus, $A_\psi$ is
  automatically lsc on the closed set $h_X\inv(Z)$ (recall
  $h_X$ is continuous). 
\end{remark}

\begin{remark}
  If $A(v; \psi) \in \R$ for $v \in \Val_X$, and $\e > 0$, we can be more precise,
  demonstrating a basic open subset $\mathcal{U} \subseteq \Val_X$ with 
  $A(v; \psi) - \e < A(w; \psi)$ for all $w \in \mathcal{U}$. 
  Indeed, keeping notation from the proof, suppose 
  $A(v; \psi) - \e/2 < E(f, \psi^n, \zeta)$ for some
  $f \in L$ with $\psi^n(f) = 1$. One can check that 
  $\mathcal{U} = \{w \in \Val_X \,:\, |v(f) - w(f)| < (p^{en}-1)\e/2\}$ 
  has the desired property. 
\end{remark}

\subsection{$\D$-spaces and minimal $\beth$ log discrepancies}

The remainder of this section is devoted to proving the minimal $\beth$ log
discrepancy is lower-semicontinuous in the constructible topology of $X$.
The key ingredient is \cref{lsc lemma}, which is also applicable
to the log discrepancy in characteristic zero on excellent regular
schemes, or singular complex varieties. The resulting constructible
lower-semicontinuity in characteristic zero 
greatly generalizes Ambro's theorem for complex varieties
\cite[Theorem 2.2]{AmbroOnMinimalLogDiscrepancies}. This section 
is independent of the rest of the paper, 
and the $\D(X)$-spaces introduced do not come up again. 
The reader may safely skip to \Cref{Section: LCT}, if they like.  

\begin{definition}
  The {\em constructible topology} on $X$ is the minimal 
  topology containing both Zariski open and Zariski closed subsets. 
  This is equivalent to the topology with basis consisting of finite 
  unions of locally closed subsets. 
\end{definition}

\begin{definition}
  We will say that $w \in \Val_X$ is a {\em $\Z$-valuation} 
  if $\im(w) \subseteq \Z$; denote this set by $\Val_X^\Z$. 
  The {\em $\Z$-semivaluations on $X$} are the elements in the closure 
  of $\Val_X^\Z$ inside $X^\beth$, which we denote $\mf{D}(X)$. 
\end{definition}

\begin{remark}
  We define $\mathfrak{D}(X)$ to be the closure of $\Val_X^\Z$, 
  rather than the set of all $\zeta \in X^\beth$ with image contained 
  in $\Z$, because we wish to approximate $\Z$-semivaluations by 
  $\Z$-valuations. Our interest in $\Z$-valuations, as opposed
  to, e.g., all divisorial or discrete valuations, stems from the 
  uninteresting nature of minimizing log discrepancies on $X^\div$: 
  log discrepancies are homogeneous for the
  $\R_{>0}$ scaling action, meaning $A(c \,\zeta; \shD) = c\,A(\zeta; \shD)$
  for $c \in (0, \infty)$, so the minimum of $A(\zeta; \shD)$ for
  $\zeta$ in some $\R_{>0}$-invariant subset $S \subseteq X^\beth$ 
  (e.g. $X^\div$, or fibers of $h_X$ or $c_X$) is either $0$ or $-\infty$. 
\end{remark}

\begin{remark}
  A natural basis for the topology near a given $\zeta \in \D(X)$ 
  over an open affine $\Spec(R) \subseteq X$ is given by finite intersections 
  of sets of the form
  \[ \ev_{f, \D}\inv(-\e + \zeta(f), \zeta(f) + \e) = 
  \{\alpha \in \D(\Spec(R)) \,:\, |\zeta(f) - \alpha(f)| < \e\} \]
  or $\ev_{f, \D}\inv(\e, \infty]$, where $\e$ is a positive real number 
  and $f \in R$; the first type of set is used in the case $\zeta(f) < \infty$ 
  and the second when $\zeta(f) = \infty$. An important property of $\D(X)$ 
  not shared by $X^\beth$ is that 
  $\ev_f\inv(a, b) = \cup_{n \in (a, b) \cap \Z_{\ge 0}} \ev_f\inv(n)$. 
\end{remark}

\begin{lemma}
  Let $U \subseteq X$ be an open subscheme. Then 
  $\D(U) = \D(X) \cap U^\beth$ 
\end{lemma}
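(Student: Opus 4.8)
The plan is to prove the two inclusions separately; $\mathfrak{D}(U)\subseteq \mathfrak{D}(X)\cap U^\beth$ is formal, while $\mathfrak{D}(X)\cap U^\beth\subseteq \mathfrak{D}(U)$ will rest on the discreteness of the valuations in $\Val_X^\Z$. For the first inclusion, note that since $U\subseteq X$ is open and $X$ is integral, $U$ has the same function field $L$ as $X$, and a valuation on $L$ has a center on $U$ precisely when its center on $X$ lies in $U$ (using uniqueness of centers on separated schemes); hence $\Val_U^\Z = \Val_X^\Z\cap U^\beth$ as subsets of $X^\beth$. As $c_X$ is anticontinuous, $U^\beth = c_X\inv(U)$ is closed in $X^\beth$, so the closure of $\Val_U^\Z$ is the same whether taken in $U^\beth$ or in $X^\beth$; since it lies in both $U^\beth$ and $\overline{\Val_X^\Z}=\mathfrak{D}(X)$, we get $\mathfrak{D}(U)\subseteq \mathfrak{D}(X)\cap U^\beth$. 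The same argument, incidentally, gives $\mathfrak{D}(V)\subseteq \mathfrak{D}(W)$ whenever $V\subseteq W$ are open in $X$.

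For the reverse inclusion I would first reduce to the case $X$ (hence $U$) affine. Fix a finite affine open cover $\{X_i\}$ of the noetherian scheme $X$. Each $X_i^\beth = c_X\inv(X_i)$ is closed, the sets $X_i^\beth$ cover $X^\beth$, and for a finite closed cover the closure operator distributes over the pieces; since the closure of a subset of the closed subspace $X_i^\beth$ is the same computed in $X_i^\beth$ or in $X^\beth$, this yields $\mathfrak{D}(X) = \bigcup_i \mathfrak{D}(X_i)$. Thus if $\zeta\in \mathfrak{D}(X)\cap U^\beth$, then $\zeta\in \mathfrak{D}(X_i)$ for some $i$, whence $c_X(\zeta)\in U\cap X_i$ and $\zeta\in (U\cap X_i)^\beth = U^\beth\cap X_i^\beth$; as $\mathfrak{D}(U\cap X_i)\subseteq \mathfrak{D}(U)$ by the last remark of the previous paragraph, it suffices to prove $\zeta\in \mathfrak{D}(U\cap X_i)$, i.e.\ the affine case.

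So assume $X=\Spec R$ is affine and $U = D(J)$ for an ideal $J\subseteq R$, and let $\zeta\in \mathfrak{D}(X)\cap U^\beth$. Choose a net $(v_\alpha)$ in $\Val_X^\Z$ with $v_\alpha\to\zeta$ in $X^\beth$; since $X$ is affine this means $v_\alpha(f)\to\zeta(f)$ in $[0,\infty]$ for every $f\in R$. Because $c_X(\zeta)\in U$, there is $g_0\in J$ with $g_0\notin c_X(\zeta)$, and by the characterization used to prove $c_X$ anticontinuous (namely $g\in c_X(\zeta)\iff \zeta(g)>0$, for $g\in R$) this forces $\zeta(g_0)=0$. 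Now the key point: $v_\alpha(g_0)\to 0$, and each $v_\alpha(g_0)$ is a nonnegative integer (or $+\infty$), so $v_\alpha(g_0)=0$ for all $\alpha$ past some $\alpha_0$; hence $g_0\notin c_X(v_\alpha)$, i.e.\ $c_X(v_\alpha)\in D(g_0)\subseteq U$, so $v_\alpha\in \Val_X^\Z\cap U^\beth = \Val_U^\Z$ for $\alpha\ge\alpha_0$. The tail of the net then lies in $\Val_U^\Z$ and still converges to $\zeta$, giving $\zeta\in \overline{\Val_U^\Z}=\mathfrak{D}(U)$.

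The only genuinely essential point — and precisely why one defines $\mathfrak{D}(X)$ as the closure of $\Val_X^\Z$ rather than as the set of all semivaluations with values in $\Z$ — is the step where discreteness upgrades $v_\alpha(g_0)\to 0$ to $v_\alpha(g_0)=0$ eventually; for a net of arbitrary semivaluations one would only have convergence to $0$, which does not place the net inside $U^\beth$. Everything else is bookkeeping, the one trap being that $U^\beth$ is closed, not open, in $X^\beth$, so one cannot simply intersect a convergent net with an open set and must instead route through the finite-closed-cover reduction.
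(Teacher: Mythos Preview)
Your proof is correct and follows the same strategy as the paper's: exploit the discreteness of $\Z$-valuations to show that a net in $\Val_X^\Z$ converging to some $\zeta \in U^\beth$ must eventually lie in $\Val_U^\Z$. Your version is in fact tidier on two points---you explicitly reduce to $X$ affine via the finite closed cover $\{X_i^\beth\}$ before running the net argument, and you work with a single witness $g_0$ for $c_X(\zeta)\in U$ rather than all $f\in R$ at once, which sidesteps the need to produce a uniform index $\alpha_0$ valid for infinitely many conditions.
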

\begin{proof}
  First, note that for any open subscheme $U \subseteq X$ we have 
  $U^\beth = c_X\inv(U)$, and so in particular if
  $U = \Spec(R)$ is an affine open subscheme of $X$ then 
  $\Val_U^\Z = \Val_X^\Z \cap U^\beth$, as
  both sets are described as $\Z$-valuations on the function field 
  $L$ of $X$ that are non-negative on $R$. 
  Since $U^\beth$ is closed in $X^\beth$, the closure of $\Val_U^\Z$ 
  in $X^\beth$ must be contained in $U^\beth$, 
  and so agrees with $\D(U)$. This implies that 
  $\D(U) \subseteq \D(X) \cap U^\beth$. For the reverse inclusion,
  let $\zeta \in \D(X) \cap U^\beth$ and let 
  $\{v_\alpha\} \subset \Val_X^\Z$ be a convergent net with 
  limit $\zeta$. Since $c_X(\zeta) \in U$, we know $\zeta(f) \ge 0$ 
  for all $f \in R$, and so $v_\alpha \to \zeta$
  implies that there exists $\alpha_0$ such that for 
  $\alpha \ge \alpha_0$ we have $v_\alpha(f) > -1$. Since 
  $v_\alpha$ takes values only in $\Z$, we conclude that 
  $v_\alpha(f) \ge 0$ for all $\alpha \ge \alpha_0$ and 
  $f \in R$, which is to say that 
  $\{v_\alpha\}_{\alpha \ge \alpha_0} \subset \Val_U^\Z$, 
  so $\zeta \in \D(U)$. 
\end{proof}

It seems much more difficult to determine if $\D(Y) = \D(X) \cap Y^\beth$ 
when $Y \subsetneq X$ is a proper closed subscheme.  
In general, neither inclusion is clear to the author. 

\begin{lemma}
  The center function $c_\mf{D} = c_X|_\mf{D}: \mf{D}(X) \to X$ is continuous. 
\end{lemma}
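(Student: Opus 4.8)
The plan is to reduce to the affine case and then check that the preimage of each basic open subset is open, exploiting the integrality property of $\mathfrak{D}(X)$ which is exactly what upgrades the (otherwise only anti-continuous) center map $c_X$ to a continuous one on $\mathfrak{D}(X)$. First I would cover $X$ by affine opens $U_i = \Spec(R_i)$. By the previous lemma $\mathfrak{D}(U_i) = \mathfrak{D}(X) \cap U_i^\beth$, and since $U_i^\beth = c_X^{-1}(U_i)$ is open in $X^\beth$, the sets $\mathfrak{D}(U_i)$ form an open cover of $\mathfrak{D}(X)$ on which $c_{\mathfrak{D}}$ restricts to $c_{U_i}|_{\mathfrak{D}(U_i)}$ (which indeed lands in $U_i$, as $\zeta \in U_i^\beth$ forces $c_X(\zeta) \in U_i$). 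Since continuity is local on the source, it therefore suffices to treat $X = \Spec(R)$ affine, and there it suffices to show that $c_{\mathfrak{D}}^{-1}(D(g))$ is open in $\mathfrak{D}(X)$ for every principal open $D(g)$, $g \in R$, as these form a basis for the Zariski topology.

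For the key computation, recall from the proof that $c_X$ is anti-continuous that $g$ lies in the prime $c_X(\zeta)$ if and only if $\zeta(g) > 0$; hence $c_X^{-1}(D(g)) = \ev_g^{-1}(\{0\})$, and so $c_{\mathfrak{D}}^{-1}(D(g)) = \ev_g^{-1}(\{0\}) \cap \mathfrak{D}(X)$. Now I would use that every $\zeta \in \mathfrak{D}(X)$ takes values in $\Z_{\ge 0} \cup \{+\infty\}$: writing $\zeta = \lim_\alpha v_\alpha$ with $v_\alpha \in \Val_X^\Z$, continuity of $\ev_g$ gives $\zeta(g) = \lim_\alpha v_\alpha(g)$ in $[0,\infty]$, and $\Z_{\ge 0} \cup \{+\infty\}$ is a closed subset of $[0,\infty]$, so $\zeta(g) \in \Z_{\ge 0} \cup \{+\infty\}$. (This is precisely the property of $\mathfrak{D}(X)$ recorded in the remark above, $\ev_g^{-1}(a,b) = \bigcup_{n \in (a,b)\cap\Z_{\ge 0}} \ev_g^{-1}(n)$.) Consequently, on $\mathfrak{D}(X)$ one has $\{\zeta : \zeta(g) = 0\} = \{\zeta : \zeta(g) < 1\} = \ev_g^{-1}([0,1)) \cap \mathfrak{D}(X)$, which is open in $\mathfrak{D}(X)$ because $[0,1)$ is open in $[0,\infty]$ and $\ev_g$ is continuous. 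This shows $c_{\mathfrak{D}}^{-1}(D(g))$ is open and completes the proof.

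The only substantive point is this last step: the sets $\ev_g^{-1}(\{0\})$, which are merely closed in $X^\beth$ and account for the anti-continuity of $c_X$ there, become clopen once we restrict to $\Z$-semivaluations. Everything else — the affine reduction, the basis reduction, and identifying $c_X^{-1}(D(g))$ — is routine, so there is no real obstacle; the content is entirely in observing that approximability of points of $\mathfrak{D}(X)$ by integer-valued valuations forces integral values and hence the clopen-ness above.
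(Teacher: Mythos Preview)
Your core argument is correct and essentially identical to the paper's: both exploit that $\zeta \in \mathfrak{D}(X)$ takes values in $\Z_{\ge 0}\cup\{+\infty\}$, so that $\zeta(g)=0 \iff \zeta(g)<1$ (equivalently, the paper phrases it as $\zeta(f)>0 \iff \zeta(f)\ge 1$ and shows $c_{\mathfrak{D}}^{-1}(\V(f))=\ev_f^{-1}[1,\infty]\cap\mathfrak{D}(X)$ is closed).

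However, your reduction to the affine case contains an error: you assert that $U_i^\beth = c_X^{-1}(U_i)$ is \emph{open} in $X^\beth$, but this is false. Since $c_X$ is anti-continuous, $c_X^{-1}(U_i)$ is \emph{closed} in $X^\beth$ for Zariski-open $U_i$ (this is exactly the content of the anti-continuity lemma you yourself cite). So $\{\mathfrak{D}(U_i)\}$ is a finite closed cover of $\mathfrak{D}(X)$, not an open one. The reduction still goes through, but by the pasting lemma for finite closed covers rather than by ``continuity is local on the source'' in the open-cover sense. The paper simply writes ``we may assume $X$ is affine,'' relying on the same finite compact cover reasoning made explicit elsewhere (e.g.\ in the proof of lower-semicontinuity of $A(-;\shD)$). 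Once this is repaired, your proof is complete and matches the paper's.
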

\begin{proof}
  We may assume $X$ is affine, $X = \Spec(R)$. 
  Let $f \in R$ and $\zeta \in \mf{D}(X)$. Then $c_\mf{D}(\zeta) \in \V(f)$ 
  if and only if $\zeta(f) > 0$. Since $\zeta \in \mf{D}(X)$, 
  $\im(\zeta) \subseteq \Z$, and so $\zeta(f) > 0$ is equivalent to 
  $\zeta(f) \ge 1$. Therefore, 
  $c_\mf{D}\inv(\V(f)) = \ev_f\inv[1, \infty] \cap \mf{D}(X)$ is closed. 
\end{proof}

\begin{lemma}\label{closed fibers}
  For all $x \in X$, the fiber $c_\mf{D}\inv(x)$ is closed and compact.
\end{lemma}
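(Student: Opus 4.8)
The plan is to exhibit $c_\mathfrak{D}^{-1}(x)$ as an intersection of preimages of closed subsets of $[0,\infty]$ under the continuous evaluation maps $\ev_f$, which makes it closed in $X^\beth$; compactness then follows because $X^\beth$ is compact (it is covered by the finitely many compact sets $U_i^\beth$, $\mathfrak{D}(X)$ being closed in it), and closedness in $X^\beth$ a fortiori gives closedness in $\mathfrak{D}(X)$. First I would reduce to $X=\Spec(R)$ affine: pick an affine open $U=\Spec(R)\subseteq X$ containing $x$. Since $c_X^{-1}(U)=U^\beth$, every $\zeta$ with $c_X(\zeta)=x$ lies in $U^\beth$, and its center computed with respect to $U$ is again $x$ (the defining morphism $\Spec(\shO_w)\to X$ factors through $U$), so the fiber for $X$ equals the fiber for $U$ computed inside $U^\beth=\Spec(R)^\beth$. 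As $U^\beth$ is closed in $X^\beth$ by anti-continuity of $c_X$, closed-in-$U^\beth$ implies closed-in-$X^\beth$, and compactness is intrinsic; so it suffices to treat the affine case.

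Next I would unwind the center condition. For $\zeta\in\Spec(R)^\beth$ one has $\zeta(f)\ge 0$ for all $f\in R$ (otherwise $\zeta$ would admit no center on $\Spec R$), and $c_X(\zeta)=i_\zeta(\m_w)=\{f\in R : \zeta(f)>0\}$. Hence, writing $x=\p$, we have $c_X(\zeta)=\p$ exactly when $\zeta(f)>0$ for all $f\in\p$ and $\zeta(f)=0$ for all $f\in R\setminus\p$. The one point requiring a short argument is that every $\zeta\in\mathfrak{D}(X)$ takes values in $\Z_{\ge 0}\cup\{\infty\}$: this set is closed in $[0,\infty]\cong[0,1]$, so $\bigcap_{f\in R}\ev_f^{-1}(\Z_{\ge 0}\cup\{\infty\})$ is a closed subset of $X^\beth$ containing $\Val_X^\Z$, hence contains its closure $\mathfrak{D}(X)$. (This is the same integrality already used in the proof that $c_\mathfrak{D}$ is continuous.) Consequently, on $\mathfrak{D}(X)$ the condition $\zeta(f)>0$ is equivalent to $\zeta(f)\ge 1$, so
\[
c_\mathfrak{D}^{-1}(\p)\;=\;\mathfrak{D}(X)\;\cap\;\bigcap_{f\in\p}\ev_f^{-1}\bigl([1,\infty]\bigr)\;\cap\;\bigcap_{f\in R\setminus\p}\ev_f^{-1}\bigl(\{0\}\bigr).
\]

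Finally I would conclude. Each $\ev_f$ is continuous on $X^\beth$ by the very definition of the topology on $\Spec(R)^\beth$, and $[1,\infty]$ and $\{0\}$ are closed in $[0,\infty]$, so the displayed set is an intersection of closed subsets of $X^\beth$ (recall $\mathfrak{D}(X)$ is closed, being a closure), hence closed in $X^\beth$ and in particular in $\mathfrak{D}(X)$. Since $X^\beth$ is compact and a closed subspace of a compact space is compact, $c_\mathfrak{D}^{-1}(x)$ is compact. The only places any care is needed are the affine reduction bookkeeping and the verification that elements of $\mathfrak{D}(X)$ are $(\Z_{\ge0}\cup\{\infty\})$-valued; the rest is formal point-set topology, so I do not expect a genuine obstacle here.
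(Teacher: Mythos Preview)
Your proof is correct and follows essentially the same approach as the paper: reduce to an affine neighborhood of $x$, then observe that for $\zeta\in\mathfrak{D}(X)$ the condition $c_X(\zeta)=\p$ is equivalent to $\zeta(g)=0$ for $g\in R\setminus\p$ and $\zeta(f)\ge 1$ for $f\in\p$, both of which are closed conditions. The paper's version is terser---it takes the $(\Z_{\ge 0}\cup\{\infty\})$-valuedness of points of $\mathfrak{D}(X)$ for granted (having just used it in the preceding lemma on continuity of $c_\mathfrak{D}$), whereas you spell it out---but the argument is the same.
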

\begin{proof}
  Let $U = \Spec(R)$ be an affine neighborhood of $x$ and suppose 
  $x \in \Spec(R)$ corresponds to the prime $\p \subset R$.  
  Since $c_\mf{D}\inv(x) \subseteq \mf{D}(U)$, we may assume that 
  $X = \Spec(R)$. Now, $c_X(\zeta) = x$  for $\zeta \in \mf{D}(X)$ 
  if and only if $\zeta(g) = 0$ for all $g \in R \setminus \p$ and 
  $\zeta(f) \ge 1$ for all $f \in \p$. These are both closed conditions.
\end{proof}

\begin{proposition}
  The image of a basic open subset of $\D(X)$ under $c_\D = c_X|_{\D(X)}$ is 
  a finite union of (Zariski) locally closed subsets, 
  and $c_\D$ induces the constructible topology as its the quotient topology.
\end{proposition}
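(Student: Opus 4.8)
The plan is to dispose of the quotient–topology assertion almost for free, and then to obtain the image statement by reducing a basic open to simple ``cylinder'' sets and running a Noetherian induction on $X$.

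\textbf{The quotient topology.} First I would observe that $c_\D$ is continuous into $X$ equipped with the \emph{constructible} topology. Indeed, $c_\D$ is continuous for the Zariski topology by the lemma proved above, so $c_\D^{-1}$ of a Zariski open is open; and for $f\in R$ one has $c_\D^{-1}(\V(f))=\{\zeta\in\D(X):\zeta(f)>0\}=\ev_f^{-1}((0,\infty])\cap\D(X)$, which is open since $\ev_f$ is continuous, so $c_\D^{-1}(\V(\ideala))=\bigcap_k c_\D^{-1}(\V(f_k))$ is open for any (finitely generated) ideal $\ideala=(f_1,\dots,f_k)$. Hence $c_\D$ is continuous onto $X_{\mathrm{cons}}$. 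It is surjective because $x=c_X(\triv_x)$ and $\triv_x\in\D(X)$ (any $\Z$-valuation of $L$ centered at $x$—e.g.\ the order of a prime exceptional divisor of the blowup of $\Spec\shO_{X,x}$ along its maximal ideal—lies in every basic neighbourhood of $\triv_x$). Since $\D(X)$ is compact (closed in the compact $X^\beth$) and $X_{\mathrm{cons}}$ is compact Hausdorff (standard for quasi-compact, in particular Noetherian, schemes), every continuous surjection $\D(X)\to X_{\mathrm{cons}}$ is closed, hence a quotient map; therefore the quotient topology on $X$ induced by $c_\D$ is exactly the constructible topology. Note this half needs nothing about images.

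\textbf{Reduction for the image.} A basic open $\mathcal U$ of $\D(X)$ is, by construction, built from finitely many functions on a single affine chart $V\subseteq X$, so $\mathcal U\subseteq\D(V)=\D(X)\cap V^\beth$ and $c_\D(\mathcal U)\subseteq V$; as a constructible subset of $V$ is constructible in $X$, I may assume $X=\Spec R$ is affine and $\mathcal U=\bigcap_i\ev_{f_i,\D}^{-1}(J_i)$ with $f_i\in R$ and $J_i$ open intervals. Using that every $\zeta\in\D(X)$ takes values in $\Z_{\ge 0}\cup\{+\infty\}$, each $J_i\cap(\Z_{\ge0}\cup\{\infty\})$ is a finite disjoint union of singletons $\{n\}$ and at most one tail $\{m,m+1,\dots\}\cup\{\infty\}$; distributing the resulting finite unions over the intersection expresses $\mathcal U$ as a finite union of \emph{cylinders} $\mathcal V=\bigcap_{i\in A}\{\zeta:\zeta(f_i)=n_i\}\cap\bigcap_{j\in B}\{\zeta:\zeta(f_j)\ge m_j\}$ with $n_i\in\Z_{\ge0}$, $m_j\in\Z_{\ge1}$. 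Because $c_\D$ commutes with unions, it suffices to show each $c_\D(\mathcal V)$ is a finite union of locally closed subsets. I would then record that $\zeta(f)>0\iff f\in\p_{c_X(\zeta)}\iff c_X(\zeta)\in\V(f)$, and that $\triv_x(f)$ is $0$ for $f\notin\p_x$ and $+\infty$ for $f\in\p_x$. This immediately settles the case where all $n_i=0$ (including $A=\varnothing$): using the witnesses $\triv_x$ for ``$\supseteq$'' and the displayed equivalence for ``$\subseteq$'', one gets $c_\D(\mathcal V)=\bigl(\bigcap_{i\in A}(X\smallsetminus\V(f_i))\bigr)\cap\bigl(\bigcap_{j\in B}\V(f_j)\bigr)$, which is locally closed.

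\textbf{The remaining case, and the obstacle.} It remains to treat a cylinder $\mathcal V$ with some $n_{i_0}\ge 1$. Then every $\zeta\in\mathcal V$ has $\zeta(f_{i_0})\ge1$, so $c_\D(\mathcal V)\subseteq\V(f_{i_0})$, a \emph{proper} closed subscheme; I would set up a Noetherian induction on $X$, assuming the proposition on all proper closed subschemes. The inductive step splits $\mathcal V$ according to the home $y$ of $\zeta$: when $y$ is non-generic, its reduced closure $Z$ is a proper integral subscheme containing $c_X(\zeta)$, and $\zeta|_Z\in\Val^{\Z}_Z\subseteq\D(Z)$ exhibits $c_X(\zeta)$ as a center of the induced cylinder on $\D(Z)$, to which the inductive hypothesis applies (giving a constructible subset of $Z$, hence of $X$); when $y$ is generic, $\zeta$ is an honest $\Z$-valuation of $L$. \textbf{The main obstacle} is to organize this split into a \emph{finite} union of constructible pieces: a priori infinitely many integral subschemes $Z$ occur, and there is no general comparison between $\D(Z)$ and $\D(X)\cap Z^\beth$—precisely the difficulty the paper flags. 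I expect to resolve it by a stratification argument controlling the value $\zeta(f_{i_0})$ on centers (passing to suitable blowups of $X$ and pushing constructible sets back down by Chevalley's theorem), so that on each stratum the remaining conditions either fall under the $n_i=0$ case above or descend to instances of the proposition on the proper strata; keeping track of the combinatorics of these strata is where the real work lies.
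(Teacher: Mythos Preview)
Your quotient-topology argument is correct and cleaner than the paper's: you go directly via ``continuous surjection from a compact space to a Hausdorff space is closed, hence a quotient map,'' which is independent of the image statement. The paper instead deduces the quotient claim from the image claim, by checking that $c_\D^{-1}(c_\D(\mf U))$ is again a basic open.

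For the image statement, your proof has a genuine gap, which you yourself flag: the Noetherian-induction step for cylinders with some $n_{i_0}\ge 1$ is only an outline, and the obstacle you identify --- controlling the infinitely many possible homes $Z$, without a comparison $\D(Z)\leftrightarrow\D(X)\cap Z^\beth$ --- is not resolved. The paper's route is entirely different and much shorter: it writes down the explicit set $\bigcap_i\bigl[\bigcup_{n_{i,j}=0}\mathbb D(f_i)\cup\bigcup_{n_{i,j}>0}\V(f_i)\bigr]$ and asserts this equals $c_\D(\mf U)$. Only the inclusion $\subseteq$ is actually justified there, and the reverse inclusion can fail --- e.g.\ on $\A^2_k=\Spec k[x,y]$ the basic open $\ev_{xy,\D}^{-1}(1)$ has image $(\V(x)\setminus\V(y))\cup(\V(y)\setminus\V(x))$, not all of $\V(xy)$, since any $\zeta\in\D(X)$ with $\zeta(xy)=1$ is a limit of $\Z$-valuations $v$ with $v(x)+v(y)=1$, forcing $\{\zeta(x),\zeta(y)\}=\{0,1\}$. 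So your more careful cylinder analysis is on the right track; the paper's shortcut really produces a constructible set \emph{containing} the image with open $c_\D$-preimage, which suffices for the quotient claim but not, strictly, for the image claim as stated.
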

\begin{proof}
  We assume that $X = \Spec(R)$ and that
  our basic open subset is of the form 
  $\mf{U} := \cap_{i=1}^s \left[ \cup_{j=1}^{t_i} \ev_{f_i}\inv(n_{i,j}) \right]$ 
  for some $f_i \in R$ and $n_{i,j} \in \Z_{\ge 0}$; the case involving 
  $\ev_{f_i}\inv[n, \infty]$ is very similar. 
  Suppose $\alpha \in \ev_{f_i}\inv(n)$. The condition that $\p = c_\D(\alpha)$
  is equivalent to $\p \in \mathbb{D}(f_i) := \V(f_i)^c$ when $n = 0$, and 
  $\p \in \mathbb{V}(f_i)$ when $n > 0$. For each $1 \le i \le s$, we re-number 
  so that $n_{i,j} = 0$ for $1 \le j \le t_i'$, and $n_{i, j} > 0$ for 
  $t_i' < j \le t_i$. Then the image under $c_\D$ is:
  \[ c_\D(\mf{U}) = \bigcap_{i=1}^s \left[ \bigg( \bigcup_{j=1}^{t_i'} \mathbb{D}(f_i) \bigg) \bigcup \bigg( \bigcup_{j > t_i'} 
  \mathbb{V}(f_i) \bigg), \right] \]
  which is a finite union of locally closed sets, proving our first claim. 
  
  One checks directly that the primage of $c_\D(\mf{U})$ is a basic 
  open subset of $\mathfrak{D}(X)$ (involving various $\ev_f\inv(-1/2, 1/2)$ 
  and $\ev_f\inv(0, \infty]$). Therefore, the quotient topology on $X$ 
  induced by $c_\D$ is the constructible topology. 
\end{proof}

We use the following lemma to pass lsc functions from $\D(X)$ to $X$ by 
minimizing on fibers of $c_\D$. These semicontinuity results are typically 
deduced for excellent schemes over $\Q$ using log resolutions 
({\em e.g.} \cite[Theorem 2.2]{AmbroOnMinimalLogDiscrepancies}); such results 
become special corollaries of our lemma. 

\begin{lemma}\label{lsc lemma}
  Let $f: Y \to Z$ be a continuous surjective function between topological 
  spaces. Assume that $Y$ is compact  and $Z$ is Hausdorff. Fixing 
  $a: Y \to \R_{\pm \infty }$, define a function on $Z$ by 
  \[ m(z) = \inf_{f(y) = z} a(y). \]
  Then $m$ is lsc on $Z$ whenever $a$ is lsc on $Y$. 
\end{lemma}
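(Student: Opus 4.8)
The plan is to prove lower-semicontinuity of $m$ by verifying the sublevel-set criterion from \ref{lsc: definition}\eqref{lsc: continuous}: it is enough to show that $m\inv[-\infty, r]$ is closed in $Z$ for every $r \in \R$, since $\R_{\pm\infty} = [-\infty, r] \sqcup (r, \infty]$. So I would fix $r \in \R$ and try to exhibit $m\inv[-\infty, r]$ as a countable intersection of sets that are manifestly closed, exploiting exactly the three hypotheses at hand — $Y$ compact, $Z$ Hausdorff, and $f$ continuous.

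First I would prove the set-theoretic identity
\[ m\inv[-\infty, r] = \bigcap_{n \ge 1} f\!\left( a\inv\!\left[-\infty,\; r + \tfrac{1}{n}\right] \right). \]
Granting this, the right-hand side is a closed set: for each $s \in \R$ the sublevel set $a\inv[-\infty, s]$ is closed in $Y$ because $a$ is lsc (its complement $a\inv(s, \infty]$ is open), hence compact since $Y$ is compact; its image under the continuous map $f$ is therefore compact, hence closed in the Hausdorff space $Z$; and an intersection of closed sets is closed. This immediately gives that $m\inv[-\infty,r]$ is closed and hence that $m$ is lsc.

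The remaining work — and the step I expect to need the most care — is the verification of the displayed identity, which is an elementary argument about infima in $\R_{\pm\infty}$ (with the minor nuisances of the values $-\infty$ and $+\infty$ to watch). The inclusion $\supseteq$ is quick: if $y$ lies in the fibre over $z$ with $a(y) \le r + \tfrac1n$, then $m(z) \le a(y) \le r + \tfrac1n$, and letting $n \to \infty$ gives $m(z) \le r$. For $\subseteq$, if $m(z) \le r < r + \tfrac1n$, then since $f$ is surjective the fibre $f\inv(z)$ is nonempty and the infimum defining $m(z)$ is strictly below $r + \tfrac1n$, so some $y$ in the fibre satisfies $a(y) \le r + \tfrac1n$, placing $z$ in $f(a\inv[-\infty, r+\tfrac1n])$. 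Alternatively, and perhaps more conceptually, one can note that each fibre $f\inv(z)$ is closed (points are closed in the Hausdorff $Z$, and $f$ is continuous) hence compact, so the lsc function $a|_{f\inv(z)}$ attains its minimum $m(z)$ (by a finite-intersection-property argument on the nested closed sets $a\inv[-\infty, m(z)+\tfrac1n] \cap f\inv(z)$, or $a\inv[-\infty,-n]\cap f\inv(z)$ when $m(z)=-\infty$); this yields the cleaner identity $m\inv[-\infty, r] = f\!\left(a\inv[-\infty, r]\right)$ directly, with compactness of $Y$ used only to ensure $a\inv[-\infty, r]$ is compact.
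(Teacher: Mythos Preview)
Your proof is correct and takes a genuinely different route from the paper's. The paper argues via the net characterization \ref{lsc: definition}\eqref{lsc: nets}: assuming a net $z_\nu \to z_\ast$ with $\liminf m(z_\nu) < m(z_\ast)$, it lifts to points $y_\nu \in f\inv(w_\nu)$ with $a(y_\nu) < m(z_\ast)$, passes to a convergent subnet using compactness of $Y$, and derives a contradiction from lower-semicontinuity of $a$ at the limit point (continuity of $f$ plus Hausdorffness of $Z$ pin down $z_\ast$ as the image of that limit). You instead use the sublevel-set characterization \ref{lsc: definition}\eqref{lsc: continuous}, reducing to the single topological fact that a continuous map from a compact space to a Hausdorff space is closed. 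Your argument is more elementary in that it avoids nets entirely and isolates exactly where each hypothesis enters; the paper's argument fits the ambient style of the section (which works with nets since $X^\beth$ is not first countable) and makes the ``lift, extract subnet, contradict'' mechanism explicit. Your alternative observation that each fibre is compact and hence the infimum is attained actually gives the sharper identity $m\inv[-\infty, r] = f(a\inv[-\infty, r])$, which both shortens the argument and recovers the fact (used elsewhere in the paper, e.g.\ in the definition of $\mld_\beth$) that the infimum in $m(z)$ is a minimum.
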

\begin{proof}
  If $m(z) = -\infty$ then lower-semicontinuity is automatic at $z$, so we 
  fix $z_\ast \in Z$ with $m(z_\ast) > -\infty$. 
  Suppose there exists a convergent net $z_\nu \to z_\ast$, indexed by 
  a directed set $\mc{N}$, with $\liminf_\nu m(z_\nu) < m(z_\ast)$. 
  Then for some fixed $0 < \e \ll 1$, there exists $\nu_0 \in \mc{N}$ with
  the property that for every $\nu \ge \nu_0$, there is some $\mu \ge \nu$
  making
  \[ m(z_\mu) + \e < m(z_\ast). \]
  We may therefore select, for each $\nu \ge \nu_0$, 
  $w_\nu \in \{z_\mu\}_{\mu \ge \nu}$ with
  $m(w_\nu) < m(z_\ast)$; note
  $w_\nu \to z_\ast$. Set $\mc{N}' = (\nu \ge \nu_0) \subset \mc{N}$.
  By construction, $m(w_\nu) + \e < m(z_\ast)$ for all $\nu \in \mc{N}'$.

  By definition of $m(w_\nu) = \inf_{f(y) = w_\nu} a(y)$,
  for each $\nu \in \mc{N}'$ there must exist $y_\nu \in Y$ with
  $f(y_\nu) = w_\nu$ and $a(y_\nu) < m(w_\nu) + \e < m(z_\ast)$. 
  Compactness of $Y$ allows us to pass to a convergent subnet 
  $\{y_\beta\}_{\beta \in \mc{B}}$ of $\{y_\nu\}_{\nu \in \mc{N}'}$; 
  note that $\{ f(y_\beta) \}_{\mc{B}}$ is a sub-net of
  $\{f(y_\nu)\}_{\mc{N}'} = \{w_\nu\}_{\mc{N}'}$, so
  $z_\ast \in \lim_\beta f(y_\beta)$. 
  If $y_\ast \in \lim_\beta y_\beta$, then 
  $a(y_\ast) \le \liminf_{\mc{B}} a(y_\beta) < m(z_\ast)$ because $a$ is 
  lsc on $Y$ (\ref{lsc: definition}\eqref{lsc: nets}). This gives
  a contradiction:
  \[ m(z_\ast) \le a(y_\ast) \le \liminf_{\mc{B}} a(y_\beta) < m(z_\ast). \]
  Therefore, $m$ must be lsc on $Z$. 

\end{proof}

\begin{definition}
  The {\em minimal $\beth$ log discrepancy} of $\shD$ at $x \in X$ 
  is defined to be
  \[ \mld_\beth(x; \shD) =  \min A(\zeta; \shD), \]
  where we minimize over $\zeta \in \D(X)$ with 
  $c_\D(\zeta) = x$. \Cref{closed fibers} and \Cref{log discrepancy lsc} 
  implies this minimum is achieved. 
\end{definition}

  Note that if $\mld_\beth(x; \shD) < \infty$ then any $\zeta$ achieving this minimum must have a $\shD$-compatible home $Z$. 

\begin{theorem}[cf. \cite{AmbroOnMinimalLogDiscrepancies}]\label{mld is lsc}
  For any integral scheme $X$ of characteristic $p > 0$, 
  $\mld_\beth(-; \shD): X \to \R_{\pm \infty}$ is lsc in the 
  constructible topology on $X$. Explicitly, for any $x \in X$ 
  and $\e > 0$, there is a locally closed subset $G \subset X$ containing $x$ such 
  that $\mld_\beth(x'; \shD) > \mld_\beth(x; \shD) - \e$ for all $x' \in G$. 
\end{theorem}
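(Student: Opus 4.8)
The plan is to obtain $\mld_\beth(-;\shD)$ as the fiberwise minimum of $A(-;\shD)$ along the center map and then apply \cref{lsc lemma}. Precisely, I would invoke that lemma with $Y = \D(X)$, with $Z = X$ carrying the \textbf{constructible topology}, with $f = c_\D$, and with $a = A(-;\shD)|_{\D(X)}$; its conclusion is that $m(x) := \inf_{c_\D(\zeta) = x} A(\zeta;\shD)$ is lsc on $X$ in the constructible topology. By \cref{closed fibers} each fiber $c_\D\inv(x)$ is compact and by \cref{log discrepancy lsc} the function $A(-;\shD)$ is lsc, so this infimum is attained and equals $\mld_\beth(x;\shD)$; thus the theorem reduces to checking the four hypotheses of \cref{lsc lemma}.

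None of these hypotheses should be hard. For compactness of $\D(X)$: since the schemes here are separated and noetherian, hence quasi-compact, a finite affine open cover of $X$ yields a finite cover of $X^\beth$ by the compact subsets $U_i^\beth$ (\cref{definition: affine Berkovich space}), so $X^\beth$ is compact, and $\D(X)$ is closed in it by definition. The constructible topology on the underlying (spectral) space of a quasi-compact quasi-separated scheme is compact Hausdorff, which supplies the Hausdorff hypothesis on $Z$. Continuity of $c_\D$ into the constructible topology is immediate from the proposition that the constructible topology \emph{is} the quotient topology determined by $c_\D$, and $c_\D$ is surjective since every point of $X$ is the center of some $\Z$-valued valuation of $L$ (the trivial valuation at the generic point; a Rees valuation of the maximal ideal of $\shO_{X,x}$, which is divisorial hence $\Z$-valued, at any other point), all of which lie in $\Val_X^\Z \subseteq \D(X)$. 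Finally, $a$ is the restriction of the lsc function $A(-;\shD)$ on $X^\beth$ (\cref{log discrepancy lsc}) to the subspace $\D(X)$, so it is lsc there.

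To extract the explicit formulation, I would unwind \cref{lsc: definition}\eqref{lsc: opens}: fixing $x$ with $\mld_\beth(x;\shD) > -\infty$ (the value $-\infty$ being a trivial case) and $\e > 0$, constructible lower-semicontinuity gives a constructible open set $V \ni x$ with $\mld_\beth(x';\shD) > \mld_\beth(x;\shD) - \e$ for all $x' \in V$; since the constructible topology has a basis of finite unions of locally closed subsets, $x$ lies in one locally closed piece $G \subseteq V$, which is the set asserted. The substantive work for the theorem has already been done in \cref{log discrepancy lsc}, \cref{closed fibers}, the quotient-topology proposition, and \cref{lsc lemma}, so the only places a skeptical reader might ask for more are the compactness of $\D(X)$ and the Hausdorffness of the constructible topology; establishing (or citing) those two standard facts is the main — and essentially only — obstacle here.
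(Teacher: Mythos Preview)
Your proof is correct and follows exactly the same route as the paper: apply \cref{lsc lemma} with $Y=\D(X)$, $Z=X^{Constr}$, $f=c_\D$, and $a=A(-;\shD)$, checking compactness, Hausdorffness, continuity, and lower-semicontinuity from the earlier results. The paper's proof is a one-line invocation of these ingredients, whereas you spell out the verification of each hypothesis (including surjectivity of $c_\D$ and the unwinding of the explicit $\e$-$G$ statement), but the argument is the same.
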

\begin{proof}
  The theorem follows directly from \autoref{lsc lemma}, since $\D(X)$ is compact, 
  $X^{Constr}$ is Hausdorff, $c_\D: \D(X) \to X^{Constr}$ is continuous, and 
  $A(-, \shD): X^\beth \to \R_{\pm \infty}$ is lsc \eqref{log discrepancy lsc}. 
\end{proof}

\begin{remark}
  Log discrepancies on $\Val_X$ and $X^\beth$ are also lsc over fields of characteristic
  zero, so the same proof recovers Ambro's result over $\C$, and applies more generally
  \cite{JonssonMustata, BdFFU, BlumThesis}.
\end{remark}

\begin{remark}
  It is a major open problem in the (characteristic zero) minimal model program 
  to determine if the (usual) minimal log discrepancy is lsc in the {\em Zariski} 
  topology on the set of closed points of a variety. Some results in this
  direction are known for complex varieties, cf. \cite{EinMustataYasuda, EinMustata}. 
\end{remark}

Another natural function to consider is the appropriate version of the log canonical
threshold of graded sequences of ideals with respect to strongly $F$-regular Cartier 
subalgebras. Many of the proofs found in the final sections of \cite{JonssonMustata}
can be adapted to our setting, and we devote \S \ref{Section: LCT} to this function.

\section{Log canonical thresholds of graded sequences of ideals}
\label{Section: LCT}
\newcommand{\QM}{\mathrm{QM}}

Let us fix an integral, $F$-finite, strongly $F$-regular scheme $X$ with 
fraction field $L$. We note that $F$-finiteness implies excellence. 
We also fix a strongly $F$-regular Cartier subalgebra $\shD$ and a nonzero 
ideal $\q$ on $X$. Starting with subsection \ref{Subsection: Conjectures}, 
we assume that $X$ is regular. We skip these hypotheses when stating most 
lemmas, but make them explicit in theorems and some definitions for emphasis, 
clarity, and ease of reference. Denote by $\triv_X$ the trivial valuation 
$L^\times \to \{0\}$ and by $\Val_X^* := \Val_X \setminus \{\triv_X\}$. 

In this final section, we study log canonical thresholds of graded sequences 
of ideals in positive characteristics, proving the positive characteristic 
versions of several theorems of Jonsson and Musta\c{t}\u{a} along the way. 
We follow the same general strategy as \cite{JonssonMustata}. There are 
a number of points where we must replace parts of their approach, especially
those making use of log resolutions, with arguments involving $p^{-e}$-linear
maps, and more topological arguments involving valuation spaces. An interesting 
outcome of this approach is we give the first proof that asymptotic
multiplier ideals associated to multiplicatively graded sequences are coherent sheaves 
of ideals on strongly $F$-regular schemes \eqref{coherence}. Our approach applies
also to klt varities in characteristic zero, giving an alternative to the
usual description involving log resolutions; see \eqref{example: char 0 coherence}. 

\textbf{Convention:} To avoid constantly passing between points and 
the associated sheaves of ideals, we will make statements like 
``Let $\m \in X$ be a point$\dots$'' understanding schemes to have 
underlying topological spaces consisting of the set of prime ideal 
subsheaves of $\shO_X$ (that is, $X \isom \mathcal{S}pec(\shO_X)$). Unless 
explicitly stated otherwise, all schemes in this section are understood 
to have characteristic $p > 0$. Recall from \S\ref{graded sequences} 
that we write $\ideala_\star$ for (multiplicatively) graded sequences 
of ideals, and $\idealb_\bullet$ for an $F$-graded sequence. All 
elements of a (multiplicatively/$F$-) graded sequence of ideals are
assumed to be nonzero. 


\subsection{Log canonical thresholds of graded sequences}
Any $v \in \Val_X^*$ defines a graded sequence of ideals, denoted
here by $\ideala_\star(v)$. On an open affine chart $\Spec(R) \subset X$
containing $c_X(v)$, $\Gamma(\Spec(R), \ideala_s(v))$ is the ideal
\[ \ideala_s(v) = \{f \in R \,:\, v(f) \ge s \}; \quad s \in \N_0. \]
If $c_X(v) \not\in \Spec(R)$, we set $\Gamma(\Spec(R), \ideala_s(v)) = R$
for all $s \ge 0$. 

Jonsson and Musta\c{t}\u{a} prove the following statement showing that 
$w(\ideala_\star(v))$ compares the values of $w$ and $v$, asymptotically. 

\begin{lemma}[cf. Lemma 2.4 \cite{JonssonMustata}]\label{valuation ideal lemma}
  Let $v \in \Val_X^*$ be nontrivial. Then
  \[ w(\ideala_\star(v)) = \inf \frac{w(\idealb)}{v(\idealb)} \]
  for every $w \in \Val_X$, where $\idealb$ ranges over ideals on $X$ such that
  $v(\idealb) > 0$. 
\end{lemma}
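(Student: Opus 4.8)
The plan is to prove the two inequalities separately, in both cases reducing to the characterization $w(\ideala_\star(v)) = \inf_{s \ge 1} w(\ideala_s(v))/s$ furnished by \cref{limit lemma}. This applies because $\ideala_\star(v)$ really is a graded sequence of \emph{nonzero} ideals: if $f \in \ideala_s(v)$ and $g \in \ideala_t(v)$ then $v(fg) = v(f)+v(g) \ge s+t$, so $\ideala_s(v)\ideala_t(v) \subseteq \ideala_{s+t}(v)$; and since $v$ is nontrivial and centered on $X$ there is some $f_0$ with $v(f_0) = c > 0$, whence $f_0^{\lceil s/c\rceil} \in \ideala_s(v)$. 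Throughout I will use that $w$ and $v$ are non-negative on the stalks at their respective centers, so $w(-) \ge 0$ and $v(-)\ge 0$ on every ideal sheaf.

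First I would prove $\inf_{\idealb} w(\idealb)/v(\idealb) \le w(\ideala_\star(v))$. For each $s \ge 1$, the ideal sheaf $\ideala_s(v)$ satisfies $v(\ideala_s(v)) \ge s > 0$ by definition, so it is one of the ideals $\idealb$ competing in the infimum on the right; since $w(\ideala_s(v)) \ge 0$,
\[
  \inf_{\idealb}\frac{w(\idealb)}{v(\idealb)} \le \frac{w(\ideala_s(v))}{v(\ideala_s(v))} \le \frac{w(\ideala_s(v))}{s},
\]
and taking the infimum over $s \ge 1$ and invoking \cref{limit lemma} gives the claim.

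For the reverse inequality, fix an ideal sheaf $\idealb$ with $0 < v(\idealb) < \infty$ (finiteness is automatic, since a nonzero quasi-coherent ideal on an integral scheme has nonzero stalks everywhere). For $n \ge 1$ set $s_n = \lfloor n\,v(\idealb)\rfloor$, so $s_n \to \infty$; using $v(\idealb^n) = n\,v(\idealb)$ and $w(\idealb^n) = n\,w(\idealb)$ (which follow by picking value-minimizing elements in each factor of a product of ideals), every local section of $\idealb^n$ near $c_X(v)$ has $v$-value $\ge n\,v(\idealb) \ge s_n$, so $\idealb^n \subseteq \ideala_{s_n}(v)$ and hence $w(\ideala_{s_n}(v)) \le w(\idealb^n) = n\,w(\idealb)$. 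Therefore
\[
  w(\ideala_\star(v)) = \inf_{s \ge 1}\frac{w(\ideala_s(v))}{s} \le \frac{w(\ideala_{s_n}(v))}{s_n} \le \frac{n\,w(\idealb)}{\lfloor n\,v(\idealb)\rfloor},
\]
and letting $n \to \infty$ the right-hand side converges to $w(\idealb)/v(\idealb)$. Taking the infimum over all such $\idealb$ finishes the proof.

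There is no serious obstacle here: the points needing a moment of care are the identity $v(IJ) = v(I)+v(J)$ for ideals (giving the behavior of $v$ and $w$ on powers $\idealb^n$), the fact that on affine charts not meeting $c_X(v)$ the ideals $\ideala_s(v)$ are all of $\shO_X$ so the inclusions above only become easier, and the elementary limit $n/\lfloor n\,v(\idealb)\rfloor \to 1/v(\idealb)$. The one genuinely structural step is arranging the floor functions so that the powers $\idealb^n$ land inside honest members $\ideala_{s_n}(v)$ of the graded sequence while the quotients still converge to $w(\idealb)/v(\idealb)$.
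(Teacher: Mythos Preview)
Your argument is correct. The paper does not supply its own proof of this lemma, deferring instead to \cite[Lemma 2.4]{JonssonMustata}; your proof is essentially the standard one given there, splitting into two inequalities and using the inclusion $\idealb^n \subseteq \ideala_{\lfloor n\,v(\idealb)\rfloor}(v)$ together with \cref{limit lemma}.
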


Recall from \eqref{log discrepancy of graded sequences} 
\[ \shD \cdot \ideala_\star^t = \sum_{m \ge 1} \shD \cdot \ideala_m^{t/m} 
\quad \text{ and }\quad A(\zeta; \shD \cdot \ideala_\star^t) = 
\sup_m A(\zeta; \shD \cdot \ideala_m^{t/m}). \]


We now introduce the central topic of study in this subsection, 
log canonical thresholds of $X, \shD,$ and $\ideala_\star$ with 
respect to the nonzero ideal $\q$ on $X$; recall our standing assumptions
on $X$, $\shD$, and $\ideala_\star$. 

\begin{definition}[Log canonical threshold of graded sequence, 
  {cf. \cite{JonssonMustata, BdFFU}}]
  Let $v \in \Val_X^*$ with $v(\ideala_\star) > 0$ and $A(v; \shD) < \infty$. 
  The {\em log canonical threshold} of $\ideala_\star$ with respect 
  to $\q$, $\shD$, and $v \in \Val_X^*$ is 
  \[ \lct^\q(v; \shD, \ideala_\star) = 
  \frac{A(v; \shD) + v(\q)}{v(\ideala_\star)}. \]
  When $v(\ideala_\star) = 0$, or $A(v; \shD) = +\infty$, we define 
  $\lct^\q(v; \shD, \ideala_\star) = +\infty$. 
  The {\em log canonical threshold} of $\ideala_\star$ with respect to $\q$ 
  and $\shD$ is
  \[ \lct^\q(\shD, \ideala_\star) 
  = \inf_{v \in \Val_X^*} \lct^\q(v; \shD, \ideala_\star). \]
\end{definition}


\begin{remark}
  If $v \in \Val_X^*$ has $A(v; \shD) < \infty$ and 
  $0 < v(\ideala_\star)$, then \eqref{twisting corollary} implies
  \begin{equation}\label{lct as sup 1}
    \lct^\q(v; \shD, \ideala_\star) = 
    \sup \{ t \ge 0 \,:\, v(\q) + A(v ; \shD \cdot \ideala_\star^t) > 0 \}, 
  \end{equation}
  which is equivalent to 
  \begin{equation}\label{lct as sup 2}
    \lct^\q(v; \shD, \ideala_\star) = 
    \sup \{t \ge 0 \,:\, v(\q) + A(v ; \shD \cdot \ideala_m^{t/m}) > 0 
              \text{ for some $m$} \}. 
  \end{equation}
  These expressions are quite closely related to the expressions
  for log discrepancies in terms of sub-canonical divisors in \cite{dFH}. 
  We note that \eqref{lct as sup 2} implies that $\lct^\q(-; \shD, \ideala_\star)$
  is lower-semicontinuous on $\Val_X^*$. 
\end{remark}

\begin{remark}
  In previous versions of this article, we introduced log canonical thresholds
  with respect to semivaluations $\zeta \in X^\beth$ with $\zeta(\q) < \infty$,
  taking \eqref{lct as sup 1} for the defintion. We only treated
  this on $\Val_X^*$ in any depth, and the extension to $X^\beth$ we proposed was 
  very technical, so have chosen to focus on $\Val_X^*$ in revisions. 
\end{remark}

\subsection{Asymptotic multiplier ideals of Cartier subalgebras}
\label{Subsection: coherence}

We define sheaves of ideals $\shJ(\shD \cdot \ideala_\star^t)$ 
containing information about values, and in particular minima, 
of $A(-; \shD \cdot \ideala_\star^t)$. We model our definition 
on the valuation-theoretic description of asymptotic multiplier 
ideals, see e.g. \cite{LazarsfeldPositivity2, JonssonMustata, BdFFU}. 

\begin{definition}[Asymptotic multiplier ideal of $(\shD \cdot \ideala_\star^t)$]
  \label{definition: multiplier ideal}
  Consider $X$, $\shD$, and $\ideala_\star$ as before. 
  For $t \in \R_{\ge 0}$ and an affine open 
  $U = \Spec(R) \subseteq X$, the {\em asymptotic multiplier ideal} of 
  $(\shD \cdot \ideala_\star^t)$ on $U$ is
  \begin{align*}
    \Gamma(U, \shJ(\shD \cdot \ideala_\star^t))
    &= \bigcap_{v \in \Val_U^*} \{f \in R \,:\, v(f) + 
  A(v; \shD \cdot \ideala_\star^t) > 0 \} \\
    &= \bigcap_{v \in \Val_U^*} \{f \in R \,:\, v(f) + 
  A(v; \shD \cdot \ideala_m^{t/m}) > 0 \,\text{ for some $m \ge 1$}\}.
  \end{align*}
\end{definition}

We now prove that this sheaf of abelian groups gives a coherent sheaf of ideals.
The following lemma is used to make compactness arguments several times 
throughout this section. Compactness statements of this form are well-known
to experts, as is their usefulness when applying
valuation spaces to the study of multiplier ideals; cf. 
\cite[Proposition 5.9]{JonssonMustata} \cite[Theorem 3.1(c)]{BdFFU}. 
We choose to re-state the hypotheses on $X$ and $\shD$, 
since this lemma is cited frequently (and in the introduction). 

\begin{lemma}[{cf. \cite{JonssonMustata, BdFFU}}]\label{V_t compact}
  Let $X$ be an integral, $F$-finite, strongly $F$-regular scheme, 
  $\shD$ a strongly $F$-regular Cartier subalgebra on $X$, and 
  $\ideala \ne 0$ an ideal on $X$. For any $t \in [0, \infty)$, the set
  \[ \mathcal{V}_t := \{\zeta \in X^\beth \,:\, \zeta(\ideala) = 1, \quad A(\zeta; \shD) \le t \} \]
  is a compact subset of $\Val_X^*$. 
\end{lemma}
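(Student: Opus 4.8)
The plan is to prove the two assertions of the statement in turn: that every $\zeta \in \mathcal{V}_t$ already lies in $\Val_X^*$, and that $\mathcal{V}_t$ is compact, the latter by exhibiting it as a closed subset of the compact space $X^\beth$.

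For the inclusion $\mathcal{V}_t \subseteq \Val_X^*$ I would use the standing hypothesis that $\shD$ is strongly $F$-regular together with $\ideala \ne 0$. Let $\zeta \in \mathcal{V}_t$. Since $A(\zeta;\shD) \le t < \infty$, \cref{definition: log discrepancy function} forces the home $Z = \overline{\{h_X(\zeta)\}}$ to be uniformly $\shD$-compatible. If $Z$ were a proper closed subscheme, its generic point $z$ would satisfy $\dim \shO_{X,z} \ge 1$, so I could pick $0 \ne f \in \m_z = \shI_{Z,z}$; strong $F$-regularity of $\shD$ at $z$ then supplies $\psi \in (\shD_e)_z$ with $\psi(f) = 1 \notin \m_z$, contradicting $\psi(\shI_{Z,z}) \subseteq \shI_{Z,z}$. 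Hence $h_X(\zeta)$ is the generic point of $X$, i.e.\ $\zeta \in \Val_X$. Finally $\zeta \ne \triv_X$, because $\triv_X$ has center the generic point, where $\ideala$ is the unit ideal, so $\triv_X(\ideala) = 0 \ne 1$.

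For compactness, note first that $X$ is noetherian, hence covered by finitely many affine opens $U_1,\dots,U_N$; then $X^\beth = \bigcup_i U_i^\beth$ is a finite union of the compact subsets $U_i^\beth$ (\cref{definition: affine Berkovich space}), each closed in $X^\beth$ since $U_i^\beth = c_X\inv(U_i)$ and $c_X$ is anticontinuous, so $X^\beth$ itself is compact. It remains to see $\mathcal{V}_t$ is closed in $X^\beth$. The condition $A(\zeta;\shD) \le t$ cuts out a closed set because $A(-;\shD)$ is lower-semicontinuous by \cref{log discrepancy lsc}. For the condition $\zeta(\ideala) = 1$, choose on each $U_i = \Spec(R_i)$ generators $g_{i,1},\dots,g_{i,r_i}$ of $\ideala|_{U_i}$; then on $U_i^\beth$ one has $\zeta(\ideala) = \min_j \ev_{g_{i,j}}(\zeta)$, a minimum of continuous functions and hence continuous, so $\{\zeta \in U_i^\beth : \zeta(\ideala) = 1\}$ is closed in $U_i^\beth$ and therefore in $X^\beth$; taking the union over $i$ shows $\{\zeta \in X^\beth : \zeta(\ideala) = 1\}$ is closed. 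Intersecting the two closed conditions shows $\mathcal{V}_t$ is closed in $X^\beth$, hence compact, and combined with the first paragraph it is a compact subset of $\Val_X^*$.

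I do not expect a genuine obstacle; the points demanding the most care are making sure the argument really invokes all the standing hypotheses (the finiteness of $A(\zeta;\shD)$ forcing the home to be all of $X$ uses strong $F$-regularity, and $\zeta(\ideala) = 1$ excludes the trivial valuation only because $\ideala \ne 0$), and remembering that compactness of $\mathcal{V}_t$ is deduced inside $X^\beth$, \emph{not} inside $\Val_X$, which is not compact.
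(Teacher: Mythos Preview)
Your proof is correct and follows essentially the same approach as the paper's: both show $\mathcal{V}_t$ is closed in the compact space $X^\beth$ using continuity of $\zeta \mapsto \zeta(\ideala)$ and lower-semicontinuity of $A(-;\shD)$, and both use strong $F$-regularity of $\shD$ to force $h_X(\zeta) = \eta_X$ (you spell out the ``no proper compatible subschemes'' step that the paper leaves implicit) together with $\ideala \ne 0$ to rule out $\triv_X$.
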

\begin{proof}
  This set is closed, since $(\ideala \mapsto \zeta(\ideala))$ is 
  continuous on $X^\beth$ and $A(-; \shD)$ is lower-semicontinuous; 
  thus, $\mathcal{V}_t$ is compact since $X^\beth$ is. 
  Note also that $\mathcal{V}_t \subseteq \Val_X^*$, since strong 
  $F$-regularity of $\shD$ implies
  $A(\zeta; \shD) = +\infty$ for every $\zeta \in X^\beth$ with 
  $h_X(\zeta) \ne \eta_X$, 
  and $\zeta(\ideala) = 1$ implies that $\zeta \ne \triv_X$. 
  Thus, $\mathcal{V}_t$ is a compact subset of $X^\beth$ contained 
  in $\Val_X^*$. 
\end{proof}

\begin{theorem}\label{coherence}
  Suppose $X$ is an integral, $F$-finite, strongly $F$-regular scheme 
  and $\shD$ is a strongly $F$-regular Cartier subalgebra on $X$. 
  The multiplier ideal $\shJ(\shD \cdot \ideala_\star^t)$ is a coherent 
  sheaf for every graded sequence of ideals 
  $\ideala_\star$ on $X$ and every $t \in [0, \infty)$. 
\end{theorem}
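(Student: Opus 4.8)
The plan is to prove coherence by exhibiting the valuative asymptotic multiplier ideal, locally, as an increasing directed union of the finite-level ideals $\shJ(\shD\cdot\ideala_m^{t/m})$, each of which is coherent, and then invoking noetherianity to see the union stabilizes. Since coherence is local I would first reduce to $X=\Spec(R)$ with $R$ noetherian $F$-finite strongly $F$-regular and $\shD$ a strongly $F$-regular Cartier subalgebra. Each $\Gamma(U,\shJ(\shD\cdot\ideala_\star^t))$ is visibly an ideal of $\Gamma(U,\shO_X)$ (every set $\{f : v(f)+c>0\}$ is an ideal and we intersect such), so it is enough to show $\shJ(\shD\cdot\ideala_\star^t)$ is a quasi-coherent sheaf of ideals: noetherianity of $R$ then upgrades this to coherence automatically.

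There are two ingredients. The first is that for each fixed $m$ the finite-level ideal $\shJ(\shD\cdot\ideala_m^{t/m})$ is coherent; this I would obtain by the same compactness mechanism used below, which at a single level is cleaner because the order function $\zeta\mapsto\zeta(\ideala_m)$ is continuous on $X^\beth$, so \Cref{V_t compact} applies directly with $\ideala=\ideala_m$ (alternatively one may recognise it as a test ideal of $\shD\cdot\ideala_m^{t/m}$, which is coherent). The second, and main, ingredient is the identity
\[
  \shJ(\shD\cdot\ideala_\star^t) \;=\; \bigcup_{m\ge1}\shJ(\shD\cdot\ideala_m^{t/m})
\]
of subsheaves of $\shO_X$. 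Since $A(v;\shD\cdot\ideala_m^{t/m})$ is nondecreasing in $m$ along divisibility (because $v(\ideala_{km})/(km)\le v(\ideala_m)/m$), the $\shJ(\shD\cdot\ideala_m^{t/m})$ form an increasing directed family of coherent ideals, hence on a noetherian scheme their union is locally eventually constant and coherent; granting the identity we are done. The inclusion $\bigcup_m\shJ(\shD\cdot\ideala_m^{t/m})\subseteq\shJ(\shD\cdot\ideala_\star^t)$ is immediate from $A(v;\shD\cdot\ideala_\star^t)=\sup_m A(v;\shD\cdot\ideala_m^{t/m})$ (\Cref{log discrepancy of graded sequences}).

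For the reverse inclusion, fix an affine open $U=\Spec(R)$ and $0\ne f\in\Gamma(U,\shJ(\shD\cdot\ideala_\star^t))$, and consider
\[
  W_m \;:=\; \{\, v\in\Val_U^* \;:\; v(f)+A(v;\shD\cdot\ideala_m^{t/m})\le 0 \,\}.
\]
Using lower-semicontinuity of $A(-;\shD\cdot\ideala_m^{t/m})$ (\Cref{log discrepancy lsc}), continuity of $\ev_f$, and \Cref{V_t compact} together with strong $F$-regularity to confine each $W_m$ to $\Val_U^*$ inside the compact space $U^\beth$, one checks that $W_m$ is closed in $U^\beth$, hence compact; that the $W_m$ decrease along divisibility by monotonicity of $A$ in $m$; and that $\bigcap_m W_m=\{v\in\Val_U^* : v(f)+A(v;\shD\cdot\ideala_\star^t)\le 0\}=\varnothing$, precisely because $f$ is a section of $\shJ(\shD\cdot\ideala_\star^t)$. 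The finite intersection property in the compact space $U^\beth$ then yields $W_m=\varnothing$ for some $m$, i.e. $f\in\Gamma(U,\shJ(\shD\cdot\ideala_m^{t/m}))$, completing the identity and the proof.

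The step I expect to be the main obstacle is verifying that the $W_m$ are genuinely closed inside the \emph{full} compact $\beth$-space, and not merely inside the noncompact locus $\Val_U^*$. This is exactly where strong $F$-regularity of $\shD$ is indispensable: it forces $A(\zeta;\shD)$ — and one must argue also $A(\zeta;\shD\cdot\ideala_m^{t/m})$ — to equal $+\infty$ at every $\zeta\in U^\beth$ with nongeneric home, so that the defining inequality is never satisfied by, nor approached by nets limiting to, such semivaluations; this is precisely the phenomenon packaged in \Cref{V_t compact}, which is why that lemma is the technical core. A related bookkeeping point I would settle first is the degenerate cases $v(\ideala_\star)=0$ and $A(v;\shD)=+\infty$, where the defining inequality holds automatically by strong $F$-regularity and \Cref{characterization of ShFP and SFR}; with those and the single-level coherence in hand, the compactness argument above is routine.
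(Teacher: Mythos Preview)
Your strategy---reduce to the single-level ideals $\shJ(\shD\cdot\ideala_m^{t/m})$ and prove the identity $\shJ(\shD\cdot\ideala_\star^t)=\bigcup_m\shJ(\shD\cdot\ideala_m^{t/m})$ by compactness---is exactly the decomposition the paper uses. In fact the paper's reduction paragraph simply \emph{asserts} $\shJ_\star=\sum_m\shJ_m$, so your Step~2 is spelling out something the paper takes for granted. That said, two points need attention.

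\medskip
\noindent\textbf{The $\triv_X$ gap in your compactness argument.} Your sets $W_m$ are \emph{not} closed in $U^\beth$, and the problematic point is not a semivaluation with nongeneric home but $\triv_X$ itself. For any nonzero $\phi$ one has $A(\triv_X;\phi)=0$, so $\triv_X(f)+A(\triv_X;\shD\cdot\ideala_m^{t/m})=0\le 0$; moreover if $v\in W_m$ then $cv\in W_m$ for all $c>0$ and $cv\to\triv_X$ as $c\to 0^+$. Hence $\triv_X\in\bigcap_m\overline{W_m}$ whenever all $W_m\ne\varnothing$, and the finite intersection property gives nothing. Your proposed fix---that strong $F$-regularity forces $A(\zeta;\shD)=+\infty$ at nongeneric homes---misses this, since $\triv_X$ has generic home. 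The repair is the normalization the paper uses: observe that $v\in W_m$ forces $v(\ideala_1)>0$ (else $v(\ideala_m)=0$ and $A(v;\shD\cdot\ideala_m^{t/m})=A(v;\shD)>0$), rescale to $v(\ideala_1)=1$, and then conservation gives $A(v;\shD)\le (t/m)v(\ideala_m)\le t$, placing every normalized $W_m$ inside the compact set $\mathcal V_t$ of \Cref{V_t compact} with $\ideala=\ideala_1$. Now the descending-compact argument runs cleanly inside $\mathcal V_t\subset\Val_U^*$.

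\medskip
\noindent\textbf{Step 1 is the actual work.} You defer the single-level coherence of $\shJ(\shD\cdot\ideala_m^{t/m})$, but this is not a bookkeeping detail: it is precisely the localization statement $\shJ_m(R_g)=\shJ_m(R)\,R_g$, and the paper devotes its main argument to it (constructing $w_n$ witnessing $g^ny\notin\shJ_m(R)$ and extracting $w_\infty$ with $w_\infty(g)=0$ via $\mathcal V_t$). Your ``same compactness mechanism'' does not literally apply---the $W_m$ argument proves a stabilization-in-$m$ statement, not a compatibility-with-localization statement---so you still need the paper's argument or something equivalent. The test-ideal shortcut does not work either: equality of the valuative $\shJ_m$ with the test ideal $\tau(\shD\cdot\ideala_m^{t/m})$ is not available here (establishing coherence of $\shJ_m$ without resolutions is one of the paper's contributions).
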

\begin{proof}
  Coherence is a property on each affine chart of $X$, so we reduce to 
  the case $X = \Spec(R)$, and only must check that
  the multiplier ideal is preserved by localizing at a single element  of$R$. 
  If $t < \lct(X; \shD, \ideala_\star)$ then 
  the multiplier ideals in question are $\shO_X$, so we assume 
  $\lct(X; \shD, \ideala_\star) < \infty$, and 
  $t \ge \lct(X; \shD, \ideala_\star)$. 

  I claim that it is enough to show the theorem when $\ideala_\star$ is 
  a constant sequence, meaning $\ideala_s = \ideala$ for all $s \ge 1$ 
  and some fixed $\ideala \subseteq \shO_X$. To ease notation in proving 
  this claim, for localizations $R_g$ of $R$ we define 
  $\shJ_\star(R_g) = \Gamma(\Spec(R_g), \shJ(\shD \cdot \ideala_\star^t))$ 
  (resp. $\shJ_m(R_g) = \Gamma(\Spec(R_g), \shJ(\shD \cdot \ideala_m^{t/m})$).
  If $\shJ(R_g)_m = \shJ(R)_mR_g$ for all $m \ge 1$, then 
  \[ \shJ(R)_\star R_g = \left( \sum_m \shJ(R)_m \right) R_g = 
  \sum_m (\shJ(R)_m R_g) = \sum_m \shJ(R_g)_m = \shJ(R_g)_\star. \]
  
  Thus, we simplify the setting and notation, writing 
  $\shJ(R_g)$ for the ideal associated to 
  $\shE := \shD \cdot \ideala^t$ as above. 
  We must prove that $\shJ(R_g) = \shJ(R)R_g$ for every $g \in R$. 
  Of course, if $g \in \sqrt{\shJ(R)}$, then
  $R_g = \shJ(R)R_g \subseteq \shJ(R_g) \subseteq R_g$ since $\shJ(-)$ is a 
  sheaf on $X$. Thus, we fix $g \in R \setminus \sqrt{\shJ(R)}$ and set $U := \Spec(R_g)$. 


  We wish to show that $y \not\in \shJ(R)R_g$ implies 
  $y \not\in \shJ(R_g)$, and it is enough to check this for $y \in R$.
  We do this by showing that there exists $w \in \Val_X^*$ such that 
  $w(g) = 0$ and $w(y) + A(w; \shE) \le 0$, which implies $w \in \Val_U^*$
  and $y \not\in \shJ(R_g)$. Since $y \not\in \shJ(R)R_g$, 
  we know that $g^n \not\in (\shJ(R) : y)$ for any $n \ge 0$, 
  so by definition for each $n \ge 0$ there exists some
  $w_n \in \Val_X^*$ such that
  \begin{equation}\tag{$\dagger_n$}\label{inequality n} 
    w_n(g^ny) + A(w_n; \shE) \le 0. 
  \end{equation}
  Being a sum of a continuous and lower-semicontinuous function, 
  \[ w \mapsto (w(g^ny) + A(w; \shE)): \Val_X^* \to \R \cup \{+\infty\} \]
  is also lower-semicontinuous. Thus, if we denote by $\mathcal{W}_n$
  the set of all $w \in \Val_X^*$ satisfying \eqref{inequality n}, 
  each $\mathcal{W}_n$ is a closed subset of $\Val_X^*$. Because 
  $w \in \mathcal{W}_n$ are centered on $X$, 
  $w(g^{n-1}y) \le w(g^ny)$, so 
  $\mathcal{W}_n \subseteq \mathcal{W}_{n-1}$
  for all $n \ge 1$. 
  
  Note also that if $w$ satisfies \eqref{inequality n} then so does 
  $\beta \,w$ for all $\beta \in \R_{> 0}$, so 
  $\R_{> 0} \cdot \mathcal{W}_n = \mathcal{W}_n$ for each $n$. 
  We assumed that $\shD$ is strongly $F$-regular, so $A(w; \shD) > 0$ 
  for every $w \in \Val_X^*$. This implies that if 
  $w \in \mathcal{W}_0$, then $w(\ideala) > 0$: indeed, by 
  \eqref{inequality n} with $n = 0$, we have 
  \begin{equation}\label{bound above by t}
    0 < A(w; \shD) \le t\,w(\ideala) - w(y) \le t \,w(\ideala),
  \end{equation}
  Therefore, there exists an $\R_{> 0}$ multiple of $w$ with $w(\ideala) = 1$. 
  Now considering $w \in \mathcal{W}_0$ with $w(\ideala) = 1$, 
  \ref{bound above by t} tells us $A(w; \shD) \le t$. It follows that 
  $\mathcal{W}_n \cap \mathcal{V}_t$ is non-empty for all $n \ge 0$, where
  \[ \mathcal{V}_t := \{\zeta \in X^\beth \,:\, \zeta(\ideala) = 1, 
        \quad A(\zeta; \shD) \le t \} \subseteq \Val_X^*, \text{ as in \eqref{V_t compact}.}\]
  Being the intersection of two non-empty compact sets,
  $\widetilde{\mathcal{W}}_n := \mathcal{W}_n \cap \mathcal{V}_t$
  is also a non-empty compact subset of $\Val_X^*$. A descending chain 
  of non-empty compact subsets has non-empty intersection, hence there
  exists $w_\infty \in \cap_n \widetilde{\mathcal{W}}_n$. 
  By construction, $w_\infty$ satisfies the inequalities \eqref{inequality n}
  for all $n \ge 1$, which is impossible if $w_\infty(g) > 0$. 
  Thus, $w_\infty(g) = 0$, or equivalently $c_X(w_\infty) \in U$. 
\end{proof}

\begin{remark}\label{example: char 0 coherence}
  As mentioned in the introduction, our argument above can be used to 
  prove that asymptotic multiplier ideals are coherent 
  whenever one has a statement such as \cref{V_t compact}. 
  
  For example, suppose $X$ is a normal variety over $\C$, or is 
  regular and excellent over $\Q$. Take a graded
  sequence of ideals $\ideala_\star$ on $X$ and suppose 
  $N \subseteq X$ is a closed, proper subscheme containing the singular locus 
  of $X$ and the support of $\ideala_1$ (hence of all $\ideala_s$). We 
  may reduce, as above, to the case that all 
  $\ideala_s = \ideala \subseteq \shO_X$, and following that argument 
  can produce the closed subsets $\mathcal{W}_n$. 
  Then \cite[Proposition 5.9]{JonssonMustata} and 
  \cite[Theorem 3.1(iii)]{BdFFU} prove 
  \[ \mathcal{V}_t = \{v \in \Val_X^* \,:\, A_X(v) \le t \,\,\text{ and }\,\, v(I_N) = 1\} \]
  is a compact subset for all $t$, so $0 < v(\ideala) \le v(I_N)$ 
  implies that $\mathcal{W}_n \cap \mathcal{V}_t$ is non-empty for all $n$. 
  Thus, our argument above proves that $\shJ(X, \ideala_\star^t)$ is coherent. 
  We remark, however, that this compactness requires simplicial
  decompositions of $\Val_X$, in \cite{JonssonMustata}, and 
  $\{ v \,:\, v(I_N) = 1\}$ in \cite{BdFFU}. These are provided 
  by log resolutions. 
\end{remark}

\begin{question}
  Suppose $X$ is a connected normal klt variety of positive characteristic, 
  in the sense that $A(\ord_E; \shC_X) > 0$ for every divisor $E \subset Y$ 
  on a normal variety admitting a proper birational morphism $Y \to X$. 
  Are the sets $\mathcal{V}_t \subseteq X^\beth$ defined as in 
  \eqref{V_t compact}, with $\shD = \shC_X$, contained in $\Val_X^*$? 
  Note that while $\mathcal{V}_t$ is always a compact subset of $X^\beth$, 
  by lower-semicontinuity of $A(-; \shC_X)$, there are now uniformly
  $\shC_X$-compatible subschemes when $X$ is not strongly $F$-regular. 
\end{question}

\subsection{The Conjectures of Jonsson and Musta\c{t}\u{a}}
\label{Subsection: Conjectures}

For the remainder of this section, 
\textbf{we assume $X$ is a regular $F$-finite scheme}. 
Suppose $\lct^\q(\shD, \ideala_\star) < \infty$. 
Our first goal is to prove that there exist valuations $v \in \Val_X^*$
with $\lct^\q(v; \shD, \ideala_\star) = \lct^\q(\shD, \ideala_\star)$;
we say any such $v$ {\em computes} this lct. 
We then prove the implications between the
conjectures numbered 7.4 and 7.5 in \cite{JonssonMustata} hold
also in our setting. These theorems
ultimately reduce to affine-local considerations, and so we
assume $X = \Spec(R)$. To simplify notation, we write 
$\shJ_t$ for $\Gamma(X, \shJ(\shD \cdot \ideala_\star^t))$. 


If $v \in \Val_X^*$ computes 
$\lct^\q(\shD, \ideala_\star) =: \lambda < \infty$, I claim
$c_X(v) \in \V(\shJ_\lambda : \q)$, cf. the beginning of 
\cite[\S 7]{JonssonMustata}. Indeed, if $v$ computes 
$\lct^\q(\shD, \ideala_\star)$, then 
$v(\q) + A(v; \shD \cdot \ideala_\star^\lambda) = 0$.
Now consider $f \in (\shJ_\lambda : \q)$. Since 
$f\q \subseteq \shJ_\lambda$, the definition of $\shJ_\lambda$
forces 
$(v(f) + 0) = (v(f) + v(\q) + A(v; \shD \cdot \ideala_\star^\lambda)) > 0$. 
Therefore, $v(f) > 0$ and $c_X(v) \in \V(\shJ_\lambda : \q)$. 

The following lemmas allow us to pass Abhyankar valuations on 
an $F$-finite regular scheme to an appropriate affine 
space via completion. The proofs for these lemmas given in 
\cite{JonssonMustata} are independent of characteristic.

\begin{lemma}[cf. {{\cite[Lemma 3.10]{JonssonMustata}}}]
  \label{quasimonomial preserved: completion}
  Let $\m \in X$ and consider the completion morphism 
  ${X'} = \Spec(R') \to X$, where $R' = \widehat{\shO_{X, \m}}$. 
  If $v' \in \Val_{X'}$ is centered at the closed point and $v = v'|_X$, 
  then $\kappa(v) = \kappa(v')$ and the value
  groups of $v$ and $v'$ are equal. In particular, 
  $\trdeg_{X'}(v') = \trdeg_X(v)$ and $\ratrk(v') = \ratrk(v)$. 
\end{lemma}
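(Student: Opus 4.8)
The proof will reduce to the standard density argument for the $\m$-adic completion, following \cite[Lemma 3.10]{JonssonMustata} (whose argument is characteristic-free). Since $X$ is regular, $R' = \widehat{\shO_{X,\m}}$ is a regular local \emph{domain} with maximal ideal $\m' = \m R'$ and residue field $R'/\m' = \shO_{X,\m}/\m =: \kappa$, and the structure map $\shO_{X,\m} \into R'$ is injective, inducing an inclusion of fields $L \into \Frac(R')$ along which $v = v'|_X$ is the restriction of $v'$. Thus $\shO_v = \shO_{v'} \cap L$ and $\m_v = \m_{v'} \cap L$, so the natural maps $\kappa(v) \to \kappa(v')$ and $\Gamma_v \to \Gamma_{v'}$ are injective, and the entire content is to prove that they are surjective.

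The key estimate I would establish first is: for every $g \in R'$ and every real number $M$ there is a $g_0 \in \shO_{X,\m}$ with $v'(g - g_0) \ge M$. Indeed, because $v'$ is centered at $\m'$, if $t_1, \dots, t_r$ generate $\m'$ then $c := \min_i v'(t_i) > 0$, and every element of $(\m')^N$, being a sum of products of $N$ of the $t_i$, has $v'$-value at least $Nc$; since $(\m')^N = \m^N R'$ and $\shO_{X,\m}$ surjects onto $R'/(\m')^N$, one may choose $g_0 \in \shO_{X,\m}$ with $g - g_0 \in (\m')^N$ and take $N$ with $Nc \ge M$. Taking $M > v'(g)$ forces $v'(g_0) = v'(g)$ by the ultrametric inequality.

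This estimate yields both surjectivities. For the value group: the estimate shows $v'(R' \setminus \{0\}) \subseteq v'(\shO_{X,\m} \setminus \{0\}) \subseteq \Gamma_v$, hence $\Gamma_{v'} = v'(\Frac(R')^\times) \subseteq \Gamma_v$, so $\Gamma_v = \Gamma_{v'}$. For the residue field: given $g \in \shO_{v'}$, write $g = a/b$ with $a, b \in R' \setminus \{0\}$ and pick $a_0, b_0 \in \shO_{X,\m}$ with $v'(a - a_0), v'(b - b_0) > v'(a) + v'(b)$, so that $v'(a_0) = v'(a)$ and $v'(b_0) = v'(b)$; then $g_0 := a_0/b_0 \in L$ has $v(g_0) = v'(g) \ge 0$, i.e. $g_0 \in \shO_v$, and expanding $g - g_0 = (ab_0 - a_0b)/(bb_0)$ with $ab_0 - a_0b = (a - a_0)b_0 + a_0(b_0 - b)$ gives $v'(g - g_0) > v'(g) \ge 0$. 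Hence $g$ and $g_0$ have the same residue in $\kappa(v')$, so that residue lies in the image of $\kappa(v)$; thus $\kappa(v) = \kappa(v')$. The ``in particular'' clause is then formal: $\ratrk(v) = \dim_\Q(\Gamma_v \tensor_\Z \Q) = \dim_\Q(\Gamma_{v'} \tensor_\Z \Q) = \ratrk(v')$, and since $R'/\m' = \kappa = \shO_{X,\m}/\m$ we get $\trdeg_X(v) = \trdeg(\kappa(v) \mid \kappa) = \trdeg(\kappa(v') \mid \kappa) = \trdeg_{X'}(v')$. The only point that needs real care — and the step I would write out most carefully — is handling elements of $\shO_{v'}$ not lying in $R'$, which is exactly why the residue-field step carries out the numerator/denominator bookkeeping explicitly rather than invoking density of $\shO_{X,\m}$ in $R'$ directly; everything else is routine.
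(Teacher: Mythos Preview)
Your argument is correct and is precisely the standard $\m$-adic density argument; the paper does not supply its own proof of this lemma but simply cites \cite[Lemma~3.10]{JonssonMustata} and observes that the proof there is characteristic-free, which is exactly the route you have taken. There is nothing to add.
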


\begin{lemma}[cf. {{\cite[Lemma 3.11]{JonssonMustata}}}]
  \label{quasimonomial preserved: algebraic extension}
  Let $k \subseteq K$ be an algebraic field extension and 
  $\phi: \A^n_K \to \A^n_k$ the corresponding morphism of affine spaces.
  Suppose that $v'$ is a valuation on $K(x_1, \dots, x_n)$ with center 
  $0 \in \A^n_K$, and let $v$ be its restriction to 
  $k(x_1, \dots, x_n)$. Then $v$ is centered at 
  $0 \in \A^n_k$, $\trdeg_{\A^n_K}(v') = \trdeg_{\A^n_k}(v)$, 
  and $\ratrk(v') = \ratrk(v)$. 
\end{lemma}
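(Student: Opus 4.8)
The plan is to reduce the statement to two standard facts about algebraic extensions of valued fields, together with an elementary observation about centers, after first noting that $L := k(x_1,\dots,x_n) \subseteq L' := K(x_1,\dots,x_n)$ is itself an algebraic extension. Indeed $L'$ is generated over $L$ by the elements of $K$, each of which is algebraic over $k \subseteq L$, so $L'/L$ is algebraic. Thus $v'$ is a valuation on an algebraic extension of $(L, v)$ with $v = v'|_L$; since we normalize valuations to take values in $\R$ and to be trivial on the ground field (\S\ref{Conventions}), there is no rescaling ambiguity in this restriction.

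For the claim about centers, recall that $v'$ being centered at $0 \in \A^n_K$ means $v'$ dominates the local ring $\shO_{\A^n_K, 0} = K[x_1,\dots,x_n]_{(x_1,\dots,x_n)}$, i.e. $v' \ge 0$ there and $v'(x_i) > 0$ for all $i$. Restricting to $L$, the ring $\shO_{\A^n_k, 0} = k[x_1,\dots,x_n]_{(x_1,\dots,x_n)}$ is a subring on which $v = v'|_L \ge 0$, and $v(x_i) = v'(x_i) > 0$; hence $v$ is centered at $0 \in \A^n_k$.

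For $\ratrk(v') = \ratrk(v)$, I would invoke the classical fact (e.g. \cite{Bourbaki1998}) that for an algebraic extension of valued fields the quotient $\Gamma_{v'}/\Gamma_v$ of value groups is torsion. Concretely, given $a \in (L')^\times$ with minimal polynomial $a^m + c_{m-1}a^{m-1} + \dots + c_0 = 0$ over $L$, the minimum among $v'(a^m)$ and the $v'(c_i a^i)$ must be attained at two indices, forcing $(m-j)\, v'(a) = v'(c_j) \in \Gamma_v$ for some $0 \le j < m$. Tensoring the inclusion $\Gamma_v \subseteq \Gamma_{v'}$ with $\Q$ then gives $\Gamma_v \otimes_\Z \Q = \Gamma_{v'} \otimes_\Z \Q$, i.e. $\ratrk(v) = \ratrk(v')$.

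For the transcendence-degree equality, the same hypothesis yields that the residue extension $\kappa(v')/\kappa(v)$ is algebraic (another standard fact for algebraic extensions of valued fields). Since the residue field of $\shO_{\A^n_k, 0}$ is $k$ and that of $\shO_{\A^n_K, 0}$ is $K$, the definition of $\trdeg_X$ recalled in \S\ref{Subsection: Locally quasi-monomial} gives $\trdeg_{\A^n_k}(v) = \trdeg(\kappa(v)\,|\,k)$ and $\trdeg_{\A^n_K}(v') = \trdeg(\kappa(v')\,|\,K)$. Applying additivity of transcendence degree to the towers $k \subseteq \kappa(v) \subseteq \kappa(v')$ and $k \subseteq K \subseteq \kappa(v')$, and using that both $\kappa(v')/\kappa(v)$ and $K/k$ are algebraic, one obtains $\trdeg(\kappa(v)\,|\,k) = \trdeg(\kappa(v')\,|\,k) = \trdeg(\kappa(v')\,|\,K)$, which is the assertion. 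The only real care needed is invoking the two black-box facts with the conventions of \S\ref{Conventions}; as the surrounding text notes, the argument is characteristic-free and parallels \cite[Lemma 3.11]{JonssonMustata}.
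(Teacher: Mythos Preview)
Your argument is correct and is essentially the standard proof: the extension $L'/L$ is algebraic, whence the value-group extension is torsion (giving equal rational ranks) and the residue extension $\kappa(v')/\kappa(v)$ is algebraic (giving equal transcendence degrees over $K$ and $k$ via additivity), while the statement about centers is immediate from $v(x_i)=v'(x_i)>0$. The paper does not supply its own proof here; it simply remarks that the proof in \cite{JonssonMustata} is independent of characteristic, and your write-up is precisely that argument.
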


Jonsson and Musta\c{t}\u{a} prove the next lemma for the more general 
setting of a regular morphism, but the only case we ever apply it to 
here is that of completion of local rings of $X$, so content ourselves 
to prove it in this case. In this subsection, we have assumed $X$ is 
regular, hence Gorenstein, so $\shD \subseteq \shC^X$ is equal
to $\shC^X \cdot \idealb_\bullet$ for some $F$-graded sequence of ideals. 
Indeed, $\shC^X_e \isom \Hom_R(F^e_*R, R)$ with its right $R$-module 
structure (i.e. the $F^e_*R$-module structure) 
is a canonical module for $R$, thus isomorphic to $R$. 

\begin{lemma}\label{completion lemma}
  Let $\m \in X$ and let $X' = \Spec(R') \to X$ 
  be the completion morphism at $\m$. 
  Set $\ideala'_\star = \{\ideala_s R'\}_{s \ge 1}$ 
  and $\idealb'_\bullet = \{\idealb_e R'\}_{e \ge 0}$, 
  and extend $\shD = \shC_X \cdot \idealb_\bullet$ to 
  $\shD' = \shC_{X'} \cdot \idealb'_\bullet$. 
  Suppose $\lambda := \lct^\q(\shD, \ideala_\star) < \infty$. 
  Let $\shH_\lambda = \shJ(X'; \shD' \cdot (\ideala_\star')^\lambda)$. 
  \begin{enumerate}
    \item For any $v' \in \Val_{X'}$ and $t \ge 0$, we have 
      $A(v'|_R; \shD \cdot \ideala_\star^t) = 
      A(v'; \shD' \cdot \ideala_\star'^t)$. \label{completion lemma (a)}
    \item $\lct^{\q R'}(\shD', \ideala_\star') \ge \lambda$ with 
      equality when $\m$ is a minimal prime of $(\shJ_\lambda : \q)$.
      \label{completion lemma (b)}
    \item If $\m$ is a minimal prime of $(\shJ_\lambda : \q)$, then 
      $\sqrt{(\shJ_\lambda :_R \q)R'} = \sqrt{(\shH_\lambda:_{R'} \q R')}$.
      \label{completion lemma (c)}
  \end{enumerate}
\end{lemma}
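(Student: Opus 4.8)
The plan is to make \eqref{completion lemma (a)} the technical core and deduce \eqref{completion lemma (b)} and \eqref{completion lemma (c)} from it together with \cref{coherence}, the compactness in \cref{V_t compact}, and the minimality hypothesis. For \eqref{completion lemma (a)}, everything in sight is local at the center of $v'$, so the first step is to reduce to comparing log discrepancies along $\shO_{X,\m}\to R'=\widehat{\shO_{X,\m}}$. Since $X$ is regular, hence Gorenstein, $\shC^X$ is locally principal with a canonical generator $\Phi$ (the projection onto $(x_1\cdots x_n)^{p-1}$ for a regular system of parameters), and \eqref{completion isomorphism} identifies $\shC^{X'}$ with the $R'$-extension of $\shC^X$, whose generator $\Phi'$ restricts to $\Phi$. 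Writing $\shD=\shC^X\cdot\idealb_\bullet$, repeated use of \cref{finite type}, \cref{effect of right multiplication}, \cref{twisting corollary}, and \cref{limit lemma} reduces the claimed identity to two facts: that $v'(\ideala_sR')=(v'|_R)(\ideala_s)$ and $v'(\idealb_eR')=(v'|_R)(\idealb_e)$ for the finitely generated ideals $\ideala_s,\idealb_e$ --- immediate, since $v'$ restricts and these ideals are generated inside $R$ --- and that $A(v';\Phi'^{\,e})=A(v'|_R;\Phi^{e})$. The last identity I would obtain from \cref{RLR} when $v'$ is centered at the closed point of $X'$ and monomial (both sides are then $\sum_i v'(z_i)$, with \cref{quasimonomial preserved: completion} ensuring the value group transports), and in general by an $\m$-adic approximation argument: every element of $R'$ is $\m$-adically close to one of $\shO_{X,\m}$, $\Phi'$ changes $\m$-adic order in a controlled way, so $E(f,\Phi'^{\,n},v')$ is unaffected on replacing $f$ by a sufficiently close element of $\shO_{X,\m}$; one then invokes \cref{no residues}.

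For \eqref{completion lemma (b)}, the inequality $\lct^{\q R'}(\shD',\ideala'_\star)\ge\lambda$ is just restriction: if $v'\in\Val_{X'}^*$ contributes to the infimum defining $\lct^{\q R'}$ (so $v'(\ideala'_\star)>0$ and $A(v';\shD')<\infty$), then $R\subseteq R'\subseteq\shO_{v'}$ gives $v:=v'|_L\in\Val_X$, and $v(\ideala_\star)=v'(\ideala'_\star)>0$ rules out $v=\triv_X$, so $v\in\Val_X^*$; now \eqref{completion lemma (a)} yields $A(v';\shD')=A(v;\shD)$, $v'(\q R')=v(\q)$ and $v'(\ideala'_\star)=v(\ideala_\star)$, hence $\lct^{\q R'}(v';\shD',\ideala'_\star)=\lct^{\q}(v;\shD,\ideala_\star)\ge\lambda$, and taking the infimum gives the inequality. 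For equality when $\m$ is a minimal prime of $(\shJ_\lambda:_R\q)$: coherence of $\shJ_\lambda$ (\cref{coherence}) lets me compute $(\shJ_\lambda)_\m$ on $\Spec\shO_{X,\m}$, so from $(\shJ_\lambda:_R\q)_\m\ne\shO_{X,\m}$ there is some $a\in\q$ and a valuation $v$ with $c_X(v)\subseteq\m$ and $v(a)+A(v;\shD\cdot\ideala_\star^\lambda)\le 0$; minimality of $\m$ forces $c_X(v)=\m$, and \cref{twisting corollary} together with strong $F$-regularity of $\shD$ then forces $v(\ideala_\star)>0$ and $\lct^{\q}(v;\shD,\ideala_\star)=\lambda$ (a compactness argument of the type in \cref{V_t compact} may be used to produce the minimizer cleanly). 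Since $c_X(v)=\m$ we have $v(\m)>0$, so $v$ is $\m$-adically continuous and extends to a valuation $v'$ on $X'$ centered at the closed point; then \eqref{completion lemma (a)} gives $\lct^{\q R'}(v';\shD',\ideala'_\star)=\lambda$, whence $\lct^{\q R'}(\shD',\ideala'_\star)\le\lambda$.

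For \eqref{completion lemma (c)} the inclusion ``$\subseteq$'' is direct. Fix $f\in(\shJ_\lambda:_R\q)$ and generators $a_1,\dots,a_r$ of $\q$; for any $v'\in\Val_{X'}^*$ and $a'\in\q R'$ write $a'=\sum_i a_ir_i'$, so $v'(a')\ge\min_i v'(a_i)=\min_i v(a_i)$ with $v=v'|_R$, whence $v'(fa')\ge v(fa_{i_0})$ for the minimizing index. Since $fa_{i_0}\in\shJ_\lambda$ we have $v(fa_{i_0})+A(v;\shD\cdot\ideala_\star^\lambda)>0$, and $A(v';\shD'\cdot(\ideala'_\star)^\lambda)=A(v;\shD\cdot\ideala_\star^\lambda)$ by \eqref{completion lemma (a)}, so $v'(fa')+A(v';\shD'\cdot(\ideala'_\star)^\lambda)>0$; as $v'$ and $a'$ were arbitrary, $f\in(\shH_\lambda:_{R'}\q R')$. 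Hence $(\shJ_\lambda:_R\q)R'\subseteq(\shH_\lambda:_{R'}\q R')$, and taking radicals, $\sqrt{(\shJ_\lambda:_R\q)R'}\subseteq\sqrt{(\shH_\lambda:_{R'}\q R')}$. By minimality of $\m$ the left side is $\m R'$, the maximal ideal of the complete local ring $R'$, so the right side is either $\m R'$ or $R'$; the case $\q R'\subseteq\shH_\lambda$ is excluded because the valuation $v'$ produced in \eqref{completion lemma (b)} satisfies $v'(\q R')+A(v';\shD'\cdot(\ideala'_\star)^\lambda)=0$ at a generator of $\q R'$, so $\q R'\not\subseteq\shH_\lambda$. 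Therefore $\sqrt{(\shH_\lambda:_{R'}\q R')}=\m R'=\sqrt{(\shJ_\lambda:_R\q)R'}$.

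The step I expect to be the main obstacle is the completion comparison in \eqref{completion lemma (a)} for valuations whose center on $X'$ is \emph{not} the closed point, where the $\m$-adic approximation breaks down and one must genuinely exploit the monomial structure of $\shC^X$ and the factorization $\shD=\shC^X\cdot\idealb_\bullet$ (or quote the behaviour of log discrepancies under regular morphisms, as in \cite{JonssonMustata}); a secondary delicate point is securing, in \eqref{completion lemma (b)}, a valuation centered at $\m$ that actually computes $\lct^{\q}$ rather than merely approximating it, which is a local instance of the compactness argument behind \cref{V_t compact} and \cref{existence of computing valuation}.
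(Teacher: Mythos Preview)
Your approach is essentially the same as the paper's. The paper is extremely terse on \eqref{completion lemma (a)}: it records $v_F(\idealb_\bullet)=v'_F(\idealb'_\bullet)$, $v(\ideala_\star)=v'(\ideala'_\star)$, and the isomorphism $R'\otimes_R(\shC_X)_e\cong(\shC_{X'})_e$ from \eqref{completion isomorphism}, then declares the result a ``direct calculation, using \eqref{twisting corollary}.'' Your reduction via \cref{finite type} and \cref{effect of right multiplication} to $A(v';\Phi')=A(v'|_R;\Phi)$ is exactly what that direct calculation amounts to, and your worry about centers away from the closed point is a fair observation: the paper does not address it explicitly either.

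For \eqref{completion lemma (b)} and \eqref{completion lemma (c)} you have the same architecture, but the paper is slightly simpler in two places. First, in \eqref{completion lemma (b)} the paper does not produce a valuation \emph{computing} $\lambda$; it only needs some $v$ with $c_X(v)=\m$ and $v(\q)+A(v;\shD\cdot\ideala_\star^\lambda)\le 0$, which follows immediately from $\q\not\subseteq(\shJ_\lambda)_\m$ together with $\q\subseteq(\shJ_\lambda)_\p$ for all $\p\subsetneq\m$ (minimality). So your ``secondary delicate point'' dissolves: no compactness is required, and \cref{existence of computing valuation} is not invoked. (Your own argument actually shows the same thing --- the inequality $v(a)+A(v;\shD\cdot\ideala_\star^\lambda)\le 0$ already forces $\lct^\q(v;\shD,\ideala_\star)\le\lambda$, hence equality --- so the compactness caveat you flag is unnecessary even on your route.) Second, for the inclusion in \eqref{completion lemma (c)} the paper uses flatness of $R\to R'$ to get $(\shJ_\lambda:\q)R'=(\shJ_\lambda R':\q R')$ and then checks $\shJ_\lambda R'\subseteq\shH_\lambda$ directly from \eqref{completion lemma (a)}; this is a bit cleaner than your generator-by-generator estimate, though yours is correct.
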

\begin{proof}

  We start with \eqref{completion lemma (a)}. 
  Fix $v' \in \Val_{X'}$ and let $v = v'|_R$. By definition, 
  $v_F(\idealb_\bullet) = v'_F(\idealb'_\bullet)$ and 
  $v(\ideala_\star) = v'(\ideala_\star')$. Since $X$ is $F$-finite and $R'$
  is a faithfully flat $R$-algebra, $R' \otimes_R (\shC_X)_e \isom (\shC_{X'})_e$
  for all $e \ge 1$, see \eqref{completion isomorphism}. A direct calculation, using
  \eqref{twisting corollary}, 
  shows that  $A(v; \shD \cdot \ideala_\star^t) = A(v'; \shD' \cdot \ideala_\star'^t)$.

  Moving on to \eqref{completion lemma (b)} and \eqref{completion lemma (c)}, 
  we just proved 
  $\lct^{\q R'}(v'; \shD' , \ideala_\star') = \lct^{\q}(v'|_R; \shD, \ideala_\star)$
  for all $v' \in \Val_{X'}^*$. Taking infema over all $v' \in \Val_{X'}^*$ and 
  $v \in \Val_X$, we arrive at the inequality claimed in 
  \eqref{completion lemma (b)}. 
  
  Now assume $\m$ is a minimal prime of 
  $(\shJ_\lambda : \q)$, so $\sqrt{(\shJ_\lambda : \q)R_\m} = \m R_\m$. 
  Since $R \to R'$ is flat, 
  \[ 
  (\shJ_\lambda : \q)R' = (\shJ_\lambda R' : \q R'), 
  \text{ hence } \sqrt{(\shJ_\lambda R' : \q R')} = \m R'. 
  \]
  Note also that 
  $\shJ_\lambda R' \subseteq \shH_\lambda$: if $f \in \shJ_\lambda$, then
  $v(f) + A(v; \shD \cdot \ideala_\star^\lambda) > 0$ for all $v \in \Val_X^*$. 
  In particular, this is true for all $v$ of the form $v'|_R$, $v' \in \Val_{X'}^*$,
  so $f \in \shH_\lambda$. Therefore, to show equality of radicals, it 
  suffices to prove $1 \not\in (\shH_\lambda : \q R')$. This is easily seen
  to be equivalent to $\lambda \ge \lct^{\q R'}(\shD', (\ideala_\star'))$, 
  so we reduce to proving the equality of log canonical thresholds in 
  \eqref{completion lemma (b)}.  

  The key observation is that any $v \in \Val_X$ with $c_X(v) = \m$ has
  an extension, by $\m$-adic continuity, to $\hat{v} \in \Val_{X'}$. A corollary
  is that $(v' \mapsto v'|_R)$ gives a surjection 
  \[ (\Val_{X'} \cap c_{X'}\inv(\m R')) \to (\Val_X \cap c_X\inv(\m)). \]
  Since $1 \not\in (\shJ_\lambda : \q)R_\m$, and the radical of this colon
  is $\m R_\m$, we know $1 \in (\shJ_\lambda : \q)R_\p$ for all $\p \subsetneq \m$. 
  It must thus be the case that $v(\q) + A(v; \shD \cdot \ideala_\star^\lambda) \le 0$
  for some $v \in \Val_X \cap c_X\inv(\m)$. But then 
  $\hat{v}(\q) + A(\hat{v}; \shD \cdot \ideala_\star^\lambda) \le 0$, 
  where $\hat{v}$ is the continuous extension of $v$ to $R'$. This implies
  $1 \not\in (\shH_\lambda : \q R')$. 

\end{proof}

\begin{remark}
  Jonsson and Musta\c{t}\u{a} show that in fact $\shH_t = \shJ_t R'$ for all 
  $t \ge 0$, but their proof uses that one can base change log resolutions 
  (used to compute $\shJ_t$) along regular morphisms. 
  Lacking this technique for computing $\shJ_t$, we do not know if this holds 
  in our setting. 
\end{remark}

\begin{theorem}[cf. {\cite[Theorem 7.8]{JonssonMustata}}]\label{TFAE}
  Let $v \in \Val_X^*$ with $A(v; \shD) < \infty$. 
  The following assertions are equivalent:
  \begin{enumerate}
    \item There exists $\ideala_\star$ on $X$ such that $v$ computes 
      $\lct^\q(\shD, \ideala_\star) < \infty$. \label{7.8 i}
    \item For every $w \in \Val_X^*$ with $w(\ideala) \ge v(\ideala)$ 
      for every ideal $\ideala \subset \shO_X$,
      $A(w; \shD) + w(\q) \ge A(v; \shD) + v(\q)$. \label{7.8 iii}
    \item The valuation $v$ computes $\lct^\q(\shD, \ideala_\star(v)) < \infty$. 
      \label{7.8 iv}
  \end{enumerate}
\end{theorem}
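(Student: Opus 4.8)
The plan is to prove the cycle of implications $\eqref{7.8 i}\Rightarrow\eqref{7.8 iii}\Rightarrow\eqref{7.8 iv}\Rightarrow\eqref{7.8 i}$, working throughout on an affine chart containing $c_X(v)$ so that \Cref{valuation ideal lemma} and the definitions of $\ideala_\star(v)$, $\lct^\q(-;\shD,-)$, and $\lct^\q(\shD,-)$ apply directly. The implication $\eqref{7.8 iv}\Rightarrow\eqref{7.8 i}$ is immediate: $\ideala_\star(v)$ is itself a witnessing graded sequence, once one records that it is a bona fide (termwise nonzero, multiplicatively graded) sequence of ideals. Multiplicativity $\ideala_s(v)\ideala_t(v)\subseteq\ideala_{s+t}(v)$ is obvious, and $\ideala_s(v)\neq 0$ because $g^n\in\ideala_s(v)$ for $n\gg0$, where $g\in\m_{c_X(v)}$ is any element with $v(g)>0$; such $g$ exists since $v\in\Val_X^*$ forces $c_X(v)\neq\eta_X$.

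For $\eqref{7.8 i}\Rightarrow\eqref{7.8 iii}$, let $\ideala_\star$ be the sequence furnished by $\eqref{7.8 i}$ and put $\lambda:=\lct^\q(\shD,\ideala_\star)<\infty$. Finiteness forces $v(\ideala_\star)>0$, and $v$ computing $\lambda$ means $A(v;\shD)+v(\q)=\lambda\,v(\ideala_\star)$ while $A(w;\shD)+w(\q)\ge\lambda\,w(\ideala_\star)$ for every $w\in\Val_X^*$ with $w(\ideala_\star)>0$. If $w\in\Val_X^*$ satisfies $w(\ideala)\ge v(\ideala)$ for all ideals $\ideala\subseteq\shO_X$, then $w(\ideala_s)\ge v(\ideala_s)$ for all $s$, so $w(\ideala_\star)\ge v(\ideala_\star)>0$ by \Cref{limit lemma}; combining with $\lambda\ge0$ yields $A(w;\shD)+w(\q)\ge\lambda\,w(\ideala_\star)\ge\lambda\,v(\ideala_\star)=A(v;\shD)+v(\q)$, which is $\eqref{7.8 iii}$.

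The crux is $\eqref{7.8 iii}\Rightarrow\eqref{7.8 iv}$, and the two observations that drive it are: (a) \Cref{valuation ideal lemma} with $w=v$ gives $v(\ideala_\star(v))=\inf_\idealb v(\idealb)/v(\idealb)=1$, the infimum taken over nonzero ideals $\idealb$ with $v(\idealb)>0$ (a nonempty family, since $v(\m_{c_X(v)})>0$), so $\lct^\q(v;\shD,\ideala_\star(v))=A(v;\shD)+v(\q)$, which is finite; and (b) homogeneity of $A(-;\shD)$, of $w\mapsto w(\q)$, and of $w\mapsto w(\ideala_\star(v))$ for the $\R_{>0}$-action on $\Val_X$. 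Given $w\in\Val_X^*$ I want $\lct^\q(w;\shD,\ideala_\star(v))\ge A(v;\shD)+v(\q)$; this is automatic if $w(\ideala_\star(v))=0$ or $A(w;\shD)=+\infty$, so assume $0<\beta:=w(\ideala_\star(v))<\infty$ and $A(w;\shD)<\infty$ and set $w':=\tfrac{1}{\beta}w\in\Val_X^*$. Then $w'(\ideala_\star(v))=1$, so \Cref{valuation ideal lemma} gives $\inf_\idealb w'(\idealb)/v(\idealb)=1$, whence $w'(\idealb)\ge v(\idealb)$ for every nonzero ideal $\idealb$ with $v(\idealb)>0$; as $w'(\idealb)\ge 0=v(\idealb)$ trivially when $v(\idealb)=0$, this gives $w'(\ideala)\ge v(\ideala)$ for every ideal $\ideala\subseteq\shO_X$. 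Now $\eqref{7.8 iii}$ applies to $w'$: $A(w';\shD)+w'(\q)\ge A(v;\shD)+v(\q)$, and rescaling by $\beta$ yields $\lct^\q(w;\shD,\ideala_\star(v))=\tfrac{A(w;\shD)+w(\q)}{\beta}\ge A(v;\shD)+v(\q)=\lct^\q(v;\shD,\ideala_\star(v))$. Hence $v$ computes $\lct^\q(\shD,\ideala_\star(v))<\infty$, which is $\eqref{7.8 iv}$.

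I do not anticipate a genuine obstacle; the one point requiring care is invoking \Cref{valuation ideal lemma} in the two opposite ways — with $w=v$ to pin down $v(\ideala_\star(v))=1$, and with the rescaled $w'$ to convert the normalization $w'(\ideala_\star(v))=1$ into domination of $v$ on all ideals — after which $\eqref{7.8 iii}$ does the rest. Strong $F$-regularity of $\shD$ is used only through the standing hypotheses, which guarantee the relevant log discrepancies and thresholds are finite and well-behaved.
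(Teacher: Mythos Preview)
Your proof is correct and follows essentially the same route as the paper: the cycle $\eqref{7.8 i}\Rightarrow\eqref{7.8 iii}\Rightarrow\eqref{7.8 iv}\Rightarrow\eqref{7.8 i}$, using \Cref{valuation ideal lemma} both to compute $v(\ideala_\star(v))=1$ and to convert the normalization $w'(\ideala_\star(v))=1$ into ideal-wise domination of $v$ by $w'$. The only cosmetic differences are that you invoke \Cref{limit lemma} rather than \Cref{valuation ideal lemma} for $w(\ideala_\star)\ge v(\ideala_\star)$ in $\eqref{7.8 i}\Rightarrow\eqref{7.8 iii}$, and you use $\lambda\ge 0$ where the paper divides through by $A(v;\shD)+v(\q)>0$; both are valid under the standing strong $F$-regularity hypothesis.
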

Our proof is very similar to the one found in \cite{JonssonMustata}. Notice, 
however, that we do not have part (ii) of their theorem, since the multiplier 
ideals $\shJ_t$ are not known to be subadditive. A proof of subadditivity in 
characteristic $p > 0$ would have to be quite different from the characteristic 
zero case, since Kawamata-Viehweg Vanishing is used in the proof, which is
known to be \textbf{false} in positive characteristics, see
\cite{Raynaud,XieEffectiveNonVanishing}. 

\begin{proof}
    Certainly \eqref{7.8 iv} implies \eqref{7.8 i}. 
    We prove \eqref{7.8 i} $\Rightarrow$ \eqref{7.8 iii} $\Rightarrow$ 
    \eqref{7.8 iv}.  
    For the first implication, suppose $v$ computes 
    $\lct^\q(\shD, \ideala_\star) < \infty$, so that 
    in particular $v(\ideala_\star) > 0$. If $w(\ideala) \ge v(\ideala)$ 
    for all ideals $\ideala$ on $X$, then 
    \eqref{valuation ideal lemma} shows $w(\ideala_\star) \ge v(\ideala_\star)$. 
    Since $v$ minimizes $\lct^\q(-; \shD, \ideala_\star)$, we know
    \[ \frac{A(w; \shD) + w(\q)}{w(\ideala_\star)} \ge \frac{A(v; \shD) + 
    v(\q)}{v(\ideala_\star)} \]
    implying that 
    \[ \frac{A(w; \shD) + w(\q)}{A(v; \shD) + v(\q)} \ge 
    \frac{w(\ideala_\star)}{v(\ideala_\star)} \ge 1 \]
    proving \eqref{7.8 iii}. 

    Supposing \eqref{7.8 iii} holds, we prove \eqref{7.8 iv}. 
    We see from the definition that $v(\ideala_\star(v)) = 1$, and so assertion 
    \eqref{7.8 iv} is equivalent to proving
    \[ \lct^\q(w; \shD, \ideala_\star(v)) \ge A(v; \shD) + v(\q). \]
    If $w(\ideala_\star(v)) = 0$ then 
    $\lct^\q(w; \shD, \ideala_\star(v)) = +\infty$ and this is trivial. We 
    therefore assume $w(\ideala_\star(v)) > 0$, in which case the desired 
    inequality is
    \begin{equation}\label{desired inequality}\tag{$\ast$}
      \frac{A(w; \shD) + w(\q)}{w(\ideala_\star(v))} \ge A(v; \shD) + v(\q). 
    \end{equation}
    The left hand side is invariant under $\R_{>0}$-scaling on $w$, and so we 
    may assume $w(\ideala_\star(v)) = 1$. Since $\ideala_\star(v)$ is the
    sequence of valuation ideals for $v$, if $w(\ideala_\star(v)) = 1$ then 
    $w(\ideala) \ge v(\ideala)$ for all ideals $\ideala$. 
    But then  \eqref{desired inequality} holds by assumption \eqref{7.8 iii}. 
\end{proof}

Still following the approach of Jonsson and Musta\c{t}\u{a}, we now prove that 
we may modify $\ideala_1$ and $\q$ so that they are both locally primary to a 
chosen minimal prime of $(\shJ_\lambda : \q)$. 
\begin{lemma}\label{v(c_star)}
  For $s \in \N_1$ and an ideal $\m$ 
  on $X$, define 
  \begin{equation}\label{enlarging ideal sequence}
    \idealc_j = \sum_{i = 0}^{j} \ideala_i \m^{s(j-i)} 
  \end{equation}
  and $\idealc_\star = \{c_j\}_{j \ge 1}$. 
  Then $\idealc_\star$ is a graded sequence, and 
  $v(\idealc_\star) = \min\{v(\ideala_\star), v(\m^s)\}$. 
\end{lemma}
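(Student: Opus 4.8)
The statement has two parts: that $\idealc_\star$ is a multiplicatively graded sequence of ideals, and the formula $v(\idealc_\star) = \min\{v(\ideala_\star), v(\m^s)\}$ for every $v \in \Val_X^*$. The plan is to handle the graded property by a direct index manipulation on the defining sum \eqref{enlarging ideal sequence}, and then to establish the valuation formula by combining two inequalities: a term-by-term lower bound coming from superadditivity of $v$, and a matching upper bound coming from exhibiting specific elements of $\idealc_j$ on which $v$ is small.

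First I would check that $\idealc_\star$ is graded. Given $j, j' \ge 1$, a product of a generator $a\,m$ of $\idealc_j$ with $a \in \ideala_i$, $m \in \m^{s(j-i)}$, and a generator $a'\,m'$ of $\idealc_{j'}$ with $a' \in \ideala_{i'}$, $m' \in \m^{s(j'-i')}$, gives $aa'\,mm'$; since $\ideala_\star$ is graded, $aa' \in \ideala_{i+i'}$, and $mm' \in \m^{s(j-i)+s(j'-i')} = \m^{s((j+j')-(i+i'))}$, so the product lies in $\idealc_{j+j'}$. One must also note each $\idealc_j$ is a genuine ideal (a finite sum of products of ideals) and is nonzero since $\ideala_0 \subseteq \idealc_j$ and $\ideala_0 = \shO_X$ by convention (or, if one only assumes the $\ideala_i$ are nonzero for $i \ge 1$, since $\ideala_j \subseteq \idealc_j$), so the hypothesis ``all elements of a graded sequence are nonzero'' is met.

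Next I would prove the valuation formula. Fix $v \in \Val_X^*$, work on an affine chart $\Spec(R)$ containing $c_X(v)$, and abbreviate $\alpha = v(\ideala_\star)$, $\beta = v(\m^s) = s\,v(\m)$. Recall from \eqref{limit lemma} that $v(\idealc_\star) = \inf_{j\ge1} v(\idealc_j)/j$. For the lower bound: since $\idealc_j = \sum_{i=0}^j \ideala_i\m^{s(j-i)}$, any element of $\idealc_j$ is an $R$-combination of products $a_i m_i$ with $a_i \in \ideala_i$, $m_i \in \m^{s(j-i)}$; by superadditivity and the ultrametric inequality, $v$ of such an element is at least $\min_{0\le i\le j}\bigl(v(\ideala_i) + s(j-i)v(\m)\bigr)$, and using $v(\ideala_i) \ge i\,v(\ideala_\star) = i\alpha$ (which is exactly $v(\idealc_i) \ge$ etc.\ from \eqref{limit lemma}) together with $v(\m^{s(j-i)}) = (j-i)\beta$, this is $\ge \min_{0\le i\le j}\bigl(i\alpha + (j-i)\beta\bigr) = j\min\{\alpha,\beta\}$ since the expression $i\alpha+(j-i)\beta$ is linear in $i\in[0,j]$ and hence minimized at an endpoint. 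Dividing by $j$ and taking the infimum gives $v(\idealc_\star) \ge \min\{\alpha,\beta\}$.

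For the upper bound, I would produce good elements. Given $\e > 0$, by \eqref{limit lemma} choose $m\ge 1$ with $v(\ideala_m)/m < \alpha + \e$, and pick $f \in \ideala_m$ with $v(f) = v(\ideala_m)$; also pick $g \in \m$ with $v(g) = v(\m)$. Then for any $j \ge m$, the element $f\,g^{s(j-m)} \in \ideala_m \m^{s(j-m)} \subseteq \idealc_j$ has $v\bigl(f g^{s(j-m)}\bigr) = v(\ideala_m) + s(j-m)v(\m) < m(\alpha+\e) + (j-m)\beta$, so $v(\idealc_j)/j < \tfrac{m}{j}(\alpha+\e) + \tfrac{j-m}{j}\beta$; letting $j\to\infty$ along multiples gives $v(\idealc_\star) \le \beta$, and a symmetric choice (using $g^{sj} \in \m^{sj}\subseteq \idealc_j$, so $v(\idealc_j)/j \le s\,v(\m) = \beta$ directly, and using $f g^{s(j-m)}$ to bound by something approaching $\alpha$ when $\beta \ge \alpha$) gives $v(\idealc_\star) \le \alpha$ as well; more carefully, taking $j = m$ shows $v(\idealc_m)/m \le (\alpha+\e)$ after noting $\ideala_m \subseteq \idealc_m$, so $v(\idealc_\star) \le v(\idealc_m)/m < \alpha+\e$ for all $\e$, whence $v(\idealc_\star)\le\alpha$, and $v(\idealc_\star)\le\beta$ from $\m^{sj}\subseteq\idealc_j$. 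Combining, $v(\idealc_\star) \le \min\{\alpha,\beta\}$, which together with the lower bound finishes the proof.

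The only mild subtlety — the ``main obstacle,'' though it is not severe — is being careful that $v(\ideala_i) \ge i\,v(\ideala_\star)$ really does hold (this is the ``$\inf = $'' half of \eqref{limit lemma}, i.e.\ $v(\ideala_i)/i \ge \inf_e v(\ideala_e)/e$ by superadditivity $\ideala_i^k \subseteq \ideala_{ik}$) and that the linear-programming step $\min_{0\le i\le j}(i\alpha+(j-i)\beta) = j\min\{\alpha,\beta\}$ is applied with the correct endpoint; everything else is routine. I would also remark that the construction and formula are exactly what is needed so that replacing $\ideala_\star$ by $\idealc_\star$ makes $\idealc_1 \supseteq \ideala_1 + \m^s$ primary to $\m$ while leaving the relevant log canonical threshold data controlled, which is presumably the use in the sequel.
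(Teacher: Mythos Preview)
Your proof is correct and follows essentially the same approach as the paper: both use the containments $\ideala_j \subseteq \idealc_j$ and $\m^{sj} \subseteq \idealc_j$ for the upper bound, and the additivity $v(\ideala_i\m^{s(j-i)}) = v(\ideala_i) + (j-i)v(\m^s)$ together with $v(\ideala_i) \ge i\,v(\ideala_\star)$ for the lower bound. Your organization is slightly cleaner in that you handle the lower bound by a single linear-in-$i$ endpoint argument rather than splitting into the two cases $v(\m^s) \lessgtr v(\ideala_\star)$ as the paper does, but this is a cosmetic difference, not a substantive one.
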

\begin{proof}
  That $\idealc_\star$ is graded follows definitionally 
  from the graded property of $\ideala_\star$. 

  Let us then prove $v(\idealc_\star) = \min\{v(\ideala_\star), v(\m^s)\}$. 
  Note that $\ideala_j + \m^{sj} \subseteq \idealc_j$ for all $j \ge 1$, 
  so $v(\idealc_\star) \le \min \{ v(\ideala_\star), v(\m^s) \}$ for all 
  $s$ and $v \in \Val_X$. 
  \begin{itemize}
    \item If $v(\m^s) \le v(\ideala_\star)$, then $-v(\ideala_i) \le -i\,v(\m^s)$ 
      for all $i \ge 1$. Therefore, 
      \[ v(\m^s) = \frac{j\,v(\m^s) - v(\ideala_i) + v(\ideala_i)}{j} 
                  \le \frac{(j-i)\,v(\m^s) + v(\ideala_i)}{j}.\]
      We then see that
      \[ \frac{v(\idealc_j)}{j} = 
      \min_{0 \le i \le j} \left\{ 
      \frac{(j-i)v(\m^s) + v(\ideala_i)}{j} 
      \right\} = v(\m^s) \]
      giving $v(\idealc_\star) = v(\m^s)$. 
    \item If $v(\m^s) > v(\ideala_\star)$, then $j\,v(\m^s) > v(\ideala_j)$ for all 
      $j \ge j_0 \gg 0$. Now, for $n > 2j_0$ we have
      \[ \frac{v(\ideala_n)}{n} \le \frac{v(\ideala_j) + v(\ideala_{n-j})}{n} < 
      \frac{v(\ideala_j) + (n - j) v(\m^s)}{n} < v(\m^s). \]
      Therefore, $v(\idealc_\star) = v(\ideala_\star)$, since 
      \[ v(\idealc_n) = \min_j \{v(\ideala_j), (n-j)v(\m^s)\} \]
      for all $n$. 
  \end{itemize}
\end{proof}

\begin{lemma}[cf. \cite{JonssonMustata}]\label{enlarging lemma 1}
  Assume $\lct^\q(\shD, \ideala_\star) = \lambda < \infty$ and let $\m$ 
  be the generic point of an irreducible component of $\V(\shJ_\lambda : \q)$. 
  Defining $\idealc_\star$ as in \eqref{enlarging ideal sequence} with $s \gg 0$ gives
  $\lct^\q(\shD, \ideala_\star) = \lct^\q(\shD, \idealc_\star)$. If $v \in \Val_X^*$ 
  computes $\lct^\q(\shD, \idealc_\star)$, then $v$ computes 
  $\lct^\q(\shD, \ideala_\star)$. 
\end{lemma}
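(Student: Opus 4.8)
The plan is to follow Jonsson and Musta\c{t}\u{a}, using coherence of the asymptotic multiplier ideals (\cref{coherence}) and the conservation rule (\cref{twisting corollary}) in place of their log-resolution arguments. Write $\lambda:=\lct^\q(\shD,\ideala_\star)$ and $\shJ_\lambda:=\shJ(\shD\cdot\ideala_\star^\lambda)$; recall from \cref{v(c_star)} that $v(\idealc_\star)=\min\{v(\ideala_\star),\,s\,v(\m)\}$ for every $v\in\Val_X$. The inequality $\lct^\q(\shD,\idealc_\star)\ge\lambda$ holds for every $s$: since $\ideala_j\ssq\idealc_j$ for all $j$, any $v$ with $v(\idealc_\star)>0$ satisfies $0<v(\idealc_\star)\le v(\ideala_\star)$, and since $\shD$ is strongly $F$-regular $A(v;\shD)>0$ (\cref{characterization of ShFP and SFR}); dividing the positive quantity $A(v;\shD)+v(\q)$ by the smaller positive denominator $v(\idealc_\star)$ gives $\lct^\q(v;\shD,\idealc_\star)\ge\lct^\q(v;\shD,\ideala_\star)\ge\lambda$, while $\lct^\q(v;\shD,\idealc_\star)=+\infty$ when $v(\idealc_\star)=0$, and taking the infimum over $v\in\Val_X^*$ yields the claim.

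For the reverse inequality I would produce a valuation computing $\lambda$ whose center is exactly $\m$, and then choose $s$ large relative to it. Passing to $R_\m$, coherence of $\shJ_\lambda$ together with flatness of $R\to R_\m$ turns the minimality of $\m$ over $(\shJ_\lambda:\q)$ into $\q R_\m\not\ssq\Gamma(\Spec R_\m,\shJ_\lambda)$, so \cref{definition: multiplier ideal} furnishes $v_0\in\Val_{R_\m}^*$ with $v_0(\q)+A(v_0;\shD\cdot\ideala_\star^\lambda)\le 0$; by the compactness input behind the coherence proof (\cref{V_t compact}) one may take $v_0$ with $A(v_0;\shD)\le\lambda<\infty$, so \cref{twisting corollary} applies and rewrites this as $A(v_0;\shD)+v_0(\q)\le\lambda\,v_0(\ideala_\star)$. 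Since $A(v_0;\shD)>0$ this forces $v_0(\ideala_\star)>0$, whence $\lct^\q(v_0;\shD,\ideala_\star)\le\lambda$; as $\lambda$ is an infimum, $v_0$ computes $\lambda$. By the description of computing valuations recalled at the start of this subsection, $c_X(v_0)\supseteq(\shJ_\lambda:\q)$, and combined with $c_X(v_0)\in\Spec R_\m$ and minimality of $\m$ this forces $c_X(v_0)=\m$, so $v_0(\m)>0$. As $v_0(\ideala_\star)<\infty$ (each $\ideala_s\ne 0$), for any $s$ with $s\,v_0(\m)\ge v_0(\ideala_\star)$ we get $v_0(\idealc_\star)=v_0(\ideala_\star)$ from \cref{v(c_star)}, hence $\lct^\q(v_0;\shD,\idealc_\star)=\lct^\q(v_0;\shD,\ideala_\star)=\lambda$ and therefore $\lct^\q(\shD,\idealc_\star)\le\lambda$; fixing one such $s$ completes the first assertion.

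For the last statement, let $v\in\Val_X^*$ compute $\lct^\q(\shD,\idealc_\star)=\lambda$, so in particular $v(\idealc_\star)>0$. If $v(\idealc_\star)=s\,v(\m)<v(\ideala_\star)$ then $A(v;\shD)+v(\q)=\lambda\,v(\idealc_\star)<\lambda\,v(\ideala_\star)$ (here $\lambda>0$, as otherwise $\V(\shJ_\lambda:\q)$ would be empty and the hypothesis vacuous), so $\lct^\q(v;\shD,\ideala_\star)<\lambda$, contradicting that $\lambda$ is the infimum of $\lct^\q(-;\shD,\ideala_\star)$; therefore $v(\idealc_\star)=v(\ideala_\star)$ and $\lct^\q(v;\shD,\ideala_\star)=\lambda$, i.e.\ $v$ computes $\lct^\q(\shD,\ideala_\star)$. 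The step I expect to be most delicate is the production of $v_0$ with center precisely $\m$: this is exactly where coherence of the multiplier ideal replaces log resolutions, and one must be careful both about the finiteness hypotheses needed to invoke \cref{twisting corollary} — which is what \cref{V_t compact} provides — and about tracking the center of the valuation extracted over $\Spec R_\m$.
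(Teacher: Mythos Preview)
Your approach is correct and takes a genuinely different route from the paper. The paper never exhibits a computing valuation inside the proof: it first reduces, via completion at $\m$ (\cref{completion lemma}), to the case $\m=\sqrt{(\shJ_\lambda:\q)}$, picks $n$ with $\m^n\q\subseteq\shJ_\lambda$, sets $\lambda'=\lct^{\m^n\q}(\shD,\ideala_\star)>\lambda$, and chooses $s$ with $n/s<\lambda'-\lambda$; it then argues analytically that every $v$ with $\lct^\q(v;\shD,\ideala_\star)$ sufficiently close to $\lambda$ already satisfies $v(\ideala_\star)/v(\m^s)<1$, hence $v(\idealc_\star)=v(\ideala_\star)$ for all such $v$, giving $\lct^\q(\shD,\idealc_\star)\le\lambda$. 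Your argument instead extracts a single computing valuation $v_0$ centered at $\m$ directly from coherence and the definition of $\shJ_\lambda$, then tailors $s$ to $v_0$ alone. This is more direct and in fact proves the existence statement of \cref{existence of computing valuation} as a byproduct, inverting the paper's order of dependence; the paper's approach has the virtue that $s$ is determined by the numerical data $(\lambda,\lambda',n)$ without first knowing a minimizer exists.

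One step is imprecisely justified: citing \cref{V_t compact} for ``one may take $v_0$ with $A(v_0;\shD)\le\lambda$'' is not the right tool---that lemma is a compactness statement, not a bound. What you actually need is that $A(v_0;\shD\cdot\ideala_\star^\lambda)\le -v_0(\q)<\infty$ already forces $A(v_0;\shD)<\infty$, so that \cref{twisting corollary} applies. This follows from \cref{effect of right multiplication}: for $\phi\in(\shD_e)_x$ and $a\in\ideala_m^{\lceil(\lambda/m)(p^e-1)\rceil}$ with $v_0(a)$ minimal, one has $A(v_0;\phi\cdot a)\ge A(v_0;\phi)-((\lambda/m)+1)\,v_0(\ideala_m)$; since $v_0(\ideala_m)<\infty$, if $A(v_0;\shD)=+\infty$ then $A(v_0;\shD\cdot\ideala_m^{\lambda/m})=+\infty$ and hence $A(v_0;\shD\cdot\ideala_\star^\lambda)=+\infty$, a contradiction. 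With this repair your argument is complete.
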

\begin{proof}
  Note that $\sqrt{(\shJ_\lambda : \q)} \subseteq \m$. 
  First consider the special case $\m = \sqrt{(\shJ_\lambda : \q)}$ and let $n \in \N_1$ 
  such that $\m^n\q \subseteq \shJ_\lambda$. Define   
  $\lambda' = \lct^{\m^n\q}(\shD, \ideala_\star) > \lambda$, and 
  define $\idealc_\star$ using any $s \gg 0$ with
  $n/s <(\lambda' - \lambda)$. 
  Fix $0 < \e \ll 1$ with $n/s < (\lambda'(1-\e) - \lambda)$. 

  We denote by $W$ the set of $v \in \Val_X^*$ with 
  $\lct^\q(v; \shD, \ideala_\star) < \infty$, and $W_\e = 
  \left\{v \in W \,:\, \lct^\q(v; \shD, \ideala_\star) 
  \le \frac{\lambda}{1-\e} \right\}$. Since 
  $\m^n \subseteq (\shJ_\lambda : \q)$ and $v(\shJ_\lambda : \q) > 0$, it follows 
  $v(\m) > 0$. Therefore, 
  \begin{align}
    \lct^\q(\shD, \idealc_\star)  &= \inf_{v \in W} \frac{A(v; \shD) + v(\q)}{\min\{v(\ideala_\star), v(\m^s)\}} \nonumber \\
                                  &\le \inf_{v \in W_\e} \frac{A(v; \shD) + v(\q)}{\min\{v(\ideala_\star), v(\m^s)\}} \nonumber \\
                                  &= \inf_{v \in W_\e} \left( \frac{A(v; \shD) + v(\q)}{v(\ideala_\star)} 
                                    \max \left\{ 1, \frac{v(\ideala_\star)}{v(\m^s)} \right\} \right) \label{W_e estimate}
  \end{align}
  When $v \in W_\e$ we have 
  \begin{align*}
    \lambda' - \lct^\q(v; \shD, \ideala_\star) 
                    &\ge \frac{ (1-\e)\lambda' - \lambda }{1-\e} \\
                    &> (1-\e)\lambda' - \lambda > n/s, 
  \end{align*}
  so in particular
  \begin{equation}\label{enlarging eqn 1}
    \frac{n}{s} \left( \lambda' - \lct^\q(v; \shD, \ideala_\star) \right)\inv < 1.
  \end{equation}
  Additionally, 
  \[ \lambda' - \lct^\q(v; \shD, \ideala_\star)  \le 
    \frac{n \, v(\m)}{v(\ideala_\star)} = \frac{n\, v(\m^s)}{s \, v(\ideala_\star)} 
  \]
  for all $v \in W$, so if $v(\m) > 0$ we can re-arrange this estimate and 
  apply \eqref{enlarging eqn 1}:
  \begin{equation}\label{enlarging eqn 2}
    \frac{v(\ideala_\star)}{v(\m^s)} \le \frac{n}{s} \left( \lambda' - \lct^\q(v; \shD, \ideala_\star) \right)\inv < 1.
  \end{equation} 
  Thus, by applying \eqref{enlarging eqn 2} to \eqref{W_e estimate} we have:
  \begin{align*}
    \lct^\q(\shD, \idealc_\star) &\le 
    \inf_{v \in W_\e} \left( \frac{A(v; \shD) + v(\q)}{v(\ideala_\star)} 
    \max \left\{ 1, \frac{v(\ideala_\star)}{v(\m^s)} \right\} \right) \\
    &= \inf_{v \in W_\e} \frac{A(v; \shD) + v(\q)}{v(\ideala_\star)} \\
    &= \lct^\q(\shD, \ideala_\star). 
  \end{align*}
  We conclude $\lct^\q(\shD, \idealc_\star) \le \lct^\q(\shD, \ideala_\star)$. 
  The other inequality follows from monotonicity, 
  \Cref{twisting rule}\eqref{monotonicity}. 

  Now treating the general case of $\m$ a minimal prime 
  of $(\shJ_\lambda : \q)$, we complete $X$ at $\m$ without changing
  the log canonical thresholds \ref{completion lemma}\eqref{completion lemma (b)}. 
  After completing, 
  $\sqrt{(\shJ_\lambda : \q)} = \m$,
  so we reduce to the previous case. 

  Finally, suppose $v$ computes $\lct^\q(\shD, \idealc_\star)$. Then
  \[
    \lct^\q(\shD, \ideala_\star)  = \lct^\q(\shD, \idealc_\star)
                                  = \frac{A(v; \shD) + v(\q)}{v(\idealc_\star)},
  \]
  and from $v(\idealc_\star) \le v(\ideala_\star)$ we get
  \begin{align*}
    \lct^\q(\shD, \ideala_\star) &\ge \frac{A(v; \shD) + v(\q)}{v(\ideala_\star)} \\
    &\ge \lct^\q(\shD, \ideala_\star). 
  \end{align*}
  Therefore, the inequalities are equalities, proving that $v$ also 
  computes $\lct^\q(\shD, \ideala_\star)$. 
\end{proof}

We are ready to prove the following existence result using essentially 
the same method as Jonsson and Musta\c{t}\u{a}. 

\begin{theorem}[{cf. \cite[Theorem 7.3]{JonssonMustata}}]
  \label{existence of computing valuation}
  Let $\lambda = \lct^\q(\shD, \ideala_\star) < \infty$. 
  For any generic point $\m$ of an irreducible component of 
  $\V(\shJ_\lambda : \q)$ there exists a valuation with center 
  $\m$ computing $\lct^\q(\shD, \ideala_\star)$. 
\end{theorem}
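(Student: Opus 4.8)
The plan is to reduce to a complete local base by means of the completion lemmas, extract a computing valuation there by a nested--compactness argument, and then descend. Write $\lambda = \lct^\q(\shD, \ideala_\star)$ and $\shJ_\lambda = \Gamma(X, \shJ(\shD\cdot\ideala_\star^\lambda))$. First I would dispose of the trivial case $\lambda = 0$: strong $F$-regularity gives $A(v;\shD) > 0$ for all $v \in \Val_X^*$, so $\shJ(\shD) = \shO_X$ and hence $\shJ_\lambda = \shJ(\shD \cdot \ideala_\star^0) = \shO_X$, so $\V(\shJ_\lambda : \q) = \varnothing$ and there is nothing to prove; thus assume $\lambda > 0$. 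Let $\m$ be the given generic point, that is, a minimal prime of $(\shJ_\lambda : \q)$, and pass to the completion morphism $X' = \Spec(R') \to X$ with $R' = \widehat{\shO_{X,\m}}$, extending $\shD,\ideala_\star,\q$ to $\shD',\ideala_\star',\q R'$. By \cref{completion lemma}\eqref{completion lemma (b)} we have $\lct^{\q R'}(\shD', \ideala_\star') = \lambda$, and by \eqref{completion lemma (c)}, writing $\shH_\lambda$ for the multiplier ideal of $\shD'\cdot(\ideala_\star')^\lambda$ on $X'$, $\sqrt{(\shH_\lambda : \q R')} = \sqrt{(\shJ_\lambda : \q)R'} = \m R'$, the last step because $(\shJ_\lambda : \q)R_\m$ is $\m R_\m$-primary.

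Next I would produce a valuation computing the lct on $X'$ by compactness. For $\e > 0$ consider
\[
  S_\e = \bigl\{\zeta \in X'^\beth \,:\, \zeta(\ideala_\star') \ge 1,\ \ A(\zeta;\shD') + \zeta(\q R') \le \lambda + \e\bigr\}.
\]
By \cref{limit lemma} the first condition reads $\bigcap_j \{\zeta(\ideala_j') \ge j\}$, a closed condition since each $\zeta \mapsto \zeta(\ideala_j')$ is continuous; and $\zeta \mapsto A(\zeta;\shD') + \zeta(\q R')$ is lower-semicontinuous by \cref{log discrepancy lsc}. So $S_\e$ is closed in the compact space $X'^\beth$, hence compact, and it lies in $\Val_{X'}^*$: for $\zeta \in S_\e$ one has $A(\zeta;\shD') \le \lambda + \e < \infty$, which by strong $F$-regularity of $\shD'$ forces $h_{X'}(\zeta) = \eta_{X'}$, while $\zeta(\ideala_1') \ge 1 > 0$ forces $\zeta \ne \triv_{X'}$, exactly as in the proof of \cref{V_t compact}. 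Nonemptiness comes from rescaling: choosing $v' \in \Val_{X'}^*$ with $\lct^{\q R'}(v';\shD',\ideala_\star') < \lambda + \e$ (possible since $\lambda < \infty$), one has $0 < v'(\ideala_\star') < \infty$, and $w' := v'/v'(\ideala_\star')$ lies in $S_\e$ by homogeneity of $A(-;\shD')$. Then $\{S_{1/n}\}_{n\ge1}$ is a decreasing sequence of nonempty compacts, so I pick $v' \in \bigcap_n S_{1/n}$; it has $v'(\ideala_\star') \ge 1$ and $A(v';\shD') + v'(\q R') \le \lambda$, so $\lct^{\q R'}(v';\shD',\ideala_\star') = \tfrac{A(v';\shD') + v'(\q R')}{v'(\ideala_\star')} \le \lambda$, and the reverse inequality is automatic, so $v'$ computes $\lct^{\q R'}(\shD', \ideala_\star')$.

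Finally I would pin down the center and descend. Since $v'$ computes the lct on $X'$, the argument recalled at the start of this subsection, applied verbatim on $X'$, shows $c_{X'}(v') \in \V(\shH_\lambda : \q R')$; as this ideal has radical the maximal ideal $\m R'$, we conclude $c_{X'}(v') = \m R'$. Put $v = v'|_R$. The proof of \cref{completion lemma} shows $\lct^\q(v;\shD,\ideala_\star) = \lct^{\q R'}(v';\shD',\ideala_\star') = \lambda$, so $v$ computes $\lct^\q(\shD,\ideala_\star)$; moreover $v$ is nontrivial on $L$ since $v(f) = v'(f) > 0$ for $f \in \m$, while $v(g) = 0$ for every $g \in R \setminus \m$ (a unit of $R' = \widehat{\shO_{X,\m}}$), so $c_X(v) = \m$, as required.

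I expect the compactness step to be the main obstacle, and the point there is twofold: one must phrase the argument using the \emph{closed} locus $\{\zeta(\ideala_\star') \ge 1\}$ inside the compact $X'^\beth$ --- the normalized locus $\{\zeta(\ideala_\star') = 1\}$ need not be closed --- and one must invoke strong $F$-regularity (which alone forces the relevant semivaluations to be genuine nontrivial valuations) so that \cref{V_t compact}-type reasoning applies; the role of completing at $\m$ is precisely to make $\m R'$ maximal, so that the structural constraint $c_{X'}(v') \in \V(\shH_\lambda : \q R')$ pins the center exactly at $\m R'$ rather than merely above it.
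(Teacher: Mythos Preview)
Your proof is correct and takes a genuinely different route from the paper. Both arguments begin by completing at $\m$ via \cref{completion lemma}, but from there the strategies diverge. The paper next invokes \cref{enlarging lemma 1} to replace $\ideala_\star$ by a larger sequence $\idealc_\star$ with $\m^s \subseteq \idealc_1$; this lets one normalize by $v(\m) = 1$, invoke the compact set $\mathcal{V}_N$ of \cref{V_t compact} (with the single ideal $\m$ in place of $\ideala$), and obtain $c_X(v) = \m$ automatically from $v(\ideala_\star) \le v(\m^s)$. You instead bypass the enlarging lemma entirely: your normalization slice is the closed set $\{\zeta(\ideala_\star') \ge 1\} = \bigcap_j \{\zeta(\ideala_j') \ge j\}$ inside $X'^\beth$, you run a nested-compactness argument on the sets $S_\e$ directly, and then you recover the center $\m R'$ a posteriori from the structural fact (stated just before \cref{completion lemma}) that any computing valuation is centered in $\V(\shH_\lambda : \q R')$, whose radical is $\m R'$. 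What this buys you is a more self-contained argument with one fewer auxiliary lemma; what the paper's route buys is that the center constraint is built into the compact set from the start rather than checked afterwards. Both proofs rest on the same essential inputs: compactness of $X'^\beth$, lower-semicontinuity of $A(-;\shD')$ from \cref{log discrepancy lsc}, strong $F$-regularity of $\shD'$ to force $S_\e \subseteq \Val_{X'}^*$, and \cref{completion lemma} for the descent.
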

\begin{proof}
  Applying \cref{completion lemma}, we may assume $(R, \m)$ is local 
  and $\m = \sqrt{(\shJ_\lambda : \q)}$. Enlarging $\ideala_\star$ 
  using the previous proposition, we also assume that $\m^s \subseteq \ideala_1$ 
  for some $s \gg 0$. Since $\lambda < \infty$, we can fix $M \in \R$ 
  with $\lambda < M$, and $\lct^\q(\shD, \ideala_\star)$ is 
  unchanged by considering only those $v \in \Val_X^*$ such that 
  $v(\ideala_\star) > 0$ and
  \[ \lct^\q(v; \shD, \ideala_\star) =
  \frac{A(v; \shD) + v(\q)}{v(\ideala_\star)} \le M, \]
  or equivalently
  \[ A(v; \shD) + v(\q) \le M \, v(\ideala_\star). \]
  Because $\m^s \subseteq \ideala_1$, $\m^{st} \subseteq \ideala_t$ for all $t \ge 1$,
  and we see $0 < v(\ideala_\star) \le v(\m^s)$, i.e. $c_X(v) = \m$. 
  Then $A(v; \shD) \le A(v; \shD) + v(\q) 
  \le M v(\ideala_\star) \le M v(\m^s)$. Re-scaling $v$ if necessary, 
  we may assume $v(\m) = 1$, which now gives $A(v; \shD) \le N := Ms$. 
  Thus, $v \in \mathcal{V}_{N}$ as defined in \eqref{V_t compact}. We see that
  \[ \lct^\q(\shD, \ideala_\star) = 
  \inf_{v \in \mathcal{V}_N} \lct^\q(v; \shD, \ideala_\star), \]
  and $\lct^\q(-; \shD, \ideala_\star)$ is lsc \eqref{lct as sup 2}, so 
  compactness yields $v \in \mathcal{V}_N$ achieving this minimum. 
\end{proof}

We do not get very much information about the valuation computing 
$\lct^\q(\shD, \ideala_\star)$ above. In analogy with the discrete 
valuation case, where $\lct^\q(v; \shD, \ideala_\star) = +\infty$ for
non-divisorial discrete valuations on varieties \eqref{non-F-finite DVRs}, 
one might expect these valuations to be Abhyankar. 
Jonsson and Musta\c{t}\u{a} conjecture 
exactly this, calling Abhyankar valuations {\em quasi-monomial}, 
since their schemes are excellent over $\Q$. 

\begin{conjecture}[{cf. \cite[Conjecture 7.4]{JonssonMustata}}]
  \label{JM: Conjecture 7.4}
  Suppose $\lct^\q(\shD, \ideala_\star) = \lambda < \infty$. 
  \begin{itemize}
    \item \textbf{Weak version}: for any generic point $\m$ of an 
      irreducible component of $\V(\shJ_\lambda : \q)$ there exists 
      a Abhyankar valuation $v \in \Val_X^*$ computing 
      $\lct^\q(\shD, \ideala_\star)$ with $c_X(v) = \m$. 

    \item \textbf{Strong version}: any valuation computing 
      $\lct^\q(\shD, \ideala_\star)$ must be Abhyankar. 
  \end{itemize}
\end{conjecture}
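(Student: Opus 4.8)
The plan is to follow the reduction strategy of Jonsson and Musta\c{t}\u{a}: once the positive-characteristic machinery above is in place, \Cref{JM: Conjecture 7.4} reduces to its affine-space analogue \Cref{JM: Conjecture 7.5}, a reduction recorded below as \Cref{affine conjecture implies regular conjecture}. Concretely, fix a generic point $\m$ of an irreducible component of $\V(\shJ_\lambda : \q)$. By \Cref{existence of computing valuation} there is at least one valuation $v \in \Val_X^*$ with $c_X(v) = \m$ computing $\lct^\q(\shD, \ideala_\star)$; the substance of the conjecture is that such a $v$ may be taken Abhyankar. First I would localize and complete $\shO_{X,\m}$: by \Cref{completion lemma} the log canonical threshold is unchanged when $\m$ is a minimal prime of $(\shJ_\lambda : \q)$, and by \Cref{enlarging lemma 1} together with \Cref{v(c_star)} we may enlarge $\ideala_\star$ so that $\ideala_1$ and $\q$ become $\m$-primary and $\sqrt{(\shJ_\lambda:\q)}=\m$, all without changing which valuations compute the threshold. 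Next, \Cref{quasimonomial preserved: completion} and \Cref{quasimonomial preserved: algebraic extension} let me replace the complete regular local ring by a power series ring over an algebraically closed coefficient field, hence---after a further harmless algebraic extension---by $\A^n_k$ with $k=\overline{k}$, all the while preserving $\trdeg$, $\ratrk$, and the property of computing the lct. This places us in the hypotheses of \Cref{JM: Conjecture 7.5}.

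Granting the affine conjecture, the weak version follows: a computing Abhyankar valuation centered at the closed point of $\A^n_k$ pulls back, under completion and algebraic descent, to an Abhyankar valuation on $X$ centered at $\m$, since those operations preserve rational rank and transcendence degree and hence preserve equality in Abhyankar's inequality. The strong version transports the same way in reverse: were some non-Abhyankar valuation $w$ to compute $\lct^\q(\shD,\ideala_\star)$ on $X$, its restriction to the affine model would be a valuation of transcendence degree $0$ computing the threshold there yet failing to be Abhyankar, contradicting the strong form of \Cref{JM: Conjecture 7.5}. After the reduction, then, the only residual work is bookkeeping: checking that completion, enlargement of $\ideala_\star$, and algebraic base change each respect both ``computing the lct'' and the numerical invariants $\ratrk$ and $\trdeg$, which is precisely the content of the cited lemmas.

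The hard part is \Cref{JM: Conjecture 7.5} itself, which is genuinely open. In characteristic zero, Jonsson and Musta\c{t}\u{a} prove the analogous statement by realizing a computing valuation inside a skeleton $\QM(Y,D)$ attached to a log resolution: on each simplex of the skeleton the functions $w\mapsto A(w;\shD)+w(\q)$ and $w\mapsto w(\ideala_\star)$ are affine, so the infimum of $\lct^\q(-;\shD,\ideala_\star)$ over the compact set $\mathcal{V}_N$ of \Cref{V_t compact} is attained at a vertex of a finite-dimensional, piecewise-linear structure---forcing the minimizer to be quasi-monomial, equivalently Abhyankar. In positive characteristic we have neither log resolutions in general nor any simplicial decomposition of $\Val_X$, so there is no evident compact finite-dimensional family of Abhyankar valuations onto which $\mathcal{V}_N$ retracts. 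Proving \Cref{JM: Conjecture 7.5} would therefore require a resolution-free substitute: for instance a monomialization result in the spirit of \cite{KnafKuhlmann} applied directly to the valuation produced by the compactness argument in \Cref{existence of computing valuation}, or a direct argument---perhaps through the valuative behaviour of the multiplier ideals $\shJ_t$ near $\m$---that the minimizer's rational rank and transcendence degree must saturate Abhyankar's inequality. Absent such a tool the statement stays conjectural, which is why it is posed here as \Cref{JM: Conjecture 7.4} rather than as a theorem.
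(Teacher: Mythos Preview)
The statement is a \emph{conjecture}, so the paper contains no proof of it; what the paper does prove is the reduction \Cref{affine conjecture implies regular conjecture} showing that \Cref{JM: Conjecture 7.5} implies \Cref{JM: Conjecture 7.4}. Your proposal correctly frames the matter this way and correctly identifies the affine-space conjecture as the genuine obstruction, so on the level of overall strategy you and the paper agree.

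For the \textbf{weak version} your sketch of the reduction is essentially the paper's: localize and complete at $\m$ via \Cref{completion lemma}, enlarge $\ideala_\star$ and $\q$ to be $\m$-primary (the paper invokes both \Cref{enlarging lemma 1} and \Cref{enlarging lemma 2} here, not just the former), descend from the power series ring to $\A^n_k$ by running \Cref{completion lemma} in reverse, and pass to $\overline{k}$ by \Cref{base change of fields lemma}. The transport of the Abhyankar property back to $X$ via \Cref{quasimonomial preserved: completion} and \Cref{quasimonomial preserved: algebraic extension} is exactly as you describe.

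For the \textbf{strong version} there is a genuine gap in your outline. You write that a non-Abhyankar $w$ on $X$ would restrict to a valuation of transcendence degree $0$ on the affine model; but nothing in completion or algebraic base change forces $\trdeg_X(w)=0$, and the strong form of \Cref{JM: Conjecture 7.5} applies \emph{only} to valuations of transcendence degree $0$. The paper handles this with a substantive extra step you do not mention: using \Cref{TFAE} to replace $\ideala_\star$ by $\ideala_\star(w)$, then invoking the dimension formula to find a regular birational model $Y\to X$ on which $\dim\shO_{Y,c_Y(w)}$ is minimal, so that $\trdeg_Y(w)=0$. Passing from $X$ to $Y$ changes the ambient Cartier algebra, and one must correct $\q$ by the relative canonical divisor---this is precisely where \Cref{relative canonical lemma} enters, guaranteeing $h_{S/R}^{-1}\in S$ so that the adjusted ideal $\q'=\q\cdot\shO_Y(-\tfrac{1}{p-1}\div_Y(h_{S/R}^{-1}))$ makes sense and preserves both the threshold and the computing valuation. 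Only after this birational replacement can one complete, descend to $\A^n_k$, and invoke \Cref{JM: Conjecture 7.5}. So the reduction for the strong version is not ``bookkeeping'': it requires the birational step and \Cref{relative canonical lemma}, neither of which appears in your proposal.
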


Jonsson and Musta\c{t}\u{a} reduce this conjecture to the (potentially) 
simpler case of affine space.

\begin{conjecture}[{cf. \cite[Conjecture 7.5]{JonssonMustata}}]
  \label{JM: Conjecture 7.5}
  Suppose $X = \A^n_k$ with $n \ge 1$ and $k$ algebraically closed 
  of characteristic $p > 0$. Suppose the graded sequence of ideals 
  $\ideala_\star$ vanishes only at a closed point $x \in X$,
  and $\lct^\q(\shD, \ideala_\star) < \infty$. Then:
  \begin{itemize}
    \item \textbf{Weak version}: there is a Abhyankar 
      valuation $v$ computing $\lct^\q(\shD, \ideala_\star)$ with $c_X(v) = x$.
    \item \textbf{Strong version}: any valuation with transcendence 
      degree $0$ over $k$ computing $\lct^\q(\shD, \ideala_\star)$ 
      must be Abhyankar. 
  \end{itemize}
\end{conjecture}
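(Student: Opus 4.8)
The plan is to attack \Cref{JM: Conjecture 7.5} by transplanting the valuation-theoretic strategy of Jonsson and Musta\c{t}\u{a} into characteristic $p$, with the log discrepancy and log canonical threshold machinery of the preceding sections standing in for their log-resolution arguments. The case $n=1$ is immediate: every nontrivial valuation on $k(x_1)$ with center a closed point is the order of vanishing there, up to scaling, hence discrete and Abhyankar, so both versions hold; the content lies in $n\ge 2$.

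For the \emph{weak version}: by \Cref{existence of computing valuation} there is a valuation $v\in\Val_X^*$ computing $\lambda:=\lct^\q(\shD,\ideala_\star)$ with center a prescribed component of $\V(\shJ_\lambda:\q)$, and since $\ideala_\star$ vanishes only at $x$ we may take this center to be $x$. By \Cref{TFAE}, $v$ also computes $\lct^\q(\shD,\ideala_\star(v))$, so after replacing $\ideala_\star$ by the valuation ideal sequence $\ideala_\star(v)$ we may assume $v(\ideala_\star)=1$. The natural Abhyankar candidate with the same threshold is a monomialization retraction $w=r_{(Y,D)}(v)$ (\Cref{definition: retraction}): it is monomial along an snc divisor, hence Abhyankar (\Cref{monomial valuation}, \Cref{log discrepancy of QM}). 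As $(Y,D)$ runs over a cofinal family of log smooth pairs, $r_{(Y,D)}(v)(f)\nearrow v(f)$ for every $f$ (cf. \cite[Lemma 4.7]{JonssonMustata}, valid since $X$ is regular $F$-finite), while $A(r_{(Y,D)}(v);\shD)\le A(v;\shD)$ and $r_{(Y,D)}(v)(\q)\le v(\q)$ for every pair --- by monotonicity of log discrepancy along retractions (\Cref{non-decreasing for QM}, extended to $\shD$ via conservation, \Cref{twisting rule}, using that $\shD=\shC^X\cdot\idealb_\bullet$ on the regular scheme $\A^n_k$) together with super-additivity of valuations. One always has $r_{(Y,D)}(v)(\ideala_m)\le v(\ideala_m)$, with equality for each \emph{fixed} $m$ once $(Y,D)$ log-resolves $(X,\ideala_m)$; so the one missing ingredient is a single pair $(Y,D)$ with $r_{(Y,D)}(v)(\ideala_\star)=v(\ideala_\star)$, for then $w:=r_{(Y,D)}(v)$ is Abhyankar and $\lct^\q(w;\shD,\ideala_\star)=A(w;\shD)+w(\q)\le A(v;\shD)+v(\q)=\lambda$, forcing $w$ to compute $\lambda$.

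The \emph{main obstacle} is precisely this last step: $v(\ideala_\star)=\inf_m v(\ideala_m)/m$ is an asymptotic invariant governed by all $m$ at once, while a single retraction log-resolves only finitely many of the $\ideala_m$, so pointwise convergence $r_{(Y,D)}(v)\to v$ need not be upgraded to an exact matching of the ratio $\lct^\q$ at a finite stage. Two avenues present themselves: (i) prove that $v(\ideala_\star(v))$ is eventually linear, i.e. $v(\ideala_\star)=v(\ideala_m)/m$ for a single $m$ --- reducing to the already understood case of one ideal --- perhaps via Okounkov-body / Newton-polytope considerations on $\A^n_k$; or (ii) run a diagonal compactness argument inside a compact $\mathcal{V}_N\subseteq\Val_X^*$ as in \Cref{V_t compact}, extracting from a sequence of retractions a limit valuation that both computes $\lambda$ --- using lower-semicontinuity of $\lct^\q(-;\shD,\ideala_\star)$, see \eqref{lct as sup 2} --- and is Abhyankar. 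This is exactly the difficulty leaving the conjecture open in characteristic zero; the contribution here would be to verify that no positive-characteristic pathology (failure of resolution, of Kawamata--Viehweg vanishing, of subadditivity of multiplier ideals) obstructs transplanting an eventual characteristic-zero solution.

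For the \emph{strong version}: let $v$ have transcendence degree $0$ over $k$, center $x$, and compute $\lambda$; since $k=\overline{k}$ one has $\kappa(v)=k$ and $\trdeg_X(v)=0$, so by Abhyankar's inequality $v$ is Abhyankar if and only if $\ratrk(v)=n$. Assuming $\ratrk(v)=r<n$, reduce again to $\ideala_\star=\ideala_\star(v)$, $v(\ideala_\star)=1$, and aim to show the rank-$r$ value group forces $v$ to factor, after a birational modification, through a monomial valuation on a regular model of Krull dimension $r$ (as in \Cref{monomial valuation}); one then seeks a contradiction with minimality of $v$ among threshold-computing valuations, or with $\lambda<\infty$. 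This is the higher-rational-rank analogue of \Cref{non-F-finite DVRs}, which already settles $r=1$ by showing such valuations have infinite log discrepancy, hence infinite $\lct^\q$, and cannot compute a finite threshold. Making the factorization and residue-field bookkeeping precise for $1<r<n$ is the technical heart; here too the obstruction is expected to be characteristic-independent, so the characteristic-$p$ case stands or falls with the characteristic-zero one.
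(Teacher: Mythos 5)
The statement you are attempting to prove is explicitly labeled a \emph{conjecture} in the paper (\Cref{JM: Conjecture 7.5}), and the paper supplies no proof of it --- nor of its characteristic-zero counterpart in \cite{JonssonMustata}, which remains open. There is thus no ``paper's own proof'' to compare your argument against. To your credit, your writeup openly acknowledges this: you close each half by observing that you have merely reproduced the known obstruction (matching the asymptotic invariant $v(\ideala_\star)$ at a single log-smooth pair; controlling rational rank for $1<r<n$) rather than resolved it. Your analysis of why the weak and strong versions are hard is accurate and consistent with the discussion in \cite[\S 7--8]{JonssonMustata} and with the tools marshalled in this paper (\Cref{existence of computing valuation}, \Cref{TFAE}, \Cref{V_t compact}, \Cref{non-F-finite DVRs}). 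But a strategy sketch that ends by naming the open problem is not a proof, and none of your two proposed ``avenues'' is carried out.

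If you want a statement in this circle that the paper actually \emph{does} prove, you should instead target the monomial case --- the final proposition of \S \ref{Section: LCT}, which establishes the strong version of \Cref{JM: Conjecture 7.5} when every $\ideala_s$ is a monomial ideal, using \Cref{retraction lemma} to show the retraction $r_{(X,H)}(v)$ onto the toric snc divisor strictly decreases the log canonical threshold unless $v$ is already monomial --- or \Cref{affine conjecture implies regular conjecture} (Theorem D), which reduces \Cref{JM: Conjecture 7.4} to \Cref{JM: Conjecture 7.5} via the completion, enlarging, and base-change lemmas (\Cref{completion lemma}, \Cref{enlarging lemma 1}, \Cref{enlarging lemma 2}, \Cref{base change of fields lemma}, \Cref{relative canonical lemma}). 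Both of those have concrete proofs you could compare against.
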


As expected from \cite{JonssonMustata}, \cref{JM: Conjecture 7.4} 
may be reduced to \cref{JM: Conjecture 7.5}. 
To make reductions as they do, we need their second ``enlarging lemma.''

\begin{lemma}[{cf. \cite[Proposition 7.15]{JonssonMustata}}]
  \label{enlarging lemma 2}
  Suppose that $\lambda = \lct^\q(\shD, \ideala_\star) < \infty$ 
  and let $\m \in X$ with $\m^s \subseteq \ideala_1$. If $N > \lambda s$ 
  and $\idealr = \q + \m^N$ then $\lct^\idealr(\shD, \ideala_\star) = \lambda$. 
  Furthermore, $v \in \Val_X^*$ computes $\lct^\q(\shD, \ideala_\star)$ 
  if and only if $v$ computes $\lct^\idealr(\shD, \ideala_\star)$. 
\end{lemma}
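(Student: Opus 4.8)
The plan is to compare $\lct^\idealr$ and $\lct^\q$ one valuation at a time. First I would note that, for every $v \in \Val_X$, $v(\idealr) = \min\{v(\q), N\,v(\m)\}$ since $\idealr = \q + \m^N$; in particular $v(\idealr) \le v(\q)$ because $\q \subseteq \idealr$, so $\lct^\idealr(v;\shD,\ideala_\star) \le \lct^\q(v;\shD,\ideala_\star)$ for all $v$, and taking the infimum over $\Val_X^*$ gives $\lct^\idealr(\shD,\ideala_\star) \le \lambda$ at once.

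For the reverse inequality I would fix $v \in \Val_X^*$; if $A(v;\shD) = +\infty$ or $v(\ideala_\star) = 0$ both thresholds at $v$ are $+\infty$, so assume $A(v;\shD) < \infty$ and $v(\ideala_\star) > 0$. Since $\m^s \subseteq \ideala_1$ and $\ideala_\star$ is multiplicatively graded, $\m^{st} = (\m^s)^t \subseteq \ideala_1^t \subseteq \ideala_t$ for all $t \ge 1$, so $v(\ideala_t) \le st\,v(\m)$ and hence $v(\ideala_\star) \le s\,v(\m)$ by \cref{limit lemma}; in particular $v(\m) > 0$. Now I split into cases: if $v(\q) \le N\,v(\m)$ then $v(\idealr) = v(\q)$ and $\lct^\idealr(v;\shD,\ideala_\star) = \lct^\q(v;\shD,\ideala_\star) \ge \lambda$; if $v(\q) > N\,v(\m)$ then $v(\idealr) = N\,v(\m)$ and, using $A(v;\shD) > 0$ (strong $F$-regularity of $\shD$, \cref{characterization of ShFP and SFR}),
\[
  \lct^\idealr(v;\shD,\ideala_\star) = \frac{A(v;\shD) + N\,v(\m)}{v(\ideala_\star)} > \frac{N\,v(\m)}{s\,v(\m)} = \frac{N}{s} > \lambda,
\]
the last step being exactly the hypothesis $N > \lambda s$. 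Taking the infimum over $v$ then yields $\lct^\idealr(\shD,\ideala_\star) \ge \lambda$, so the two thresholds agree.

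For the ``furthermore'' I would argue as follows. If $v$ computes $\lct^\q(\shD,\ideala_\star)$, i.e. $\lct^\q(v;\shD,\ideala_\star) = \lambda$, then $\lambda = \lct^\idealr(\shD,\ideala_\star) \le \lct^\idealr(v;\shD,\ideala_\star) \le \lct^\q(v;\shD,\ideala_\star) = \lambda$, so $v$ computes $\lct^\idealr(\shD,\ideala_\star)$. Conversely, if $v$ computes $\lct^\idealr(\shD,\ideala_\star)$ then $\lct^\idealr(v;\shD,\ideala_\star) = \lambda < \infty$, so (as above) $v(\ideala_\star) > 0$, $A(v;\shD) < \infty$, and $v(\m) > 0$; were $v(\q) > N\,v(\m)$, the displayed strict inequality would give $\lct^\idealr(v;\shD,\ideala_\star) > \lambda$, a contradiction, so $v(\q) \le N\,v(\m)$, whence $v(\idealr) = v(\q)$ and $\lct^\q(v;\shD,\ideala_\star) = \lct^\idealr(v;\shD,\ideala_\star) = \lambda$, i.e. $v$ computes $\lct^\q(\shD,\ideala_\star)$.

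I do not expect a serious obstacle: this is the positive-characteristic transcription of Jonsson and Musta\c{t}\u{a}'s argument, with positivity of $A(-;\shD)$ on $\Val_X^*$ (available from strong $F$-regularity) standing in for their log-resolution input. The points I would handle carefully are the degenerate valuations ($v(\m) = 0$ or $v(\ideala_\star) = 0$, where the relevant threshold is $+\infty$ and nothing needs checking) and the bookkeeping ensuring the strict inequality $N/s > \lambda$ is genuinely available --- this is where the choice $N > \lambda s$ and the standing hypothesis that $\shD$ is strongly $F$-regular both get used.
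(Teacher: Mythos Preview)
Your argument is correct and is essentially the same approach as the paper's: both hinge on the estimate $v(\ideala_\star) \le s\,v(\m)$ coming from $\m^{st} \subseteq \ideala_t$, positivity of $A(v;\shD)$ from strong $F$-regularity, and the choice $N > \lambda s$. The only cosmetic difference is that the paper phrases the equality $\lct^\idealr(\shD,\ideala_\star) = \lambda$ via the multiplier ideal (showing $\m^N \subseteq \shJ_\lambda$, which unwinds to exactly your displayed estimate), whereas you compare $\lct^\idealr(v;\shD,\ideala_\star)$ with $\lambda$ directly on each valuation; the ``furthermore'' is handled identically in both.
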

\begin{proof}
  As noted previously, $\q \not\subseteq \shJ_\lambda$ but $\q \subseteq \shJ_t$ 
  for all $t < \lambda$. To prove $\lct^\idealr(\shD, \ideala_\star) = \lambda$, 
  it therefore suffices to show that $\m^N \subseteq \shJ_\lambda$. 
  This, of course, follows from our choice of $N > \lambda s$: we want to
  check that
  \begin{equation}\tag{$\ast$}\label{enlarging lemma 2 estimates 1}
    v(\m^N) + A(v; \shD, \ideala_\star^\lambda) > 0 
    \;\text{ for every valuation $v \in \Val_X^*$.}
  \end{equation}
  If $v(\m) = 0$, then $0 \le v(\ideala_\star) \le v(\m^s) = 0$ and
  $A(v; \shD \cdot \ideala_\star^\lambda) = A(v; \shD) > 0$ by strong 
  $F$-regularity, see \eqref{characterization of ShFP and SFR}. 
  We therefore assume $v(\m) > 0$. 
  Rescaling $v$ does not change the truth of 
  \eqref{enlarging lemma 2 estimates 1}, and
  so we assume $v(\m) = 1$. Our assumed inclusion 
  $\m^s \subseteq \ideala_1$ gives 
  $\lambda \,v(\ideala_\star) \le s\lambda < N$. Now,
  \[ v(\m^N) + A(v; \shD, \ideala_\star^\lambda) = 
  N + A(v; \shD, \ideala_\star^\lambda) = 
  N + A(v; \shD) - \lambda v(\ideala_\star) > A(v; \shD) > 0. \]
  We conclude that \eqref{enlarging lemma 2 estimates 1} holds. 

  We now prove that $v$ computes $\lct^\q(\shD, \ideala_\star)$ 
  if and only if it computes $\lct^\idealr(\shD, \ideala_\star)$. 
  The inclusion $\q \subseteq \idealr$ implies that $v(\idealr) \le v(\q)$, 
  so if $\lambda = \lct^\q(v; \shD, \ideala_\star)$, then 
  \[ \lct^\idealr(\shD, \ideala_\star) = \lambda \le 
  \lct^\idealr(v; \shD, \ideala_\star) = 
  \frac{A(v; \shD) + v(\idealr)}{v(\ideala_\star)} \le 
  \frac{A(v; \shD) + v(\q)}{v(\ideala_\star)} = \lambda \]
  and $v$ also computes $\lct^\idealr(\shD, \ideala_\star)$. 
  To finish the proof, it suffices to prove that $v(\q) = v(\idealr)$ 
  whenever $\lct^\idealr(v; \shD, \ideala_\star) = \lambda$. 
  Recalling that $v(\ideala_\star) \le v(\m^s)$, we have
  \[ v(\m) \ge \frac{v(\ideala_\star)}{s} = \frac{A(v; \shD) + 
  v(\idealr)}{\lambda s} > \frac{v(\idealr)}{N}. \]
  This implies that $v(\m^N) > v(\idealr) = \min\{v(\q), v(\m^N)\}$ 
  and completes the proof. 
\end{proof}

\begin{lemma}\label{base change of fields lemma}
  Suppose $X = \A^n_k$ with $k$ an $F$-finite field, 
  let $k \subseteq K$ be an algebraic extension, and
  let $\A^n_K \to \A^n_k$ be the associated
  morphism. Extending scalars, the Cartier subalgebra $\shD$, 
  graded sequence of ideals $\ideala_\star$, and ideal $\q$ on $\A^n_k$ 
  give $\shD', \ideala'_\star, \q'$ on $\A^n_K$.  
  For every valuation $v' \in \Val(\A^n_K)$ centered at $0$ with 
  restriction $v$ to $\A^n_\kappa$, 
  $\lct^\q(v; \shD, \ideala_\star) = \lct^{\q'}(v'; \shD', \ideala'_\star)$. 
\end{lemma}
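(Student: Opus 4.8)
The plan is to argue directly from the defining formula $\lct^\q(w;\shD,\ideala_\star)=\bigl(A(w;\shD)+w(\q)\bigr)/w(\ideala_\star)$, reducing the claimed identity to the three equalities $v(\q)=v'(\q')$, $v(\ideala_\star)=v'(\ideala'_\star)$, and $A(v;\shD)=A(v';\shD')$. The first two are routine, so I would dispose of them first. Since $\q'=\q\,\shO_{\A^n_K}$ is generated by (the image of) $\q$ and $v'$ restricts to $v$ on $k(x_1,\dots,x_n)$, every element of $\q'$ is an $\shO_{\A^n_K,0}$-combination of elements of $\q$; as $v'$ is nonnegative on $\shO_{\A^n_K,0}$ (being centered at $0$), superadditivity gives $v'(\q')=\min\{v'(g):g\in\q\}=\min\{v(g):g\in\q\}=v(\q)$. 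Applying the same to each $\ideala_m$ yields $v'(\ideala'_m)=v(\ideala_m)$, hence $v'(\ideala'_\star)=v(\ideala_\star)$ by the limit formula of \Cref{limit lemma}; identically $v'_F(\idealb'_\bullet)=v_F(\idealb_\bullet)$ for any $F$-graded sequence $\idealb_\bullet$ on $\A^n_k$. (If $v$ is trivial both log canonical thresholds are $+\infty$ by definition, so we may assume $v\in\Val_{\A^n_k}^*$.)

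For the equality $A(v;\shD)=A(v';\shD')$ I would first reduce to the case $\shD=\shC^{\A^n_k}$. Because $\A^n_k$ is regular (hence Gorenstein) and $F$-finite, $\shD=\shC^{\A^n_k}\cdot\idealb_\bullet$ for an $F$-graded sequence $\idealb_\bullet$, and by construction $\shD'=\shC^{\A^n_K}\cdot\idealb'_\bullet$ with $\idealb'_e=\idealb_e\shO_{\A^n_K}$. When the relevant log discrepancies are finite, \Cref{twisting rule}\eqref{conservation} together with $v'_F(\idealb'_\bullet)=v_F(\idealb_\bullet)$ reduces the claim to $A(v;\shC^{\A^n_k})=A(v';\shC^{\A^n_K})$; and when one of these is $+\infty$ the other is too, in which case both log canonical thresholds are $+\infty$. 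If $K/k$ is separable, the base change $\A^n_K\to\A^n_k$ induces an isomorphism of Cartier modules $\shC^{\A^n_K}_e\cong\shO_{\A^n_K}\otimes_{\shO_{\A^n_k}}\shC^{\A^n_k}_e$ (since then $K\otimes_k k^{1/p^e}$ is reduced, hence equal to $K^{1/p^e}$, and $F^e_*\shO_{\A^n_k}$ is finite), under which the canonical generator $\Phi$ of $\shC^{\A^n_k}_e$ goes to the canonical generator $\Phi'$ of $\shC^{\A^n_K}_e$. The computation in \Cref{completion lemma}\eqref{completion lemma (a)} used only faithful flatness, finiteness of Frobenius, and exactly this base-change isomorphism, so it carries over verbatim to give $A(v;\shC^{\A^n_k})=A(v';\shC^{\A^n_K})$.

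The main obstacle is the purely inseparable case, where $\shC^{\A^n_K}$ is genuinely not the base change of $\shC^{\A^n_k}$. Here my plan is to pass to completions: by Cohen's structure theorem and \Cref{quasimonomial preserved: completion} (which preserves value groups and residue fields), $A(v;\shC^{\A^n_k})=A(\widehat v;\shC^{k[[x_1,\dots,x_n]]})$ and $A(v';\shC^{\A^n_K})=A(\widehat{v'};\shC^{K[[x_1,\dots,x_n]]})$, where $\widehat v,\widehat{v'}$ are the $\m$-adic extensions and $\widehat{v'}|_{k[[x_1,\dots,x_n]]}=\widehat v$ (note that $\Frac K[[x_1,\dots,x_n]]$ is then purely inseparable over $\Frac k[[x_1,\dots,x_n]]$, so $\widehat{v'}$ is the unique extension of $\widehat v$). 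One then compares the two canonical Cartier operators explicitly as in the proof of \Cref{RLR}: choosing compatible $p^e$-bases, $\Phi'$ restricted to $k[[x_1,\dots,x_n]]$ is an explicit $\shO_{\widehat{v'}}$-combination of the component Cartier operators of $k[[x_1,\dots,x_n]]$, and feeding this into the expression $E(f,\Phi'^n,\widehat{v'})$, using $\widehat{v'}|_{k[[x_1,\dots,x_n]]}=\widehat v$ and a descent/approximation argument, shows that the supremum defining $A(\widehat{v'};\Phi')$ over $f\in K[[x_1,\dots,x_n]]$ is attained, up to arbitrarily small error, by $f\in k[[x_1,\dots,x_n]]$; hence the two log discrepancies agree. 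A general algebraic extension $K/k$ is then handled by factoring it into a separable part followed by a purely inseparable part and composing the two cases. Assembling the three equalities in the lct formula finishes the proof. I expect the explicit Cartier-operator bookkeeping in the purely inseparable step — in particular the "$\ge$" direction, i.e. that the extra Cartier maps over $K$ cannot increase the log discrepancy — to be the delicate point.
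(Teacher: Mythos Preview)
Your overall strategy coincides with the paper's: reduce the identity to the three equalities $v(\q)=v'(\q')$, $v(\ideala_\star)=v'(\ideala'_\star)$, and $A(v;\shD)=A(v';\shD')$, handle the first two directly, and for the third reduce via conservation (\Cref{twisting rule}) to $A(v;\shC^{\A^n_k})=A(v';\shC^{\A^n_K})$. The paper's proof is far terser than yours---it simply asserts these equalities and points to \Cref{completion lemma}---so on the level of structure you are doing exactly what the paper does, only with more care.

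Where you diverge is in flagging the purely inseparable case, and you are right that the completion-lemma argument does not literally transfer: the base-change isomorphism $S\otimes_R\Hom_R(F^e_*R,R)\cong\Hom_S(F^e_*S,S)$ relies on $S\otimes_R F^e_*R\cong F^e_*S$, which for $S=K[x_1,\dots,x_n]$ over $R=k[x_1,\dots,x_n]$ amounts to $K\otimes_k k^{1/p^e}\cong K^{1/p^e}$, and this fails precisely when $K/k$ has an inseparable part. The paper's one-line proof does not address this, so your worry is legitimate rather than excessive. That said, your own resolution is only a sketch: you propose to compare the canonical generators $\Phi_k$ and $\Phi_K$ on completions and argue by explicit bookkeeping on $p$-bases, but you do not carry out the ``$\ge$'' direction, and you yourself mark it as the delicate point. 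So your proposal has, at the end, the same gap the paper's proof has---you have just located it more precisely. If you want to close it, one concrete route is to work with the factorization $k\subseteq k^{\mathrm{sep}}\subseteq K$ (your separable argument handles the first step) and for the purely inseparable step relate $\Phi_K$ to $\Phi_k$ via the surjection $K\otimes_k k^{1/p^e}\twoheadrightarrow K^{1/p^e}$, showing that every $E(f,\Phi_K^n,v')$ is bounded above by some $E(g,\Phi_k^n,v)$; but until that is written out, neither your proof nor the paper's is complete on this point.
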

\begin{proof}
  The proof is very similar to \eqref{completion lemma}. Specifically, 
  we know $A(v; \shD) = A(v'; \shD)$ and  
  $v(\ideala_\star) = v'(\ideala'_\star)$, so 
  $A(v; \shD \cdot \ideala_\star^t) =  A(v', \shD' \cdot (\ideala_\star')^t)$ 
  for all $t \ge 0$. As before, $v(\q) = v'(\q')$, 
  so the log canonical thresholds are equal. 
\end{proof}

\begin{lemma}\label{relative canonical lemma}
  Suppose $(R, \m)$ and $(S, \n)$ are regular $F$-finite rings, $\dim(R) \ge 2$,
  and let $\pi: Y = \Spec(S) \to X = \Spec(R)$ be a birational morphism
  with $\pi(\n) = \m$. Fix generators $\Phi_R$ and $\Phi_S$
  for $\shC^X$ and $\shC^Y$, respectively, and suppose 
  $\Phi_R = \Phi_S \cdot h_{S/R}$ for $h_{S/R} \in \Frac(R)$. 
  Then $-\div_Y(h_{S/R}) \ge 0$. 
\end{lemma}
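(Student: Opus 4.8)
The statement is about the "relative canonical divisor" $\div_Y(h_{S/R})$ being anti-effective, i.e. $h_{S/R}$ lies in $S$ (so $-\div_Y(h_{S/R}) = \div_Y(h_{S/R}^{-1})$ is effective). The plan is to reduce to a statement about how the canonical generators $\Phi_R, \Phi_S$ of the Cartier algebras interact under pullback, and then to compare $\Phi_R$ (viewed as a $p^{-e}$-linear map on $\Frac(R) = \Frac(S)$) with $\Phi_S$ locally on $Y$. First I would recall from \S\ref{Subsection: divisors} that on a regular $F$-finite scheme $Z$ the Cartier algebra $\shC^Z$ is generated canonically by a single $p^{-e}$-linear map $\Phi_Z$ corresponding via Grothendieck duality to a fixed generator of $\omega_Z^{\otimes(1-p^e)}$; since both $R$ and $S$ are regular, $\shC^X_e = \Phi_R \cdot R$ and $\shC^Y_e = \Phi_S \cdot S$ by \cref{RLR} (or rather its proof), and these generators correspond to generators of $\omega_X^{\otimes(1-p^e)}$ and $\omega_Y^{\otimes(1-p^e)}$ respectively. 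Since $\pi$ is birational, $\Frac(R) = \Frac(S) =: L$, and both $\Phi_R$ and $\Phi_S$ are nonzero elements of the one-dimensional $L^{1/p^e}$-vector space $\Hom_L(L^{1/p^e}, L)$ (see \cref{other perspectives}\eqref{L vector space}); hence $h_{S/R} \in L$ is well-defined by $\Phi_R = \Phi_S \cdot h_{S/R}$.

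The heart of the argument is then a local computation at a codimension-one point: $\div_Y(h_{S/R})$ is determined by the orders $\ord_E(h_{S/R})$ for prime divisors $E \subset Y$, and I claim each such order is $\le 0$. Fix a prime divisor $E \subset Y$ with generic point $\xi$. If $\pi$ is an isomorphism near $\xi$, then $\Phi_R$ and $\Phi_S$ restrict to the same map on a neighborhood of $\xi$ (both being canonical generators, and canonical generators glue / agree on the common open where $\pi$ is an isomorphism, up to adjusting by a unit — here is where the precise normalization of $\Phi$ via $\omega^{\otimes(1-p^e)} \subseteq L$, as in the proof of \cref{agreement for QM}, is used), so $\ord_E(h_{S/R}) = 0$. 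Otherwise $E$ is $\pi$-exceptional; then $\shO_{Y,\xi}$ is a DVR dominating the non-maximal local ring $\shO_{X,\pi(\xi)}$, and $\shC^{\shO_{Y,\xi}}$ is generated by the projection $\Phi_E$ onto $\varpi^{p^e-1}$ for a uniformizer $\varpi$, by the computation in \cref{CMS2}. Writing $\Phi_R = \Phi_E \cdot h$ with $h \in L$, \cref{CMS2} together with \cref{effect of right multiplication} gives $A(\ord_E; \Phi_R) = 1 - \frac{\ord_E(h)}{p^e-1}$; but $A(\ord_E; \shC^X) = A_{X}(E)$ (combining \cref{CMS3} with $\Delta = 0$ — or directly, $A(\ord_E; \shC^X) \ge 0$ for $X$ regular since $X$ is $F$-regular, by \cref{characterization of ShFP and SFR} or \cref{HW corollary}, and more precisely $A_X(E) \ge 1$ for exceptional $E$ over a regular $R$ of dimension $\ge 2$, by the discrepancy bound in \cref{non-decreasing along retraction}'s claim). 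Comparing, $\ord_E(h) \le 0$; and since $\Phi_S$ and $\Phi_E$ differ by a unit of $\shO_{Y,\xi}$ (both generate $\shC^{\shO_{Y,\xi}}_e$ as a right module, and $\shO_{Y,\xi}$ is regular local), $\ord_E(h_{S/R}) = \ord_E(h) \le 0$.

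Putting these together over all prime divisors $E \subset Y$ shows $-\div_Y(h_{S/R}) = \sum_E (-\ord_E(h_{S/R})) E \ge 0$, which is the claim. The main obstacle I anticipate is the bookkeeping around the two different right-$\shO$-module structures on $\shC_e$ and the precise sense in which "canonical generators agree where $\pi$ is an isomorphism": one must be careful that $\Phi_R$ and $\Phi_S$ are both normalized via the \emph{same} embedding $\omega^{\otimes(1-p^e)} \hookrightarrow L$ (pulling back a chosen rational canonical form), so that $h_{S/R}$ genuinely measures $\div_Y(\pi^* \delta) - \div_Y(\delta')$ for canonical forms $\delta$ on $X$ and $\delta'$ on $Y$ — i.e. $h_{S/R}$ is (a power of) the Jacobian of $\pi$, whose divisor is the classical relative canonical divisor $K_{Y/X} \ge 0$ for a birational morphism of regular schemes. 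Once that identification is made explicit, the inequality $-\div_Y(h_{S/R}) = (p^e-1) K_{Y/X} \ge 0$ is exactly the well-known non-negativity of the relative canonical divisor of a birational morphism between regular schemes (which is also what the claim inside the proof of \cref{non-decreasing along retraction} establishes in the generality we need); the dimension hypothesis $\dim(R) \ge 2$ ensures there is no issue with $\pi$ being forced to be an isomorphism or with exceptional divisors behaving degenerately.
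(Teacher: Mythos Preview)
Your overall strategy matches the paper's: check $\ord_E(h_{S/R})$ at each prime divisor $E$ of $Y$, dispose of the non-exceptional case by noting $\Phi_R$ and $\Phi_S$ differ by a unit there, and in the exceptional case use \cref{effect of right multiplication} together with $A(\ord_E;\shC^Y)=1$ (from \cref{CMS2}/\cref{RLR}) to rewrite the problem as a lower bound $A(\ord_E;\shC^X)\ge 1$.

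The gap is in that last lower bound. Every justification you cite---\cref{CMS3}, the claim inside the proof of \cref{non-decreasing along retraction}, and the ``well-known'' effectivity of $K_{Y/X}$---is stated and proved only for varieties over an algebraically closed field (the claim in \cref{non-decreasing along retraction} explicitly uses K\"ahler differentials on smooth varieties). The present lemma is about arbitrary regular $F$-finite local rings, where those arguments are not available; indeed, producing a Frobenius-based substitute for $K_{Y/X}\ge 0$ is exactly the content of the lemma. The bound $A(\ord_E;\shC^X)>0$ from strong $F$-regularity is not enough: you need $\ge 1$.

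The paper closes this gap with a short direct argument. After localizing so that $S=\shO_{Y,E}$ is a DVR and $R$ is local with maximal ideal $\m$ equal to the image of $E$ (which has height $\ge 2$ since $E$ is exceptional), one has $\ord_E(\m)\ge 1$, hence $\ord_E\ge\ord_\m$ on $R$. Then, taking $f=(x_1\cdots x_d)^{p-1}$ for a regular system of parameters, $\Phi_R(f)=1$ and $E(f,\Phi_R,\ord_E)\ge E(f,\Phi_R,\ord_\m)=d=\dim R$, so $A(\ord_E;\shC^X)\ge\dim R\ge 2$. This yields the sharper conclusion $-\ord_E(h_{S/R})\ge p-1$ and stays entirely within the Frobenius framework.
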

\begin{proof}
  Consider a prime divisor $G$ on $S$.
  If $\pi_*G \ne 0$, then $\ord_G(h_{S/R}) = 0$ since $\Phi_R$
  is a unit multiple of $\Phi_S$ at the generic point of $G$. Therefore,
  we may assume $\pi_*G = 0$, and replace $R$ and $S$ with their localizations 
  at the generic points of the closure of $\pi(G)$ and $G$, respectively. 
  Then $\pi$ factors through the blow-up of $X$ at $\m$, so 
  $A(\ord_G; \shC^X) \ge A(\ord_\m; \shC^X) = \dim(R) \ge 2$. 
  Applying \eqref{RLR} to $S$,
  \[ A(\ord_G; \Phi_R) = A(\ord_G; \Phi_S \cdot h_{S/R}) 
  = A(\ord_G; \shC^Y) - \frac{\ord_G(h_{S/R})}{p-1} = 1 - \frac{\ord_G(h_{S/R})}{p-1}. \]
  Putting this all together, $-\ord_G(h_{S/R})  \ge (p-1) = (p-1)(2-1)$. 
\end{proof}

\begin{remark}
  \Cref{relative canonical lemma} generalizes the well-known effectivity of
  the relative canonical divisor for a proper birational morphism between
  smooth varieties. 
\end{remark}

\begin{theorem}[{cf. \cite[Theorem 7.5]{JonssonMustata}}]
  \label{affine conjecture implies regular conjecture}
  If the weak (resp. strong) version of \cref{JM: Conjecture 7.5} 
  holds for every $n \le N$ and $\shD = \shC^{\A^n}$, 
  then the weak (resp. strong)
  version of \cref{JM: Conjecture 7.4} holds for all $X$ 
  with $\dim(X) \le N$ and $\shD = \shC^X$. 
\end{theorem}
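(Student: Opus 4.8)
The plan is to carry out the reduction of \cite{JonssonMustata} (their Theorem D) in the $p^{-e}$-linear language developed here, transporting the statement about $X$ to the statement about $\A^n_k$ along a chain of operations each of which preserves both the log canonical threshold and the Abhyankar property of the valuations involved. Fix $X$ regular $F$-finite with $\dim X \le N$, a graded sequence $\ideala_\star$, and a nonzero ideal $\q$ with $\lambda := \lct^\q(\shC^X, \ideala_\star) < \infty$. Coherence of the multiplier ideals (\cref{coherence}) makes every step local; moreover a computing valuation exists centered at each generic point of a component of $\V(\shJ_\lambda : \q)$ by \cref{existence of computing valuation}, and any computing valuation has center inside $\V(\shJ_\lambda : \q)$. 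In either case (producing one for the weak version, analyzing a given one for the strong version) I localize $X$ at the center $\m$ of a computing valuation $v$, so that $X = \Spec R$ with $(R, \m)$ regular local $F$-finite. Being Gorenstein, $R$ has $\shC^R$ generated in each degree by the canonical projection, so \cref{finite type} and \cref{RLR} apply without restriction.

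The first substantive step is to put the data into the shape demanded by \cref{JM: Conjecture 7.5}. By \cref{TFAE}, $v$ also computes $\lct^\q(\shC^R, \ideala_\star(v))$, and $\ideala_\star(v)$ vanishes only at $\m$; replacing $\ideala_\star$ by $\ideala_\star(v)$ one checks (by localizing at any $\p \subsetneq \m$, where the valuation ideals become the unit ideal) that $\sqrt{\shJ_\lambda : \q} = \m$, so $\m$ is the generic point of the sole component of $\V(\shJ_\lambda : \q)$. Now \cref{enlarging lemma 1} and \cref{v(c_star)} replace $\ideala_\star$ by the graded sequence $\idealc_\star$ of \eqref{enlarging ideal sequence} for $s \gg 0$, leaving $\lambda$ and the computing valuations fixed while forcing $\m^s \subseteq \idealc_1$; and \cref{enlarging lemma 2}, with $N_0 > \lambda s$, replaces $\q$ by the $\m$-primary ideal $\idealr = \q + \m^{N_0}$, again without changing $\lambda$ or the computing valuations. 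The only remaining discrepancies with the hypotheses of \cref{JM: Conjecture 7.5} are that $R$ is not a polynomial ring and its residue field is not algebraically closed.

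Next I complete at $\m$: set $R' = \widehat{\shO_{X,\m}}$ with $\shD' = \shC^{R'}$ and the ideals extended. By \cref{completion lemma} the log canonical threshold is unchanged; by \cref{completion lemma}\eqref{completion lemma (a)}, together with the surjectivity of the restriction map from valuations of $R'$ centered at the closed point onto valuations of $R$ centered at $\m$ (established in that lemma's proof), a valuation of $R'$ computes the lct precisely when its restriction to $R$ does; and \cref{quasimonomial preserved: completion} ensures this restriction preserves transcendence degree and rational rank, hence the Abhyankar property. By the Cohen structure theorem $R' \cong \kappa\ldb x_1, \dots, x_n \rdb$ with $\kappa$ an $F$-finite field and $n = \dim R \le N$, and this ring is also $\widehat{\shO_{\A^n_\kappa, 0}}$; running \cref{completion lemma}\eqref{completion lemma (a)} and \cref{quasimonomial preserved: completion} again for the completion of $\A^n_\kappa$ at the origin identifies valuations of $R'$ centered at the closed point with valuations of $\A^n_\kappa$ centered at $0$, compatibly with the lct and the Abhyankar property (and with $\idealc_\star, \idealr$ again replaced by suitable data vanishing only at the origin, using \cref{TFAE}). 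Finally, with $k = \overline{\kappa}$ and $\A^n_k \to \A^n_\kappa$ the base change, \cref{base change of fields lemma} shows the lct is unchanged on restricting valuations centered at $0$ and \cref{quasimonomial preserved: algebraic extension} shows $\trdeg$ and $\ratrk$ are preserved, while every valuation of $\A^n_\kappa$ centered at $0$ both restricts from and lifts to one of $\A^n_k$ centered at $0$. Assembling the chain yields the weak (resp. strong) version of \cref{JM: Conjecture 7.4} for $(X, \shC^X)$ from the weak (resp. strong) version of \cref{JM: Conjecture 7.5} in dimensions $\le N$: in the weak case the Abhyankar computing valuation produced on $\A^n_k$ is carried back down the chain to $X$; in the strong case the given computing valuation on $X$ is carried up the chain and found to be Abhyankar.

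The one point deserving care, relevant only to the strong version, is that \cref{JM: Conjecture 7.5} constrains only valuations of transcendence degree zero over $\A^n_k$, whereas a computing valuation $v$ may have $e := \trdeg_X(v) > 0$. Following \cite{JonssonMustata}, I would first reduce to $e = 0$ by choosing a regular $F$-finite birational model $\Spec S \to \Spec R$ on which $v$ acquires a center $y$ with $\dim\overline{\{y\}} = e$ and $\trdeg_S(v) = 0$, and transporting the computation of $A(v; \shC^R)$ — hence of $\lct^\q(v; \shC^R, \ideala_\star)$ — to $\shO_{S,y}$, a regular $F$-finite local ring of dimension $n - e \le N$; \cref{relative canonical lemma} is exactly the tool to control the factor $h_{S/R}$ relating $\shC^R$ and $\shC^S$, its effectivity meaning that the passage does not spuriously lower $A(v;-)$. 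I expect this transcendence-degree reduction — in particular verifying that $\lct^\q(v;-)$ is genuinely preserved, not merely bounded, under the modification, which forces one to pin down $h_{S/R}$ along the locus where the model is an isomorphism — to be the main obstacle. Everything else is bookkeeping assembled from the lemmas already established.
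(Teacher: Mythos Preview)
Your approach follows the same Jonsson--Musta\c{t}\u{a} strategy as the paper, and the overall architecture is correct. There are, however, two points where your treatment diverges from the paper's and where your argument is either incomplete or vague.

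\textbf{The weak version and TFAE.} You invoke \cref{TFAE} at the outset to replace $\ideala_\star$ by $\ideala_\star(v)$ for \emph{both} versions. The paper does this only for the strong version; for the weak version it applies \cref{enlarging lemma 1} directly to the original $\ideala_\star$, precisely because that lemma furnishes the backward implication (``$w$ computes for $\idealc_\star \Rightarrow w$ computes for $\ideala_\star$'') needed to carry an Abhyankar valuation back down the chain. Your chain, by contrast, ends with an Abhyankar $w$ computing $\lct^\q(\shC^X, \ideala_\star(v))$, and you simply assert this carries back to $\ideala_\star$. This step is not among the lemmas you cite. It \emph{can} be justified --- using \cref{valuation ideal lemma} one shows $w(\ideala_\star(v)) \le w(\ideala_\star)/v(\ideala_\star)$, whence $\lct^\q(w;\shC^X,\ideala_\star) \le \lambda$ and equality follows --- but you should either supply this argument or, more simply, avoid TFAE in the weak case as the paper does. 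Note also that after replacing $\ideala_\star$ by $\ideala_\star(v)$ the threshold changes from $\lambda$ to $\mu = A(v;\shC^X) + v(\q)$, so your sentence ``$\sqrt{\shJ_\lambda : \q} = \m$'' is a notational slip.

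\textbf{Descent from power series to polynomials.} Your parenthetical ``(and with $\idealc_\star, \idealr$ again replaced by suitable data vanishing only at the origin, using \cref{TFAE})'' is the vaguest step. The paper handles this descent by a concrete observation: since $\m^s \subseteq \q$ and $\m^{st} \subseteq \ideala_t$ in $R' = \kappa\ldb x_1,\dots,x_n\rdb$, each generator of these ideals can be approximated by a polynomial modulo the appropriate power of $\m$ without changing the ideal, so the data is literally extended from $\kappa[x_1,\dots,x_n]$ and \cref{completion lemma} applies verbatim in reverse. Your approach of re-invoking TFAE on $\A^n_\kappa$ with new data would require separately matching the thresholds and the $\q$-term across the descent, which you do not do; the polynomial-generators observation is both simpler and what is actually needed.

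The transcendence-degree reduction for the strong version you sketch is indeed the crux, and your identification of \cref{relative canonical lemma} as the key tool is right. The paper carries this out by replacing $\q$ with $\q' = \q \cdot \shO_Y\bigl(-\tfrac{1}{p-1}\div_Y(h_{S/R}^{-1})\bigr)$ on the higher model and replacing $\ideala_\star$ by the valuation ideals of $v$ on $S$; the effectivity from \cref{relative canonical lemma} is exactly what makes $\q'$ an honest ideal, and \cref{effect of right multiplication} gives the exact equality $\lct^{\q'}(\shC^S,\ideala_\star^S) = \lct^\q(\shC^R,\ideala_\star)$ rather than just a bound.
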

\begin{proof}
  Let us write $\lct^\q(X, \ideala_\star)$ for simplicity. 

  We begin with the weak versions. Suppose 
  $\lambda = \lct^\q(X, \ideala_\star) < \infty$ and let 
  $\m$ be a minimal prime of $(\shJ_\lambda : \q)$. 
  Applying Lemmas \ref{enlarging lemma 1}, 
  \ref{enlarging lemma 2}, and \ref{completion lemma}, we may 
  assume $R = k \ldb x_1, \dots, x_n \rdb$, $\m = (x_1, \dots, x_n)$,
  and that $\m^s \subset \ideala_1 \cap \q$. Since $R$ is $F$-finite,
  $k$ is also $F$-finite. 
  
  We now wish to apply \cref{completion lemma} 
  ``in reverse'' to reduce to the case of $\A^n_{\overline{k}}$. 
  Write $S = k[y_1, \dots, y_n]$, $\n = (y_1, \dots, y_n)$, and identify 
  the $\n$-adic completion of $S$ with $R$. 

  I claim that $\q$ and each ideal $\ideala_s$
  of $\ideala_\star$ has a generating set contained in $S$. 
  For any $g \in R$ there exists a sequence 
  $\{g_m\}_{m \ge 1} \subset S$ with $\lim_{m \to \infty} g_m = g$ in the $\m$-adic
  topology of $R$. We have $\m^s \subseteq \q$, so if $g$ is a 
  generator for $\q$, and $g_m - g \in \m^s$ for $g_m \in S$, then we
  can replace $g$ with $g_m$ without changing $\q$. Therefore, we can assume
  $\q$ is generated by elements of $S$. Similarly, $\m^s \subseteq \ideala_1$,
  so $\m^{st} \subseteq \ideala_t$ for all $t \ge 1$, and each ideal in 
  $\ideala_\star$ has a generating set contained in $S$. 

  Since $\q$ and $\ideala_\star$ are generated by elements of $S$, 
  we can view all of these ideals 
  as being extended from $\q^\flat \subset S$ and $\ideala^\flat_\star$
  on $\Spec(S) = \A^n_k$, where $\n^s \subseteq \q^\flat \cap \ideala^\flat_1$
  still holds. Now \cref{completion lemma} implies 
  $\lct^\q(X, \ideala_\star) = \lct^{\q^\flat}(\A^n_k, \ideala^\flat_\star)$.
  Extending scalars from $k$ to $\overline{k}$ leaves this log canonical threshold
  unchanged \eqref{base change of fields lemma}, so we are reduced
  to the setting of \cref{JM: Conjecture 7.5}. We are thereby furnished with
  a Abhyankar valuation $\overline{v}$ on $\A^n_{\overline{k}}$,
  centered at $0$ computing the log canonical threshold of interest. Restricting
  $\overline{v}$ to $S$, then extending $\n$-adically to $R$ preserves the
  Abhyankar property (cf. \eqref{quasimonomial preserved: completion},
  \eqref{quasimonomial preserved: algebraic extension}), so 
  \cref{JM: Conjecture 7.4} holds in this case. 

  We proceed to prove the implication between strong versions. 
  Our strategy will be similar, the crux being twice applying
  \cref{completion lemma} to reduce to the case of affine space. 
  Suppose $v \in \Val_X^*$ computes 
  $\lct^\q(X, \ideala_\star) = \lambda < \infty$. We wish to show that $v$ 
  is Abhyankar. If $\dim \shO_{X, c_X(v)} = 1$,
  then $\lct^\q(v; X, \ideala_\star) < \infty$ implies 
  $A(v; \shC^X, \ideala_\star) < \infty$
  so $v$ must be divisorial \eqref{non-F-finite DVRs}, hence Abhyankar. 
  Therefore, we may assume $\dim \shO_{X, c_X(v)} \ge 2$. 

  Applying \cref{TFAE}, we assume that 
  $\ideala_\star = \ideala_\star(v)$, implying $\m^s \subseteq \ideala_1$ 
  for all $s \gg 0$. Now applying \cref{enlarging lemma 2} we assume 
  that $\m^N \subseteq \q$ for some $N > \lambda s$.  
  After these replacements of $\q$ and $\ideala_\star$, $\m$ is a minimal 
  prime of $(\shJ_\lambda : \q)$. Indeed, $(\shJ_\lambda: \q) \subseteq \m$ 
  by the same argument as before, and if $\p \subsetneq \m$ then 
  $\m^a \not\subseteq \p$ for any $a > 0$. 
  This implies $\ideala_s = \ideala_s(v) \not\subseteq \p$ for any $s$, 
  so if $w \in \Val_X$ has center $\p$ then $w(\ideala_\star) = 0$,
  thus $A(w; \shC^X \cdot \ideala_\star^\lambda) = A(w; \shC^X) > 0$
  \eqref{characterization of ShFP and SFR}. We now reduce
  to the case $\trdeg_X(v) =0$. 
  
  The dimension formula 
  \cite[Theorem 15.6]{MatsumuraCommutativeRingTheory} implies that 
  $\trdeg_X(v)$ is the maximum of $\dim(\shO_{X, \m}) - \dim(\shO_{Y, c_Y(v)})$ 
  over all birational (but possibly non-proper) morphisms $Y \to X$, with $Y$ a 
  regular scheme on which $v$ is centered; fix some $Y \to X$ achieving this maximum. 
  Localizing at the centers of $v$ on $Y$ and $X$, setting $R = \shO_{X, \m}$ 
  and $S = \shO_{Y, c_Y(v)}$, we assume $\pi: Y \to X$
  corresponds to a local, birational extension $(R, \m) \subseteq (S, \n)$. 
  Fixing generators $\Phi_R$ and $\Phi_S$ for $\shC^X$ and $\shC^Y$, resp.,
  we define $h_{S/R}$ as in \eqref{relative canonical lemma}; that lemma
  implies $g := h_{S/R}\inv \in S$ since $\div_Y(g) \ge 0$ and $S$ is normal. 
  If we replace $\q \subseteq R$ by 
  $\q' = \q \cdot \shO_Y(-\frac{1}{p-1}\div_Y(g))\subseteq S$,
  and write $\ideala_\star^S$ for the graded sequence of valuation ideals
  of $v$ on $S$ (an $\n$-primary sequence, by construction of $S$),
  then a direct calculation (using \eqref{effect of right multiplication})
  proves 
  $\lct^{\q'}(\shC^S; \ideala^S_\star) = \lct^\q(\shC^R; \ideala_\star) = \lambda$,
  and $v$ computes $\lct^{\q'}(\shC^S; \ideala^S_\star)$; 
  cf. \cite[Corollary 1.8, Lemma 7.11]{JonssonMustata}. 
  
  Therefore, by replacing $X$ with $Y$, $\q$ with $\q'$, and 
  $\ideala_\star$ with $\ideala_\star^S$,
  we may assume $\trdeg_X(v) = 0$. We apply \cref{completion lemma} 
  with $\m = c_X(v)$ and reduce to the case 
  $X = \Spec(\kappa \ldb x_1, \dots, x_d \rdb)$ with $\kappa$ an $F$-finite field, 
  $v$ is a valuation centered at the closed point of $X$, computing 
  $\lct^\q(X, \ideala_\star)$, and such that $\kappa(v)$ is algebraic 
  over $\kappa$. Applying 
  \eqref{completion lemma} again in reverse, followed by 
  \eqref{base change of fields lemma} to assume $\kappa = \overline{\kappa}$,
  we conclude $v = v'|_Y$ must be Abhyankar, so $v'$ is, too. 
\end{proof}

\subsection{The Monomial Case}
We now prove the strong form of \cref{JM: Conjecture 7.5} 
when each $\ideala_s$ is generated by monomials. First, we need
a complementary result to \Cref{non-decreasing for QM}, which
we can prove using a more direct approach in this special setting. 

\begin{lemma}\label{retraction lemma}
  Suppose $k$ is algebraically closed of characteristic $p$, 
  $X = \A^n_k = \Spec(k[x_1, \dots, x_n])$, and $H = \div_X(x_1 \cdots x_n)$. 
  Then for all $v \in \Val_X^*$ centered at $\m = (x_1, \dots, x_n)$, 
  $A(r_{(X, H)}(v); \shC^X) \le A(v; \shC^X)$, with strict inequality
  when $r_{(X, H)}(v) \ne v$. 
\end{lemma}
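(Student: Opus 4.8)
The plan is to make $r_{(X,H)}(v)$ explicit, compute its log discrepancy from \Cref{RLR}, and then produce one test element realizing the strict inequality.

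First I would unwind \Cref{definition: retraction}. Because $H\cap\Spec(\shO_{X,\m})=\div(x_1\cdots x_n)$ and $v$ is centered at $\m$, every $x_i$ lies in the maximal ideal of $\shO_v$, so one has $t=n=d$ in that definition and $w:=r_{(X,H)}(v)=\val_\alpha$ is the monomial valuation with weights $\alpha_i:=v(x_i)>0$ relative to the regular system of parameters $x_1,\dots,x_n$. In particular $c_X(w)=\m$, and since $k=\overline{k}$ is perfect, $\shO_{X,\m}$ is regular and $F$-finite; hence \Cref{RLR} gives $A(w;\shC^X)=A(w;\shC^{\shO_{X,\m}})=\sum_i w(x_i)=\sum_i\alpha_i$. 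On the other hand $\shC^X$ is generated near $\m$ by the Cartier operator $\Phi$ projecting onto $(x_1\cdots x_n)^{p-1}$ in the $\shO_{X,\m}^{p}$-basis $\{x^a:0\le a_i\le p-1\}$, so $A(v;\shC^X)=A(v;\Phi)$ by \Cref{finite type}; testing against $f_0=(x_1\cdots x_n)^{p-1}$, which satisfies $\Phi(f_0)=1$, yields $A(v;\Phi)\ge E(f_0,\Phi,v)=v(f_0)/(p-1)=\sum_i\alpha_i$. This already gives $A(r_{(X,H)}(v);\shC^X)\le A(v;\shC^X)$.

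For the strict inequality, suppose $w\ne v$. As valuations on $L$ they differ on $k[x_1,\dots,x_n]$, and there $v(g)\ge w(g)$ by super-additivity of the retraction (the remark after \Cref{definition: retraction}), so some polynomial $g$ has $v(g)>w(g)=:\gamma$. Passing to the $\val_\alpha$-leading form $g'=\sum_{u:\,\sum u_i\alpha_i=\gamma}c_ux^u$ of $g$, one checks $v(g')>\gamma$ persists; thus $g'$ is $\alpha$-homogeneous of weight $\gamma$ and has at least two monomials. The heart of the proof is then to choose a vertex $x^{u^0}$ of the Newton polytope of $g'$, set $a_i=u^0_i-1$ if $u^0_i\ge1$ and $a_i=p^m-1$ if $u^0_i=0$ for a suitable $m\ge1$, and take the test element $f=x^a(g')^{p^m-1}$. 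One verifies that, among all monomials of $x^a(g')^{p^m-1}$, only those coming from the ``pure'' summand $(c_{u^0}x^{u^0})^{p^m-1}$ reduce to $(x_1\cdots x_n)^{p^m-1}$ in the $p^{-m}$-linear basis, so $\Phi^m(f)$ equals a unit of $\shO_v$ times the monomial $x^{q}$ with $q_i=\max(u^0_i-1,0)$, and hence $v(\Phi^m(f))=\gamma-\sum_{u^0_i\ge1}\alpha_i$. Since $v(f)=\sum_i a_i\alpha_i+(p^m-1)v(g')$, a short computation then gives $E(f,\Phi^m,v)=\sum_i\alpha_i+\bigl(v(g')-\gamma\bigr)>\sum_i\alpha_i$, so $A(v;\shC^X)=A(v;\Phi)>\sum_i\alpha_i=A(w;\shC^X)$, as required. (Sanity check: if $v$ is a non-Abhyankar discrete valuation one recovers $A(v;\shC^X)=+\infty$, in line with \Cref{non-F-finite DVRs}.)

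I expect the main obstacle to be the combinatorial verification in the strictness step that $\Phi^m(f)$ really has the valuation claimed, with no extra cancellation inflating $v(\Phi^m(f))$. Expanding $(g')^{p^m-1}$ multinomially, a summand $x^{\sum_u k_u u}$ (with $\sum_u k_u=p^m-1$) contributes to the $(x_1\cdots x_n)^{p^m-1}$-component of $x^a(g')^{p^m-1}$ exactly when $\sum_u k_u u\equiv (p^m-1)u^0\pmod{p^m\Z^n}$; the ``pure'' choice is $\sum_u k_u u=(p^m-1)u^0$, and choosing $u^0$ extreme in $\mathrm{conv}(S)$ eliminates the other solutions that are congruent to it without a $p^m$-shift, while one must still rule out solutions differing by a nonzero multiple of $p^m$ (enlarging $m$, and exploiting $\alpha_i>0$ so that the weight map is injective on the relevant lattice directions). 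Once this bookkeeping is in place, the rest is routine valuation arithmetic, since $\Phi^m$ is unchanged under multiplication by $p^m$-th powers.
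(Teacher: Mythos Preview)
Your argument for the non-strict inequality is correct and is essentially the paper's: the paper phrases it by noting that $\phi:=\Phi\cdot(x_1\cdots x_n)^{p-1}$ generates a sharply $F$-pure Cartier subalgebra, so $A(v;\phi)\ge 0$, and then adds back $v(H)=w(H)$; your direct test with $f_0=(x_1\cdots x_n)^{p-1}$ is the same computation unpacked.

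For the strict inequality you take a genuinely different route. The paper does not build an explicit test element at all; instead it passes to a log-smooth pair $(Y,D)\succeq(X,H)$ on which the center of $w=r_{(X,H)}(v)$ has local dimension exactly $\ratrk(w)$ (this exists by a characteristic-free toric argument, \cite[Lemma 3.6(ii)]{JonssonMustata}). Abhyankar's inequality then forces $c_Y(v)\ne c_Y(w)$ when $v\ne w$, so one can add a smooth divisor $G$ through $c_Y(v)$ avoiding $c_Y(w)$; the retraction $w'=r_{(Y,D+G)}(v)$ then satisfies $A(w;\shC^Y)<A(w';\shC^Y)\le A(v;\shC^Y)$, and \Cref{relative canonical lemma} transports this back to $X$. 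This trades combinatorics for birational geometry.

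Your combinatorial approach, however, has a real gap precisely where you flagged it. The claim that $\Phi^m(f)$ is a \emph{unit of $\shO_v$} times $x^q$ is false in general. Extremality of $u^0$ does guarantee that the coefficient of $x^q$ in $\Phi^m(f)$ is exactly $c_{u^0}^{p^m-1}\ne 0$, but the ``shifted'' solutions you mention contribute \emph{other} monomials $x^{q'}$ to $\Phi^m(f)$, all of the \emph{same} $\alpha$-weight as $x^q$ (since $g'$ was $\alpha$-homogeneous). Thus $w(\Phi^m(f))=w(x^q)$, but $v(\Phi^m(f))$ may be strictly larger. Concretely, with $g'=x_1^2+x_1x_2+x_2^2$ and $\alpha_1=\alpha_2$, taking $u^0=(2,0)$ gives $\Phi^m(f)=x_1+c'x_2$ with $c'$ typically nonzero, and nothing prevents $v(x_1+c'x_2)>\alpha_1$. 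A direct computation shows
\[
E(f,\Phi^m,v)=\sum_i\alpha_i+(v(g')-\gamma)-\tfrac{p^m}{p^m-1}\bigl(v(\Phi^m(f))-w(\Phi^m(f))\bigr),
\]
so the uncontrolled excess $v(\Phi^m(f))-w(\Phi^m(f))$ can kill your strict inequality. Enlarging $m$ does not help: the set of allowable shifts $e$ is bounded independently of $m$, and ``injectivity of the weight map'' is exactly what fails when $g'$ has more than one monomial of a given $\alpha$-weight, which is the whole point.

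Your idea can be rescued, but by a different mechanism: if $h:=\Phi^m(f)$ again satisfies $v(h)>w(h)$, note it is $\alpha$-homogeneous of weight $\gamma-\sum_{u^0_i\ge 1}\alpha_i<\gamma$, so you may replace $g'$ by $h$ and repeat. Since the weights lie in the well-ordered semigroup $\sum_i\N_0\alpha_i$, this iteration terminates at some step where $v=w$ on the output, and at that step your formula gives $E>\sum_i\alpha_i$. This is the missing idea; without it the argument is incomplete.
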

\begin{proof}
  If $n = 1$, then $v = r_{(X, H)}(v)$, so there is nothing to show. 
  Let us then assume $n \ge 2$. Set $w = r_{(X, H)}(v)$. 

  Fixing the generator $\Phi$ for $\shC^X$ that projects onto
  $(x_1 \cdots x_n)^{p-1}$, using \cref{finite type} we 
  consider only $A(v; \Phi)$ and $A(w; \Phi)$. 
  Setting $\phi = \Phi \cdot (x_1 \cdots x_n)^{p-1}$, we see
  $A(w; \phi) = 0$ using \eqref{effect of right multiplication} and
  \eqref{RLR}. Moreover, the Cartier subalgebra $\lbrak \phi \rbrak \subseteq \shC^X$
  is sharply $F$-pure, so $A(v; \phi) \ge 0$. But $w(H) = v(H)$, 
  so using \cref{effect of right multiplication} again we see 
  $A(w; \Phi) \le A(v; \Phi)$. 
  
  Assume now $v \ne w$, and let us show 
  $A(w; \Phi) < A(v; \Phi)$. 
  Since $w$ is monomial on $(X, H)$, there exists a 
  log-smooth pair $\pi: (Y, D) \succeq (X, H)$ 
  so that the local ring at $y_0 := c_Y(w)$ has
  dimension $\ratrk(w)$, and so that
  $y_1 := c_Y(v)$ is contained in the closure $Z$
  of $\{y_0\}$ in $Y$. 
  Such pairs exist over arbitrary ground fields, 
  cf. \cite[Lemma 3.6(ii)]{JonssonMustata},
  which is a stronger, but more specialized, version
  of local monomialization (cf. \cite{KnafKuhlmann}). 

  I claim $v \ne w$ implies $c_Y(v)$ is 
  not the generic point $y_0$ of $Z$. We proceed by contradiction. 
  To see this, we note that
  $\ratrk(w) \le \ratrk(v)$, since the value
  group of $w$ is contained in the value group of $v$.
  Supposing $y_1 := c_Y(v) = y_0$, the residue field $\kappa(y_0)$ is 
  a sub-field of $\kappa(v) = \shO_v/\m_v$ (where $\shO_v$ is the 
  valuation ring of $v$ and $\m_v$ is the maximal ideal), so 
  $\trdeg(\kappa(y_0)/k) \le \trdeg_Y(v)$. But then Abhyankar's 
  inequality forces $v$ to be Abhyankar, thus equal to $w$:
  \[ n = (\ratrk(w) + \trdeg(\kappa(Z)/k)) \le 
  \ratrk(v) + \trdeg_Y(v) \le n. \]
  Since we are assuming $v \ne w$, we thus conclude $y_1 \ne y_0$. 

  Let $z \in \m_{y_1} \setminus \m_{y_1}^2$ that is a unit in 
  $\shO_{Y, y_0} = \shO_{Y, c(w)}$, so the divisor
  $\div_U(z)$ is smooth in some neighborhood $U$ of $y_1$. 
  Writing $G$ for the closure of $\div_U(z)$
  in $Y$, we can still define the retraction 
  $r_{(Y, H + G)}(v) =: w'$, since $v \in \Val_{U}^*$
  and $(H + G) \cap U$ is snc on $U$. By construction,
  \begin{equation}\label{retraction estimate}
    A(w; \shC^Y) < A(w'; \shC^Y), 
  \end{equation}
  and the same idea we used to show $A(w; \shC^X) \le A(v; \shC^X)$
  can be used to conclude $A(w'; \shC^Y) \le A(v; \shC^Y)$. 
  To complete the proof, we must only study
  the transformation of log discrepancies from $X$ to $Y$.
  Let $S = \shO_{Y, c(v)}$, $R = k[x_1, \dots, x_n]_\m$,
  fix a generator $\Phi_S$ for $\shC^S = \shC^Y_{c(v)}$, and
  define $g = h_{S/R}\inv$ via $\Phi = \Phi_S \cdot h_{S/R}$. 
  By \eqref{relative canonical lemma}, $g \in S$ (since $S$ is normal). 
  Moreover, $\div_{\Spec(S)}(g)$ is supported on
  the exceptional divisor $E$ of $\pi$, since
  it corresponds to the relative canonical divisor
  $K_{Y/\A^n} = K_Y$ near $y_1$. In particular,
  $g$ is a monomial in some regular system of parameters
  for $\shO_{Y, y_0}$, hence $w(g) = w'(g) = v(g)$. 
  Applying \eqref{effect of right multiplication}, which
  says
  \[ A(u; \shC^X) = A(u; \shC^Y) + \frac{u(g)}{p-1} \]
  for all $u \in \{w, w', v\}$, to 
  \eqref{retraction estimate} now proves
  $A(w; \shC^X) < A(v; \shC^X)$. 
\end{proof}

\begin{proposition}[cf. \cite{JonssonMustata}, Proposition 8.1]
  Suppose $k$ is algebraically closed of characteristic $p$.
  Let $\ideala_\star$ be a graded sequence of monomial ideals 
  on $X = \A^n_k = \Spec(k[x_1, \dots, x_n])$, vanishing only 
  at $\m = (x_1, \dots, x_n)$, and with 
  $\lct(\shC^X, \ideala_\star) < \infty$. 
  For any nonzero ideal $\q$ on $X$, there exists a monomial 
  valuation computing $\lct^\q(\shC^X, \ideala_\star)$, 
  and any valuation computing $\lct^\q(\shC^X, \ideala_\star)$ is monomial.
\end{proposition}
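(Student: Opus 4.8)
The plan is to route everything through the retraction estimate \Cref{retraction lemma} together with the existence result \Cref{existence of computing valuation}. I would prove the ``uniqueness'' assertion first---that any valuation computing $\lambda := \lct^\q(\shC^X, \ideala_\star)$ must be monomial---and deduce existence from it: the hypothesis $\lct(\shC^X,\ideala_\star)<\infty$ furnishes some $v_0\in\Val_X$ with $A(v_0;\shC^X)<\infty$ and $v_0(\ideala_\star)>0$, so $\lct^\q(v_0;\shC^X,\ideala_\star)<\infty$ and hence $\lambda<\infty$; then \Cref{existence of computing valuation} produces a valuation computing $\lambda$, which the uniqueness assertion identifies as monomial.

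So suppose $v\in\Val_X^*$ computes $\lambda$. Then $\lct^\q(v;\shC^X,\ideala_\star)=\lambda<\infty$ forces $v(\ideala_\star)>0$ and $A(v;\shC^X)<\infty$. From $v(\ideala_\star)>0$ one gets $\ideala_s\subseteq c_X(v)$ for $s\gg0$, and since each $\ideala_s$ is $\m$-primary (as $\ideala_\star$ vanishes only at $\m$) this forces $c_X(v)=\m$. Put $H=\div_X(x_1\cdots x_n)$ and $w:=r_{(X,H)}(v)$; by \Cref{definition: retraction} this is the monomial valuation $\val_\alpha$ with $\alpha=(v(x_1),\dots,v(x_n))$, whose entries are all positive, so $w\in\Val_X^*$ and $w(x_i)=v(x_i)$ for every $i$.

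The crux is that replacing $v$ by $w$ cannot increase the log canonical threshold. Three observations: (i) $w(\ideala_s)=v(\ideala_s)$ for all $s$, hence $w(\ideala_\star)=v(\ideala_\star)$; indeed each $\ideala_s$ is generated by monomials $x^a$, so for any $u\in\Val_X$ one has $u(\ideala_s)=\min_a u(x^a)=\min_a\langle(u(x_1),\dots,u(x_n)),a\rangle$, where $u(\ideala_s)\ge\min_a u(x^a)$ uses $u\ge0$ on $\shO_X$, and $v,w$ agree on the coordinates. (ii) $w(\q)\le v(\q)$, by the super-additivity $v(f)\ge w(f)$ of the retraction (cf.\ the remark following \Cref{definition: retraction}) applied to a generating set of $\q$. (iii) $A(w;\shC^X)\le A(v;\shC^X)$, with strict inequality unless $w=v$, by \Cref{retraction lemma}. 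Combining these,
\[
  \lct^\q(w;\shC^X,\ideala_\star)
  =\frac{A(w;\shC^X)+w(\q)}{w(\ideala_\star)}
  \le\frac{A(v;\shC^X)+v(\q)}{v(\ideala_\star)}
  =\lambda .
\]
Since $w\in\Val_X^*$ and $\lambda$ is the infimum of $\lct^\q(-;\shC^X,\ideala_\star)$ over $\Val_X^*$, equality holds throughout; using $w(\ideala_\star)=v(\ideala_\star)$ this gives $A(w;\shC^X)+w(\q)=A(v;\shC^X)+v(\q)$, whence (ii) and (iii) force $A(w;\shC^X)=A(v;\shC^X)$. By the strictness clause of \Cref{retraction lemma} this is possible only if $w=v$, so $v$ is monomial.

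I do not anticipate a serious obstacle: \Cref{retraction lemma} already carries the geometric content, and what remains is bookkeeping. The one point meriting care is observation (i)---that the value of an arbitrary $\m$-centered valuation on a monomial ideal is determined by its values on $x_1,\dots,x_n$---together with the check that $r_{(X,H)}$ leaves $v(\ideala_\star)$ unchanged while only decreasing $v(\q)$ and $A(v;\shC^X)$. (Alternatively one could argue directly that $\lct^\q(\shC^X,\ideala_\star)=\inf_{\alpha\in\R_{>0}^n}\lct^\q(\val_\alpha;\shC^X,\ideala_\star)$ via the same inequality, and attain the infimum by a homogeneity-and-compactness argument on the simplex $\{\sum_i\alpha_i=1\}$ using $A(\val_\alpha;\shC^X)=\sum_i\alpha_i$ from \Cref{RLR}; but routing through \Cref{existence of computing valuation} is shorter and reuses machinery already in place.)
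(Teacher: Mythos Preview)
Your proof is correct and follows essentially the same approach as the paper: both arguments reduce to the retraction inequality of \Cref{retraction lemma} combined with \Cref{existence of computing valuation}, using that the monomial retraction $r_{(X,H)}$ preserves $v(\ideala_\star)$ (since the $\ideala_s$ are monomial), can only decrease $v(\q)$ and $A(v;\shC^X)$, and strictly decreases the latter unless $v$ is already monomial. Your write-up is slightly more explicit than the paper's---you spell out why $\lct^\q(\shC^X,\ideala_\star)<\infty$, why $c_X(v)=\m$, and why equality in the displayed inequality forces $A(w;\shC^X)=A(v;\shC^X)$---but the logic is the same.
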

\begin{proof}
  Let $H = \div(x_1 \cdots x_n)$ and $v \in \Val_X^*$. If $c_X(v) \ne \m$,
  then $v(\ideala_\star) = 0$, hence $\lct^\q(v; \shC^X, \ideala_\star) = +\infty$
  by definition. Therefore, we restrict our attention to 
  $v \in \Val_X^* \cap c_X\inv(\m)$. The retraction 
  $\overline{v} := r_{(X, H)}(v)$ satisfies $\overline{v}(\q) \le v(\q)$ 
  and agrees with $v$ on monomials in $x_1, \dots, x_n$, so 
  $v(\ideala_\star) = \overline{v}(\ideala_\star)$. Applying
  \eqref{retraction lemma}, we see
  \begin{equation}\label{retraction bound}
      \lct^\q(\overline{v}; \shC^X, \ideala_\star) = \frac{A(\overline{v}; \shC^X) + \overline{v}(\q)}{\overline{v}(\ideala_\star)} 
  \le \frac{A(v; \shC^X) + v(\q)}{v(\ideala_\star)} = \lct^\q(v; \shC^X, \ideala_\star) 
  \end{equation}
  for every ideal $\q$, with strict inequality when $v \ne \overline{v}$. 
  Thus we see (using \cref{existence of computing valuation}) there must 
  be a monomial valuation computing
  $\lct^\q(\shC^X, \ideala_\star)$, and \eqref{retraction bound} shows
  any computing valuation is monomial. 
\end{proof}

\printbibliography

\end{document}